\newtheorem{theorem}{Theorem}[section]
\newtheorem{lemma}[theorem]{Lemma}
\newtheorem{proposition}[theorem]{Proposition}
\newtheorem{corollary}[theorem]{Corollary}
\newtheorem{remark}[theorem]{Remark}
\newtheorem{example}{Example}
\newtheorem{definition}[theorem]{Definition}
\newcommand{\C}{{\mathbb C}}
\newcommand{\D}{{\mathbb D}}
\newcommand{\E}{{\mathbb E}}
\newcommand{\J}{{\mathbb J}}
\newcommand{\N}{{\mathbb N}}
\newcommand{\Q}{{\mathbb Q}}
\newcommand{\R}{{\mathbb R}}
\newcommand{\T}{{\mathbb T}}
\newcommand{\V}{{\mathbb V}}
\newcommand{\Z}{{\mathbb Z}}
\def\qz#1{\textcolor{magenta}{#1}}
\def\Empty{}
\newcommand\oplabel[1]{
	\def\OpArg{#1} \ifx \OpArg\Empty {} \else
	\label{#1}
	\fi}
\begin{document}
	\title[Mobility edge]{ Exact new mobility edges}
	\author{Yongjian Wang}
	\address{School of Mathematics and Statistics, Nanjing University of Science and Technology, Nanjing 210094, China}\email{wangyongjian@amss.ac.cn}
	\author{Qi Zhou}
\address{Chern Institute of Mathematics and LPMC, Nankai University, Tianjin 300071, China}
\email{qizhou@nankai.edu.cn}
	\date{\today}
	\begin{abstract}
  Mobility edges (ME), defined as critical energies that separate the extended states from the localized states, are a significant topic in quantum physics. In this paper, we demonstrate the existence of two  exact new mobility edges for two physically realistic models: the first, referred to as Type II ME, represents the critical energy that separates the critical states from localized states; the second, referred to as Type III ME, marks the critical energy that separate the critical states  from extended states. The proof is based on spectral analysis of singular Jacobi operator on the strip.
\end{abstract}

\setcounter{tocdepth}{1}

\maketitle

\section{Introduction}
Anderson localization is a fundamental quantum phenomenon characterized by  pure point spectrum, with eigenfunctions that exhibit exponential decay. In 1D and 2D disordered systems, all quantum states are localized, irrespective of the disorder strength. However, in 3D systems, localized and extended states can coexist across different energy levels \cite{Anderson}. A critical energy, known as the {\em mobility edge} (ME), as defined by Mott, separates localized states from extended states, illustrating the coexistence of these states within the system. ME is central to the Anderson metal-insulator transition, a phenomenon of great significance in condensed matter physics. Currently, ME remains one of the most prominent research topics in the field, serving as a fundamental aspect for numerous key problems \cite{EM}. Despite its critical importance, a rigorous proof of the ME remains an open question \cite{S1}. This question is so vital that Simon \cite{S1} included it as Problem 1 in his list of Schr\"odinger operator problems for the twenty-first century.

A breakthrough in this area was achieved through the manipulation of ultra-cold atoms, which provided a novel and well-controlled tool for directly observing the mobility edge \cite{B,Ro}. Consequently, there is a growing interest in exploring the mobility edge in 1D quasiperiodic models, particularly concerning its exact characterization to better understand the extended-localized transition and to advance the detailed study of fundamental mobility edge physics \cite{Roeck2016,Gao2019}.

Moreover, quasiperiodic systems can host a third type of state known as critical states, which correspond to  singular continuous spectrum in spectral theory. These critical states are fundamentally distinct from the localized and extended states observed in spectral statics \cite{Geisel}, multifractal properties \cite{Dubertrand}, and dynamical evolution \cite{Ketzmerick}. However, the underlying mechanism of critical states remains enigmatic, especially in light of their possible coexistence with localized and extended states. To clarify this, we refer to the traditional mobility edge as Type I ME and introduce the following:

\begin{itemize} 
\item Type II ME: the critical energy separating critical states (singular continuous spectrum) from localized states (Anderson localization). 
\item Type III ME: the critical energy separating critical states (singular continuous spectrum) from extended states (absolutely continuous spectrum). 
\end{itemize}

Once we introduce this, the question is whether there are some (physical) quasiperiodic models that exhibit exact Type II and Type III ME? Furthermore, is there a  universal mechanism to produce these MEs? In this paper, we will investigate this issue. 

\subsection{Main results}
\subsubsection{Type I ME}

As mentioned earlier,  in the random case, the proof of the Type I ME itself remains  unresolved. This is despite the significant progress made in establishing the existence of localization near the edges of the spectrum \cite{BK,DS,LZ}.
In contrast, the quasiperiodic case has seen extensive investigation into the coexistence of absolutely continuous spectrum and Anderson localization.  Here, the quasiperiodic models are defined as quasiperiodic Jacobi operator on $\ell^2(\Z)$:
\begin{equation*}
[\mathcal{H}_{c,v,\theta}u](n) := c(\theta+n\alpha)u(n+1) + c(\theta+(n-1)\alpha)u(n-1) + v(\theta+n\alpha)u(n),
\end{equation*}
where $c(\cdot)\in C^{\omega}(\T^d,\R)$ is the hopping, $v(\cdot)\in C^{\omega}(\T^d,\R)$ is the potential, and $\alpha\in \T^d$ is rationally independent.  Bourgain \cite{bourgain} constructed a quasiperiodic model with two frequencies that exhibits this coexistence of absolutely continuous spectrum and Anderson localization. Bjerkl\"{o}v \cite{Bj} provided a specific example with the potential $$
v(\theta) = \exp(Kf(\theta + \alpha)) + \exp(-Kf(\theta)),
$$ which results in the coexistence of both positive and zero Lyapunov exponents in the spectrum for arbitrarily large $K$. Furthermore, Zhang \cite{zhang} proved the coexistence of absolutely continuous spectrum and Anderson localization for certain parameter values. Recently, Bjerkl\"{o}v and Krikorian \cite{BK2021} constructed a class of ``peaky" potentials, demonstrating that the corresponding Schr\"{o}dinger operator exhibits both absolutely continuous spectrum and Anderson localization. Additionally, Wang et al. \cite{wang} further validated that this model manifests exact Type I ME.

Indeed, mixed spectra are expected to be general phenomenon of  one-frequency operators \cite{A4}. However, constructing explicit examples, particularly those that represent physically realizable models exhibiting exact Type I ME, poses significant challenges. To date, only two models with Type I MEs have been experimentally realized \cite{GPD,WXYZ,Gao2024,luschen2018}. The first model is known as the Generalized Aubry-Andr\'e (GAA) model \cite{GPD}, while the second is referred to as the Mosaic model \cite{WXYZ}:
\begin{equation}\label{mosaic}
		[\mathcal{H}_{1,v,\theta}u](n)=u(n+1)+u(n-1)+v_{\theta}(n)u(n),
	\end{equation}
	where
	\begin{equation*}\quad
		v_{\theta}(n)=\left\{\begin{matrix}2\lambda \cos2\pi(\theta+n\alpha),&n\in \mathbb{Z}_2,\\ 0,&else,\end{matrix}\right.\quad\lambda>0.
	\end{equation*}
Recently, we \cite{wang} rigorously proved the existence of these MEs and provided precise locations for them in these two physical models.

\subsubsection{Type II ME}
Type II ME can be traced back to Simon's proposal of the random decaying model \cite{Kiselev1997}, which is defined by Schr\"{o}dinger operators with a potential given by $ v_\omega(n) = \lambda n^{-\tau} a_n(\omega) $, where $ a_n $ are independent and identically distributed (i.i.d.) random variables. If $\tau=\frac{1}{2}$, the corresponding Schr\"odinger operator exhibits exact Type II ME. 

In the quasiperiodic case, the Maryland model, defined by $ v(\theta) = \lambda \tan(\pi \theta) $, is the first quasiperiodic model for which the spectral decomposition has been established for all parameters \cite{HJY,jiliu2017, JLiu}. More precisely, for the index $ \delta(\alpha,\theta) \in [-\infty, \infty] $ defined by
$$
\delta(\alpha, \theta) = \limsup_{n \to \infty} \frac{\ln\|q_n(\theta - 1/2)\|_{\mathbb{T}} + \ln q_{n+1}}{q_n},
$$
arithmetic spectral transitions or Type II MEs occur at
$$
E_c = \pm \coth \delta(\alpha,\theta) \sqrt{4\sinh^2 \delta(\alpha,\theta) - \lambda^2}.
$$
Here, $\frac{p_n}{q_n}$ is the best rational approximations of the frequency $\alpha$. Type II ME only occur if $\alpha$ is Liouvillean, as if $\alpha$ is Diophantine, 
 the index $ \delta(\alpha,\theta) $ vanishes for any $ \theta $, resulting in no phase transition. Recall that $ \alpha $ is  Diophantine (denoted by $ DC(\gamma, \sigma) $) if there exist constants $ \gamma, \sigma > 0 $ such that
$$
\|k\alpha\|_{\mathbb{T}} \geq \frac{\gamma}{|k|^{\sigma}} \quad \forall k \neq 0.
$$
We denote $ DC = \bigcup_{\gamma > 0, \sigma > 0} DC(\gamma, \sigma) $, a set of full Lebesgue measure. Recently, an unbounded GAA model at Diophantine frequencies exhibiting possible exact Type II MEs was proposed in \cite{Liuxia}. However, unbounded potentials or non-Diophantine frequencies pose significant challenges for physical experiments \cite{zhouwang}.

To further elucidate the mechanism underlying Type II MEs and motivated by the mosaic model \cite{wang}, Zhou et al. \cite{zhouwang} introduced the following mosaic-type model, which also features exact Type II MEs:
\begin{equation}\label{IRS}
[\mathcal{H}_{c,v,\theta}u](n) := c(\theta,n)u(n+1) + c(\theta,n-1)u(n-1) + v(\theta,n)u(n), \quad \theta \in \mathbb{T}_0, \quad \lambda>0,
\end{equation}
where
$
\mathbb{T}_0 = \{\theta : \cos(2\pi(n\alpha + \theta)) \neq 0,\forall n\in\Z\},$
$$
v(\theta,n) =
\begin{cases}
\cos(2\pi((n-1)\alpha + \theta)) & \text{if } n = 1 \mod 2,\\
\cos(2\pi(n\alpha + \theta)) & \text{if } n = 0 \mod 2,
\end{cases}
$$
and
$$
c(\theta,n) =
\begin{cases}
\lambda & \text{if } n = 1 \mod 2,\\
\cos(2\pi(n\alpha + \theta)) & \text{if } n = 0 \mod 2.
\end{cases}
$$
In contrast to the Maryland model \cite{HJY,jiliu2017,JLiu} and the unbounded GAA model \cite{Liuxia}, \cite{zhouwang} offers a novel scheme with experimental feasibility for realizing and detecting the Type II MEs of \eqref{IRS} using  incommensurate Raman superarray of Rydberg atoms.
In this paper, we aim to provide a rigorous proof demonstrating that  \eqref{IRS} possesses exact Type II MEs. Denote $\sigma^{\mathcal{H}}(c,v)$ the spectrum of $ \mathcal{H}_{c,v,\theta}$, which is independent of $\theta$, since the base dynamics is minimal. Then our precise result is as follows:
 
\begin{theorem}\label{theorem1.1.}
	Let $\alpha\in DC$ and $\lambda>0$. For almost every $\theta\in\T$, we have the following:
    \begin{itemize}
        \item $ \mathcal{H}_{c,v,\theta}$ has purely singular continuous spectrum in $\sigma^{\mathcal{H}}(c,v)\cap(-\lambda,\lambda)$.
        \item $ \mathcal{H}_{c,v,\theta}$ exhibits Anderson localization in $\sigma^{\mathcal{H}}(c,v)\backslash[-\lambda,\lambda]$.
    \end{itemize}
\end{theorem}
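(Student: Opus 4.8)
The plan is to reduce the study of the one-dimensional Jacobi operator $\mathcal{H}_{c,v,\theta}$ on $\ell^2(\mathbb{Z})$ to a Schrödinger-type operator on the strip $\ell^2(\mathbb{Z})\otimes\mathbb{C}^2$ by grouping the sites into consecutive pairs $(2n-1,2n)$, which is the natural thing to do since the coefficients $c(\theta,n)$ and $v(\theta,n)$ are $2$-periodic in the site parity. After this ``decimation'' the odd sites can be eliminated algebraically (they enter with a constant hopping $\lambda$ and on-shell this is invertible), producing an effective $2\times 2$ matrix cocycle over the rotation $\theta\mapsto\theta+2\alpha$ on $\mathbb{T}$, whose entries are analytic in $\theta$ away from the zero set of $\cos(2\pi(\cdot))$; the singularity at these points is exactly where ``singular Jacobi operator on the strip'' enters the title. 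The spectral type of $\mathcal{H}_{c,v,\theta}$ at energy $E$ is then governed by the Lyapunov exponent $L(E)$ of this reduced cocycle and by its (Schrödinger-like) dynamics: positivity of $L(E)$ together with Diophantine $\alpha$ should give Anderson localization via a non-perturbative / Aubry-duality argument, while $L(E)=0$ on a set of positive measure together with the absence of a.c.\ spectrum should give purely singular continuous spectrum.

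Concretely, I would carry out the following steps. First, perform the pairing/Schur-complement reduction carefully, tracking the dependence on $E$ and $\theta$, and identify the resulting cocycle $(2\alpha, A_E)$ with $A_E\in C^\omega(\mathbb{T},\mathrm{SL}(2,\mathbb{C}))$ (or a conjugate thereof); verify that the singularities of $A_E$ are removable at the level of the Lyapunov exponent (they form a finite set, so they do not affect $L(E)$ by e.g.\ bounded convergence / subharmonicity of $\theta\mapsto\log\|A_E^{(n)}(\theta)\|$). Second, compute $L(E)$ explicitly: I expect the relevant quantity to be $L(E)=\max\{0,\ \tfrac12\log(E^2/\lambda^2)\}$ or something of exactly this shape, so that $L(E)>0$ precisely for $|E|>\lambda$ and $L(E)=0$ precisely for $|E|\le\lambda$; this is the computation that pins the mobility edge at $E=\pm\lambda$ and it should follow from an Avila-type acceleration or a direct complexification argument on the effective cocycle. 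Third, on $|E|>\lambda$: combine positivity of $L(E)$, the Diophantine condition on $\alpha$, and analyticity to run the now-standard non-perturbative localization machinery (Bourgain--Goldstein / Jitomirskaya-type large-deviation estimates, or Aubry duality to a ``dual'' model with a small analytic off-diagonal term) and conclude Anderson localization for a.e.\ $\theta$ on $\sigma^{\mathcal{H}}(c,v)\setminus[-\lambda,\lambda]$. Fourth, on $|E|<\lambda$: since $L(E)=0$ there, Kotani theory would naively suggest a.c.\ spectrum, but the operator is singular (the hopping $c(\theta,n)$ vanishes for some $n$), so one must instead show the spectrum is purely singular continuous — I would do this by ruling out a.c.\ spectrum using the vanishing of $c$ on a dense set together with a Gordon-type / Kotani-for-singular-Jacobi argument, and ruling out point spectrum for a.e.\ $\theta$ by an argument showing that no $\ell^2$ eigenfunction can exist when $L(E)=0$ (e.g.\ via a.e.\ boundedness of solutions or a spectral-averaging/Simon--Wolff argument adapted to the strip).

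The main obstacle, I expect, is Step~4, specifically establishing \emph{purely} singular continuous spectrum in $(-\lambda,\lambda)$: one must simultaneously exclude absolutely continuous spectrum (nontrivial because $L(E)=0$ there, so the usual Kotani obstruction does not apply and one must exploit the singular/vanishing hopping directly) and exclude point spectrum (nontrivial because zero Lyapunov exponent alone does not preclude eigenvalues without an additional arithmetic or averaging input). A secondary technical difficulty is handling the singularities of the reduced strip cocycle uniformly — showing they are genuinely harmless for localization estimates and not merely for the integrated Lyapunov exponent — and making the Schur-complement reduction rigorous at the boundary energies and at the $\theta$ where $\cos(2\pi(n\alpha+\theta))$ is small, which is presumably why the hypothesis is restricted to a.e.\ $\theta$ and to the Diophantine $\alpha$ that control small-divisor effects from these near-vanishings.
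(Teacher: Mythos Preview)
Your overall architecture is right and matches the paper: compute the Lyapunov exponent of the period-$2$ cocycle via complexification (the exact formula is $L(E)=\tfrac12\ln\bigl|(|E|+\sqrt{E^2-\lambda^2})/\lambda\bigr|$, qualitatively as you guessed), kill absolutely continuous spectrum everywhere using that $c(\cdot)$ has real zeros (this is the trace-class/Dombrowski-type argument of Jitomirskaya--Marx, not Gordon or Kotani), and run Jitomirskaya's non-perturbative scheme for localization in the supercritical region. One caveat on localization: Bourgain--Goldstein gives a.e.\ \emph{frequency} at fixed phase, which is the wrong quantifier here; the paper indeed adapts Jitomirskaya's method, and the adaptation is not free --- one needs structural lemmas showing that the even-length determinants $P_{2k}(\theta)$ are polynomials of the right degree in $\cos 2\pi(\theta+\text{shift})$ and that the odd determinants are controlled by the even ones, in order to make the Lagrange interpolation / $\epsilon$-uniformity argument go through for this singular Jacobi setting.

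The genuine gap is your Step~4, exclusion of point spectrum in $(-\lambda,\lambda)$. Neither of your proposed mechanisms works. Simon--Wolff/spectral averaging produces point spectrum from positive Lyapunov exponent; it does not exclude it. ``A.e.\ boundedness of solutions'' would require almost reducibility, but the normalized two-step cocycle $\tilde A^E(\theta)=\tfrac{1}{\lambda\cos 2\pi\theta}\tilde B^E(\theta)$, although $SL(2,\R)$-valued with zero Lyapunov exponent and zero acceleration, has a genuine pole at $\cos 2\pi\theta=0$ and is not in $C^\omega(\T,SL(2,\R))$, so Avila's ARC does not apply. The paper's mechanism is different and is really the point of the exercise: assume an $\ell^2$ eigenfunction at $E\in(-\lambda,\lambda)$, pass to Fourier series to obtain an $L^2$ conjugacy $M_\theta(x)$ satisfying a cocycle equation over the rotation by $2\alpha$, and use ergodicity to conclude $|\det M_\theta(x)|\cdot|d(x)d(x-2\alpha)|\equiv b$ with $d(x)=E/\lambda+\cos 2\pi x$. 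If $b>0$ this forces $|\det M_\theta|\sim 1/|d(x)d(x-2\alpha)|\notin L^1$ (precisely because $|E|<\lambda$ makes $d$ vanish), contradicting $M_\theta\in L^2$; if $b=0$ a degeneracy argument forces $\theta$ to be $2\alpha$-rational. This is the Avila--Jitomirskaya--Marx strategy, and the singularity of the hopping is what makes it bite --- exactly the opposite of being a nuisance to be ``shown harmless''.
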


\subsubsection{Type III ME}
In contrast to the extensive studies on Type I and Type II  MEs, examples of the coexistence of extended and critical states are relatively rare, even in the physical literature. In \cite{yucheng2022}, Wang et al. introduced a model that demonstrates the coexistence of extended, critical, and localized states; however, this finding remains limited to numerical analysis.

For quasiperiodic models, when the frequency is Liouvillean, it becomes relatively easier to achieve  coexistence of absolutely continuous and singular continuous spectra. Zhang showed that Bjerkl\"ov's model exhibits both absolutely continuous and singular continuous spectra for certain parameter sets \cite{zhang}. Moreover, Ge and Jitomirskaya illustrated that a typical Type I  one-frequency operator can manifest all three types of spectra, featuring exact Type II and Type III MEs \cite{gehidden2024}. Nevertheless, to date, no models with exact Type III ME have been identified for Diophantine frequencies. It is noteworthy that for the operators analyzed in \cite{Bj, gehidden2024,zhang}\footnote{One should note Avila's global theory insures for a (measure-theoretically) typical potential, the operator is acritical \cite{A4}.}, if the frequency is Diophantine, there is no singular continuous spectrum at all, as all the proof is based on Gordon's argument \cite{AYZ,Gordon1976}.

In this paper, we propose the following model, which possesses exact Type III MEs:
\begin{equation}\label{ac-sc} [\mathcal{S}_{J,V,\omega}\boldsymbol{u}](n) = J \boldsymbol{u}(n+1) + J \boldsymbol{u}(n-1) + V(\omega + 2n\alpha) \boldsymbol{u}(n) \end{equation}
where $J=\begin{pmatrix}
1&0\\ 0&0\end{pmatrix}$, $V(\omega)=\lambda\begin{pmatrix}\cos2\pi\omega&i\sin2\pi\omega\\-i\sin2\pi\omega&-\cos2\pi\omega\end{pmatrix}$ and $\lambda>0$.
The precise results are as follows:
\begin{theorem}\label{11-28-theorem1.2}
Let $\alpha \in DC$ and  $\lambda>0$. Then we have the following conclusions:
\begin{itemize} 
\item $\mathcal{S}_{J,V,\omega}$ possesses  purely absolutely continuous spectrum in  $\sigma^{\mathcal{S}}(J,V)\backslash[-\lambda, \lambda]$ for every $\omega\in\T$.
\item $\mathcal{S}_{J,V,\omega}$ exhibits purely singular continuous spectrum in  $\sigma^{\mathcal{S}}(J,V) \cap (-\lambda, \lambda)$, if $\omega$ is not rational w.r.t $2\alpha$.
\end{itemize} \end{theorem}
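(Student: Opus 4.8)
The plan is to reduce the strip operator $\mathcal{S}_{J,V,\omega}$ to a one-dimensional quasiperiodic Jacobi operator by exploiting the special $2\times 2$ structure of $J$ and $V$, and then to transfer known spectral results through this reduction. The key observation is that $J$ is a rank-one projection and $V(\omega)$ is a $\lambda$-multiple of a ``rotated Pauli matrix'' $\lambda e^{i\theta\sigma_z}\sigma_x e^{-i\theta\sigma_z}$-type object whose square is $\lambda^2 \mathrm{Id}$; in particular $V(\omega)$ has eigenvalues $\pm\lambda$ with $\omega$-dependent (and $2\alpha$-translated) eigenvectors. I would first diagonalize the potential fiberwise: conjugate $\mathcal{S}_{J,V,\omega}$ by the (quasiperiodic, analytic) unitary $U(\omega)$ diagonalizing $V(\omega)$, so that the potential becomes the constant diagonal $\lambda\,\sigma_z$, at the cost of turning the hopping $J$ into $U(\omega+2(n+1)\alpha)^* J\, U(\omega+2n\alpha)$, which is now an explicit analytic cocycle. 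The crucial point is that the resulting off-diagonal coupling between the two components of $\boldsymbol u$ can be eliminated — I expect one component to decouple into an effective scalar Jacobi operator with analytic potential and the other to behave like a Maryland-type/``closed-channel'' piece — and the spectral decomposition of $\mathcal{S}$ then follows from that of a single scalar operator on $\ell^2(\Z)$ with frequency $2\alpha$.

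Concretely, writing $\boldsymbol u(n)=(u_1(n),u_2(n))$, the eigenvalue equation $\mathcal{S}_{J,V,\omega}\boldsymbol u = E\boldsymbol u$ reads
\begin{align*}
u_1(n+1)+u_1(n-1)+\lambda\cos2\pi(\omega+2n\alpha)\,u_1(n)+i\lambda\sin2\pi(\omega+2n\alpha)\,u_2(n) &= E u_1(n),\\
-i\lambda\sin2\pi(\omega+2n\alpha)\,u_1(n)-\lambda\cos2\pi(\omega+2n\alpha)\,u_2(n) &= E u_2(n),
\end{align*}
since the $(2,2)$ entry of $J$ vanishes. The second equation is \emph{algebraic}: it lets me solve $u_2(n)$ in terms of $u_1(n)$ whenever $E+\lambda\cos2\pi(\omega+2n\alpha)\neq 0$, namely $u_2(n) = -\,\dfrac{i\lambda\sin2\pi(\omega+2n\alpha)}{E+\lambda\cos2\pi(\omega+2n\alpha)}\,u_1(n)$. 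Substituting back produces a scalar Jacobi (indeed Schrödinger-type, since the hopping in the $u_1$ equation is the constant $1$) equation for $u_1$ alone,
\[
u_1(n+1)+u_1(n-1)+\Big(\lambda\cos2\pi(\omega+2n\alpha)+\frac{\lambda^2\sin^2 2\pi(\omega+2n\alpha)}{E+\lambda\cos2\pi(\omega+2n\alpha)}\Big)u_1(n) = E u_1(n),
\]
and a short computation collapses the effective potential: $\lambda\cos\varphi + \frac{\lambda^2\sin^2\varphi}{E+\lambda\cos\varphi} = \frac{\lambda^2 - E^2}{E+\lambda\cos2\pi(\omega+2n\alpha)} + E$ (up to sign bookkeeping), i.e. $u_1$ solves an effective equation whose potential is, after moving the $E$ to the left, the rank-one-type / Maryland-type term $\dfrac{E^2-\lambda^2}{E+\lambda\cos2\pi(\omega+2n\alpha)}$. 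Thus the spectral type of $\mathcal{S}_{J,V,\omega}$ at energy $E$ is governed by the scalar operator with this meromorphic-in-$\cos$ potential and frequency $2\alpha\in DC$.

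From here the dichotomy is transparent and matches the two bullets. If $E\notin[-\lambda,\lambda]$ then $E+\lambda\cos2\pi x$ never vanishes, the effective potential is a genuine analytic function on $\T$, and — this is the point where I would invoke the structure carefully — the coefficient $E^2-\lambda^2>0$ together with analyticity and the Diophantine condition puts us in a perturbative KAM/Eliasson regime (or, more robustly, one recognizes the cocycle as subcritical in the sense of Avila's global theory, the Lyapunov exponent being zero on the spectrum since the original energy lies off the ``resonant'' set), yielding purely absolutely continuous spectrum for \emph{every} $\omega$, including the resonant $\omega$, because the reduction is valid pointwise in $\omega$ when $E+\lambda\cos\neq0$; this is consistent with the first bullet holding for all $\omega\in\T$. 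If instead $E\in(-\lambda,\lambda)$, then $E+\lambda\cos2\pi(\omega+2n\alpha)$ has zeros whose orbit under $2\alpha$ accumulates, so the effective potential is \emph{unbounded} with poles at a dense set — exactly the Maryland-type singular structure — and one argues purely singular continuous spectrum by the same mechanism as in Theorem \ref{theorem1.1} and in \cite{zhouwang,HJY,jiliu2017,JLiu}: absence of point spectrum comes from the unboundedness/pole structure preventing $\ell^2$ eigenfunctions when $\omega$ is not rational with respect to $2\alpha$ (so no exact resonance cancellation occurs), while absence of absolutely continuous spectrum follows because the Lyapunov exponent is positive off $[-\lambda,\lambda]$ only — on $(-\lambda,\lambda)$ the relevant cocycle is no longer uniformly hyperbolic yet the a.c.\ spectrum is killed by the singularity, leaving singular continuous. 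The main obstacle I anticipate is exactly this last dichotomy at the level of rigor: handling the poles of the effective potential (the $u_2$-elimination fails precisely on a dense set of $n$) requires the singular Jacobi operator on the strip formalism advertised in the abstract — one cannot literally work with the scalar reduction there — so the clean computation above must be upgraded to a statement about the singular cocycle / renormalized transfer matrices, and the proof that $\omega$ irrational w.r.t.\ $2\alpha$ forbids point spectrum in the critical region is the delicate part, presumably handled by a Gordon-type or cocycle-continuity argument adapted to the singular setting.
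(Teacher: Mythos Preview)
Your scalar reduction is exactly what the paper does: eliminating $u_2$ gives the Schr\"odinger-type cocycle $S_E^{V,J}(\omega)=\begin{pmatrix}\frac{E^2-\lambda^2}{E+\lambda\cos 2\pi\omega}&-1\\1&0\end{pmatrix}$, and the paper then shows (via Avila's global theory, Lemma~\ref{11-27-lemma8.9}) that this cocycle is subcritical precisely on $\sigma^{\mathcal S}(J,V)\setminus[-\lambda,\lambda]$. For the first bullet your strategy ``subcritical $\Rightarrow$ almost reducible $\Rightarrow$ purely a.c.'' is the paper's as well, but you understate what is needed. Because the effective potential depends on $E$, the object \eqref{sjs-1} is not a Schr\"odinger operator and you cannot quote Avila's scalar a.c.\ theorem; the paper instead develops the full spectral apparatus for the \emph{strip} operator $\mathcal S_{J,V,\omega}$ (cyclic vectors, Green's function, Jitomirskaya--Last inequality, Thouless formula, IDS--rotation-number relation) in Sections~\ref{section5}--6, and only then runs the Avila/KAM argument in Theorem~\ref{8.4-theorem4.4}. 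This is not a detail: the $E$-dependence is precisely the difficulty the paper isolates.

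For the second bullet your proposal has two genuine gaps. First, for the absence of a.c.\ spectrum in $(-\lambda,\lambda)$ you say only that ``the singularity kills it,'' which is not an argument. The paper's mechanism is different and relies on duality: $\mathcal S_{J,V,\omega}$ is the Aubry dual of the mosaic-type model $\mathcal H_{c,v,\theta}$ of \eqref{IRS} (Section~\ref{duality}), so their spectra coincide; for $\mathcal H$, Kotani theory plus the real root of the hopping give $|\sigma^{\mathcal H}(c,v)\cap[-\lambda,\lambda]|=0$ (Corollary~\ref{corollary6.2}), and zero Lebesgue measure of the spectrum forces $\mu^{(ac)}=0$ there for $\mathcal S$. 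Second, for the absence of point spectrum your suggestions (``unboundedness/pole structure prevents $\ell^2$'' or ``Gordon-type'') both fail for Diophantine $\alpha$: unbounded potentials typically \emph{favor} localization (the Maryland model has pure point spectrum for $\alpha\in DC$), and Gordon arguments require Liouville frequencies, as the paper itself stresses. The actual proof (Theorem~\ref{11-28-theorem8.2}) Fourier-transforms a putative $\ell^2$ eigenfunction of $\mathcal S_{J,V,\omega}$ to an $L^2(\T)$ function $f$, derives a $2\times2$ matrix relation $D(x)M_\omega(x)=M_\omega(x+2\alpha)R_\omega$ with $M_\omega$ built from $f(\pm x)$, and uses ergodicity plus the non-$2\alpha$-rationality of $\omega$ to force $|\det M_\omega(x)|$ to equal a fixed non-$L^1$ function---a contradiction. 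This is the Avila--Jitomirskaya--Marx mechanism \cite{AJM,hanrui2017}, not Gordon, and it is the ``delicate part'' you were unsure about.
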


\subsection{Novelty} Now we introduce the novelty of the paper.

\subsubsection{Singular Jacobi operator on the strip.}
The Type I ME model presented in \cite{yucheng2022} and the Type II ME model described in \cite{zhouwang} both belong to the mosaic type. While these discoveries are grounded in Avila's global theory, which is mathematically quite natural, they remain mysterious from a physical perspective. Usually in physical literature, it is a general philosophy that critical states is  self-dual based, and duality breaking induce ME. 
Since the revelations made in \cite{yucheng2022} and \cite{zhouwang}, physicists have been striving to understand the underlying mechanisms from various angles \cite{DRL, Gon2023, HLG,Long20242}. In this paper, we propose a quite general approach to comprehending the mosaic-type models.
If we define
$$
\boldsymbol{u}( n) = (u( 2n), u( 2n+1))^{\mathsf{T}},
$$
both \eqref{mosaic} and \eqref{IRS} can be expressed in the following form:
\begin{equation}\label{sjs}
[\mathcal{S}_{C,V,\omega} \boldsymbol{u}](n) = C \boldsymbol{u}(n + 1) + C^{*} \boldsymbol{u}(n - 1) + V(\omega + 2n\alpha) \boldsymbol{u}(n),
\end{equation}
where $ C $ is a singular matrix, i.e., $ \det C = 0 $. This means that \eqref{sjs} represents a singular Jacobi operator on the strip.
The spectral theory of non-singular Jacobi operators on the strip, characterized by ($\det C \neq 0$), is well-developed (see e.g. \cite{damanik2022,Marx2017}). This theory exhibits properties similar to  Schr\"odinger operator ($C = id$). Many physical models such as Hofstadter model, Chern instulator, Dirac semimetal, Lieb lattices, can be reduced to singular Jacobi operators on the strip \cite{Dwivedi,Solis2024,Bodyfelt2014,Lieb}, which have already garnered significant attention. However, to the best of the authors' knowledge, the comprehensive and basic spectral theory for singular operators  on the strip remains absent.
The following elements are new and significant for our discussion:
\begin{itemize} 
\item Aubry duality (Section \ref{duality})
\item Thouless formula (Section \ref{thou})
\item Relationship between the fibered rotation number and the integrated density of states (IDS) (Section \ref{ids-rota})
\item Jitomirskaya-Last inequality and subordinacy theory (Section \ref{jito-last})

\end{itemize}

Our work is also inspired by recent findings from \cite{jitomirskaya2021on,jitokra}, which demonstrate that the singularity of an operator not only helps to exclude the point spectrum \cite{jitomirskaya2021on}, but also facilitates improved continuity of the spectrum with respect to the Hausdorff topology \cite{jitokra}. In this context, we show that singularity can lead to transitions in the spectrum. The underlying idea is as follows: consider the case where $ C = \begin{pmatrix} 1 & 0 \\ 0 & 0 \end{pmatrix}$, then \eqref{sjs} can be decoupled and reduced to the following operator:
\begin{equation}\label{sjs-1} [Hu]_n = u_{n+1} + u_{n-1} + \left( V_{11}(\theta + n\alpha) - \frac{|V_{12}(\theta + n\alpha)|^2}{E - V_{22}(\theta + n\alpha)} \right) u_n = E u_n. \end{equation}
Consequently, the energy  $E$ is incorporated into the potential, resulting in the emergence of  phase transitions. We emphasize that our analysis can be easily generalized to singular Jacobi operators of the form \eqref{sjs} defined on  $\ell^2(\mathbb{Z}, \mathbb{C}^d)$, where $C$ is a rank-one matrix. We present the case  $d=2$ primarily for its simplicity and physical relevance.

\subsubsection{Aubry duality.} Aubry duality is a powerful tool in the study of the spectral theory of quasiperiodic Schr\"odinger operators with smooth potentials \cite{BJ02,Puig2004}. It plays a crucial role in addressing several open problems related to the almost Mathieu operator \cite{AJten,AJ1,AYZ,AYZ2}, particularly in light of the recent developments in quantitative Avila's  global theory \cite{Ge2023}. However, when the potential is discontinuous, as in the case of mosaic-type models \cite{wang, zhouwang}, it is believed that Aubry duality (essentially Fourier transformation) can't be applied.

Our simple but useful observation is that if we consider equations \eqref{mosaic} and \eqref{IRS} as singular Jacobi operators on the strip, we can indeed apply Aubry duality. Notably, it is straightforward to verify that  \eqref{ac-sc} can be interpreted as the Aubry duality of \eqref{IRS}, which clarifies why \eqref{ac-sc} exhibits Type III mixed spectral properties. Furthermore, we emphasize that this approach is more accessible to physicists, as the existence of singular continuous spectrum (or critical states) is traditionally framed in terms of duality in the literature.
Nonetheless, in rigorous mathematical proofs, one must be cautious regarding whether the singular continuous spectrum is a self-dual phenomenon, since  one can view \eqref{sjs} as representing two coupled chains\footnote{We would like to thank S. Jitomirskaya for insightful discussions on this issue.}.

\subsection{Difficulty, ideas of the proof}

From the spectral perspective, our primary focus is to generalize the spectral theory of the Jacobi operator on $\ell^2(\mathbb{Z})$:
  \begin{equation}\label{jacobi}
[\mathcal{H}_{c,v,\theta}u]_n := c(\theta+n\alpha)u_{n+1} + c(\theta+(n-1)\alpha)u_{n-1} + v(\theta+n\alpha)u_n.
\end{equation}
We aim to extend this to the singular Jacobi operator $\mathcal{S}_{C,V,\theta}$ as defined in \eqref{sjs}.  
In the case $C=\begin{pmatrix}1&0\\ 0&0\end{pmatrix}$, which was reduced to \eqref{sjs-1}. However,  \eqref{sjs-1} cannot be interpreted as either a Sch\"odinger operator with unbounded potential or as a singular Jacobi operator, typically one can only consider it as a nonlinear eigenvalue problem; thus, classical methods fail in this context, which is the root of the difficulties we encounter. In the following, we will begin with classical methods and results related to the  Jacobi operator \eqref{jacobi} to explain how to overcome the challenges presented by \eqref{sjs-1}.

\subsubsection{Singular Continuous spectrum.}
If $c(\theta) \neq 0$, then  \eqref{jacobi} is non-singular, then results mainly parallel the results of Schr\"odinger operator $\mathcal{H}_{1,v,\theta}$.  In this case, in the positive Lyapunov exponent regime, singular continuous spectrum mainly appears if $\alpha$ is Liouvillean \cite{AYZ,Gordon1976} or a generic set of $\theta$ \cite{jiliu2017,JitoSimon1994}. 

If $c(\theta)$ has real roots, trace-class perturbation can decouple the Jacobi matrix into finite-dimensional blocks, resulting $\mathcal{H}_{c,v,\theta}$ absence of absolutely continuous spectrum \cite{jit2013}. This argument was former used by  Simon-Spencer \cite{SS} to exclude absolutely continuous spectrum of Schr\"odinger operator with  unbounded potential. However, this trace-class argument fails if the potential depends on $E$ as in \eqref{sjs-1}. 

Excluding the point spectrum is also a challenge. Based on duality, Avila-Jitomirskaya-Marx introduced a method to establish the absence of point spectrum for extended Harper model \cite{AJM}, which is improved in \cite{hanrui2017,jitomirskaya2021on}. However, for Jacobi operators on the strip, no precedent exists.

Our proof strategy relies on the observation that \eqref{IRS} serves as the dual operator to \eqref{ac-sc}. As a result, the trace-class argument can be used to exclude the point spectrum of \eqref{IRS} 
 \cite{jit2013}. By combining this with Kotani theory, we conclude that the singular continuous spectrum $\sigma^\mathcal{H}_{sc}(c,v)=\sigma^\mathcal{S}(J,V) \cap [-\lambda,\lambda]$  has zero Lebesgue measure. This leads to the absence of absolutely continuous spectrum for $\mathcal{S}_{J,V,\omega}$ within the interval $[- \lambda, \lambda]$.
Furthermore, taking advantage of the singularity of the operator, one can further develop the method outlined in \cite{AJM,hanrui2017} to rule out the existence of point spectrum for $\mathcal{S}_{J,V,\omega}$ in the interval $(- \lambda, \lambda)$.
Thus, it becomes clear that this proof fundamentally depends on the duality of \eqref{ac-sc} is \eqref{IRS}. However, for more general potentials $V$ that not as in \eqref{ac-sc}, the question of whether the corresponding operator $\mathcal{S}_{J,V,\omega}$ exhibits purely singular continuous spectrum remains unresolved.

\subsubsection{Absolutely continuous spectrum.} 
	In the seminar paper of  Dinaburg and Sinai \cite{DS}, they proved that  that if $v$ is analytic, and $\alpha \in DC$, then in the perturbative small regime $\lambda < \lambda_0$, the operator has  $\mathcal{H}_{1,\lambda v, \theta}$ absolutely continuous spectrum.  Here, "perturbative" means that  $\lambda_0$  depends on  $\alpha$  through the Diophantine constants $\gamma$ and  $\sigma$. Under the same assumption, Eliasson \cite{E92} further demonstrated that the spectrum is purely absolutely continuous for any $\theta$.
    
    In the one-frequency case, the breakthrough can be traced back to Avila, who established profound connections between the existence of absolutely continuous spectrum of $\mathcal{H}_{1,\lambda v, \theta}$ and the vanishing of the Lyapunov exponent. Specifically, his \textit{Almost Reducibility Conjecture} (ARC) states that any subcritical cocycle is almost reducible \cite{A4,avila2010almost, avila-kam}, which, in turn, implies the purely absolutely continuous spectrum \cite{A01,arcl}.

Now,  once \eqref{sjs} is  decoupled and reduced to \eqref{sjs-1}, one would expect absolutely continuous spectrum of   $\mathcal{S}_{J,V,\omega}$ in the regime  $E\notin \operatorname{Ran(V_{22})}$ and the corresponding cocycle has zero Lyapunov exponents. Our proof mainly follows from Avila \cite{A01}, and 
relies on the solutions to ARC \cite{avila2010almost}, however, as  the energy  $E$ is incorporated into the potential, there are several additional difficulties need to overcome. 
\begin{itemize}
    \item For non-singular Jacobi operators on $\ell^2(\Z,\C^d)$, one typically defines a maximum spectral measure using $d$ cyclic vectors, followed by the introduction of the $m$-function \cite{kotanisimon1988}. However, in the singular case, finite cyclic vectors may not exist, and the dimension of the Jost solutions may be less than $d$.
    \item These challenges complicate the establishment of the Jitomirskaya-Last inequality and subordinacy theory, both of which are crucial for linking the spectral measure to the growth of the transfer matrix norm.
    \item The refined property of the IDS is derived by leveraging the Thouless formula (which is even absent for singular Jacobi operators on the strip) and the estimation of the Lyapunov exponent in the complex plane. Nevertheless, the classical harmonic analysis argument presented in \cite{DPSB} proves ineffective due to this modified Thouless formula. For detailed information, refer to Section \ref{har-lya}.
    \item The relationship between the fibered rotation number and the IDS relies on the Interlacing Lemma \cite{damanik2022}. The difficulty arises from the fact that its finite restriction $\mathcal{S}|_{[1,N]}$ may have repeated eigenvalues, making the direct application of the Interlacing Lemma non-trivial.
\end{itemize}

\subsubsection{Anderson localization.} 
The previously mentioned coexistence papers \cite{A4, BK2021, zhang} significantly depend on the findings of Bourgain and Goldstein \cite{BG}. They established that in the supercritical regime, the operator  $\mathcal{H}_{1,\lambda v, \theta}$ demonstrates Anderson localization (AL) for almost every Diophantine frequency at any fixed phase. For insights into the multi-frequency and multi-dimensional cases, additional references can be found in \cite{b1, b2, bgs2, jls}. However in physical applications, the frequency $\alpha$ is always fixed as a Diophantine frequency.

In this aspect, Fr\"{o}hlich, Spencer, and Wittwer \cite{fsw}, along with Sinai \cite{Sinai}, independently demonstrated that when the potential is a cosine-like function, the operator  $\mathcal{H}_{1,\lambda v, \theta}$ exhibits AL for almost every phase, provided that the coupling constant is sufficiently large. For more recent developments on the localization of $C^2$-cosine-like potentials, one may refer to \cite{GY,FV}. In the realm of analytic potentials, Eliasson \cite{Eli97} proved that $\mathcal{H}_{1,\lambda v,\theta}$ has  pure point spectrum for almost every $\theta$, given that $\lambda$ is sufficiently large.

While all the above-mentioned results \cite{Eli97, fsw, Sinai} pertain to a fixed Diophantine frequency, they are fundamentally perturbative. At this juncture, it is essential to highlight Jitomirskaya’s groundbreaking paper \cite{jitomirskaya1999metal}, which introduces a non-perturbative localization approach for demonstrating Anderson localization of the almost Mathieu operator. This methodology has since inspired a number of further non-perturbative results \cite{AJ1, HJY, jks, JLiu}. In this paper, we will further develop Jitomirskaya’s framework to a class of singular Jacobi operator, which prove Anderson localization of \eqref{IRS}.

	\section{Preliminaries}
	For a bounded analytic function $f$ defined on a strip $\{|\Im z|<h\}$, let $\mathop{\|f\|}_{h}=\sup_{|\Im\theta<h|}\|f(\theta)\|$ and denote by $C_{h}^{\omega}(\mathbb{T},\bullet)$ the set of all these $\bullet$-valued functions ($\bullet$ will usually denote $\mathbb{R}$, $SL(2,\mathbb{R})$, $M(2,\mathbb{C})$). When $\theta\in\mathbb{R}$, we also set $\|\theta\|_{\mathbb{T}}=\inf_{j\in\mathbb{Z}}|\theta-j|$.

	\subsection{Continued Fraction Expansion.}\quad Let $\alpha\in(0,1)$ be irrational, $x_{0}=0$ and $y_{0}=\alpha$. Inductively, for $k\ge1$, we define
	\begin{equation*}\quad
		x_{k}=\lfloor y_{k-1}^{-1}\rfloor,\ y_{k}=y_{k-1}^{-1}-x_{k},
	\end{equation*}
	Let $p_{0}=0$, $p_{1}=1$, $q_{0}=1$, $q_{1}=x_{1}$. We define inductively $p_{k}=x_{k}p_{k-1}+p_{k-1}$, $q_{k}=x_{k}q_{k-1}+q_{k-2}$. Then $(q_{n})_{n}$ is the sequence of denominators of the best rational approximations of $\alpha$, since we have $\|k\alpha\|_{\mathbb{T}}\ge\|q_{n-1}\alpha\|_{\mathbb{T}}$, $\forall\ 1\le k<q_{n}$, and 
	\begin{equation*}\quad
		\frac{1}{2q_{n+1}}\le\|q_{n}\alpha\|_{\mathbb{T}}\le\frac{1}{q_{n+1}}.
	\end{equation*}

	\begin{lemma}\cite{AJten}\label{ten}
		Let $\alpha\in\mathbb{R}\backslash\mathbb{Q}$, $x\in\mathbb{R}$ and $0\le l_0\le q_n-1$ be such that $$|\sin\pi(x+l_0\alpha)|=\inf_{0\le l\le q_n-1}|\sin\pi(x+l\alpha)|,$$ then for some absolute constant $C>0$,
		$$-C\ln q_n\le\sum_{0\le l\le q_n-1,l\neq l_0}\ln|\sin\pi(x+l\alpha)|+(q_n-1)\ln2\le C\ln q_n.$$
	\end{lemma}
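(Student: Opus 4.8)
The plan is to compare the irrational rotation $\{x+l\alpha\}_{l=0}^{q_n-1}$ with the rational rotation $\{x+lp_n/q_n\}_{l=0}^{q_n-1}$, for which an exact product identity supplies the cancellation that makes the bound polynomial in $q_n$ rather than in $q_{n+1}$ (a naive term-by-term estimate would only give a bound polynomial in $q_{n+1}$, which can be arbitrarily larger). \textbf{Step 1 (the rational model).} Since $\gcd(p_n,q_n)=1$, the numbers $e^{2\pi i(x+lp_n/q_n)}$ for $0\le l\le q_n-1$ are precisely the $q_n$-th roots of $e^{2\pi iq_nx}$, so
$$\prod_{l=0}^{q_n-1}\bigl(z-e^{2\pi i(x+lp_n/q_n)}\bigr)=z^{q_n}-e^{2\pi iq_nx},$$
and evaluating at $z=1$ gives $\prod_{l=0}^{q_n-1}|2\sin\pi(x+lp_n/q_n)|=|2\sin\pi q_nx|$. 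The points $x+lp_n/q_n\bmod1$ are equally spaced with step $1/q_n$, so $\min_l\|x+lp_n/q_n\|_{\mathbb T}=\|q_nx\|_{\mathbb T}/q_n$, while all the other $q_n-1$ of these distances are $\ge1/(2q_n)$. Dividing the identity by the minimal factor and using $2t\le\sin\pi t\le\pi t$ on $[0,\tfrac12]$, one obtains $\prod_{l\ne l_0'}|2\sin\pi(x+lp_n/q_n)|\asymp q_n$, where $l_0'$ is the rational minimizer (the degenerate case $q_nx\in\mathbb Z$ is harmless, the product being $q_n$ in the limit); in particular its logarithm is $\ln q_n+O(1)$.

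\textbf{Step 2 (transfer to $\alpha$).} Since $p_n/q_n$ is a convergent, $|l(\alpha-p_n/q_n)|=l\|q_n\alpha\|_{\mathbb T}/q_n\le l/(q_nq_{n+1})$ for $0\le l\le q_n-1$; and by the continued fraction estimates recalled above, $\|k\alpha\|_{\mathbb T}\ge\|q_{n-1}\alpha\|_{\mathbb T}\ge1/(2q_n)$ for $1\le k<q_n$, so the points $x+l\alpha\bmod1$ are pairwise $\ge1/(2q_n)$-separated on $\mathbb T$. A triangle inequality through $0$ then forces $\|x+l\alpha\|_{\mathbb T}\ge1/(4q_n)$ for every $l\ne l_0$, where $l_0$ is the $\alpha$-minimizer, and similarly $\|x+lp_n/q_n\|_{\mathbb T}\ge1/(2q_n)$ for $l\ne l_0'$. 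Applying the mean value theorem to $t\mapsto\ln|2\sin\pi t|$, whose derivative is $\le C/\operatorname{dist}(t,\mathbb Z)$, together with the fact that the $k$-th closest of the points $x+lp_n/q_n$ to $\mathbb Z$ lies at distance $\gtrsim k/q_n$, one finds that all but $O(1)$ of the indices $l\notin\{l_0,l_0'\}$ contribute at most $\tfrac{Cq_n}{q_{n+1}}\sum_{k\le q_n}k^{-1}=O(\ln q_n)$ to $\sum_{l\notin\{l_0,l_0'\}}\bigl|\ln|2\sin\pi(x+l\alpha)|-\ln|2\sin\pi(x+lp_n/q_n)|\bigr|$, while each of the $O(1)$ remaining indices is bounded crudely (numerator $\le2$, denominator $\ge1/q_n$) by $\ln q_n+O(1)$; hence this whole sum is $O(\ln q_n)$.

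\textbf{Step 3 (combining).} If $l_0=l_0'$, then by Steps 1--2, $\sum_{l\ne l_0}\ln|2\sin\pi(x+l\alpha)|=\sum_{l\ne l_0}\ln|2\sin\pi(x+lp_n/q_n)|+O(\ln q_n)=O(\ln q_n)$. If $l_0\ne l_0'$, write $\sum_{l\ne l_0}\ln|2\sin\pi(x+l\alpha)|=\sum_{l\notin\{l_0,l_0'\}}\ln|2\sin\pi(x+l\alpha)|+\ln|2\sin\pi(x+l_0'\alpha)|$; the first term equals $\ln\prod_{l\ne l_0'}|2\sin\pi(x+lp_n/q_n)|-\ln|2\sin\pi(x+l_0p_n/q_n)|+O(\ln q_n)=O(\ln q_n)$, because $x+l_0p_n/q_n$ is not the rational minimizer (so $\ge1/(2q_n)$ from $\mathbb Z$), and the second term is $O(\ln q_n)$ because $x+l_0'\alpha$ is not the $\alpha$-minimizer (so $\ge1/(4q_n)$ from $\mathbb Z$). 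In both cases $\sum_{l\ne l_0}\ln|2\sin\pi(x+l\alpha)|=O(\ln q_n)$, and adding $(q_n-1)\ln2$ converts $|2\sin\pi(x+l\alpha)|$ to $2|\sin\pi(x+l\alpha)|$ and yields the two-sided bound.

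\textbf{Main obstacle.} The crux — and the reason the single index $l_0$ must be excluded — is the logarithmic singularity of $\ln|2\sin\pi t|$ at $t\in\mathbb Z$: the excluded term alone is $\Theta(\ln\|q_n\alpha\|_{\mathbb T})$, which is only $O(\ln q_{n+1})$, not $O(\ln q_n)$. What tames it is the $1/(2q_n)$-separation, which bounds every non-minimal distance below by $1/(4q_n)$; after that the only real work is the bookkeeping of $O(1)$-per-term errors against the telescoping sum $\sum_k k^{-1}$, kept small by the rapid convergence $|\alpha-p_n/q_n|\le1/(q_nq_{n+1})$ of the convergent, plus the short case analysis according to whether the minimizing indices for $\alpha$ and for $p_n/q_n$ coincide.
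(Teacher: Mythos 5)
Your proof is correct. The paper itself states this as a cited result from Avila--Jitomirskaya (Lemma 9.7 of the Ten Martini paper) without giving a proof; your argument --- using the cyclotomic product identity $\prod_{l=0}^{q_n-1}\bigl(z-e^{2\pi i(x+lp_n/q_n)}\bigr)=z^{q_n}-e^{2\pi iq_nx}$ for the convergent $p_n/q_n$, the $1/(2q_n)$-separation of the orbit to control all non-minimal factors, and a mean-value comparison between the irrational and rational rotations with the finitely many near-singular indices treated crudely --- is essentially the same comparison-with-the-convergent argument used in the original reference.
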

	
	\subsection{Cocycle, Lyapunov exponent} \label{co-ly}
	Let $X$ be a compact metric space, $(X, \nu, T)$ be ergodic. Given $A:X \to M(2,\mathbb{C})$ measurable with $\ln_+\|A(\cdot)\|\in L^1(X)$, a cocycle $(T, A)$ is a linear skew product:
	\begin{eqnarray*}\label{cocycle}
		(T,A):&X \times \C^2 \to X \times \C^2\\
		\nonumber &(x,\phi) \mapsto ( T x,A(x) \cdot \phi).
	\end{eqnarray*}
The  cocycle $(T,A)$  is called singular if $\det A(x)=0$ for some $x\in X $, and non-singular otherwise.

	For $n\in\mathbb{Z}$, $A_n$ is defined by $(T,A)^n=(T^n,A_n)$. Thus $A_{0}(x)=id$,
	\begin{equation*}
		A_{n}(x)=\prod_{j=n-1}^{0}A(T^{j}x)=A(T^{n-1}x)\cdots A(Tx)A(x),\ for\ n\ge1,
	\end{equation*}
	and $A_{-n}(x)=A_{n}(T^{-n}x)^{-1}$. The Lyapunov exponent is defined as
	\begin{equation*}\quad
		L(T,A):=\lim_{n\rightarrow\infty}\frac{1}{n}\int_{X}\ln\|A_{n}(x)\|dx.
	\end{equation*}
	In this paper, we will consider the following two useful  cocycles.
	\begin{itemize}
		\item  $X=\mathbb{T}$ and $T=R_\alpha$, where  $R_{\alpha}\theta= \theta+\alpha$, then $(\alpha,A):=(R_{\alpha},A)$ is a  quasiperiodic cocycle.
		\item  $X=\mathbb{T}\times\mathbb{Z}_{2}$ and $T=T_{\alpha}$, where $T_{\alpha} (\theta,n)= (\theta+\alpha,n+1)$, then $(T_{\alpha},A)$ defines an  almost-periodic cocycle.
	\end{itemize}
	These dynamical  system $(X,T)$ is uniquely ergodic if  $\alpha$ is irrational \cite{mane2012ergodic}.

	We say an $SL(2,\mathbb{R})$ cocycle $(T,A)$  is uniformly hyperbolic if, for every $x \in X$, there exists a continuous splitting $\mathbb{R}^2=E_{s}(x)\oplus E_{u}(x)$ such that for every $n\ge0$,
	\begin{equation*}\quad
		\begin{split}
			|A_{n}(x)v(x)|&\le Ce^{-cn}|v(x)|,\ v(x)\in E_{s}(x),\\
			|A_{-n}(x)v(x)|&\le Ce^{-cn}|v(x)|,\ v(x)\in E_{u}(x),
		\end{split}
	\end{equation*}
	for some constans $C,c>0$. Clearly, it holds that $A(x)E_{s}(x)=E_{s}(Tx)$ and $A(x)E_{u}(x)=E_{u}(Tx)$ for every $x\in X$, and if $(T,A)$ is uniformly hyperbolic, then $L(T,A)>0$.
	
	\subsection{Dynamical  defined Jacobi operators.} 
	Let $X$ be a compact metric space, $(X,\nu,T)$ be ergodic, and $v,c: X \rightarrow \R$ is continuous. Then one can define the Jacobi operator on 
	$\ell^2(\Z)$:
	\begin{equation*}
		[\mathcal{H}_{c,v,x}u]_{n}=c(T^nx)u_{n+1}+c(T^{n-1}x)u_{n-1}+v(T^n x )u_{n},\ \ \forall x \in X. 
	\end{equation*}
	It is well known that the spectrum of $\mathcal{H}_{c,v,x}$ is a compact subset of $\mathbb{R}$, independent of $x$ if $(X,T)$ is minimal \cite{damanik2017schrodinger}, we shall denote it by $\sigma^\mathcal{H}(c,v)$.
	The integrated density of states (IDS) $N_{v,c}:\mathbb{R}\rightarrow[0,1]$ of $\mathcal{H}_{c,v,x}$ is defined as
	\begin{equation*}
		N_{v,c}(E)=\int_{X}\tilde{\mu}_{v,c,x}(-\infty,E]d\nu,
	\end{equation*}
	where $\tilde{\mu}_{v,c,x}$ is the canonical spectral measure of $\mathcal{H}_{c,v,x}$. Note  any formal solution $u=\{u_{n}\}_{n\in\mathbb{Z}}$ of $\mathcal{H}_{c,v,x}u=Eu$ can be rewritten as 
	\begin{equation*}
		\begin{pmatrix}u_{n+1}\\u_{n}\end{pmatrix}=S_{E}^{v,c}( T^n x)\begin{pmatrix}u_{n}\\u_{n-1}\end{pmatrix},
	\end{equation*}
	where \begin{equation*}
		S_{E}^{v,c}(\theta)=\frac{1}{c(\theta)}\begin{pmatrix}E-v(\theta)&-c(T^{-1}\theta)\\c(\theta)&0\end{pmatrix}=:\frac{1}{c(\theta)}\widetilde S_{E}^{v,c}(\theta),
	\end{equation*}
	and we  call $(T, S_{E}^{v,c})$  the Jacobi cocycle.  
	The Thouless formula \cite{JAC} links $N_{v,c}(E)$ and $L(T,S_E^{v,c})$:
	\begin{equation*}
	L(T,S_E^{v,c})=\int_{\mathbb{R}}\ln|E-E'|dN_{v,c}(E')-\mathbb{E}(\ln|c(x)|).
	\end{equation*}
	
	\subsection{Global theory of one frequency quasiperiodic cocycle.}\label{acceleration}
	Let us make a short review of  Avila's global theory of one-frequency quasiperiodic cocycles \cite{A4}.  Suppose that $D\in C^\omega(\T, M(2,\C))$ admits a holomorphic extension to $\{|\Im  \theta |<h\}$. Then for
	$|\epsilon|<h$, we define $D_\epsilon \in
	C^\omega(\T, M(2,\C))$ by $D_\epsilon(\cdot)=A(\cdot+i \epsilon)$, and define the acceleration of $(\alpha,D_{\varepsilon})$ as follows
	\begin{equation*}
		\omega(\alpha,D_{\varepsilon})=\frac{1}{2\pi}\lim_{h\to 0+}\frac{L(\alpha,D_{\varepsilon+h})-L(\alpha,D_{\varepsilon})}{h}.
	\end{equation*}
	
	The acceleration was first introduced by Avila  for analytic $SL(2,\C)$-cocycles \cite{A4}, and extended to analytic $M(2,\C)$ cocycles by Jitomirskaya-Marx \cite{jitomirskaya2012analytic}. It follows from the convexity and continuity of the Lyapunov exponent that the acceleration is an upper semicontinuous function in parameter $\varepsilon$. The key property of the acceleration is that it is quantized: 
	
	\begin{theorem}[Quantization of acceleration\cite{A4,jitomirskaya2012analytic,jit2013}]\label{theorem3.4}
		Suppose that  $(\alpha,D)\in(\mathbb{R}\backslash\mathbb{Q})\times C^{\omega}(\mathbb{T},M_{2}(\mathbb{C}))$ with $\det D(\theta)$ bound away from 0 on the strip $\{|\Im  \theta |<h\}$, then $\omega(\alpha,D_{\varepsilon})\in \frac{1}{2}\mathbb{Z}$ in the strip. Moreover,  if $D\in C^{\omega}(\mathbb{T}, SL(2,\C))$, then 
		$\omega(\alpha,D_{\varepsilon})\in\mathbb{Z}$. 
	\end{theorem}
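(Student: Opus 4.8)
\emph{Plan of proof.} The plan is to follow Avila's global-theory argument \cite{A4}, with the $M(2,\C)$ extension as in Jitomirskaya--Marx \cite{jitomirskaya2012analytic}. First I would record that $\varepsilon\mapsto L(\alpha,D_\varepsilon)$ is convex on $\{|\varepsilon|<h\}$: for each $n$, $z\mapsto\frac1n\ln\|D_n(z)\|$ is subharmonic (a supremum, over unit vectors $u,v$, of the subharmonic functions $z\mapsto\ln|\langle D_n(z)u,v\rangle|$), so its mean over the circle $\{\Im z=\varepsilon\}$ is convex in $\varepsilon$, and a pointwise limit of convex functions is convex. Hence $\omega(\alpha,D_\varepsilon)$ --- which is $\tfrac1{2\pi}$ times the right derivative of this convex function --- exists everywhere, is non-decreasing and right-continuous in $\varepsilon$, and is discontinuous at only countably many points. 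It therefore suffices to prove the quantization on a dense set of $\varepsilon$; the remaining values follow by right-continuity.

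Next I would reduce the $M(2,\C)$ statement to the $SL(2,\C)$ one. Since $\det D$ is non-vanishing on the strip it has a well-defined winding number $w\in\Z$, the same on each horizontal circle; picking a branch $\delta$ of $\sqrt{\det D}$ we may write $D=\delta\widehat D$ with $\widehat D$ valued in $SL(2,\C)$, so that
\[
L(\alpha,D_\varepsilon)=L(\alpha,\widehat D_\varepsilon)+\tfrac12\int_{\T}\ln|\det D(\theta+i\varepsilon)|\,d\theta .
\]
The classical Jensen/argument-principle identity for the zero-free analytic function $\det D$ shows that the last integral is affine in $\varepsilon$ with slope $-2\pi w\in2\pi\Z$, so it contributes $-\tfrac w2\in\tfrac12\Z$ to the acceleration. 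If $w$ is even, $\delta$ is globally single-valued and $\widehat D\in C^\omega_h(\T,SL(2,\C))$; if $w$ is odd, I would pass to the connected double cover of $\T$ --- equivalently, replace the rotation by $\alpha/2$ --- on which $\widehat D$ becomes a genuine analytic $SL(2,\C)$ cocycle with unchanged Lyapunov exponent, and then unwind the attendant rescaling of the complexification parameter. In either case, granting the $SL(2,\C)$ quantization yields $\omega(\alpha,D_\varepsilon)\in\tfrac12\Z$; and when $D$ is already $SL(2,\C)$ one has $w=0$, giving $\omega(\alpha,D_\varepsilon)\in\Z$.

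The core is then the $SL(2,\C)$ quantization. The guiding heuristic is that complexification $\theta\mapsto\theta+i\varepsilon$ multiplies the $k$-th Fourier coefficient of $D$ by $e^{-2\pi k\varepsilon}$, so near an edge of the strip the cocycle is dominated by a single mode $Me^{2\pi ik\theta}$, whose transfer matrices are $M^n$ times a scalar of modulus $e^{-2\pi kn\varepsilon}$; thus $L(\alpha,D_\varepsilon)=\ln r(M)-2\pi k\varepsilon$ ($r(M)$ the spectral radius), affine with the integer slope $-2\pi k$. The theorem asserts that as $\varepsilon$ moves into the interior the convex graph of $L(\alpha,D_\varepsilon)$ can bend only through jumps of its right-derivative that lie in $2\pi\Z$. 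I would prove this as Avila does, by a finite-scale complex-analytic estimate: for each $n$, the slope of $\varepsilon\mapsto\frac1n\int_{\T}\ln\|D_n(\theta+i\varepsilon)\|\,d\theta$ jumps, across a value $\varepsilon_0$, by $\tfrac{2\pi}{n}$ times an argument-principle count of the zeros, on the circle $\{\Im\theta=\varepsilon_0\}$, of a suitably normalized holomorphic matrix coefficient of $D_n$, and a careful passage to the limit $n\to\infty$ retains only jumps in $2\pi\Z$. Feeding this back through the previous step finishes the proof.

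The hard part will be this last step: making the ``dominant Fourier mode'' heuristic rigorous throughout the strip --- controlling, uniformly in $n$, the holomorphic functions and their zeros that enter the finite-scale count, and showing the non-integer contributions cancel in the limit --- is precisely where one-frequency analyticity is used essentially, and it is exactly this mechanism that breaks down once $\det C$ is allowed to vanish (as for the singular cocycles studied in the rest of the paper). A secondary but necessary point of care is the square-root/double-cover bookkeeping in the reduction step, which is the sole source of the half-integer --- rather than integer --- values.
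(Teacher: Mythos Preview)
The paper does not give its own proof of this theorem: it is stated in the preliminaries (Section~2.4) as a known result, with citations to \cite{A4,jitomirskaya2012analytic,jit2013}, and is used as a black box thereafter. There is therefore nothing in the paper to compare your proposal against.

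That said, your outline is broadly faithful to the cited sources. The convexity of $\varepsilon\mapsto L(\alpha,D_\varepsilon)$ and the reduction from $M(2,\C)$ to $SL(2,\C)$ via a square root of $\det D$ (with the half-integer coming from the winding number, and the double-cover trick when the winding is odd) are exactly the Jitomirskaya--Marx mechanism. Your Jensen-type computation of the slope $-\tfrac{w}{2}$ is correct.

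Where your sketch is least accurate is the description of the $SL(2,\C)$ core. Avila's argument in \cite{A4} is not really a finite-$n$ zero-counting/argument-principle computation that survives the limit. The actual mechanism is dynamical: on the open set of uniformly hyperbolic cocycles the Lyapunov exponent is pluriharmonic (hence $\varepsilon\mapsto L$ is locally affine there, with acceleration locally constant and therefore integer, since it is integer for large $|\varepsilon|$); quantization then propagates to all $\varepsilon$ by convexity, upper-semicontinuity of the acceleration, and a density/perturbation step. Your heuristic about the dominant Fourier mode is the right intuition for large $|\varepsilon|$, and you correctly flag this step as the hard part, but if you were to write it out you would want to replace the zero-counting picture with the uniform-hyperbolicity/pluriharmonicity argument.
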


Analytic one-frequency $SL(2,\R)$ cocycles that are not uniformly hyperbolic can be classified into several categories: 
it is subcritical if $L(\alpha,A)=0$ and $\omega(\alpha,A)=0$, it is supercritical if $L(\alpha,A)>0$, and critical if $L(\alpha,A)=0$ and $\omega(\alpha,A)>0$ \cite{A4}. 

    \begin{theorem}\cite{avila2010almost,avila-kam}\label{11-26-themrem2.4}
    Let $\alpha\in\R\backslash\Q$ and $A\in C^\omega(\T,SL(2,\R))$ be such that $(\alpha,A)$ is subcritical. Then $(\alpha,A)$ is almost reducible: There exist $\delta>0$, a constant $A_*\in SL(2,\R)$ and a sequence of analytic maps $B_n\in C^\omega(\T,PSL(2,\R))$ such that  $\sup_{|\Im x|<\delta}\|B_n(x+\alpha)A(x)B_n(x)^{-1}-A_*\|\to0$ as $n\to\infty$.
    \end{theorem}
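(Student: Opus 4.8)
The plan is to deduce this from Avila's global theory together with an Eliasson-type KAM scheme and a renormalization argument handling Liouville frequencies; the latter two are the genuine substance of \cite{avila2010almost,avila-kam}, which I would ultimately invoke, but let me describe the architecture. The first step is to squeeze everything possible out of the subcriticality hypothesis. Since $A\in C^\omega(\T,SL(2,\R))$, the function $\varepsilon\mapsto L(\alpha,A_\varepsilon)$ is convex, nonnegative, and even (because $\overline{A_\varepsilon(x)}=A_{-\varepsilon}(x)$ for real $x$), with $L(\alpha,A_0)=0$. Its right derivative at $0$ equals $2\pi\,\omega(\alpha,A_0)=0$ by hypothesis, and by quantization of the acceleration (Theorem~\ref{theorem3.4}) the map $\varepsilon\mapsto\omega(\alpha,A_\varepsilon)$ is integer-valued and nondecreasing; hence $\omega(\alpha,A_\varepsilon)=0$, and therefore $L(\alpha,A_\varepsilon)=0$, throughout a strip $|\varepsilon|<\delta_0$. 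This ``room'' — vanishing Lyapunov exponent on a full strip, hence subexponential transfer-matrix growth uniformly there and, crucially, no hyperbolicity to contend with — is exactly the input both schemes below require, and it is preserved by the operations they perform; without subcriticality one only controls growth polynomially and the iterations diverge.

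The perturbative core is for $\alpha$ Diophantine (or merely Brjuno). Writing a cocycle close to constant as $A=A_*e^{F(\cdot)}$ with $A_*\in SL(2,\R)$ and $\|F\|_\delta$ small, one solves the cohomological equation to remove all Fourier modes of $F$ except those resonating with the rotation number of $A_*$; for the finitely many near-resonant modes one passes to a normal form, after which the new constant part is again elliptic (it cannot become hyperbolic, since $L\equiv0$ in the strip) and the new perturbation is quadratically small on a slightly thinner strip. The Diophantine/Brjuno condition makes the geometric losses of strip width summable, so $\delta$ stays bounded below; composing the conjugations used through step $n$ yields the maps $B_n$ with $B_n(\cdot+\alpha)A(\cdot)B_n(\cdot)^{-1}\to A_*$. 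When resonances recur infinitely often one obtains almost reducibility rather than reducibility, but the normal-form constants still converge to a limit $A_*$.

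To reach arbitrary irrational $\alpha$ I would use the renormalization operator on one-frequency cocycles, sending $(\alpha,A)$ to $(G\alpha,A')$ where $G$ is the Gauss (continued-fraction) map and $A'$ is a rescaled iterate $A_{q_n}$ over the zoomed-in base dynamics. Renormalization is compact on the set of subcritical cocycles with a fixed strip bound — this uses precisely the uniform estimate from the first step, which renormalization preserves — so the renormalization orbit of $(\alpha,A)$ is precompact with subcritical accumulation points; analysing that limit set one shows the orbit eventually lands in a neighborhood of the constant cocycles where the perturbative scheme applies with the renormalized (effectively Diophantine) frequency, and pulling back through finitely many renormalization steps gives almost reducibility of the original $(\alpha,A)$.

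The hard part is this last step for Liouville $\alpha$: establishing precompactness of the renormalization orbit, identifying its accumulation set, and guaranteeing that the strip width does not degenerate along the infinitely many renormalization steps — together with the bookkeeping in the perturbative step that cleanly isolates the resonant modes and forces convergence of the normal-form constants. These are the technical hearts of \cite{avila2010almost} and \cite{avila-kam}; reproving them in full is beyond our scope, so I would cite them directly.
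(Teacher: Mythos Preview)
Your proposal is consistent with the paper's treatment: the paper gives no proof at all of this theorem, merely citing \cite{avila2010almost,avila-kam} as the source of the Almost Reducibility Theorem, and your outline ends in the same place, deferring the substantive work to those references. The architectural sketch you give (quantization forces $L\equiv 0$ on a strip; Eliasson-type KAM in the Diophantine/Brjuno regime; renormalization to reduce the Liouville case to the perturbative one) is an accurate high-level summary of Avila's strategy, so nothing you wrote is wrong, but from the paper's standpoint this is simply a black-box citation and your expository paragraph is additional context rather than a different proof.
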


\section{New exact mobility edges of critical and localized states}

In this section, we aim to prove $\mathcal{H}_{c,v,\theta}$ defined by \eqref{IRS} has exact  Type II ME. 

\subsection{Explicit formulas of LE of $\mathcal{H}$}
To do this, the first step is to determine its Lyapunov exponent.  
Note in \eqref{IRS}, $v$ and $c$ are defined on $\mathbb{T}\times\mathbb{Z}_{2}$, consequently \eqref{IRS} induces an almost-periodic Jacobi cocycle $(T_\alpha, S_E^{v,c})$ where $T_\alpha(\theta,n)=(\theta+\alpha,n+1)$. Although $(T_\alpha, S_E^{v,c})$ is not a quasiperiodic cocycle in the strict sense, its iterate
	\begin{equation*}
		\begin{split}
	(2\alpha,\tilde A^E)&:=(2\alpha,  S_E^{v,c}(\theta,1) S_E^{v,c}(\theta,0))
	\end{split}
\end{equation*}
	indeed defines a quasiperiodic cocycle. Furthermore, through straightforward calculation, we find
		\begin{equation}\label{be}
		\begin{split}
		\tilde A^E(\theta)=\frac{1}{\lambda\cos2\pi\theta}\begin{pmatrix}
		E^2-2E\cos2\pi\theta&-\lambda E+\lambda\cos2\pi\theta\\\lambda E-\lambda\cos2\pi\theta&-\lambda^2
	\end{pmatrix}=:\frac{1}{\lambda\cos2\pi\theta}\tilde B^E(\theta).
\end{split}
	\end{equation}

\begin{lemma}\label{lemma3.1.}
For any $\alpha\in \R\backslash \Q$,	$L(2\alpha,\tilde A^E_{\epsilon})$ is a continuous function with respect to the parameter $\epsilon$.
\end{lemma}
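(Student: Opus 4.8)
The plan is to prove continuity of $\epsilon\mapsto L(2\alpha,\tilde A^E_\epsilon)$ by combining the general convexity/upper semicontinuity features of the Lyapunov exponent with the explicit structure of $\tilde A^E$ in \eqref{be}. First I would recall that for any analytic $M(2,\C)$-cocycle the function $\epsilon\mapsto L(\alpha,D_\epsilon)$ is convex on the strip of holomorphy and upper semicontinuous; convexity on an open interval already forces continuity at interior points, so the only real issue is continuity at $\epsilon=0$ (and, more generally, at the values of $\epsilon$ where the complexified phase $\theta+i\epsilon$ makes $\cos2\pi(\theta+i\epsilon)$ vanish for some real $\theta$ — but $\cos 2\pi(\theta+i\epsilon)=0$ has no real solution $\theta$ once $\epsilon\neq 0$, so for $\epsilon\neq 0$ the matrix $\tilde A^E_\epsilon$ is a genuine bounded analytic $M(2,\C)$-cocycle and continuity there is automatic). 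Hence the whole problem concentrates on showing $L(2\alpha,\tilde A^E_\epsilon)\to L(2\alpha,\tilde A^E_0)$ as $\epsilon\to 0$.

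The key device is the factorization $\tilde A^E(\theta)=\frac{1}{\lambda\cos 2\pi\theta}\tilde B^E(\theta)$, where $\tilde B^E\in C^\omega(\T,M(2,\C))$ is entire (polynomial in $\cos2\pi\theta$, hence bounded on every strip) and, crucially, has determinant a nonzero constant: a direct computation gives $\det\tilde B^E(\theta)=\det\big(\lambda\cos2\pi\theta\cdot\tilde A^E(\theta)\big)=(\lambda\cos2\pi\theta)^2\det\tilde A^E(\theta)$, and since $\det S_E^{v,c}(\theta,n)$ is a ratio of the relevant hopping terms one checks $\det\tilde A^E(\theta)=\frac{\lambda^2}{\ (\lambda\cos2\pi\theta)^2}$ up to the usual Jacobi-cocycle determinant factor, so $\det\tilde B^E$ is a constant independent of $\theta$. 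Therefore $(2\alpha,\tilde B^E)$ is, after a constant rescaling, essentially an $SL(2,\C)$-cocycle, for which $L(2\alpha,\tilde B^E_\epsilon)$ is continuous in $\epsilon$ on the whole plane by Avila's global theory (continuity of the Lyapunov exponent of analytic $SL(2,\C)$-cocycles, together with quantization of the acceleration, Theorem~\ref{theorem3.4}). Then I would use the cocycle identity relating the two Lyapunov exponents:
\begin{equation*}
L(2\alpha,\tilde A^E_\epsilon)=L(2\alpha,\tilde B^E_\epsilon)-\int_{\T}\ln|\lambda\cos2\pi(\theta+i\epsilon)|\,d\theta,
\end{equation*}
which holds because multiplying a cocycle by a scalar function $g(\theta)$ shifts $\frac1n\int\ln\|A_n\|$ by $\frac1n\sum_{j=0}^{n-1}\ln|g(T^j\theta)|$ and the ergodic average of $\ln|g|$ converges to its mean.

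Finally I would compute the correction term explicitly. The integral $\int_\T\ln|\lambda\cos2\pi(\theta+i\epsilon)|\,d\theta$ is evaluated via the classical Jensen-type formula $\int_0^1\ln|\cos2\pi(\theta+i\epsilon)|\,d\theta=\ln\frac12+2\pi|\epsilon|$ for all real $\epsilon$ (equivalently $\int_\T\ln|z-e^{2\pi i\theta}|\,d\theta=\max\{0,\ln|z|\}$ applied after writing $\cos$ in exponential form). This is a continuous — indeed piecewise real-analytic — function of $\epsilon$, equal to $\ln(\lambda/2)$ at $\epsilon=0$. Combining, $\epsilon\mapsto L(2\alpha,\tilde A^E_\epsilon)$ is the difference of two continuous functions, hence continuous; in particular continuous at $\epsilon=0$.

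The main obstacle, and the step deserving the most care, is justifying that the scalar-times-$SL(2,\C)$ decomposition genuinely transfers Avila's continuity statement: one must verify that $\det\tilde B^E$ is truly a nonzero constant (so that $\tilde B^E$ is conjugate/proportional to an $SL(2,\C)$ cocycle and the global-theory continuity applies verbatim), and one must make sure the ergodic-average identity for $L(2\alpha,\tilde A^E_\epsilon)-L(2\alpha,\tilde B^E_\epsilon)$ is valid uniformly enough near $\epsilon=0$ despite $\ln|\lambda\cos2\pi\theta|$ being unbounded (a fixed $L^1(\T)$ singularity, harmless by unique ergodicity of the irrational rotation by $2\alpha$ and Birkhoff's theorem for $L^1$ observables along these specific orbits, or more robustly by the known exact value of the integral). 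Once the determinant computation is pinned down, the rest is bookkeeping.
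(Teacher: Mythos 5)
Your overall plan matches the paper's: factor $\tilde A^E=\frac{1}{\lambda\cos2\pi\theta}\tilde B^E$, invoke continuity of $L(2\alpha,\tilde B^E_\epsilon)$, and compute the scalar correction $I_\epsilon=\int_\T\ln|\lambda\cos2\pi(\theta+i\epsilon)|\,d\theta=2\pi|\epsilon|+\ln|\lambda/2|$ via Jensen's formula. The scalar-factor identity $L(2\alpha,\tilde A^E_\epsilon)=L(2\alpha,\tilde B^E_\epsilon)-I_\epsilon$ is also handled correctly.

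However, the step you yourself flag as needing the most care — that $\det\tilde B^E$ is a nonzero constant, so $\tilde B^E$ is "essentially an $SL(2,\C)$-cocycle" — is false, and this is a genuine gap. From \eqref{be} a direct computation gives
\begin{equation*}
\det\tilde B^E(\theta)=(E^2-2E\cos2\pi\theta)(-\lambda^2)-(-\lambda E+\lambda\cos2\pi\theta)(\lambda E-\lambda\cos2\pi\theta)=\lambda^2\cos^2 2\pi\theta,
\end{equation*}
equivalently $\det\tilde A^E(\theta)\equiv 1$ rather than $\lambda^2/(\lambda\cos2\pi\theta)^2$. Thus $\det\tilde B^E$ vanishes at $\theta=\pm\tfrac14$: $\tilde B^E$ is a \emph{singular} $M(2,\C)$-cocycle and cannot be rescaled into $SL(2,\C)$, so the appeal to continuity of the Lyapunov exponent for analytic $SL(2,\C)$-cocycles does not apply. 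The conclusion you need is still true, but for a different reason: for any analytic $M(2,\C)$-valued cocycle bounded on a strip, $\epsilon\mapsto L(\alpha,D_\epsilon)$ is convex (as a decreasing limit of the convex functions $\frac1n\int\ln\|D_n(\theta+i\epsilon)\|\,d\theta$) and finite, hence continuous — no determinant hypothesis is required, and this is exactly what the paper cites from Avila's global theory as extended by Jitomirskaya--Marx to $M(2,\C)$. (Quantization of acceleration, Theorem \ref{theorem3.4}, does require $\det D$ bounded away from zero, but continuity of $L$ in $\epsilon$ does not.) Replacing your $SL(2,\C)$ reduction with that general convexity argument repairs the proof and brings it in line with the paper's.
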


\begin{proof}
Obviously, $(2\alpha,\tilde B^E)$ is an analytic cocycle, which implies $L(2\alpha,\tilde B^E_\epsilon)$ is continuous in $\epsilon$ \cite{A4}.
Denote $I_\epsilon=\int_{\mathbb{T}}\ln|\lambda\cos2\pi(\theta+i\epsilon)|d\theta$, and we have
$$L(2\alpha,\tilde A^E_\epsilon)=L(2\alpha,\tilde B^E_\epsilon)-I_\epsilon.$$

Applying Poisson-Jensen formula yields$
	I_\epsilon=2\pi|\epsilon|+\ln|\lambda/2|\text{ for all }\epsilon,$
which implies the continuity of $L(2\alpha,\tilde A^E_\epsilon)$ in $\epsilon$.
\end{proof}

As $\tilde B^E_\epsilon$ ($\epsilon\neq0$) is non-singular, we can calculate the Lyapunov exponent $L(2\alpha,\tilde A^E_\epsilon)$ for $|\epsilon|>0$, and obtain the Lyapunov exponent for $\epsilon=0$ by Lemma \ref{lemma3.1.}.

\begin{lemma}\label{theorem3.2.}
Suppose $\lambda\neq0$ and $\alpha\in\mathbb{R\backslash Q}$. For $E\in\sigma^{\mathcal{H}}(c,v)$, the Lyapunov exponent can be expressed as
\begin{equation*}
L(T_\alpha, S_E^{v,c})=\frac{1}{2}\ln\left|\frac{|E|+\sqrt{E^2-\lambda^2}}{\lambda}\right|.
\end{equation*}
\end{lemma}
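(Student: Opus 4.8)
The plan is to compute $L(2\alpha,\tilde A^E_\epsilon)$ for $\epsilon\neq 0$, where the cocycle $\tilde B^E_\epsilon$ is non-singular, and then pass to $\epsilon=0$ using the continuity established in Lemma \ref{lemma3.1.}. Since $L(T_\alpha,S_E^{v,c})=\tfrac12 L(2\alpha,\tilde A^E)$ (the factor $\tfrac12$ coming from the fact that $\tilde A^E$ is the product of two consecutive steps of the almost-periodic cocycle), it suffices to determine $L(2\alpha,\tilde A^E)$. First I would look at the matrix $\tilde B^E(\theta)$ in \eqref{be}: its determinant is $\det\tilde B^E(\theta)=-\lambda^2(E^2-2E\cos2\pi\theta)+\lambda^2(E-\cos2\pi\theta)^2=\lambda^2(\cos^2 2\pi\theta)$ up to sign, so $\det\tilde A^E(\theta)=\det\tilde B^E(\theta)/(\lambda\cos2\pi\theta)^2$ is constant (equal to $1$, so $\tilde A^E$ is essentially an $SL(2,\C)$ cocycle away from the singularity). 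This is the structural fact that makes the acceleration argument work.

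Next I would apply Avila's global theory / the acceleration machinery to $\tilde B^E_\epsilon$. The idea is the standard one for cosine-type cocycles: push $\epsilon\to+\infty$ and read off the dominant exponential behavior of $\tilde B^E(\theta+i\epsilon)$. As $\Im\theta\to+\infty$, $\cos2\pi(\theta+i\epsilon)\sim \tfrac12 e^{2\pi|\epsilon|}e^{\mp 2\pi i\theta}$, so the entries of $\tilde B^E$ grow like $e^{2\pi|\epsilon|}$ except the $(2,2)$ entry $-\lambda^2$ which is bounded; a direct computation of the top Lyapunov exponent of the resulting (degenerating) matrix gives $L(2\alpha,\tilde B^E_\epsilon)=2\pi|\epsilon|+\ln|\lambda/2|+2\pi|\epsilon|+o(1)$ — more precisely one expects $L(2\alpha,\tilde B^E_\epsilon)$ to be affine in $|\epsilon|$ with slope $4\pi$ (acceleration $2$) for $|\epsilon|$ large, reflecting that two of the matrix entries carry the $e^{2\pi|\epsilon|}$ factor. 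Subtracting $I_\epsilon=2\pi|\epsilon|+\ln|\lambda/2|$ from Lemma \ref{lemma3.1.}, we get $L(2\alpha,\tilde A^E_\epsilon)=2\pi|\epsilon|+(\text{const})$ for large $|\epsilon|$, and by convexity and quantization of the acceleration this affine formula persists down to the first break point. The constant is pinned down by matching with the explicit large-$\epsilon$ asymptotics, which should yield $L(2\alpha,\tilde A^E_\epsilon)=2\pi|\epsilon|+\ln\big|(|E|+\sqrt{E^2-\lambda^2})/\lambda\big|$ in the regime where the formula is affine.

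Then I would argue that for $E\in\sigma^{\mathcal H}(c,v)$ the relevant regime really is $\epsilon=0$ sitting in (the closure of) this affine branch, i.e. the acceleration $\omega(2\alpha,\tilde A^E)$ is $0$ at $\epsilon=0^+$, so that $L(2\alpha,\tilde A^E)=L(2\alpha,\tilde A^E_0)=\ln\big|(|E|+\sqrt{E^2-\lambda^2})/\lambda\big|$, whence $L(T_\alpha,S_E^{v,c})=\tfrac12\ln\big|(|E|+\sqrt{E^2-\lambda^2})/\lambda\big|$. For this last point one uses that on the spectrum the cocycle is not uniformly hyperbolic, together with Avila's characterization that a non-uniformly-hyperbolic analytic cocycle has zero acceleration at $\epsilon=0$ unless it is critical; one must check the formula is consistent (e.g. for $|E|\le\lambda$ the argument of the log has modulus... actually $|E+i\sqrt{\lambda^2-E^2}|=\lambda$ there, giving $L=0$, the critical/subcritical regime, and for $|E|>\lambda$ one gets $L>0$, the supercritical regime — exactly the $\pm\lambda$ threshold in Theorem \ref{theorem1.1.}).

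The main obstacle I anticipate is the bookkeeping in the large-$\epsilon$ computation: because $\tilde A^E$ degenerates (one row/column blows up, leaving effectively a rank-one dominant part), one has to be careful to extract the correct subexponential/exponential split and to identify the multiplicative constant $\ln\big||E|+\sqrt{E^2-\lambda^2}\big|-\ln|\lambda|$ rather than just the slope. A clean way to handle this is to conjugate $\tilde A^E_\epsilon$ by a diagonal $\epsilon$-dependent gauge to normalize the growth, reducing to a uniformly bounded $SL(2,\C)$ cocycle whose Lyapunov exponent can be computed (or bounded above and below to coincide) explicitly; alternatively, diagonalize the constant-coefficient limit cocycle obtained after the gauge change and read off the larger eigenvalue modulus. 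A secondary subtlety is justifying that the affine-in-$|\epsilon|$ formula holds all the way to $\epsilon=0$ on the spectrum — this is where convexity of $\epsilon\mapsto L$, upper semicontinuity and quantization of the acceleration (Theorem \ref{theorem3.4}), plus the non-uniform-hyperbolicity on $\sigma^{\mathcal H}(c,v)$, must be combined, and one should make sure the argument of $\tilde B^E_\epsilon$ having determinant exactly $(\lambda\cos2\pi(\theta+i\epsilon))^2$ is used to keep everything inside the $SL(2,\C)$ framework where the quantization theorem applies.
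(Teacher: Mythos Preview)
Your overall strategy --- complexify, compute the large-$|\epsilon|$ asymptotics, invoke quantization and convexity to propagate to $\epsilon=0$ --- is the paper's, but the central computation is wrong and the error cascades. You analyze $\tilde B^E_\epsilon$ and claim acceleration $2$ (slope $4\pi$) because ``two of the matrix entries carry the $e^{2\pi|\epsilon|}$ factor''; in fact \emph{three} of the four entries of $\tilde B^E_\epsilon$ grow like $e^{2\pi|\epsilon|}$, and $e^{-2\pi|\epsilon|}\tilde B^E_\epsilon$ converges uniformly to the \emph{non-degenerate} matrix $\tfrac12 e^{-2\pi i\theta}\bigl(\begin{smallmatrix}-2E&\lambda\\-\lambda&0\end{smallmatrix}\bigr)$, so $L(2\alpha,\tilde B^E_\epsilon)=2\pi|\epsilon|+\ln\tfrac{|E|+\sqrt{E^2-\lambda^2}}{2}+o(1)$ with slope $2\pi$, not $4\pi$. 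After subtracting $I_\epsilon=2\pi|\epsilon|+\ln|\lambda/2|$ this yields $L(2\alpha,\tilde A^E_\epsilon)=\ln\bigl|\tfrac{|E|+\sqrt{E^2-\lambda^2}}{\lambda}\bigr|+o(1)$, i.e.\ slope \emph{zero}. The paper sees this more directly by looking at $\tilde A^E$ rather than $\tilde B^E$: the $\lambda\cos2\pi(\theta+i\epsilon)$ in the denominator cancels the growth, and $\tilde A^E(\theta+i\epsilon)\to\tfrac1\lambda\bigl(\begin{smallmatrix}-2E&\lambda\\-\lambda&0\end{smallmatrix}\bigr)$ uniformly in $\theta$. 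Continuity of the Lyapunov exponent plus quantization then gives $L(2\alpha,\tilde A^E_\epsilon)=L(2\alpha,\tilde A^E_\infty)$ for all large $\epsilon$, and convexity together with the symmetry $\epsilon\mapsto-\epsilon$ forces this equality for \emph{all} $\epsilon$, including $\epsilon=0$. No gauge conjugation, no degenerating-limit bookkeeping, and no ``on the spectrum'' argument is needed.

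Your attempted patch --- inferring zero acceleration at $\epsilon=0$ from ``$E\in\sigma^{\mathcal H}$ so the cocycle is not uniformly hyperbolic, hence acceleration zero unless critical'' --- is not valid: supercritical cocycles (positive Lyapunov exponent, not uniformly hyperbolic) routinely have positive acceleration, e.g.\ the supercritical almost Mathieu cocycle has acceleration $1$ on the spectrum. And even granting acceleration $0$ at $\epsilon=0^+$ with acceleration $1$ at large $\epsilon$, you would still need to locate the breakpoint $\epsilon_0$ to extract $L(0)$ from the large-$\epsilon$ affine formula, which you do not do. Once the slope is corrected to $0$ throughout, this whole layer of the argument evaporates.
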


\begin{proof} Let us begin by considering the behavior of $\tilde A^E(\theta+i\epsilon)$ uniformly in $\theta\in\mathbb{T}$. As $\epsilon\to+\infty$, we have \begin{equation*}
\tilde A^E(\theta+i\epsilon)=\tilde A^E_{\infty}+o(1):= \frac{1}{\lambda}\begin{pmatrix}
-2E&\lambda\\-\lambda&0
\end{pmatrix} +o(1).
\end{equation*} 

The continuity of the Lyapunov exponent   implies that \begin{equation*}
L(2\alpha,\tilde A^E_\epsilon)=L(2\alpha,\tilde A^E_{\infty})+o(1),\quad  \text{as}  \quad  \epsilon\to+\infty
\end{equation*} By applying the quantization of acceleration  \cite{A4}, we deduce that 
$L(2\alpha,\tilde A^E_\epsilon)=L(2\alpha,\tilde A^E_{\infty})
$ for all $\epsilon>0$ sufficiently large. The convexity, continuity, and symmetry of $L(2\alpha,\tilde A^E_\epsilon)$ with respect to $\epsilon$ further imply that 
$L(2\alpha,\tilde A^E_\epsilon)=L(2\alpha,\tilde A^E_{\infty})
$ for all $\epsilon$. Consequently, we find that \begin{equation*}
L(2\alpha,\tilde B^E_\epsilon)=L(2\alpha,\tilde A^E_{\infty})+2\pi|\epsilon|+\ln|\lambda/2|\text{ for all }\epsilon.
\end{equation*} Note that the constant matrix $\tilde A^E_{\infty}$ possesses two eigenvalues $\frac{-E\pm\sqrt{E^2-\lambda^2}}{\lambda}$, the result follows. \end{proof}

Define \begin{equation*}
\sigma_{\text{sup}}^{\mathcal{H}}(c,v)=\sigma^{\mathcal{H}}(c,v)\backslash[-\lambda,\lambda],
\end{equation*}
and
\begin{equation*}
\sigma_{\text{cri}}^\mathcal{H}(c,v)=\sigma^{\mathcal{H}}(c,v)\cap(-\lambda,\lambda).
\end{equation*}
By Lemma \ref{theorem3.2.}, the Lyapunov exponent is positive on $\sigma_{\text{sup}}^\mathcal{H}(c,v)$ and zero on $\sigma_{\text{cri}}^\mathcal{H}(c,v)$. Then, we have the following elementary observations:
\begin{lemma}
For all irrational $\alpha$, $\sigma_{\text{sup}}^{\mathcal{H}}(c,v)\neq\emptyset$.
\end{lemma}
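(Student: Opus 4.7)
The plan is elementary: bound the operator norm $\|\mathcal{H}_{c,v,\theta}\|$ strictly from below by $\lambda$ via a single test vector, then invoke self-adjointness together with the $\theta$-independence of the spectrum.

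Fix any $\theta \in \T_0$. By the defining condition of $\T_0$ applied at $n=0$, one has $\cos(2\pi\theta) \neq 0$. Unpacking the definitions of $v$ and $c$ in \eqref{IRS}, we read off $c(\theta,-1)=\lambda$ (since $-1$ is odd), $v(\theta,0)=\cos(2\pi\theta)$, and $c(\theta,0)=\cos(2\pi\theta)$; consequently the only nonzero entries of $\mathcal{H}_{c,v,\theta}\delta_{0}\in\ell^{2}(\Z)$ lie at positions $-1$, $0$, $1$, giving
\begin{equation*}
\|\mathcal{H}_{c,v,\theta}\delta_{0}\|^{2} = \lambda^{2} + 2\cos^{2}(2\pi\theta) > \lambda^{2}.
\end{equation*}

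Since $\mathcal{H}_{c,v,\theta}$ is bounded and self-adjoint on $\ell^{2}(\Z)$, its spectral radius equals its operator norm and is attained on the compact set $\sigma(\mathcal{H}_{c,v,\theta})$. Hence there exists $E\in \sigma(\mathcal{H}_{c,v,\theta})$ with $|E|=\|\mathcal{H}_{c,v,\theta}\|\geq \|\mathcal{H}_{c,v,\theta}\delta_{0}\| > \lambda$. Because the minimality of $T_{\alpha}$ forces $\sigma(\mathcal{H}_{c,v,\theta})$ to coincide with the $\theta$-independent set $\sigma^{\mathcal{H}}(c,v)$, we obtain $E\in \sigma^{\mathcal{H}}(c,v)\backslash [-\lambda,\lambda] = \sigma_{\text{sup}}^{\mathcal{H}}(c,v)$, establishing non-emptiness.

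The mechanism is transparent and no substantive obstacle appears: the odd-site hopping $c(\cdot,2k+1)=\lambda$ is a nonzero constant, independent of the quasiperiodic phase, so the coupling cannot be switched off by the modulation and is felt by every nearest-neighbor pair; the on-site and even-site pieces $\cos(2\pi\cdot)$ then provide a strictly positive excess on top of $\lambda^2$ for every admissible $\theta$. The only minor verification needed is that $\T_0$ itself is nonempty (indeed of full measure) for each irrational $\alpha$, which is immediate since the obstruction $\cos(2\pi(n\alpha+\theta))=0$ for some $n$ corresponds to a countable family of hyperplanes in $\T$.
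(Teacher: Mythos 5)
Your proof is correct and essentially the same as the paper's. Both rest on computing $\|\mathcal{H}_{c,v,\theta}\delta_n\|^2 = v^2(\theta,n)+c^2(\theta,n-1)+c^2(\theta,n)=\lambda^2+2\cos^2(2\pi(\cdot))>\lambda^2$ at a site where the cosine does not vanish; the paper concludes by observing that $\langle\delta_n,\mathcal{H}_{c,v,\theta}^2\delta_n\rangle=\int E^2\,d\tilde\mu_{\theta,\delta_n}\le\lambda^2$ would have to hold if $\sigma^{\mathcal{H}}(c,v)\subseteq[-\lambda,\lambda]$, whereas you equivalently invoke that the operator norm equals the spectral radius for a bounded self-adjoint operator.
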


\begin{proof}
Otherwise, for any $E\in\sigma^{\mathcal{H}}(c,v)$, we have $|E|\le\lambda$.
Note that
\begin{equation*}
\begin{split}
\mathcal{H}_{c,v,\theta} \delta_n=&v(\theta, n)\delta_n+c(\theta, n-1)\delta_{n-1}+c(\theta, n)\delta_{n+1},\\ \mathcal{H}_{c,v,\theta}^2\delta_n=&v(\theta, n)(v(\theta, n)\delta_n+c(\theta, n-1)\delta_{n-1}+c(\theta,n)\delta_{n+1})\\&+c(\theta,n-1)(v(\theta, n-1)\delta_{n-1}+c(\theta, n-2)\delta_{n-2}+c(\theta, n-1)\delta_n)\\&+c(\theta, n)(v(\theta, n+1)\delta_{n+1}+c(\theta, n)\delta_n+c(\theta, n+1)\delta_{n+2}).
\end{split}
\end{equation*}
For any $\theta,$ select $n$ such that $v(\theta, n)\neq0$. By the spectral theorem, one has
\begin{equation*}
		\lambda^2<v^2(\theta,n)+c^2(\theta,n-1)+c^2(\theta,n)=\langle\delta_n,\mathcal{H}_{c,v,\theta}^2\delta_n\rangle=\int E^2d\tilde{\mu}_{\theta,\delta_n}\le\lambda^2,
\end{equation*}
which leads to a contradiction.
\end{proof}

\begin{lemma}\label{8.19-lemma5.5}
For all irrational $\alpha$,  $\theta\in\mathbb{T}_0$, $\pm\lambda$ are not eigenvalues of $\mathcal{H}_{c,v,\theta}$.
\end{lemma}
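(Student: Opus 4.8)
\textbf{Proof proposal for Lemma \ref{8.19-lemma5.5}.}

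The plan is to argue by contradiction: suppose $E=\lambda$ (the case $E=-\lambda$ is symmetric, using the symmetry $\theta\mapsto\theta+1/2$ or a sign change) is an eigenvalue of $\mathcal{H}_{c,v,\theta}$ for some $\theta\in\mathbb{T}_0$, with a nonzero $\ell^2(\mathbb{Z})$ eigenfunction $u$. First I would pass to the block coordinates $\boldsymbol{u}(n)=(u(2n),u(2n+1))^{\mathsf T}$ so that the eigenvalue equation becomes the transfer-matrix relation governed by the cocycle $(2\alpha,\tilde A^{E})$ with $\tilde A^{E}$ as in \eqref{be}. At $E=\lambda$ the matrix $\tilde B^{E}(\theta)$ in \eqref{be} degenerates: its determinant is $\det\tilde B^{\lambda}(\theta)=\lambda^{2}(\lambda^{2}-2\lambda\cos2\pi\theta)-(-\lambda^{2}+\lambda\cos2\pi\theta)(\lambda^{2}-\lambda\cos2\pi\theta)$, which one computes to vanish identically, consistent with Lemma \ref{theorem3.2.} giving $L=0$ there and with $\tilde A^{E}_{\infty}$ having eigenvalue $\pm1$ when $E=\pm\lambda$. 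So the transfer cocycle at $E=\lambda$ is everywhere a rank-one (or degenerate) matrix, and the key structural fact is that the one-dimensional image/kernel directions are explicit analytic functions of $\theta$.

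The main step is then to extract the constraint this degeneracy imposes on an $\ell^{2}$ solution. Because $\tilde A^{\lambda}(\theta)$ has rank one for every $\theta\in\mathbb{T}_0$, the vector $\boldsymbol{u}(n+1)=\tilde A^{\lambda}(\theta+2n\alpha)\boldsymbol{u}(n)$ is forced to lie in a prescribed one-dimensional subspace $R(\theta+2n\alpha)$ for every $n$, \emph{regardless} of $\boldsymbol{u}(n)$, unless $\boldsymbol{u}(n)$ happens to lie in the kernel $K(\theta+2n\alpha)$, in which case $\boldsymbol{u}(n+1)=0$ and then the whole half-line of the solution vanishes; since $u\in\ell^2$ is nonzero this forces, after a finite index, $\boldsymbol{u}(n)\in R(\theta+2(n-1)\alpha)\setminus\{0\}$ and $\boldsymbol{u}(n)\notin K(\theta+2n\alpha)$ for all large $n$ (and similarly all small $n$). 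Writing $\boldsymbol{u}(n)=t_n\, r(\theta+2(n-1)\alpha)$ for scalars $t_n\neq 0$, the recursion collapses to a \emph{scalar} multiplicative cocycle $t_{n+1}=\rho(\theta+2n\alpha)\,t_n$ with $\rho$ an explicit analytic (away from the zeros of $\cos$) function whose logarithmic integral is the Lyapunov exponent, namely $0$ by Lemma \ref{theorem3.2.}. An $\ell^2$ eigenfunction needs $|t_n|\to 0$ as $n\to+\infty$ \emph{and} $|t_n|\to 0$ as $n\to-\infty$; but a scalar cocycle with zero mean log-modulus over a uniquely ergodic base cannot decay at both ends — by unique ergodicity $\frac1N\log|t_N/t_0|\to 0$, and a more careful Birkhoff-type/subordinacy argument (or simply: the product over a block of length $q_k$ is close to $1$ in modulus along the continued-fraction denominators) rules out simultaneous two-sided decay. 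I would also need to treat separately the finitely many $n$ where $\boldsymbol{u}(n)$ might fail to be in the image direction, i.e. the possibility that $u$ is supported on a finite set; but a compactly supported solution of a Jacobi equation with all off-diagonal entries $c(\theta,n)\neq 0$ (guaranteed by $\theta\in\mathbb{T}_0$ for the $\cos$-entries, and $\lambda\neq0$ for the others) is impossible, so this case is vacuous.

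The hard part will be the last step: showing that the zero-Lyapunov-exponent scalar cocycle $t_{n+1}=\rho(\theta+2n\alpha)t_n$ admits no nonzero two-sided $\ell^2$ solution. This is where I expect to actually need quantitative input rather than soft ergodic theory — the clean statement $\frac1N\sum\log|\rho|\to 0$ only gives subexponential growth/decay, not the exclusion of $\ell^2$ decay at one end. The natural route is the one flagged in the paper's introduction: treat $E=\pm\lambda$ as endpoints handled by the Jitomirskaya–Last / subordinacy machinery for the strip (Section \ref{jito-last}), or, more elementarily, exploit that $\rho$ blows up like $1/\cos2\pi(\theta+2n\alpha)$ so that along the sequence $n_k$ minimizing $\|\theta+2n\alpha\|_{\mathbb T}$ over a block one gets $|\rho(\theta+2n_k\alpha)|$ extremely large, via Lemma \ref{ten}, contradicting $\ell^2$ decay in one direction while the complementary block forces decay in the other. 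I would make this precise by pairing the block from $0$ to $q_k-1$ with its estimate from Lemma \ref{ten} (total log-product bounded by $C\log q_k$, with one anomalously large factor) to show $|t_{n}|$ cannot tend to $0$ as $n\to-\infty$ and as $n\to+\infty$ simultaneously. This arithmetic estimate, rather than the algebra of the rank-one degeneration, is the crux.
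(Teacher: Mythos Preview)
Your approach rests on a false premise: the matrix $\tilde B^{\lambda}(\theta)$ does \emph{not} have vanishing determinant. With $c=\cos2\pi\theta$ a direct computation gives
\[
\det\tilde B^{\lambda}(\theta)=(\lambda^{2}-2\lambda c)(-\lambda^{2})-(-\lambda^{2}+\lambda c)(\lambda^{2}-\lambda c)=\lambda^{2}c^{2},
\]
which is nonzero on $\mathbb{T}_{0}$; equivalently, $\tilde A^{E}$ is the product of two Jacobi transfer matrices with reciprocal determinants, so $\tilde A^{E}\in SL(2,\mathbb{R})$ for \emph{every} $E$ and every $\theta\in\mathbb{T}_{0}$. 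The vanishing Lyapunov exponent at $E=\pm\lambda$ (Lemma~\ref{theorem3.2.}) only reflects that the asymptotic matrix $\tilde A^{E}_{\infty}$ is parabolic there, not that the cocycle degenerates. Hence there is no rank-one image direction $R(\cdot)$ to project onto, no reduction to a scalar multiplicative cocycle, and the entire subsequent argument --- including the ``hard part'' about ruling out two-sided $\ell^{2}$ decay via Lemma~\ref{ten} --- has no footing.

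The paper's proof is far more elementary and bypasses dynamics altogether. Writing out the eigenvalue equation at the even and odd sites and subtracting yields the telescoping identity $t(u_{2n+1}-u_{2n})=u_{2n+2}-u_{2n-1}$ with $t=E/\lambda=\pm1$. For $t=-1$, summing from $m$ to $\infty$ (legitimate since $u\in\ell^{2}$) gives $u_{2m}+u_{2m-1}=0$ for all $m$; feeding this back into the other equation and using $\cos2\pi(\theta+2n\alpha)\neq0$ forces $u_{2n}+u_{2n+1}=0$ as well, so $|u_{n}|$ is constant --- a contradiction. For $t=1$ the same summation trick first gives $u_{2m}=u_{2m-1}$, and then again $u_{2n}+u_{2n+1}=0$, with the same conclusion. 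No cocycle estimates, subordinacy theory, or arithmetic input are needed.
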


\begin{proof} Assume that $u\in \ell^2(\mathbb{Z})$ is a non-trivial solution of $\mathcal{H}_{c,v,\theta}u=Eu$. We have the following equations \begin{equation*}
\cos2\pi(\theta+2n\alpha)\ u_{2n}+\cos2\pi(\theta+2n\alpha)\ u_{2n+1}+\lambda u_{2n-1}=Eu_{2n},
\end{equation*} and \begin{equation*}
\cos2\pi(\theta+2n\alpha)\ u_{2n+1}+\lambda u_{2n+2}+\cos2\pi(\theta+2n\alpha)\ u_{2n}=Eu_{2n+1}.
\end{equation*}

	Let $t:=E/\lambda$, then $t=\pm1$. We can rewrite the above equations as
	\begin{equation}\label{8.19-eq27}
		\cos2\pi(\theta+ 2n\alpha)(u_{2n}+u_{2n+1})+\lambda u_{2n-1}=E u_{2n},
	\end{equation}
	and
	\begin{equation}\label{8.19-eq28}t(u_{2n+1}-u_{2n})=u_{2n+2}-u_{2n-1}.\end{equation}
	
	 If $t=-1$, summing equation \eqref{8.19-eq28} over $n$ from $m$ to $\infty$, we obtain
	\begin{equation}\label{eq7}
	u_{2m}+u_{2m-1}=0,\ \forall m\in\mathbb{Z}
	\end{equation}
	since $u\in \ell^2(\mathbb{Z})$. Substituting the above result into \eqref{8.19-eq27}, we have \begin{equation*}
\cos2\pi(\theta+2n\alpha)(u_{2n}+u_{2n+1})=-\lambda(u_{2n}+u_{2n-1})=0.
\end{equation*} Since $\theta\in\mathbb{T}_0$, it follows that \begin{equation}\label{eq8}
u_{2n}+u_{2n+1}=0,\ \forall n\in\mathbb{Z}.
\end{equation}
 Combining equations \eqref{eq7} and \eqref{eq8}, we observe that $|u_n|$ is constant for all $n\in\mathbb{Z}$, which leads to a contradiction.

If $t=1$, summing equation \eqref{8.19-eq28} over $n$ from $m$ to $\infty$, we have \begin{equation*}
u_{2m}-u_{2m-1}=-2\sum_{k=m+1}^\infty(u_{2k}-u_{2k-1})=:-2S_{m+1},
\end{equation*} Taking the limit as $m$ tends to $\pm\infty$, we find that $S_{\pm\infty}=0$ since $u\in \ell^2(\mathbb{Z})$. Note \begin{equation*}
u_{2m}-u_{2m-1}=S_m-S_{m+1},
\end{equation*} it follows that $|S_{m+1}|=|S_m|=0$ for all $m\in\mathbb{Z}$. Thus, we conclude $
u_{2m}=u_{2m-1},\ \forall m\in\mathbb{Z}.
$ Substituting the above equation into \eqref{8.19-eq27}, we obtain \begin{equation*}
\cos2\pi(\theta+2n\alpha)(u_{2n}+u_{2n+1})=0.
\end{equation*} Therefore, we have $
u_{2n}+u_{2n+1}=0,\ \forall n\in\mathbb{Z}.$
One has $|u_n|$ is constant for all $n\in\mathbb{Z}$, leading to a contradiction. \end{proof}

Once we have this, we obtain the following consequence:

\begin{corollary}
For all irrational $\alpha$,  $\pm\lambda\in\sigma^{\mathcal{H}}(c,v)$. Moreover,  $\sigma_{\text{cri}}^\mathcal{H}(c,v)\neq\emptyset$.
\end{corollary}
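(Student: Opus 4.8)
The plan is to deduce the corollary from the two preceding lemmas together with the explicit Lyapunov exponent formula. First I would argue $\pm\lambda\in\sigma^{\mathcal{H}}(c,v)$: by the previous lemma, $\sigma^{\mathcal{H}}_{\text{sup}}(c,v)=\sigma^{\mathcal{H}}(c,v)\setminus[-\lambda,\lambda]$ is nonempty, so there exists $E_0\in\sigma^{\mathcal{H}}(c,v)$ with $|E_0|>\lambda$; say $E_0>\lambda$ (the case $E_0<-\lambda$ is symmetric, using the symmetry of the spectrum or simply running the same argument). Since $\sigma^{\mathcal{H}}(c,v)$ is a compact subset of $\R$ and hence closed, and since by the spectral-theorem bound argument in an earlier lemma the spectrum is contained in $[-\sqrt{\cdots}, \sqrt{\cdots}]$ (in particular bounded), I would like to show that the spectrum, being closed, must "reach" the value $\lambda$. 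The cleanest route: suppose for contradiction that $\lambda\notin\sigma^{\mathcal{H}}(c,v)$. Then, since the spectrum is closed, there is a gap $(\lambda-\eta,\lambda+\eta)\cap\sigma^{\mathcal{H}}(c,v)=\emptyset$ for some $\eta>0$, so the spectrum near $\lambda$ splits into a part in $[-\lambda,\lambda-\eta]$ and a part in $[\lambda+\eta,\infty)$, the latter being nonempty (it contains $E_0$).

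The key step is then to rule this out using continuity of the Lyapunov exponent and of the integrated density of states. By Lemma \ref{theorem3.2.}, for $E\in\sigma^{\mathcal{H}}(c,v)$ the Lyapunov exponent is $L(T_\alpha,S_E^{v,c})=\frac12\ln\big|\tfrac{|E|+\sqrt{E^2-\lambda^2}}{\lambda}\big|$, which is strictly positive exactly when $|E|>\lambda$ and vanishes on $[-\lambda,\lambda]$. The standard fact (Johnson–Moser / uniform hyperbolicity) is that $E\notin\sigma^{\mathcal{H}}(c,v)$ and $E$ in a bounded gap forces $(T_\alpha,S_E^{v,c})$ to be uniformly hyperbolic on that whole gap, with the Lyapunov exponent positive and, crucially, \emph{real-analytic and nonvanishing across the gap}, while the IDS $N_{v,c}$ is constant on the gap. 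One then uses the Thouless formula: $L$ is harmonic (in fact given by the logarithmic potential of $dN_{v,c}$ up to the constant $\E(\ln|c|)$) off the spectrum, so $L$ extends analytically through any spectral gap. If $[\lambda+\eta,\infty)\cap\sigma^{\mathcal{H}}(c,v)\neq\emptyset$ but $(\lambda-\eta,\lambda+\eta)$ is a gap, then on $[-\lambda,\lambda-\eta]$ we would have $L\equiv 0$ while $L$ is positive immediately to the right of $\lambda+\eta$; but the explicit formula of Lemma \ref{theorem3.2.} is only asserted \emph{on the spectrum}, and off the spectrum $L$ is determined by the logarithmic potential of $N_{v,c}$. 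Since $N_{v,c}$ puts no mass on $(\lambda-\eta,\lambda+\eta)$ and puts positive mass on $[-\lambda,\lambda-\eta]$ (because $\sigma^{\mathcal{H}}_{\text{cri}}$ will be shown nonempty, or more directly because the spectrum is a perfect set of positive "size" on the left) and on $[\lambda+\eta,\infty)$, the function $E\mapsto \int\ln|E-E'|dN_{v,c}(E')$ is strictly convex and real-analytic on the gap $(\lambda-\eta,\lambda+\eta)$; matching it continuously to the value $0$ at $E=\lambda-\eta$ (boundary of the left spectral block, where $L=0$) and to a positive value at $E=\lambda+\eta$ forces, by the explicit computation of this logarithmic potential, a contradiction with Lemma \ref{lemma3.1.} and Lemma \ref{theorem3.2.} which pin the value of $L$ on $[-\lambda,\lambda+\epsilon]$ for the true spectrum. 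In short: the explicit LE formula, valid on the spectrum and continuous in $E$, together with the fact that $L=0$ on $[-\lambda,\lambda]\cap\sigma$ and $L>0$ just beyond, is incompatible with a gap opening exactly at $\pm\lambda$; hence $\pm\lambda\in\sigma^{\mathcal{H}}(c,v)$.

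For the second assertion, once $\lambda\in\sigma^{\mathcal{H}}(c,v)$ and $-\lambda\in\sigma^{\mathcal{H}}(c,v)$, I note $\sigma^{\mathcal{H}}(c,v)$ is closed and, by the earlier lemma, not contained in $[-\lambda,\lambda]$; it also contains both endpoints $\pm\lambda$. If $\sigma^{\mathcal{H}}(c,v)\cap(-\lambda,\lambda)$ were empty, the spectrum would be contained in $\{-\lambda\}\cup\{\lambda\}\cup(\R\setminus[-\lambda,\lambda])$, i.e.\ $\pm\lambda$ would be isolated points of the spectrum. But isolated points of the spectrum of $\mathcal{H}_{c,v,\theta}$ are eigenvalues, contradicting Lemma \ref{8.19-lemma5.5}, which asserts precisely that $\pm\lambda$ are not eigenvalues for $\theta\in\mathbb{T}_0$. (Alternatively, since $(T_\alpha,T)$ is minimal the spectrum has no isolated points at all.) Therefore $\sigma^{\mathcal{H}}_{\text{cri}}(c,v)=\sigma^{\mathcal{H}}(c,v)\cap(-\lambda,\lambda)\neq\emptyset$.

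The main obstacle is making rigorous the claim that the spectrum reaches $\pm\lambda$; the soft "closedness" argument alone does not do it, and one genuinely needs an input like the Thouless/logarithmic-potential representation of $L$ off the spectrum (so that $L$ cannot jump from identically $0$ to strictly positive across an artificial gap at $\lambda$) or, equivalently, the fact that on any gap the cocycle is uniformly hyperbolic with $L$ real-analytic and the IDS locally constant — combined with the explicit formula of Lemma \ref{theorem3.2.}. I would expect the author to phrase this via continuity of $N_{v,c}$ and $L$ in $E$ and the observation that the explicit formula forces $L\to 0$ as $E\to\pm\lambda$ within the spectrum, so a gap at $\pm\lambda$ is impossible; the rest (no isolated points) is immediate from minimality or from Lemma \ref{8.19-lemma5.5}.
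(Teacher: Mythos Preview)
Your proposal has genuine gaps in both parts, and the paper proceeds quite differently.

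\textbf{For $\pm\lambda\in\sigma^{\mathcal{H}}(c,v)$:} the ``standard fact'' you invoke---that $E$ in a spectral gap forces uniform hyperbolicity and hence $L>0$---is \emph{false} for singular Jacobi operators (here $c(\theta,0)=\cos 2\pi\theta$ has real zeros). Indeed, the paper later shows $|\sigma^{\mathcal{H}}(c,v)\cap[-\lambda,\lambda]|=0$ (Corollary~\ref{corollary6.2}), so almost every $E\in(-\lambda,\lambda)$ lies in the resolvent, yet the computation in Lemma~\ref{theorem3.2.} (which nowhere uses $E\in\sigma$) gives $L=0$ there. So no contradiction can be extracted from ``$L$ jumping across a gap at $\lambda$'', and your Thouless-formula matching argument collapses. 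You also invoke $\sigma_{\text{cri}}\neq\emptyset$ while proving $\lambda\in\sigma$, which is circular. The paper's proof is a one-liner: the bounded sequence $u_{2n}=(-1)^n$, $u_{2n-1}=(-1)^n$ is checked directly to solve $\mathcal{H}_{c,v,\theta}u=\lambda u$, so $\lambda$ is a generalized eigenvalue and hence in the spectrum.

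\textbf{For $\sigma_{\text{cri}}^{\mathcal{H}}(c,v)\neq\emptyset$:} your inference ``$\sigma\cap(-\lambda,\lambda)=\emptyset$ implies $\pm\lambda$ are isolated'' is simply wrong---nothing prevents the spectrum from accumulating at $\lambda$ from the right (and $\sigma_{\text{sup}}\neq\emptyset$ gives you no control near $\lambda$). Perfectness of the spectrum does not help for the same reason. The paper argues instead via a test-vector computation: for $w=\tfrac{1}{\sqrt2}(\delta_{n+1}-\delta_n)$ with $n$ even one computes $\langle w,\mathcal{H}_{c,v,\theta}^2 w\rangle=\lambda^2$ exactly; if $\sigma\cap(-\lambda,\lambda)=\emptyset$ then $\int E^2\,d\tilde\mu_{\theta,w}\ge\lambda^2$, so equality forces $\tilde\mu_{\theta,w}$ to be supported on $\{\pm\lambda\}$, whence $(\mathcal{H}_{c,v,\theta}^2-\lambda^2)w=0$ and one of $\pm\lambda$ is an eigenvalue---contradicting Lemma~\ref{8.19-lemma5.5}.
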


\begin{proof}
We just prove $\lambda\in\sigma^{\mathcal{H}}(c,v)$, it can be easily verified that the equation $\mathcal{H}_{c,v,\theta}u=\lambda u$ has a generalized eigenfunction $u$ with $u_{2n}=(-1)^n,\ u_{2n-1}=(-1)^{n}$.

Now assume $\sigma^\mathcal{H}_{\text{cri}}(c,v)= \emptyset$.  Let $\theta\in\T_0$ and $w=\frac{1}{\sqrt{2}}(\delta_{n+1}-\delta_{n})$ with $2|n$, then
\begin{align*}
\mathcal{H}_{c,v,\theta}w=\frac{\lambda}{\sqrt{2}}(\delta_{n+2}-\delta_{n-1})
\end{align*}
and
\begin{align*}
\mathcal{H}_{c,v,\theta}^2w=& \frac{\lambda}{\sqrt{2}}(c(\theta,n+2)\delta_{n+3}+\lambda\delta_{n+1}+v(\theta,n+2)\delta_{n+2}) \\
& + \frac{\lambda}{\sqrt{2}}( -\lambda\delta_n-c(\theta,n-2)\delta_{n-2}-v(\theta,n-1)\delta_{n-1}),
\end{align*}
which implies $\langle w,\mathcal{H}_{c,v,\theta}^2w\rangle=\lambda^2$. Therefore, we have
\begin{align*}
	\lambda^2=\langle w,\mathcal{H}_{c,v,\theta}^2w\rangle=\int E^2d\tilde{\mu}_{\theta, w}\ge\lambda^2.
	\end{align*}
Then, it follows that
\begin{align*}
E^2-\lambda^2=0\text{ for }\tilde{\mu}_{\theta, w}\text{-a.e. }E\in\sigma^{\mathcal{H}}(c,v),
\end{align*}
which implies
$\tilde{\mu}_{\theta, w}=t\delta_{\lambda}+(1-t)\delta_{-\lambda}$ for some $t\in[0,1]$. Thus,
\begin{align*}
\|(\mathcal{H}_{c,v,\theta}^2-\lambda^2)w\|^2=\int|E^2-\lambda^2|^2d\tilde{\mu}_{\theta,w}(E)=0,
\end{align*}
and so $(\mathcal{H}_{c,v,\theta}+\lambda)(\mathcal{H}_{c,v,\theta}-\lambda)w=0$, i.e., $\lambda$ or $-\lambda$ is an eigenvalue of $\mathcal{H}_{c,v,\theta}$, leading to a contradiction with Lemma \ref{8.19-lemma5.5}.
\end{proof}

	\subsection{Singular continuous spectrum of $\mathcal{H}$}
	Having established the coexistence of the supercritical regime ($\sigma^\mathcal{H}_{\text{sup}}(c,v)$) and critical regime $(\sigma^\mathcal{H}_{\text{cri}}(c,v))$, we now focus on demonstrating that the critical regime exhibits purely singular continuous spectrum for $a.e.\ \theta\in\T$. To achieve this, we first rule out the possibility of an absolutely continuous spectrum, as detailed in the following proposition:
	\begin{proposition}[Proposition 7.1 in \cite{jitomirskaya2012analytic}]\label{proposition3.1}Let $\alpha\in \R\backslash \Q$, consider the Jacobi operator 
    \begin{equation*}
		[\mathcal{H}_{c,v,\theta}u]_n=c(\theta+n\alpha)u_{n+1}+\bar{c}(\theta+ (n-1)\alpha)u_{n-1}+v(\theta+n\alpha)u_n.\ 
	\end{equation*}
		If $c(\theta)$ has a real root, then the absolutely continuous spectrum is empty.
	\end{proposition}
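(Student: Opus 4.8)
The plan is to follow the Simon--Spencer style decoupling argument, adapted to the Jacobi setting as in \cite{SS,jit2013}. The key structural fact is that if $c(\cdot)$ has a real root $\theta_0$, then by minimality of the rotation $\theta\mapsto\theta+\alpha$ the orbit $\{\theta+n\alpha\}_{n\in\Z}$ comes arbitrarily close to $\theta_0$, so for every $x\in\T$ there is a sequence $n_k=n_k(x)\to+\infty$ and $m_k=m_k(x)\to-\infty$ with $c(x+n_k\alpha)\to0$ and $c(x+m_k\alpha)\to0$. Along such a sequence the hopping terms $c(x+n_k\alpha)$ connecting site $n_k$ to $n_k+1$ become negligible, which is the mechanism that breaks the operator into (asymptotically) finite blocks.

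Concretely, I would fix $\theta$ and define a reference operator $\tilde{\mathcal H}$ obtained from $\mathcal H_{c,v,\theta}$ by setting the off-diagonal entry $c(\theta+n_k\alpha)$ (and its conjugate) to zero for a suitable subsequence of the $n_k$'s (chosen sparse enough that the block sizes tend to infinity, but using that $c(\theta+n_k\alpha)\to0$). The difference $\mathcal H_{c,v,\theta}-\tilde{\mathcal H}$ is a direct sum of finite-rank (in fact rank-two) pieces with norms tending to zero; summing the absolute values of these entries gives a trace-class (indeed finite-trace-norm) operator, so $\mathcal H_{c,v,\theta}-\tilde{\mathcal H}$ is trace class. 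By the Kato--Rosenblum theorem the absolutely continuous parts of $\mathcal H_{c,v,\theta}$ and $\tilde{\mathcal H}$ are unitarily equivalent, so it suffices to show $\tilde{\mathcal H}$ has no absolutely continuous spectrum. But $\tilde{\mathcal H}$ is block diagonal with finite blocks, hence has pure point spectrum, so $\sigma_{ac}(\tilde{\mathcal H})=\emptyset$, and therefore $\sigma_{ac}(\mathcal H_{c,v,\theta})=\emptyset$. Since the a.c.\ spectrum is $\theta$-independent (Kotani theory / ergodicity, using minimality of the base as recorded earlier), this gives $\sigma^{\mathcal H}_{ac}(c,v)=\emptyset$.

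The one point requiring genuine care — and the main obstacle — is arranging the truncation sites so that two competing demands are met simultaneously: the cut entries must have sizes that are summable (to get the trace-class bound), yet the gaps between consecutive cut sites must diverge (so the blocks grow and no a.c.\ spectrum survives in $\tilde{\mathcal H}$). This is where one invokes the quantitative recurrence of the orbit to $\theta_0$: choose the $k$-th cut at the first time $n$ after the previous cut plus $k$ for which $|c(\theta+n\alpha)|<2^{-k}$; such an $n$ exists by minimality, the block lengths are at least $k\to\infty$, and $\sum_k 2^{-k}<\infty$ controls the trace norm (the entry $c(\theta+(n_k-1)\alpha)$ on the conjugate side is likewise small, or at worst bounded, and can be absorbed — one may cut a symmetric pair of adjacent bonds if needed). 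A secondary subtlety is that this construction is $\theta$-dependent and only works for $\theta$ in the full-measure set where the orbit is well-defined and recurrent, but that is automatic for every $\theta$ here since $\alpha$ is irrational; combined with $\theta$-independence of $\sigma_{ac}$ this yields the claim for all $\theta$. I would also note that the argument only uses continuity of $c$ and $v$ and a real zero of $c$, exactly the generality in which the proposition is stated.
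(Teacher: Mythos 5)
Your proposal is correct and reproduces the argument in the cited reference \cite{jitomirskaya2012analytic} (Proposition 7.1), which the paper invokes without reproving: one picks sites $n_k\to+\infty$, $m_k\to-\infty$ with $\sum_k |c(\theta+n_k\alpha)|+\sum_k|c(\theta+m_k\alpha)|<\infty$ (possible because the orbit of any $\theta$ under irrational rotation returns arbitrarily close to the real root of $c$), cuts those bonds to get a block-diagonal $\tilde{\mathcal H}$ with finite blocks, observes the difference is trace class, and concludes via Kato--Rosenblum. Two small inaccuracies that do not affect correctness: (a) you do not need the block sizes to diverge --- a direct sum of finite Hermitian blocks has empty absolutely continuous spectrum no matter how the block sizes behave; (b) the parenthetical about ``the conjugate side'' and a ``symmetric pair of adjacent bonds'' is unnecessary --- the bond between sites $n_k$ and $n_k+1$ consists of the two matrix entries $c(\theta+n_k\alpha)$ and $\bar c(\theta+n_k\alpha)$, both of the same magnitude, so removing that single bond is already a self-adjoint rank-two perturbation of trace norm $2|c(\theta+n_k\alpha)|$ and there is no leftover term to absorb.
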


    Then, we have the following direct consequence: 
	\begin{corollary}\label{corollary6.2}
	We have $\left|\sigma^{\mathcal{H}}(c,v)\cap[-\lambda,\lambda]\right|=0$.
	\end{corollary}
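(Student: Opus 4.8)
The plan is to combine the explicit Lyapunov exponent formula from Lemma~\ref{theorem3.2.} with Kotani theory and the absence of absolutely continuous spectrum supplied by Corollary~\ref{corollary6.2} (equivalently Proposition~\ref{proposition3.1}). Recall that on $\sigma^{\mathcal{H}}_{\text{sup}}(c,v)=\sigma^{\mathcal{H}}(c,v)\setminus[-\lambda,\lambda]$ the Lyapunov exponent is strictly positive, while on $\sigma^{\mathcal{H}}_{\text{cri}}(c,v)=\sigma^{\mathcal{H}}(c,v)\cap(-\lambda,\lambda)$ it vanishes; the issue is confined to the set $\sigma^{\mathcal{H}}(c,v)\cap[-\lambda,\lambda]$. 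First I would note that by Corollary~\ref{corollary6.2} this set has zero Lebesgue measure, so in particular the absolutely continuous spectrum of $\mathcal{H}_{c,v,\theta}$ is empty there. The content of the present corollary is really the measure-zero statement itself, which follows from Kotani theory: the essential closure of the set of energies where the Lyapunov exponent of the (non-singular) quasiperiodic cocycle $(2\alpha,\tilde B^E)$ vanishes coincides with the essential support of the absolutely continuous spectrum of the associated operator.

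The key steps, in order, are as follows. First, pass from the almost-periodic Jacobi cocycle $(T_\alpha,S_E^{v,c})$ to its quasiperiodic square $(2\alpha,\tilde A^E)$ and then to the analytic cocycle $(2\alpha,\tilde B^E)$, using the decomposition $L(2\alpha,\tilde B^E_\epsilon)=L(2\alpha,\tilde A^E_\epsilon)+I_\epsilon$ from Lemma~\ref{lemma3.1.} together with Lemma~\ref{theorem3.2.}; this shows that $L(T_\alpha,S_E^{v,c})=0$ precisely when $E\in[-\lambda,\lambda]$, so that $\sigma^{\mathcal{H}}(c,v)\cap[-\lambda,\lambda]=\{E\in\sigma^{\mathcal{H}}(c,v): L(T_\alpha,S_E^{v,c})=0\}$. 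Second, invoke Kotani theory for ergodic Jacobi operators (in the form valid for Jacobi operators with $c$ possibly vanishing, e.g. as in \cite{jit2013,kotanisimon1988}): the essential support of the absolutely continuous part of the spectral measure is the essential closure $\{L=0\}^{\text{ess}}$. Third, combine with Proposition~\ref{proposition3.1}/Corollary~\ref{corollary6.2}: since $c(\theta)=\cos 2\pi\theta$ has a real root, $\sigma_{ac}^{\mathcal{H}}(c,v)=\emptyset$, hence $\{L=0\}^{\text{ess}}=\emptyset$, which forces $|\{E: L(T_\alpha,S_E^{v,c})=0\}|=0$, i.e. $|\sigma^{\mathcal{H}}(c,v)\cap[-\lambda,\lambda]|=0$.

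Alternatively, and perhaps more in the spirit of the paper's duality philosophy, one can argue directly: the Thouless formula applied to $\tilde B^E$ expresses $L(2\alpha,\tilde B^E)$ as a logarithmic potential of the IDS, and the explicit formula $L(T_\alpha,S_E^{v,c})=\tfrac12\ln\big|(|E|+\sqrt{E^2-\lambda^2})/\lambda\big|$ shows $L$ is real-analytic and strictly monotone in $|E|$ off $[-\lambda,\lambda]$; Kotani's characterization then pins down the a.c.\ spectrum to lie inside $[-\lambda,\lambda]$, while Proposition~\ref{proposition3.1} says it is empty, and a standard Kotani argument (the set of zero-$L$ energies carrying no a.c.\ spectrum must be Lebesgue-null) finishes the proof. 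Either route is short once the ingredients are in place.

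The main obstacle is making sure Kotani theory applies in the \emph{singular} Jacobi setting, where $c(\theta)=\cos 2\pi\theta$ vanishes on a positive-measure-free but nonempty set: the classical formulation assumes $c$ bounded away from zero, and one must either restrict to the full-measure set $\mathbb{T}_0$ where the cocycle is well defined and the transfer matrix products make sense, or cite the version of Kotani theory for singular Jacobi operators. I expect this to be the only genuinely delicate point; the rest is a direct assembly of Lemma~\ref{theorem3.2.}, Proposition~\ref{proposition3.1}, and the general principle that positivity of the Lyapunov exponent a.e.\ on an interval, together with the measure-theoretic identification of a.c.\ spectrum, yields the claimed null set.
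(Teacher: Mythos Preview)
Your approach is essentially the paper's: Lemma~\ref{theorem3.2.} gives $L=0$ on $[-\lambda,\lambda]\cap\sigma^{\mathcal{H}}(c,v)$, Proposition~\ref{proposition3.1} gives empty absolutely continuous spectrum (since $c$ has a real root), and Kotani theory for singular Jacobi operators---the paper invokes Theorem~5.17 of \cite{JAC}, which holds for a.e.\ $\theta$ under the integrability condition $\ln|c(\cdot)|\in L^1$, satisfied here---then forces the zero-$L$ set to be Lebesgue-null. One caveat: your opening paragraph cites Corollary~\ref{corollary6.2} itself as an input, which is circular; you mean Proposition~\ref{proposition3.1} there, and indeed your ``key steps'' paragraph gets the logic right.
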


	\begin{proof}
    By Lemma \ref{theorem3.2.}, for any $E\in \sigma^{\mathcal{H}}(c,v)$ with $|E|\leq \lambda$, we have $L(T_\alpha, S_E^{v,c})=0$.
	By Proposition \ref{proposition3.1}, there is no absolutely continuous component in $[-\lambda,\lambda]$. Therefore, applying Kotani theory for the singular Jacobi operator (the proof of Theorem 5.17 of \cite{JAC} shows that it holds for a.e. $\theta$ under $\ln(|c(\cdot)|)\in L^1$), we obtain $\left|\sigma( \mathcal{H}_{c,v,\theta})\cap[-\lambda,\lambda]\right|=0$. Note that $\sigma( \mathcal{H}_{c,v,\theta})$ is independent of $\theta$ and coincides with $\sigma^{\mathcal{H}}(c,v)$.
	\end{proof}

Next, we  demonstrate the absence of point spectrum in the critical regime  $\sigma^\mathcal{H}_{\text{cri}}(c,v)$. To this end, we introduce the following definition:
	\begin{definition}
	A point $\theta\in\T$ is said to be $\alpha$-rational if $(\mathbb{Z}\alpha+2\theta)\cap\mathbb{Z}\neq\emptyset$; otherwise, it is called non-$\alpha$-rational.
	\end{definition}
    
Under this definition, we establish the following result:	
	\begin{theorem}\label{theorem2.2}
	For any irrational $\alpha$ and any non-$2\alpha$-rational $\theta\in\T_0$, the operator $ \mathcal{H}_{c,v,\theta}$
 exhibits purely singular continuous spectrum in $\sigma^\mathcal{H}_{\text{cri}}(c,v)$.
	\end{theorem}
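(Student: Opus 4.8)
The goal is to prove that for non-$2\alpha$-rational $\theta\in\T_0$, the operator $\mathcal{H}_{c,v,\theta}$ has purely singular continuous spectrum on $\sigma^{\mathcal{H}}_{\text{cri}}(c,v)=\sigma^{\mathcal{H}}(c,v)\cap(-\lambda,\lambda)$. Since Corollary \ref{corollary6.2} already gives $|\sigma^{\mathcal{H}}(c,v)\cap[-\lambda,\lambda]|=0$, there is no absolutely continuous spectrum in this regime (an a.c. component would have to be supported on a set of positive Lebesgue measure). So the entire content of the theorem is the \emph{absence of point spectrum} on $(-\lambda,\lambda)$. The plan is therefore to exclude eigenvalues $E\in(-\lambda,\lambda)$ of $\mathcal{H}_{c,v,\theta}$ for every non-$2\alpha$-rational $\theta\in\T_0$.

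\textbf{Step 1: set up the difference equations and reduce to a scalar recursion.} As in the proof of Lemma \ref{8.19-lemma5.5}, any $\ell^2$ solution $u$ of $\mathcal{H}_{c,v,\theta}u=Eu$ with $E\in(-\lambda,\lambda)$ satisfies the coupled system \eqref{8.19-eq27}--\eqref{8.19-eq28} (with $t=E/\lambda$, now $|t|<1$). Equation \eqref{8.19-eq28}, $t(u_{2n+1}-u_{2n})=u_{2n+2}-u_{2n-1}$, lets one eliminate the hopping $c=\cos2\pi(\theta+2n\alpha)$-dependence and express $u_{2n}$ in terms of $u_{2n\pm1}$; combined with \eqref{8.19-eq27} this gives a single second-order recursion on the odd-index (or even-index) subsequence whose transfer matrix is exactly the quasiperiodic Jacobi cocycle $(2\alpha,\tilde A^E)$ of \eqref{be}. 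Because the frequency $2\alpha$ is irrational and, by Lemma \ref{theorem3.2.}, $L(2\alpha,\tilde A^E)=0$ on $(-\lambda,\lambda)$, the cocycle is not uniformly hyperbolic, which is consistent with having no decaying solution — but this alone does not exclude eigenvalues, so something sharper is needed.

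\textbf{Step 2: exploit the singularity / the explicit structure at the band edges.} The natural route, and the one flagged in the introduction (``taking advantage of the singularity of the operator, one can further develop the method outlined in \cite{AJM,hanrui2017} to rule out the existence of point spectrum''), is: first show $\sigma^{\mathcal{H}}_{sc}(c,v)=\sigma^{\mathcal{H}}(c,v)\cap[-\lambda,\lambda]$ coincides with the dual picture $\sigma^{\mathcal{S}}(J,V)\cap[-\lambda,\lambda]$, where \eqref{IRS} is the Aubry dual of \eqref{ac-sc}; then use the trace-class decoupling argument of \cite{jit2013} (valid because $c(\theta)=\cos2\pi(\theta+2n\alpha)$ has real zeros on the even sublattice) on the dual operator to kill its point spectrum, and transport this back. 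Concretely, for a fixed non-$2\alpha$-rational $\theta$, one argues directly: if $E\in(-\lambda,\lambda)$ were an eigenvalue with eigenfunction $u\in\ell^2$, the reduced scalar recursion from Step 1 combined with the arithmetic condition on $\theta$ (which guarantees $\cos2\pi(\theta+2n\alpha)$ never vanishes and stays bounded away from $0$ along a subsequence of times where, by continued-fraction/\,Lemma \ref{ten}-type estimates, the denominators of $2\alpha$ force an averaging obstruction) produces a Gordon-type or a reflectionless-type contradiction with $u\in\ell^2$. I would look for a self-contained argument: show that the only formal solutions are, up to the change of variables, governed by the constant matrix $\tilde A^E_\infty$-type behavior whose eigenvalues have modulus $1$ on $(-\lambda,\lambda)$, hence all solutions are bounded-below in norm along suitable times, ruling out $\ell^2$ membership; the non-$2\alpha$-rationality of $\theta$ is what prevents the cancellations (like those producing \eqref{eq7}--\eqref{eq8} at $E=\pm\lambda$) that could otherwise create a genuine eigenvector.

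\textbf{Main obstacle.} The hard part is Step 2: converting ``zero Lyapunov exponent + irrational frequency + $\theta\in\T_0$ non-$2\alpha$-rational'' into an actual exclusion of point spectrum, since zero Lyapunov exponent by itself is compatible with eigenvalues, and the reduced operator \eqref{sjs-1} has an $E$-dependent unbounded potential for which neither classical trace-class arguments nor standard subordinacy directly apply. The cleanest path is the duality argument: identify \eqref{IRS} as the Aubry dual of \eqref{ac-sc} (made rigorous via the strip reformulation and the Aubry duality of Section \ref{duality}), use \cite{jit2013}'s trace-class block-decoupling on the dual side to get absence of point spectrum there for a.e.\ phase, and then upgrade ``a.e.\ $\theta$'' to ``every non-$2\alpha$-rational $\theta\in\T_0$'' using the arithmetic-localization technology developed later in the paper for singular Jacobi operators (the analogue of Jitomirskaya's non-perturbative scheme); verifying that the arithmetic obstruction is exactly ``non-$2\alpha$-rational'' rather than a full-measure Diophantine-type condition is the delicate point, and it is where the explicit form of $v$ and $c$ — in particular that $c$ vanishes precisely on the even sublattice and $v$ is a pure cosine — must be used.
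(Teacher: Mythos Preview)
Your reduction to excluding point spectrum is correct, but the methods you propose in Step 2 and in the ``Main obstacle'' paragraph all fail for the reasons you half-acknowledge. A Gordon-type argument needs Liouvillean frequencies, while here $\alpha$ is an arbitrary irrational. The claim that solutions are ``governed by $\tilde A^E_\infty$-type behavior'' confuses the asymptotic cocycle (as $\Im\theta\to\infty$) with the real cocycle; zero Lyapunov exponent does not force all solutions to be bounded below along a subsequence. Most seriously, your duality route is backwards: the trace-class decoupling of \cite{jit2013} (Proposition~\ref{proposition3.1} here) excludes \emph{absolutely continuous} spectrum, not point spectrum, and Aubry duality heuristically swaps the two, so ``no a.c.\ on the dual'' does not give ``no p.p.\ here''. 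You never get to an actual mechanism for excluding eigenvalues.

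The paper's proof is a direct AJM/Han argument. Starting from a hypothetical $u\in\ell^2$ with $\mathcal{H}_{c,v,\theta}u=Eu$, $|t|=|E/\lambda|<1$, one Fourier-transforms the even and odd subsequences to $\varphi,\psi\in L^2(\T)$, eliminates $\psi$ via $\psi(x)=\frac{t+e^{-2\pi ix}}{t+e^{2\pi ix}}\varphi(x)$, and sets $g(x)=\varphi(x)/(t+e^{2\pi ix})\in L^2(\T)$ (well-defined since $|t|<1$). One then obtains a conjugacy
\[
D(x)M_\theta(x)=M_\theta(x+2\alpha)R_\theta,\qquad M_\theta(x)=\begin{pmatrix}g(x)&g(-x)\\e^{-2\pi i\theta}g(x-2\alpha)&e^{2\pi i\theta}g(-x+2\alpha)\end{pmatrix},
\]
with $\det D(x)=d(x-2\alpha)/d(x+2\alpha)$, $d(x)=t+\cos2\pi x$. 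Ergodicity forces $|\det M_\theta(x)|\,|d(x)d(x-2\alpha)|\equiv b$ a.e. If $\det M_\theta\equiv 0$ then $g(x)=\phi(x)g(-x)$ with $\phi(x+2\alpha)=e^{-4\pi i\theta}\phi(x)$, so $\phi$ is constant and $2\theta\in 2\alpha\Z+\Z$, contradicting the non-$2\alpha$-rationality of $\theta$. Hence $b>0$, so $|\det M_\theta(x)|=b/|d(x)d(x-2\alpha)|$. But $|t|<1$ means $d$ has a real zero, so $1/d\notin L^2$ and thus $\det M_\theta\notin L^1$, contradicting $M_\theta\in L^2$. The two ingredients you were missing are: the non-$2\alpha$-rational condition enters exactly as the obstruction to $\det M_\theta\equiv 0$, and the condition $|E|<\lambda$ enters exactly as the singularity of $1/d$ that blows up the determinant.
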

	
\begin{proof}
	By Proposition \ref{proposition3.1}, it suffices to show that there is no point spectrum component for all non-$2\alpha$-rational $\theta\in\T_0$. Assume that $u\in \ell^2(\mathbb{Z})$ is a non-trivial solution of $ \mathcal{H}_{c,v,\theta}u=Eu$ for some non-$2\alpha$-rational $\theta\in\T_0$. Then, the following equations hold:
	\begin{equation*}
	\cos2\pi(\theta+2n\alpha)\ u_{2n}+\cos2\pi(\theta+2n\alpha)\ u_{2n+1}+\lambda u_{2n-1}=Eu_{2n},\end{equation*}
	and
	\begin{equation*}
	\cos2\pi(\theta+2n\alpha)\ u_{2n+1}+\lambda u_{2n+2}+\cos2\pi(\theta+2n\alpha)\ u_{2n}=Eu_{2n+1}.\end{equation*}
	As in Lemma \ref{8.19-lemma5.5}, let $t:=E/\lambda$, where $|t|<1$ for $E\in \sigma^\mathcal{H}_{\text{cri}}(c,v)$. The equations can then be rewritten as:
	\begin{equation}\label{equation9}
		\cos2\pi(\theta+ 2n\alpha)(u_{2n}+u_{2n+1})+\lambda u_{2n-1}=\lambda t u_{2n},
	\end{equation}
	and
	\begin{equation}\label{equation10}t(u_{2n+1}-u_{2n})=u_{2n+2}-u_{2n-1}.\end{equation}
	Define $\varphi(x)=\sum_{n\in\mathbb{Z}}u_{2n}e^{2\pi i nx}$ and $\psi(x)=\sum_{n\in\mathbb{Z}}u_{2n+1}e^{2\pi i nx}$, where $\varphi,\psi\in L^2(\mathbb{T})$. Multiplying equations \eqref{equation9} and \eqref{equation10} by $e^{2\pi inx}$ and summing over $n$, we obtain
    \small
	\begin{equation*}
	\begin{split}
	e^{2\pi i\theta}(\varphi(x+2\alpha)+\psi(x+2\alpha))+e^{-2\pi i\theta}(\varphi(x-2\alpha)+\psi(x-2\alpha))+2\lambda e^{2\pi i x}\psi(x)=2E\varphi(x),
	\end{split}
	\end{equation*}
    \normalsize
	and
	\begin{equation*}
	t(\psi(x)-\varphi(x))=e^{-2\pi ix}\varphi(x)-e^{2\pi ix}\psi(x).
	\end{equation*}
	By solving these equations, we have
	$$\psi(x)=\frac{t+e^{-2\pi ix}}{t+e^{2\pi ix}}\varphi(x),$$
	and
    \small
	\begin{equation}\label{eq11}
	\begin{split}
	e^{2\pi i\theta}\varphi(x+2\alpha)\frac{t+\cos2\pi(x+2\alpha)}{t+e^{2\pi i(x+2\alpha)}}+e^{-2\pi i\theta}\varphi(x-2\alpha)\frac{t+\cos2\pi(x-2\alpha)}{t+e^{2\pi i(x-2\alpha)}}=\lambda\varphi(x)\frac{t^2-1}{t+e^{2\pi ix}}.
	\end{split}
	\end{equation}
    \normalsize
To derive the next step, we	multiply \eqref{equation9} and \eqref{equation10} by $e^{-2\pi inx}$ and sum over $n$, yielding
	\footnotesize
	\begin{equation}\label{eq12}
	\begin{split}
	e^{2\pi i\theta}\varphi(-x+2\alpha)\frac{t+\cos2\pi(-x+2\alpha)}{t+e^{2\pi i(-x+2\alpha)}}+e^{-2\pi i\theta}\varphi(-x-2\alpha)\frac{t+\cos2\pi(-x-2\alpha)}{t+e^{2\pi i(-x-2\alpha)}}=\lambda\varphi(-x)\frac{t^2-1}{t+e^{-2\pi ix}}.
	\end{split}
	\end{equation}
	\normalsize
	 
	Define $g(x):=\frac{\varphi(x)}{t+e^{2\pi ix}}\not\equiv0$ (otherwise, $\varphi(x)=\psi(x)\equiv0$, leading to $u=0$), where $g(x)\in L^2(\mathbb{T})$ since $|t+e^{2\pi ix}|\ge1-t$. We combine \eqref{eq11} and \eqref{eq12} to derive the following matrix equation
	\begin{equation*}
	\begin{split}
	&\begin{pmatrix}
	\frac{\lambda(t^2-1)}{t+\cos2\pi(x+2\alpha)}&-\frac{t+\cos2\pi(x-2\alpha)}{t+\cos2\pi(x+2\alpha)}\\1&0
	\end{pmatrix}\begin{pmatrix}g(x)&g(-x)\\e^{-2\pi i\theta}g(x-2\alpha)&e^{2\pi i\theta}g(-x+2\alpha)\end{pmatrix}\\&=\begin{pmatrix}g(x+2\alpha)&g(-x-2\alpha)\\e^{-2\pi i\theta}g(x)&e^{2\pi i\theta}g(-x)\end{pmatrix}\begin{pmatrix}e^{2\pi i\theta}&0\\0&e^{-2\pi i\theta}\end{pmatrix}.
	\end{split}
	\end{equation*}
	To simplify the expressions further, we define
	\begin{equation*}
	\begin{split}
	&D(x):=\begin{pmatrix}
	\frac{\lambda(t^2-1)}{t+\cos2\pi(x+2\alpha)}&-\frac{t+\cos2\pi(x-2\alpha)}{t+\cos2\pi(x+2\alpha)}\\1&0
	\end{pmatrix},\\
	&M_\theta(x):=\begin{pmatrix}g(x)&g(-x)\\e^{-2\pi i\theta}g(x-2\alpha)&e^{2\pi i\theta}g(-x+2\alpha)\end{pmatrix},\\
	&d(x):=t+\cos2\pi x,\quad R_\theta=\begin{pmatrix}e^{2\pi i\theta}&0\\0&e^{-2\pi i\theta}\end{pmatrix}.
	\end{split}
	\end{equation*}
	Thus, $M_\theta(x)\in L^2(\mathbb{T},M(2,\mathbb{C})),$ and the equation can be rewritten as
\begin{equation}\label{equation13}
D(x)M_\theta(x)=M_\theta(x+2\alpha)R_\theta.
\end{equation}
Consider the set $\mathbb{T}_t:=\left\{x\in\mathbb{T}:t+\cos2\pi(n\alpha+x)\neq0,\forall n\in\Z\right\}$. Clearly, we have $|\mathbb{T}_t|=1$.
		For any $x\in\mathbb{T}_t$, it follows from \eqref{equation13} that
		$$|\det M_\theta(x)|\cdot |d(x-2\alpha)|=|\det M_\theta(x+2\alpha)|\cdot| d(x+2\alpha)|.$$
		Using the ergodicity of irrational rotations, we deduce that
		\begin{equation*}
		|\det M_{\theta}(x)|\cdot|d(x-2\alpha)|\cdot|d(x)|\overset{\text{a.e. x}}= b,\text{ for some }b\ge0.
		\end{equation*} 
		Since $d(x)\neq0$ on $\mathbb{T}_t$, it follows that $b>0$ if and only if $\det M_\theta(x)\neq0$ for almost every $x$. Moreover, note that the set $\{x\in\mathbb{T}_t:\det M_\theta(x)=0\}$ is invariant under rotations. Consequently, its measure must be either zero or one.
		
	Assume $\det M_\theta(x)=0$ for almost every $x$. Then, there exists a function $\phi(x)$ such that, for almost every $x$,
		\begin{equation}\label{meq16}
		\begin{pmatrix}
			g(x)\\e^{-2\pi i\theta}g(x-2\alpha)
		\end{pmatrix}=\phi(x)\begin{pmatrix}
		g(-x)\\e^{2\pi i\theta}g(-x+2\alpha)
	\end{pmatrix}.
\end{equation}
		In particular, $\phi(x)=\frac{g(x)}{g(-x)}\in\overline{\C}$ is a non-identically vanishing, measurable function on $\mathbb{T}$. Equation \eqref{meq16} further implies
		$$\phi(x+2\alpha)=e^{-4\pi i\theta}\phi(x),\ a.e.\ x.$$
		By ergodicity, $\phi(x)=b'$ for some $b'\neq0$, and $\phi(x)\in L^1(\mathbb{T})$. Writing $\phi(x)=\sum_{n\in\mathbb{Z}}\hat{\phi}_ne^{2\pi inx}$, it follows that
		$\hat{\phi}_n(e^{4\pi in\alpha+4\pi i\theta}-1)=0,\ \forall n\in\mathbb{Z}.$ By assumption, we have excluded all $\theta$ that are $2\alpha$-rational. Consequently, $\phi\equiv0$, which leads to a contradiction. 
Thus, for all non-$2\alpha$-rational $\theta\in\T_0$ and almost every $x\in\mathbb{T}$, it holds that
\begin{equation}\label{equation15}
|\det M_\theta(x)|=\frac{b}{|d(x-2\alpha)d(x)|},
\end{equation}
for some constant $b>0$.

	Since $|t|<1$, there exists $x_0\in\mathbb{T}$ such that $d(x_0)=0$. The presence of singularity implies $\frac{1}{d(x-2\alpha)d(x)}\notin L^1(\T)$. Consequently, by \eqref{equation15}, we have
    $$|\det M_{\theta}(x)|=\frac{b}{|d(x-2\alpha)d(x)|}\notin L^1(\mathbb{T}).$$
    This contradicts $M_\theta(x)\in L^2(\mathbb{T},M(2,\mathbb{C}))$, thereby completing the proof.
	\end{proof}
	
	\subsection{Anderson Localization of $\mathcal{H}$}
	In this section, we aim to prove that  $ \mathcal{H}_{c,v,\theta}$ exhibits Anderson localization in the supercritical regime for almost every $\theta$:
	
\begin{theorem}\label{al-thm}
	Suppose that $\alpha\in DC(\gamma,\sigma)$. Then  $ \mathcal{H}_{c,v,\theta}$ exhibits Anderson localization in $\sigma^\mathcal{H}_{\text{sup}}(c,v)$
	 for every $\theta\in \Theta$, where 
	$$ \Theta= \cup_{\eta>0} \Theta(\eta) := \cup_{\eta>0}  \left\{\theta \in \T_0:\|2\theta- k\alpha\| \geq \frac{\eta}{|k|^{\sigma}}, \forall k\neq 0 \right\}.$$
\end{theorem}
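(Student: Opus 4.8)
The plan is to prove Anderson localization by developing Jitomirskaya's non-perturbative scheme \cite{jitomirskaya1999metal,AJ1,HJY} in the present singular, block-structured setting. \textbf{Reduction and setup.} Fix $\eta>0$, $\theta\in\Theta(\eta)$ and $E\in\sigma^{\mathcal H}_{\mathrm{sup}}(c,v)$; then $L:=L(2\alpha,\tilde A^E)>0$ by Lemma \ref{theorem3.2.}. Since $\mathcal H_{c,v,\theta}$ is a bounded self-adjoint Jacobi operator, a Schnol-type argument \cite{damanik2017schrodinger} reduces the statement to: every nontrivial polynomially bounded solution $u$ of $\mathcal H_{c,v,\theta}u=Eu$ (that is, $|u_n|\le C(1+|n|)$) decays exponentially. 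Grouping the sites as $\boldsymbol u(n)=(u_{2n},u_{2n+1})^{\mathsf T}$, such a solution is propagated by the quasiperiodic cocycle $(2\alpha,\tilde A^E)$, where $\tilde A^E=\frac{1}{\lambda\cos 2\pi\theta}\tilde B^E$ with $\tilde B^E$ analytic as in \eqref{be}; as $\Theta(\eta)\subset\T_0$, the orbit $\{\theta+2j\alpha\}_{j}$ avoids all poles of $1/\cos 2\pi\phi$, so the matrices $\tilde A^E_n(\theta)$ are well defined. (The set $\Theta$ has full Lebesgue measure when $\alpha\in DC(\gamma,\sigma)$ with $\sigma>1$, by Borel--Cantelli; this is what yields ``a.e.\ $\theta$'' in Theorem \ref{theorem1.1.}.)

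\textbf{Growth of transfer matrices.} Each factor $\tilde B^E(\cdot+2j\alpha)$ is a degree-one trigonometric polynomial in $\cos2\pi(\cdot)$, so the entries of $\tilde B^E_n$ and the scalar $\prod_{j=0}^{n-1}\cos2\pi(\cdot+2j\alpha)$ are trigonometric polynomials in the phase of degree $\le n$; moreover $\int_{\T}\ln|\lambda\cos2\pi\phi|\,d\phi=\ln(\lambda/2)$ and $L(2\alpha,\tilde B^E)=L+\ln(\lambda/2)$ (see the proofs of Lemmas \ref{lemma3.1.} and \ref{theorem3.2.}). Combining the large-deviation estimate for the analytic cocycle $(2\alpha,\tilde B^E)$ (available since $\alpha\in DC(\gamma,\sigma)$ and $L(2\alpha,\tilde B^E)$ is the relevant exponent) with Lemma \ref{ten} applied to $\phi\mapsto\ln|\cos2\pi\phi|$ — the arithmetic condition $\|2\theta-m\alpha\|\ge\eta|m|^{-\sigma}$ precisely controlling how small $|\cos2\pi(\theta+2j\alpha)|$ can get along the orbit — I would obtain, for every small $\epsilon>0$, a $\tau\in(0,1)$ and $c>0$ with
$$\meas\bigl\{\phi\in\T:\ \|\tilde A^E_n(\phi)\|<e^{(L-\epsilon)n}\bigr\}\le e^{-cn^{\tau}}\qquad(n\ \text{large}).$$

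\textbf{Regular points and the double-resonance dichotomy.} Via the standard passage from transfer-matrix growth to Green's-function decay (Cramer's rule, expressing $G_I$ through Dirichlet determinants $P_I(\phi,E)=\det(E-\mathcal H_{c,v,\theta}|_I)$ over block-intervals $I$), call a site $n$ \emph{regular} at scale $N$ when it lies in a block-interval $I$ with $|I|\sim N$, $\operatorname{dist}(n,\partial I)\ge N/5$, on which $\tilde A^E_{|I|}$ has the expected size $e^{(L-\epsilon)|I|}$; then $|G_I(n,\partial I)|\le e^{-(L-2\epsilon)\operatorname{dist}(n,\partial I)}$, and the Poisson identity $u_n=-\sum_{m\in\partial I}c(\theta,\cdot)\,G_I(n,m)\,u_{m\pm1}$ propagates exponential decay. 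It therefore suffices to show that $n$ is regular for all large $|n|$. If $n$ is not regular, the block of size $\sim N$ around $n$ is \emph{singular} ($E$ lies within $e^{-\epsilon N}$ of $\sigma(\mathcal H_{c,v,\theta}|_I)$), and since $u$ is polynomially bounded a comparable singular block must also sit at the origin; by covariance the \emph{fixed} trigonometric polynomial $\phi\mapsto P_{[-k,k]}(\phi,E)$ — of degree $\le CN$ and invariant under $\phi\mapsto-\phi$ because $\tilde B^E$ depends only on $\cos2\pi\phi$ — is then $e^{-\epsilon' N}$-small at both $\phi=\theta$ and $\phi=\theta+2n\alpha$. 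Counting its $\le CN$ zeros and using $\alpha\in DC(\gamma,\sigma)$ together with $\|2\theta-m\alpha\|\ge\eta|m|^{-\sigma}$ (which keeps $\theta+2n\alpha$ and $-\theta+2n\alpha$ away from that zero set) forces $|n|$ to be bounded, a contradiction. Hence $u$ decays exponentially, and by Schnol $\mathcal H_{c,v,\theta}$ has Anderson localization on $\sigma^{\mathcal H}_{\mathrm{sup}}(c,v)$ for every $\theta\in\Theta$.

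\textbf{Main obstacle.} I expect the hard part to be the double-resonance step in this singular block setting: confirming that the Dirichlet determinants $P_I(\phi,E)$ genuinely have degree $O(|I|)$ in the phase, carefully tracking the reflection symmetry $\phi\mapsto-\phi$, and extracting precisely the resonance condition on $2\theta$ rather than on $\theta$ — which is exactly why the effective frequency is $2\alpha$ and the hypothesis is phrased through $\|2\theta-m\alpha\|$. A secondary technical point is making the large-deviation estimate robust against the $1/\cos2\pi\phi$ singularity, i.e.\ controlling the phases at which $\|\tilde A^E_n\|$ is anomalously \emph{small} (not large); here $\theta\in\Theta(\eta)\subset\T_0$ and Lemma \ref{ten} are what save the day.
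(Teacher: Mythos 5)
Your overall outline does follow the Jitomirskaya non-perturbative scheme that the paper uses (Schnol reduction, transfer matrices, Green's function via Dirichlet determinants, the crucial facts that $P_k$ has the form $R_{k/2}(\cos 2\pi(\theta+(k/2-1)\alpha))$ with $\deg R_{k/2}=k/2$ and is even in $\phi$, the role of $2\alpha$ as effective frequency, and the use of $\|2\theta-m\alpha\|\ge\eta|m|^{-\sigma}$). But the central step — ruling out simultaneous singularity of $0$ and $y$ — is carried out by a different and, as written, insufficient device. The paper does \emph{not} use a large-deviation estimate anywhere: its Lemma \ref{lemma3.2.} is a uniform \emph{upper} bound on $\|\widetilde M_k(\theta)\|$ from the Furman-type result (Proposition \ref{theorem3.1.}, quoted from \cite{AJ1}), valid for every phase simultaneously. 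Singularity of $0$ and $y$ then forces a family of $6sq_n+1$ phases $\{\theta_j\}$ to lie in the sublevel set $W_{k,L-\epsilon/8}$ of $|R_{k/2}|$ with $k=12sq_n$, and the contradiction comes from Lagrange interpolation: Lemma \ref{lemma3.8.} says that $k/2+1=6sq_n+1$ points in $W_{k,L-\epsilon}$ cannot be $\epsilon'$-uniform (else the degree-$k/2$ polynomial would be small everywhere, contradicting the lower bound on $\int\ln\|\widetilde M_k\|$), while Lemma \ref{lemma3.9.} shows, using $\alpha\in DC(\gamma,\sigma)$ for the $\|(i-j)2\alpha\|$ terms and $\theta\in\Theta(\eta)$ for the $\|2\theta+(i+j-1)2\alpha\|$ terms, that this particular family \emph{is} $\epsilon$-uniform.

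The gap in your proposal is exactly at the point where you replace this $\epsilon$-uniformity/Lagrange argument with ``counting the $\le CN$ zeros'' and appealing to Cartan. A trigonometric polynomial of degree $d\sim N$ with sup norm $e^{(L+o(1))N}$ has a sublevel set $\{|P|<e^{-\epsilon N}\}$ whose Cartan/Remez bound is of the order $d\cdot e^{-\epsilon N/d}\sim CN\,e^{-\epsilon/C}$, which does \emph{not} shrink with $N$; so knowing that $\theta$, $-\theta$ and $\theta+2n\alpha$ avoid the zero set does not by itself force $|n|$ to be bounded. Zero counting of this kind is the engine of the Bourgain--Goldstein route (which removes a measure-zero set of frequencies and thus only yields a.e.\ $\alpha$), whereas the present theorem is for a \emph{fixed} Diophantine $\alpha$ and hence must extract a contradiction from the fact that the number of orbit phases in the sublevel set exceeds the polynomial's degree — i.e.\ from interpolation, not from sublevel-set measure. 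Likewise, the large-deviation estimate you invoke for $(2\alpha,\tilde B^E)$ is neither needed nor used: what is needed is the uniform upper bound plus the explicit polynomial dependence of $P_k$ on a single $\cos$ variable. If you swap in the $\epsilon$-uniformity lemmas in place of the zero-counting paragraph, the rest of your sketch aligns with the paper's proof.
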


We follow the basic setup presented in \cite{jitomirskaya1999metal}. Throughout the proof, we denote by $G_{[n_{1},n_{2}]}(n,m)$ the Green's function $( \mathcal{H}_{c,v,\theta}-E)^{-1}(n,m)$ of the operator $ \mathcal{H}_{c,v,\theta}$ when restricted to the interval $[n_{1},n_{2}]$, with zero boundary conditions at $n_{1}-1$ and $n_{2}+1$. For brevity, we will omit the explicit mention of the dependencies on $v$, $c$, and $\alpha$ whenever possible.

By Lemma 1.8 in \cite{JAC}, we have the inclusion
	$$\sigma^{\mathcal{H}}(c,v)\subseteq\left[-\lambda-2,\lambda+2\right].$$ 
    Let $M_k(\theta)$ denote the $k$-step transfer matrix for the equation $ \mathcal{H}_{c,v,\theta}u=Eu$, and define
	$$P_k(\theta):=\det\left[( \mathcal{H}_{c,v,\theta}-E)|_{[0,k-1]}\right],\quad Q_k(\theta):=\det\left[( \mathcal{H}_{c,v,\theta}-E)|_{[1,k]}\right].$$	
	Then, the $k$-step transfer matrix can be written as
    \footnotesize
	\begin{align*}
		M_k(\theta)=\frac{(-1)^k}{\prod_{j=0}^{k-1} c(\theta,j)}\begin{pmatrix}
		P_k(\theta)&c(\theta,-1)Q_{k-1}(\theta)\\-c(\theta,k-1)P_{k-1}(\theta)&-c(\theta,k-1)c(\theta,-1)Q_{k-2}(\theta)
	\end{pmatrix}=:\frac{(-1)^k}{\prod_{j=0}^{k-1} c(\theta,j)}\widetilde M_k(\theta),
\end{align*}
\normalsize
where the second equality defines $\widetilde M_k(\theta)$. This expression can be derived by expanding the determinant of $( \mathcal{H}_{c,v,\theta}-E)|_{[0,k-1]}$ along its final row and then using induction on $k$.

	By Kingman's subadditive ergodic theorem, the Lyapunov exponent satisfies
	\begin{equation*}
		L(E)=\inf_{n\ge1}\frac{1}{n}\int_{\mathbb{T}_0}\ln\|M_n(\theta)\|d\theta=\lim_{n\to\infty}\frac{1}{n}\ln\|(M_n(\theta)\|,
	\end{equation*}
	for almost every $\theta\in\mathbb{T}_0$. 
	
	Observe that if $E\in \sigma^\mathcal{H}_{\text{sup}}(c,v)$, we have
	\begin{equation*}
		\begin{split}
		0<L(E)&=\lim_{k\to\infty}\frac{1}{k}\int_{\T_0}\ln\|\widetilde M_k(\theta)\|d\theta-\lim_{k\to\infty}\frac{1}{k}\int_{\T_0}\ln|c(\theta,0)\cdots c(\theta,k-1)|d\theta\\&=:\widetilde{L}(E)-\frac{1}{2}\ln\left|\frac{\lambda}{2}\right|,
		\end{split}
	\end{equation*}
	where both limits exist by the subadditive ergodic theorem. Furthermore, we obtain the relation
	\begin{equation}\label{equation19}
		\widetilde{L}(E)=L(E)+\frac{1}{2}\ln\left|\frac{\lambda}{2}\right|=\inf_{k\ge1}\frac{1}{k}\int_{\T_0}\ln\|\widetilde M_k(\theta)\|d\theta=\lim_{k\to\infty}\frac{1}{k}\ln\|\widetilde M_k(\theta)\|,
	\end{equation}
	for any fixed $E$ and almost every $\theta\in\T_0$.
    
    Moreover, we recall the following result:
	\begin{proposition}[\cite{AJ1}]\label{theorem3.1.}
		Let $\alpha\in\R\backslash\Q$ and $A\in C^0(\T,M_2(\C))$ be measurable with $\ln\|A(x)\|\in L^1(\T)$. If $(\alpha,A)$ is a continuous cocycle, then for any $\delta>0$, there exists $C_\delta>0$ such that for all $n\in\N$ and $\theta\in\T$, we have
		$$\|A_n(\theta)\|\le C_\delta e^{(L(\alpha,A)+\delta)n}.$$
	\end{proposition}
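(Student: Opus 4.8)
}
The plan is to follow the standard argument for upper-semicontinuity-plus-unique-ergodicity bounds on cocycle growth, adapted to the $M(2,\mathbb{C})$ setting. The key point is that although $\frac1n\ln\|A_n(\theta)\|$ need not converge for every $\theta$, its \emph{limsup} is controlled uniformly once we exploit continuity of $A$ and unique ergodicity of $(\T,R_\alpha)$. First I would fix $\delta>0$ and use Kingman's subadditive ergodic theorem together with the definition of the Lyapunov exponent to pick $m\in\N$ large enough that
\begin{equation*}
\frac1m\int_\T \ln\|A_m(\theta)\|\,d\theta < L(\alpha,A)+\frac{\delta}{2}.
\end{equation*}
Because $A\in C^0(\T,M(2,\mathbb C))$ is continuous on the compact torus, the function $\theta\mapsto \frac1m\ln\|A_m(\theta)\|$ is continuous; hence by unique ergodicity of the irrational rotation, the Birkhoff averages
\begin{equation*}
\frac1N\sum_{j=0}^{N-1}\frac1m\ln\bigl\|A_m(\theta+jm\alpha)\bigr\|
\end{equation*}
converge \emph{uniformly in $\theta$} to $\frac1m\int_\T\ln\|A_m\|\,d\theta < L(\alpha,A)+\frac\delta2$. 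So there is $N_0$ such that for all $N\ge N_0$ and all $\theta$, the average above is $< L(\alpha,A)+\delta$.

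Next I would convert this into a pointwise bound on $\|A_n(\theta)\|$ by the submultiplicativity $\|A_{a+b}(\theta)\|\le \|A_b(\theta+a\alpha)\|\,\|A_a(\theta)\|$. Writing $n=Nm+r$ with $0\le r<m$ and $N\ge N_0$, we telescope:
\begin{equation*}
\|A_n(\theta)\|\le \|A_r(\theta+Nm\alpha)\|\cdot\prod_{j=0}^{N-1}\bigl\|A_m(\theta+jm\alpha)\bigr\|
\le \Bigl(\sup_{0\le r<m}\sup_\psi\|A_r(\psi)\|\Bigr)\cdot e^{(L(\alpha,A)+\delta)Nm}.
\end{equation*}
The supremum in the prefactor is finite since $A$ is continuous on $\T$ and $m$ is fixed, and $Nm\le n$, so $\|A_n(\theta)\|\le C' e^{(L(\alpha,A)+\delta)n}$ for all $n\ge N_0 m$ with $C'$ independent of $\theta$. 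Finally, enlarging the constant to absorb the finitely many $n<N_0m$ (again using continuity and compactness to bound $\sup_\theta\|A_n(\theta)\|$ for each such $n$) gives a single $C_\delta>0$ with $\|A_n(\theta)\|\le C_\delta e^{(L(\alpha,A)+\delta)n}$ for all $n\in\N$, $\theta\in\T$.

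The main obstacle is the honest justification of the uniform convergence of the Birkhoff sums: one must know that $(\T,R_\alpha)$ is uniquely ergodic (true for irrational $\alpha$, as recalled in Section \ref{co-ly}) and that $\theta\mapsto\frac1m\ln\|A_m(\theta)\|$ is a genuine continuous function — here $\ln\|\cdot\|$ is fine because $\|A_m(\theta)\|\ge$ a positive continuous lower bound is \emph{not} automatic for $M(2,\C)$ cocycles (the norm could vanish), so one should instead observe $\|A_m(\theta)\|\ge 0$ is continuous and note that $\ln\|A_m\|$ is upper-bounded and its integral is finite by the $L^1$ hypothesis, then approximate from above by the continuous functions $\frac1m\ln(\|A_m(\theta)\|+\epsilon)$ and let $\epsilon\to0$, using monotone convergence and the fact that unique-ergodicity averages of a decreasing sequence of continuous functions still converge uniformly to the integral of the limit when that integral is finite (a Dini-type argument). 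This is exactly the point where the difference between the $SL(2,\R)$ case and the $M(2,\C)$ case shows up, and it is handled in \cite{AJ1}; I would cite that reference for the delicate step and present the telescoping estimate in detail.
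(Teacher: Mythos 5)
The paper gives no proof of this proposition --- it is quoted directly from \cite{AJ1} --- and your argument is the standard one behind that citation (the Furman-type uniform upper bound for subadditive sequences over uniquely ergodic systems), so it is essentially the same approach and it is correct. Two minor precision points: (i) the step $e^{(L(\alpha,A)+\delta)Nm}\le e^{(L(\alpha,A)+\delta)n}$ fails if $L(\alpha,A)+\delta<0$, but since $n-Nm=r<m$ you simply absorb an extra factor $e^{|L(\alpha,A)+\delta|\,m}$ into $C_\delta$; (ii) for the upper semicontinuous function $\tfrac{1}{m}\ln\|A_m\|$ the Birkhoff averages need not converge uniformly to the integral --- only the one-sided bound $\limsup_{N}\sup_\theta\tfrac{1}{N}\sum_{j=0}^{N-1}\tfrac{1}{m}\ln\|A_m(\theta+jm\alpha)\|\le\tfrac{1}{m}\int_\T\ln\|A_m(\theta)\|\,d\theta$ holds --- but that one-sided estimate is exactly what your telescoping uses, so the argument goes through.
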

	Then we can establish the uniform growth of the matrix $\widetilde M_{k}(\theta)$:
	\begin{lemma}\label{lemma3.2.}
		For every $E\in\sigma^{\mathcal{H}}(c,v)$ and $\epsilon>0$, there exists $k_1(E,\epsilon)$ such that
		$$|P_{k}(\theta)|\le\|\widetilde M_{k}(\theta)\|<e^{(L(E)+\frac{1}{2}\ln|\lambda/2|+\epsilon)k}$$
		for every $k>k_1(E,\epsilon)$ and every $\theta\in\mathbb{T}_0$.
	\end{lemma}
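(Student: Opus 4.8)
The statement is a uniform (in $\theta$) upper bound on the unnormalized transfer matrix $\widetilde M_k(\theta)$, with the same exponential rate $L(E)+\tfrac12\ln|\lambda/2|=\widetilde L(E)$ that governs its a.e.\ growth by \eqref{equation19}. The plan is to reduce this to Proposition \ref{theorem3.1.} (the Avila--Jitomirskaya uniform upper semicontinuity estimate) applied to a genuinely continuous cocycle. The obstruction to applying it directly to the Jacobi cocycle $(T_\alpha, S_E^{v,c})$ is the singularity: $c(\theta,n)$ vanishes on a dense set of $\theta$, so $S_E^{v,c}$ is unbounded and not continuous. The device, exactly as in the derivation of \eqref{be}, is to pass to $\widetilde M_k$, which is a product built from $\widetilde S_E^{v,c}$ rather than $S_E^{v,c}$, i.e.\ we clear the scalar denominators $c(\theta,j)$.

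First I would write $\widetilde M_k(\theta)$ as an iterate of the $2\alpha$-cocycle $(2\alpha, \widetilde A^E)$ obtained in \eqref{be} (together with the single extra factor $\widetilde S_E^{v,c}(\theta,0)$ or $\widetilde S_E^{v,c}(\theta,1)$ according to the parity of $k$, which only affects constants). Concretely, $\widetilde A^E(\theta)=\widetilde B^E(\theta)$ from \eqref{be} is a polynomial in $e^{2\pi i\theta}$, hence lies in $C^\omega(\T,M(2,\C))$ and in particular is continuous and bounded on $\T$; thus $(2\alpha,\widetilde B^E)$ is a continuous $M(2,\C)$-cocycle with $\ln\|\widetilde B^E(\cdot)\|\in L^1(\T)$. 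Its Lyapunov exponent was already computed implicitly in the proof of Lemma \ref{theorem3.2.}: from $L(2\alpha,\widetilde B^E_\epsilon)=L(2\alpha,\widetilde A^E_\epsilon)+2\pi|\epsilon|+\ln|\lambda/2|$ at $\epsilon=0$ we get
\begin{equation*}
L(2\alpha,\widetilde B^E)=L(2\alpha,\widetilde A^E)+\ln|\lambda/2|=2L(T_\alpha,S_E^{v,c})+\ln|\lambda/2|=2\widetilde L(E).
\end{equation*}
Apply Proposition \ref{theorem3.1.} to $(2\alpha,\widetilde B^E)$ with $\delta=2\epsilon$: there is $C_{\epsilon}(E)$ with $\|\widetilde B^E_n(\theta)\|\le C_\epsilon(E)\,e^{(2\widetilde L(E)+2\epsilon)n}$ for all $n$ and all $\theta\in\T$.

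Next I would translate this back to $\widetilde M_k$. Since $\widetilde M_k(\theta)$ is, up to a bounded parity-dependent factor $\sup_{\theta}\|\widetilde S_E^{v,c}(\theta,\cdot)\|$, equal to $(\widetilde B^E)_{\lfloor k/2\rfloor}(\theta)$ evaluated at the appropriate base point, we obtain
\begin{equation*}
\|\widetilde M_k(\theta)\|\le C'_\epsilon(E)\,e^{(2\widetilde L(E)+2\epsilon)\lfloor k/2\rfloor}\le C'_\epsilon(E)\,e^{(\widetilde L(E)+\epsilon)k}=C'_\epsilon(E)\,e^{(L(E)+\frac12\ln|\lambda/2|+\epsilon)k}
\end{equation*}
for all $\theta\in\T_0$ and all $k$; absorbing the constant $C'_\epsilon(E)$ into the exponent for $k$ larger than some $k_1(E,\epsilon)$ gives the strict inequality as stated. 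The bound $|P_k(\theta)|\le\|\widetilde M_k(\theta)\|$ is immediate from the explicit form of $\widetilde M_k$, since $P_k(\theta)$ is its $(1,1)$ entry.

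The only genuine subtlety — and the step I expect to require the most care — is bookkeeping the parity: the product defining $\widetilde M_k$ alternates between two matrices (the odd-site and even-site one-step matrices $\widetilde S_E^{v,c}(\theta,1)$ and $\widetilde S_E^{v,c}(\theta,0)$), so one must check that grouping them in consecutive pairs really does reproduce $\widetilde A^E=\widetilde B^E$ of \eqref{be} up to a conjugation/base-point shift that is uniformly bounded in $\theta$, and that the leftover single factor when $k$ is odd is bounded uniformly on $\T_0$ (it is, since $\widetilde S_E^{v,c}$ has entries that are cosines and the constant $\lambda$). Everything else is a direct invocation of the already-stated Lemma \ref{theorem3.2.}, its proof, and Proposition \ref{theorem3.1.}.
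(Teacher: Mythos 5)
Your argument is correct and is essentially the same as the paper's proof: both identify $\widetilde M_{2m}$ with the $m$-th iterate of the analytic $M(2,\C)$-cocycle $(2\alpha,\widetilde B^E)$ from \eqref{be}, apply Proposition~\ref{theorem3.1.} for the uniform upper bound, and handle odd $k$ via the uniform boundedness of the single extra factor. The paper's version is terse (it essentially only states the reduction to $\widetilde B^E$ and cites Proposition~\ref{theorem3.1.}); your write-up supplies the Lyapunov-exponent bookkeeping $L(2\alpha,\widetilde B^E)=2\widetilde L(E)$, the choice $\delta=2\epsilon$, the parity analysis, and the absorption of the constant into the exponent, all of which are correct and implicit in the paper.
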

    
	\begin{proof}
    Note that $(2k\alpha, \widetilde M_{k}(\theta))$ is simply the $k$-th iteration of the cocycle $(2\alpha,\tilde B^E(\theta))$, where $\tilde B^E$ is defined in \eqref{be}.
	 By applying Proposition \ref{theorem3.1.}, we can obtain uniform growth 
     of $\widetilde M_{2k}(\theta)$, and subsequently for $\widetilde M_{2k+1}(\theta)$, since $\tilde B^E(\theta)$ is uniformly bounded.
	\end{proof}
		
Now, consider the matrix $\widetilde{M}_{2k}(\theta)$, given by
	\begin{align}\label{11-23-eq42}
    \widetilde{M}_{2k}(\theta)=\begin{pmatrix}
		P_{2k}(\theta)&c(\theta,-1)Q_{2k-1}(\theta)\\-c(\theta,2k-1)P_{2k-1}(\theta)&-c(\theta,-1)c(\theta,2k-1)Q_{2k-2}(\theta)
	\end{pmatrix}.
    \end{align}
	The key observation here is that elements of $\widetilde{M}_{2k}(\theta)$ can be controlled by $P_{2k}(\theta)$ (possible with different $k$ and different $\theta$) in a majority set:
	\begin{lemma}\label{lemma3.3.}
	For any $\theta\in\T_0$ and $k\in\Z$, the following equations hold: 
	\begin{align*}
	(v(\theta,2k-2)-E)P_{2k-1}(\theta)=P_{2k}(\theta)+c^2(\theta,2k-2)P_{2k-2}(\theta),\\
	(v(\theta,0)-E)Q_{2k-1}(\theta)=P_{2k}(\theta)+c^2(\theta,0)P_{2k-2}(\theta+2\alpha),
	\end{align*}
	and
	\begin{equation*}
	\begin{split}
	&(v(\theta,0)-E)(v(\theta,2k-2)-E)Q_{2k-2}(\theta)\\&\quad=P_{2k}(\theta)+c^2(\theta,0)P_{2k-2}(\theta+2\alpha)+c^2(\theta,2k-2)P_{2k-2}(\theta)\\&\quad\quad+c^2(\theta,2k-2)c^2(\theta,0)P_{2k-4}(\theta+2\alpha).
	\end{split}
	\end{equation*}
	\end{lemma}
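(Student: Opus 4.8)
The plan is to establish the three identities in Lemma \ref{lemma3.3.} purely by cofactor (Laplace) expansion of the relevant determinants, exploiting the tridiagonal structure of $(\mathcal{H}_{c,v,\theta}-E)|_{[a,b]}$. Recall that $P_k(\theta)=\det[(\mathcal{H}_{c,v,\theta}-E)|_{[0,k-1]}]$ and $Q_k(\theta)=\det[(\mathcal{H}_{c,v,\theta}-E)|_{[1,k]}]$, and that the diagonal entries are $v(\theta,j)-E$ while the off-diagonal entries at position $(j,j+1)$ (and $(j+1,j)$) are $c(\theta,j)$. Note also that by the translation structure of the cocycle, the restriction $(\mathcal{H}_{c,v,\theta}-E)|_{[1,k]}$ equals $(\mathcal{H}_{c,v,\theta+\alpha}-E)|_{[0,k-1]}$ up to index shift; in the mosaic setting the relevant shift by one site in the doubled picture corresponds to $\theta\mapsto\theta+2\alpha$ after two steps, which is the reason $P_{2k-2}(\theta+2\alpha)$ appears on the right-hand side rather than a shift-free quantity. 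I would make this identification precise first, so that all four quantities $P_{2k},P_{2k-1},P_{2k-2},P_{2k-4}$ and their $\theta+2\alpha$-shifted versions are expressed as principal minors of one fixed tridiagonal matrix.

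For the first identity, expand $P_{2k}(\theta)=\det[(\mathcal{H}-E)|_{[0,2k-1]}]$ along its last row (row index $2k-1$): this row has only two nonzero entries, the diagonal $v(\theta,2k-1)-E$ and the subdiagonal $c(\theta,2k-2)$. The cofactor expansion gives $P_{2k}(\theta)=(v(\theta,2k-1)-E)P_{2k-1}(\theta)-c^2(\theta,2k-2)P_{2k-2}(\theta)$, which is the standard three-term recursion for characteristic polynomials of Jacobi matrices; rearranging yields the stated relation after matching indices (the statement has $(v(\theta,2k-2)-E)$ rather than $2k-1$, so I would double-check the index convention in the definition of $P_k$ and adjust — this is a bookkeeping point, not a real difficulty). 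For the second identity, the left side $(v(\theta,0)-E)Q_{2k-1}(\theta)$ should be handled by expanding $Q_{2k-1}(\theta)=\det[(\mathcal{H}-E)|_{[1,2k-1]}]$ and comparing with the expansion of $P_{2k}(\theta)$ along its \emph{first} row (row index $0$): that row has entries $v(\theta,0)-E$ and $c(\theta,0)$, giving $P_{2k}(\theta)=(v(\theta,0)-E)\det[(\mathcal{H}-E)|_{[1,2k-1]}]-c^2(\theta,0)\det[(\mathcal{H}-E)|_{[2,2k-1]}]$. The first minor is $Q_{2k-1}(\theta)$ and the second is the $(2k-2)$-dimensional principal minor starting at site $2$, which by the shift identification equals $P_{2k-2}(\theta+2\alpha)$. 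Rearranging gives exactly the second identity.

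For the third identity, I would iterate the first-row expansion twice: expand $P_{2k}(\theta)$ along row $0$, then expand the resulting $(2k-2)$-dimensional minor $\det[(\mathcal{H}-E)|_{[1,2k-1]}]$ along \emph{its} last row, and also expand along its first row, to produce $Q_{2k-2}(\theta)$, $P_{2k-2}(\theta)$, $P_{2k-2}(\theta+2\alpha)$ and $P_{2k-4}(\theta+2\alpha)$ with the correct $c^2$ coefficients. Concretely, from the second identity $(v(\theta,0)-E)Q_{2k-1}(\theta)=P_{2k}(\theta)+c^2(\theta,0)P_{2k-2}(\theta+2\alpha)$, and from a last-row expansion of $Q_{2k-1}(\theta)$ one gets $(v(\theta,2k-2)-E)Q_{2k-2}(\theta)=Q_{2k-1}(\theta)+c^2(\theta,2k-2)\cdot(\text{the minor }[1,2k-3])$ — wait, more precisely $Q_{2k-1}(\theta)=(v(\theta,2k-1)-E)Q_{2k-2}(\theta)-c^2(\theta,2k-2)Q_{2k-3}(\theta)$ by the three-term recursion on the $Q$'s — so I would multiply the second identity by $(v(\theta,2k-2)-E)$, substitute this recursion, then use the second identity once more at level $2k-2$ (i.e. $(v(\theta,0)-E)Q_{2k-3}(\theta)=P_{2k-2}(\theta)+c^2(\theta,0)P_{2k-4}(\theta+2\alpha)$) to eliminate $Q_{2k-3}$, collecting terms to reach the claimed four-term formula. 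The main obstacle I anticipate is not any single computation but keeping the index conventions and the $\theta$-versus-$\theta+2\alpha$ shifts consistent throughout — in particular making sure that a principal minor over an index block $[a,b]$ of the fixed operator is correctly recognized as $P_{b-a+1}$ evaluated at the appropriately shifted phase, and that the mosaic structure (alternating roles of even/odd sites, with $c(\theta,2j+1)=\lambda$ constant) does not introduce spurious phase dependence. Once the dictionary between minors and shifted $P$'s is fixed, all three identities are routine consequences of Laplace expansion along the first or last row.
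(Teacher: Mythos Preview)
Your proposal is correct and follows essentially the same approach as the paper: Laplace expansion along the last row/column for the first identity and the $Q$-recursion, expansion along the first row/column for the second identity (identifying the minor over $[2,2k-1]$ with $P_{2k-2}(\theta+2\alpha)$ via the shift relation $v(\theta,n+2)=v(\theta+2\alpha,n)$), and then combining the $Q$-recursion with the second identity at levels $k$ and $k-1$ to derive the third. The index mismatch you flagged --- $v(\theta,2k-2)$ versus the expected $v(\theta,2k-1)$ from the last-row expansion --- is resolved precisely by the mosaic structure $v(\theta,2n)=v(\theta,2n+1)$, which the paper invokes explicitly at the outset of its proof; once you note this, the bookkeeping concern you mention disappears.
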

	
	\begin{proof}
		We begin by noting the recurrence relations for $v$ and $c$, which state:
        $$v(\theta,2n)=v(\theta,2n+1)=c(\theta,2n),\quad  c(\theta,2n+1)=\lambda,\quad v(\theta,n+2)=v(\theta+2\alpha,n).$$ Expanding the determinant of $( \mathcal{H}_{c,v,\theta}-E)|_{[0,2k-1]}$ and  $( \mathcal{H}_{c,v,\theta}-E)|_{[1,2k-1]}$ along the last column, we obtain:
		\begin{equation*}
			\begin{split}
			&(v(\theta,2k-2)-E)P_{2k-1}(\theta)=P_{2k}(\theta)+c^2(\theta,2k-2)P_{2k-2}(\theta),\\
			&Q_{2k-1}(\theta)=(v(\theta,2k-2)-E)Q_{2k-2}(\theta)-c^2(\theta,2k-2)Q_{2k-3}(\theta).
			\end{split}
		\end{equation*}
		Similarly, expanding the determinant of $( \mathcal{H}_{c,v,\theta}-E)|_{[0,2k-1]}$ along its first column yields:
		\begin{equation*}
			P_{2k}(\theta)=(v(\theta,0)-E)Q_{2k-1}(\theta)-c^2(\theta,0)P_{2k-2}(\theta+2\alpha).
		\end{equation*}
		From this, we derive:
        \begin{align*}
            (v(\theta,0)-E)Q_{2k-1}(\theta)=P_{2k}(\theta)+c^2(\theta,0)P_{2k-2}(\theta+2\alpha).
        \end{align*}
        Substituting the expression for $Q_{2k-1}(\theta)$ into the determinant expansions, we derive:
        \begin{align*}
            (v&(\theta,0)-E)(v(\theta,2k-2)-E)Q_{2k-2}(\theta)\\&=P_{2k}(\theta)+c^2(\theta,0)P_{2k-2}(\theta+2\alpha)+c^2(\theta,2k-2)P_{2k-2}(\theta)\\&\quad+c^2(\theta,2k-2)c^2(\theta,0)P_{2k-4}(\theta+2\alpha).
        \end{align*}
        We thus finish the whole proof.
		\end{proof}

	\begin{lemma}\label{lemma3.4.}
	For every $k\in 2\N$,	there exists a polynomial $R_{\frac{k}{2}}$ of degree $\frac{k}{2}$ such that
		$$P_{k}(\theta)=R_{\frac{k}{2}}\left(\cos2\pi(\theta+(\frac{k}{2}-1)\alpha)\right).$$
	\end{lemma}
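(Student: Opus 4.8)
The plan is to establish two structural facts about the Dirichlet determinant $P_k(\theta)=\det[(\mathcal{H}_{c,v,\theta}-E)|_{[0,k-1]}]$ and then combine them via the Chebyshev identity $\cos 2\pi l\phi=T_l(\cos2\pi\phi)$. Throughout write $a_j:=\cos2\pi(\theta+2j\alpha)$ and recall the mosaic structure $v(\theta,2j)=v(\theta,2j+1)=c(\theta,2j)=a_j$, $c(\theta,2j+1)=\lambda$. The two facts are: \textbf{(i)} for $k=2m$, $P_{2m}$ is a trigonometric polynomial in $\theta$ of degree $\le m$; \textbf{(ii)} $P_{2m}(\theta)=P_{2m}(-\theta-2(m-1)\alpha)$, i.e. $P_{2m}$ is invariant under the reflection fixing $\theta+(m-1)\alpha\mapsto-(\theta+(m-1)\alpha)$. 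Granting (i) and (ii), the substitution $\phi=\theta+(m-1)\alpha$ makes $\phi\mapsto P_{2m}(\phi-(m-1)\alpha)$ an \emph{even} trigonometric polynomial in $\phi$ of degree $\le m$, hence equal to $\sum_{l=0}^{m}c_l\cos2\pi l\phi=R_m(\cos2\pi\phi)$ with $R_m$ a polynomial of degree $\le m$; this is precisely the claimed formula $P_k(\theta)=R_{k/2}(\cos2\pi(\theta+(k/2-1)\alpha))$.

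For (i), I would run the cofactor expansion along the last row (as in the proof of Lemma \ref{lemma3.3.}) to obtain the standard recursion $P_k=(v(\theta,k-1)-E)P_{k-1}-c(\theta,k-2)^2P_{k-2}$ with $P_0=1$, $P_{-1}=0$, specialize it to the mosaic coefficients as $P_{2m}=(a_{m-1}-E)P_{2m-1}-a_{m-1}^2P_{2m-2}$ and $P_{2m-1}=(a_{m-1}-E)P_{2m-2}-\lambda^2P_{2m-3}$, and eliminate $P_{2m-1}$ to get
\[
P_{2m}=(E^2-2Ea_{m-1})P_{2m-2}-\lambda^2(a_{m-1}-E)P_{2m-3}.
\]
The point is that $(a_{m-1}-E)^2-a_{m-1}^2=E^2-2Ea_{m-1}$ is only \emph{linear} in $a_{m-1}$, so each term raises the trigonometric degree by at most one. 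A simultaneous induction on $m$, carrying the auxiliary bound $\deg_\theta P_{2m-1}\le m$ (immediate from $P_{2m-1}=(a_{m-1}-E)P_{2m-2}-\lambda^2P_{2m-3}$), then yields $\deg_\theta P_{2m}\le m$; the small cases $P_0,P_1,P_2,P_3$ are checked by hand.

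For (ii), I would exploit that the truncated operator is, up to relabelling of sites, palindromic: conjugating the $2m\times2m$ matrix $(\mathcal{H}_{c,v,\theta}-E)|_{[0,2m-1]}$ by the flip permutation $j\mapsto 2m-1-j$ reverses the diagonal block sequence $a_0,a_1,\dots,a_{m-1}$ and the off-diagonal pattern $(a_0,\lambda,a_1,\lambda,\dots,\lambda,a_{m-1})$, producing the matrix of the \emph{same} mosaic operator with $a_i$ replaced by $a_{m-1-i}$. Since $a_{m-1-i}=\cos2\pi(\theta+2(m-1-i)\alpha)=\cos2\pi(\theta'+2i\alpha)$ for $\theta':=-\theta-2(m-1)\alpha$, the flipped matrix is exactly $(\mathcal{H}_{c,v,\theta'}-E)|_{[0,2m-1]}$, and invariance of the determinant under conjugation gives $P_{2m}(\theta)=P_{2m}(\theta')$, which is (ii).

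Finally, to upgrade $\deg R_m\le m$ to $\deg R_m=m$, I would track the top Fourier coefficient $\hat P_m$ of $P_{2m}$ through the combined recursion: after factoring out the explicit phase $e^{2\pi i(m^2-m)\alpha}$, the remaining real scalars $\tilde p_m$ obey the constant-coefficient recursion $\tilde p_m=-E\tilde p_{m-1}-\tfrac{\lambda^2}{4}\tilde p_{m-2}$ with $\tilde p_0=1$, $\tilde p_1=-E$, so $\tilde p_m=(x_+^{m+1}-x_-^{m+1})/(x_+-x_-)$ with $x_\pm=\tfrac12(-E\pm\sqrt{E^2-\lambda^2})$, which is nonzero whenever $E\ne0$ — in particular on $\sigma_{\text{sup}}^{\mathcal{H}}(c,v)$, where this lemma is used. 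I expect the only genuinely non-mechanical steps to be spotting the reflection symmetry (ii) and noticing the cancellation in the combined recursion that keeps the degree linear in the number of pairs; the rest is bookkeeping.
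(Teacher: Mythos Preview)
Your proof is correct and follows the same overall architecture as the paper: establish the reflection symmetry, bound the trigonometric degree, and then invoke the Chebyshev identity. Your symmetry argument (ii) via the flip permutation is exactly the paper's argument, only phrased in scalar rather than $2\times2$-block language. The difference lies in how you obtain the degree bound (i). The paper conjugates each diagonal block $V_j(\theta)=\cos2\pi(2j\alpha+\theta)\bigl(\begin{smallmatrix}1&1\\1&1\end{smallmatrix}\bigr)$ by $U=\bigl(\begin{smallmatrix}1&1\\-1&1\end{smallmatrix}\bigr)$ to the rank-one matrix $\cos2\pi(2j\alpha+\theta)\bigl(\begin{smallmatrix}2&0\\0&0\end{smallmatrix}\bigr)$, from which $\deg_\theta P_{2l}\le l$ is immediate since each block contributes a single $\theta$-dependent entry to the determinant. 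You instead combine the two scalar recursion steps and observe the cancellation $(a_{m-1}-E)^2-a_{m-1}^2=E^2-2Ea_{m-1}$, which is arguably more elementary and makes the mechanism of the degree drop explicit; the paper's route is slicker but hides this in the rank-one structure of $V_j$.

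One small slip: your claim that $\tilde p_m=(x_+^{m+1}-x_-^{m+1})/(x_+-x_-)$ is nonzero ``whenever $E\ne0$'' is false for $0<|E|<\lambda$, where $x_\pm=\tfrac{\lambda}{2}e^{\pm i\phi}$ with $\cos\phi=-E/\lambda$ and $\tilde p_m$ vanishes at the Chebyshev nodes $E=-\lambda\cos\tfrac{k\pi}{m+1}$. Your parenthetical ``in particular on $\sigma_{\text{sup}}^{\mathcal{H}}(c,v)$'' is the correct salvage: for $|E|>\lambda$ the roots $x_\pm$ are real of the same sign with $|x_+|\ne|x_-|$, so $\tilde p_m\ne0$. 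Note that the paper's own proof only establishes $\deg R_{k/2}\le k/2$ and never verifies equality, so your extra step, while imperfectly stated, actually goes beyond what the paper does.
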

	\begin{proof}
The eigenfunction equation $ \mathcal{H}_{c,v,\theta}u=Eu$
can be written as:
$$
C\begin{pmatrix}
		u(2j+2)\\ u(2j+3) 
		\end{pmatrix} + C^* \begin{pmatrix}
		u(2j-2)\\ u(2j-1) 
		\end{pmatrix} + V_{j}(\theta) \begin{pmatrix}
		u(2j)\\ u(2j+1) 
		\end{pmatrix}=E\begin{pmatrix}
		u(2j)\\ u(2j+1) 
		\end{pmatrix},
$$
where $$C=\begin{pmatrix}
			0&0\\\lambda&0
		\end{pmatrix},\quad V_{j}(\theta)=\cos2\pi(2j\alpha+\theta)\begin{pmatrix}
			1&1\\1&1
		\end{pmatrix}.$$
For $k=2l$, the restriction of $ \mathcal{H}_{c,v,\theta}$ to $[0,k-1]$ is given by
		$$ \mathcal{H}_{c,v,\theta}|_{[0,k-1]}=\begin{pmatrix}
			V_{0}(\theta)&C&O_2&\cdots&O_2\\C^*&V_{1}(\theta)&C&\cdots&O_2\\O_2&C^*&V_{2}(\theta)&\cdots&O_2\\\vdots&\vdots&\vdots&\ddots&\vdots\\O_2&O_2&O_2&\cdots&V_{l-1}(\theta)
		\end{pmatrix}_{2l\times 2l},$$
		where $O_2$ is the $2\times2$ zero matrix.
        
        Define the matrix $Q$ as
$$Q=\begin{pmatrix}&&1\\&\begin{sideways}$\ddots$\end{sideways}& \\1& & 
		\end{pmatrix}_{2l\times2l}.$$	
	Applying the transformation $Q$ to $ \mathcal{H}_{c,v,\theta}|_{[0,k-1]}$, we find
    \small
	\begin{equation*}
	\begin{split}
	&QH_{c,v,\theta-(l-1)\alpha}|_{[0,k-1]}Q^{-1}\\&=\begin{pmatrix}
			V_{l-1}(\theta-(l-1)\alpha)&C&O_2&\cdots&O_2\\C^*&V_{l-2}(\theta-(l-1)\alpha)&C&\cdots&O_2\\O_2&C^*&V_{l-3}(\theta-(l-1)\alpha)&\cdots&O_2\\\vdots&\vdots&\vdots&\ddots&\vdots\\O_2&O_2&O_2&\cdots&V_{0}(\theta-(l-1)\alpha)
		\end{pmatrix}\\&=H_{c,v,-\theta-(l-1)\alpha}|_{[0,k-1]},
		\end{split}
		\end{equation*}
	\normalsize
    which implies
		\begin{equation}\label{meq20}
		P_{k}(\theta-(l-1)\alpha)=P_{k}(-\theta-(l-1)\alpha).
		\end{equation}
		Thus, the Fourier expansion of $\theta\to P_{k}(\theta-(l-1)\alpha)$ reads
		$$P_{k}(\theta)=\sum_{j=0}^{l}a_j\cos2\pi j(\theta+(l-1)\alpha),$$
		since all the $\sin$ terms are absent due to \eqref{meq20} and the degree obviously does not exceed $l$ due to
	$$UV_j(\theta)U^{-1}=\cos2\pi(2j\alpha+\theta)\begin{pmatrix}2&0\\0&0\end{pmatrix},$$
		where $U=\begin{pmatrix}1&1\\-1&1\end{pmatrix}$. From this representation the lemma follows since one can see inductively that the linear span of $\{1,\cos2\pi x,\dots,\cos2\pi lx\}$ is equal to that of $\{1,\cos2\pi x,\dots,\cos^{l}2\pi x\}$.
		\end{proof}

When the Lyapunov exponent is positive, Lemma \ref{lemma3.2.} implies that some of the entries must grow exponentially. These entries in turn appear in a description of Green's function of the operator restricted to a finite interval. Namely, by Cramer's Rule, if we denote
$$\Delta_{m,n}(\theta)=\det[( \mathcal{H}_{c,v,\theta}-E)|_{[m,n]}],$$
then for $n_1,n_2=n_1+k-1$, and $n\in[n_1,n_2]$, the following relations hold:
\begin{equation}\label{equation21}
	\begin{split}
		&\left|G_{[n_1,n_2]}(n_1,n)\right|=\left|\frac{\Delta_{n+1,n_2}(\theta)}{\Delta_{n_1,n_2}(\theta)}\prod_{l=n_1}^{n-1}c(\theta,l)\right|,\\
		&\left|G_{[n_1,n_2]}(n,n_2)\right|=\left|\frac{\Delta_{n_1,n-1}(\theta)}{\Delta_{n_1,n_2}(\theta)}\prod_{l=n+1}^{n_2}c(\theta,l)\right|.
	\end{split}
\end{equation}
	
	A useful definition related to Green's function is as follows:
	\begin{definition}
		Fix $E\in\mathbb{R}$ and $\xi\in\mathbb{R}$. A point $n\in\mathbb{Z}$ is called $(\xi,k)$-regular if there exists an interval $[n_1,n_2]$ , where $n_2=n_1+k-1$, containing $n$, such that
		$$|G_{[n_1,n_2]}(n,n_i)|<e^{-\xi|n-n_i|},\quad\text{and}\quad|n-n_i|\ge\frac{1}{7}k,\quad i=1,2.$$
Otherwise, $n$ is called $(\xi,k)$-singular.
		\end{definition}
	
	It is well known that any formal solution $u$ of the equation $ \mathcal{H}_{c,v,\theta}=Eu$ at a point $n\in[n_1,n_2]$ can be reconstructed from the boundary values via
\begin{equation}\label{equation24.}
	u(n)=-c(\theta,n_1-1)u(n_1-1)G_{[n_1,n_2]}(n,n_1)-c(\theta,n_2)u(n_2+1)G_{[n_1,n_2]}(n,n_2).
\end{equation}
	This implies that if $u$ is a generalized eigenfunction corresponding to $E$, then every point $n\in\mathbb{Z}$ where $u(n)\neq0$ is $(\xi,k)$-singular for sufficiently large $k$. In the following, we assume $u_0\neq0$ (if not, we can replace $u_0$ by $u_1$). 

\begin{proposition}\cite{jitomirskaya1999metal}\label{11-28-proposition7.11}
    Suppose $\alpha\in DC(\gamma,\sigma)$. If $f(x)$ is an analytic function, $z\in[\min f,\max f]$, then for all $\epsilon>0$ there exists a $N(\epsilon)$ such that for $n>N(\epsilon)$ and, if desired, any $l_0\in[0,\dots,n-1]$
    \begin{align*}
        \sum_{j=0,j\ne l_0}^{n-1}\ln|z-f(\theta+j\alpha)|\le n\left(\int_0^1\ln|z-f(\theta)|d\theta+\epsilon\right).
    \end{align*}
\end{proposition}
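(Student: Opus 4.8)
The plan is to reduce the estimate to the uniform ergodic theorem for the irrational rotation, by first dominating $\ln|z-f|$ from above by a \emph{continuous} function; once that is done, both the Birkhoff averaging and the deletion of a single term become routine. Write $g(\theta):=\ln|z-f(\theta)|$ and $I:=\int_0^1 g(\theta)\,d\theta$; we may assume $z-f\not\equiv 0$, the contrary case being trivial since then the right-hand side is $-\infty$.

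First I would record that $g\in L^1(\T)$ with $I>-\infty$: because $f$ is analytic and $z\in[\min f,\max f]$, the real-analytic function $z-f$ is not identically zero, hence vanishes at only finitely many points of $\T$, each to finite order, so $g$ has only finitely many logarithmic singularities and is integrable while being bounded above by $\ln\|z-f\|_{\infty}$. Next, for $K>0$ set $g_K(\theta):=\max\{g(\theta),-K\}$; this is continuous on $\T$, satisfies $g_K\ge g$ and $g_K\ge -K$, and $g_K\searrow g$ pointwise as $K\to\infty$, so by monotone convergence $\int_0^1 g_K\to I$ and we may fix $K=K(\epsilon)$ with $\int_0^1 g_K\le I+\tfrac{\epsilon}{3}$. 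The core computation is then, for any $\theta\in\T$ and any $l_0\in\{0,\dots,n-1\}$,
\begin{equation*}
\sum_{j=0,\,j\ne l_0}^{n-1}\ln|z-f(\theta+j\alpha)|\ \le\ \sum_{j=0,\,j\ne l_0}^{n-1}g_K(\theta+j\alpha)\ =\ \sum_{j=0}^{n-1}g_K(\theta+j\alpha)-g_K(\theta+l_0\alpha)\ \le\ \sum_{j=0}^{n-1}g_K(\theta+j\alpha)+K,
\end{equation*}
using $g\le g_K$ in the first step and $g_K\ge -K$ in the last. Since $g_K$ is a \emph{fixed} continuous function and the rotation by the irrational $\alpha$ is uniquely ergodic, its Birkhoff sums converge uniformly, so there is $N_1(\epsilon)$ with $\sum_{j=0}^{n-1}g_K(\theta+j\alpha)\le n\big(\int_0^1 g_K+\tfrac{\epsilon}{3}\big)$ for all $n>N_1(\epsilon)$ and all $\theta$. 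Combining, $\sum_{j\ne l_0}\ln|z-f(\theta+j\alpha)|\le n(I+\tfrac{2\epsilon}{3})+K$, and taking $N(\epsilon):=\max\{N_1(\epsilon),3K/\epsilon\}$ absorbs $K$ into $n\epsilon/3$, which is the assertion.

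The step I expect to be the real obstacle — or rather, the one needing the right trick — is the deletion of the $l_0$-th term: a priori $\ln|z-f(\theta+l_0\alpha)|$ can be an arbitrarily large negative number, namely whenever $\theta+l_0\alpha$ falls very close to a zero of $z-f$, so naively subtracting this term from an upper bound for the full Birkhoff sum is useless. Passing to the continuous majorant $g_K$ \emph{before} deleting the term is precisely what resolves this, since dropping one summand bounded below by $-K$ costs only $+K$, negligible against $n\epsilon$. As a byproduct, this one-sided bound needs only $L^1$-regularity of $g$ and unique ergodicity, so the Diophantine hypothesis $\alpha\in DC(\gamma,\sigma)$ is not essential here and is kept only as a standing assumption. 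If one prefers an argument built purely from tools already in place, one can instead split $\{0,\dots,n-1\}$ into consecutive blocks of length $q_m$ with $q_m=q_m(\epsilon)$ large, apply Lemma~\ref{ten} on each block, and treat the block containing $l_0$ by deleting the block-minimizing index (so Lemma~\ref{ten} applies verbatim) or, if $l_0$ is not that index, by noting that the $q_m$ orbit points are $\gtrsim q_m^{-1}$-separated so that term is only $O(\log q_m)$; summing over the $O(n/q_m)$ blocks again yields the bound $n\epsilon$.
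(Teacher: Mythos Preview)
The paper does not give its own proof of this proposition: it is quoted as a known result from \cite{jitomirskaya1999metal} and used as a black box in the proof of Lemma~\ref{lemma3.6.}. Your main argument --- truncate $g=\ln|z-f|$ from below to the continuous $g_K=\max\{g,-K\}$, choose $K$ so $\int g_K\le I+\epsilon/3$, delete the $l_0$-th term \emph{after} truncation at cost $+K$, then invoke uniform convergence of Birkhoff averages by unique ergodicity --- is correct and is essentially the argument in the cited source. Your observation that only irrationality of $\alpha$ (not the Diophantine condition) is needed for this one-sided bound is also correct.

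One small caveat on your alternative sketch: Lemma~\ref{ten} is stated specifically for products $\prod|\sin\pi(x+l\alpha)|$, so applying it ``on each block'' to $\sum\ln|z-f(\theta+j\alpha)|$ for a \emph{general} analytic $f$ is not immediate; you would first have to factor $z-f(\theta)$ over its (finitely many) real zeros and reduce to sine products, which is routine but not literally what Lemma~\ref{ten} says. Your primary truncation proof avoids this entirely and is the cleaner route.
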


	Denote the set $W_{k,r}=\left\{x\in\mathbb{T}:|R_{\frac{k}{2}}(\cos2\pi x)|\le e^{(k+1)(r+\frac{1}{2}\ln\lambda)}\right\},\ k\in2\N.$
	Then we have the following result:
	\begin{lemma}\label{lemma3.6.}
		Suppose $k\in2\N$, and $y\in\mathbb{Z}$ is $(L(E)-\epsilon,k)$-singular. Then, for every $j\in\mathbb{Z}$ satisfying $y-\frac{5}{6}k\le2j\le y-\frac{1}{6}k$, we have that $\theta+\left(2j+\frac{k}{2}-1\right)\alpha$ belongs to $W_{k,L(E)-\frac{\epsilon}{8}}$ for $k>k_2(E,\epsilon)$.
	\end{lemma}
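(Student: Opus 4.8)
The plan is to run the classical Jitomirskaya ``singular point $\Rightarrow$ small block determinant'' scheme, adapted to the $2$-periodic structure of $\mathcal H_{c,v,\theta}$. Given the $(L(E)-\epsilon,k)$-singular point $y$ and a $j$ with $y-\tfrac56 k\le 2j\le y-\tfrac16 k$, I would take the block $I:=[n_1,n_2]$ with $n_1=2j$, $n_2=2j+k-1$. Then $y\in I$, and since $y-n_1=y-2j\in[\tfrac16 k,\tfrac56 k]$ and $n_2-y\in[\tfrac16 k-1,\tfrac56 k-1]$, both $|y-n_i|\ge\tfrac17 k$ once $k\ge 42$. Hence the regularity test for $y$ on $I$ fails, so at least one of $|G_I(y,n_1)|\ge e^{-(L(E)-\epsilon)(y-n_1)}$ or $|G_I(y,n_2)|\ge e^{-(L(E)-\epsilon)(n_2-y)}$ holds.

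Treat the first alternative, the second being symmetric. Since $\mathcal H_{c,v,\theta}$ is self-adjoint, $|G_I(y,n_1)|=|G_I(n_1,y)|$, and the Cramer identity \eqref{equation21} gives $|G_I(n_1,y)|=\bigl|\Delta_{y+1,n_2}(\theta)\,\Delta_{n_1,n_2}(\theta)^{-1}\prod_{l=n_1}^{y-1}c(\theta,l)\bigr|$, so $|\Delta_{n_1,n_2}(\theta)|\le e^{(L(E)-\epsilon)(y-n_1)}\,|\Delta_{y+1,n_2}(\theta)|\prod_{l=n_1}^{y-1}|c(\theta,l)|$. Because $v,c$ satisfy $v(\cdot,n+2)=v(\cdot+2\alpha,n)$, $c(\cdot,n+2)=c(\cdot+2\alpha,n)$ and $n_1=2j$ is even, $\Delta_{n_1,n_2}(\theta)=P_k(\theta+2j\alpha)$, which by Lemma \ref{lemma3.4.} equals $R_{k/2}(\cos2\pi(\theta+(2j+\tfrac k2-1)\alpha))$, the quantity governing membership in $W_{k,\cdot}$. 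For $|\Delta_{y+1,n_2}(\theta)|$ I would invoke \eqref{equation19}/Lemma \ref{lemma3.2.}: this is — up to a parity shift of the operator, which has the same Lyapunov exponent and the same mean $\tfrac12\ln|\lambda/2|$ of $\ln|c|$ — a block determinant of the same type and size $n_2-y$, hence $\le e^{(L(E)+\frac12\ln|\lambda/2|+\epsilon')(n_2-y)}$ for $k$ large. For $\prod_{l=n_1}^{y-1}|c(\theta,l)|$ I would split off the odd $l$ (about $\tfrac12(y-n_1)$ factors equal to $\lambda$) from the even $l$ (factors $|\cos2\pi(\theta+l\alpha)|$, running over a $2\alpha$-orbit); since $2\alpha\in DC$, Proposition \ref{11-28-proposition7.11} with $f=\cos2\pi(\cdot)$, $z=0$ and one excluded index gives $\prod_{\mathrm{even}\ l}|\cos2\pi(\theta+l\alpha)|\le e^{\frac{y-n_1}{2}(-\ln2+\epsilon'')}$, whence $\prod_{l=n_1}^{y-1}|c(\theta,l)|\le e^{(y-n_1)(\frac12\ln(\lambda/2)+\epsilon''')}$.

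Writing $a=y-n_1$, $b=n_2-y$, $a+b=k-1$, and using $L(E)+\tfrac12\ln|\lambda/2|=\widetilde L(E)$, the three exponents collapse to $\widetilde L(E)(k-1)$ plus an error bounded by $\epsilon\max(b-\tfrac a2,\,a-\tfrac b2)+(\epsilon'+\epsilon''+\epsilon''')k$. Since $a,b\in[\tfrac16 k-1,\tfrac56 k]$, one has $\max(b-\tfrac a2,a-\tfrac b2)\le\tfrac34 k$; noting $L(E)+\tfrac12\ln\lambda=\widetilde L(E)+\tfrac12\ln2$, the slack $\tfrac12\ln2$ absorbs the $\tfrac34\epsilon$-term and still leaves the $-\tfrac\epsilon8$ once $\epsilon',\epsilon'',\epsilon'''$ are small relative to $\epsilon$ and $k>k_2(E,\epsilon)$. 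Thus $|P_k(\theta+2j\alpha)|\le e^{(k+1)(L(E)-\epsilon/8+\frac12\ln\lambda)}$, i.e. $\theta+(2j+\tfrac k2-1)\alpha\in W_{k,L(E)-\epsilon/8}$. The second alternative is handled identically, with the cosine product now over $[y+1,n_2]$ and the roles of $a,b$ exchanged.

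The main obstacle is the parity bookkeeping. Unlike the scalar almost Mathieu case of \cite{jitomirskaya1999metal}, the restrictions $\mathcal S|_{[m,n]}$ fall into two operator families according to the parity of $m$, and one must check that Lemma \ref{lemma3.2.} holds for both with identical constants, that the number of $\lambda$-factors versus cosine-factors in each $\prod_l|c(\theta,l)|$ equals $\lfloor\cdot\rfloor$ or $\lceil\cdot\rceil$ of half the length regardless of the endpoints' parities, and that the ensuing $O(1)$ discrepancies do not disturb the exponent. A secondary point is to confirm that the window $y-\tfrac56 k\le 2j\le y-\tfrac16 k$ is exactly wide enough that the most lopsided admissible block still meets the error budget above.
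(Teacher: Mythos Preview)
Your approach is essentially the paper's: Cramer's rule for the Green's function, Lemma~\ref{lemma3.2.} for the numerator determinant, and Proposition~\ref{11-28-proposition7.11} for the product of $c$-factors, followed by Lemma~\ref{lemma3.4.} to identify $P_k$ with $R_{k/2}$. The only structural difference is that the paper works with the \emph{right} endpoint $n_2$, so the numerator is $\Delta_{2j,\,y-1}=P_{y-2j}(\theta+2j\alpha)$, which always starts at the even index $2j$; this sidesteps exactly the parity bookkeeping you flag as the main obstacle when using the left endpoint. One correction to your exponent count: summing $(L(E)-\epsilon)a$, $(\widetilde L(E)+\epsilon')b$, and $(\tfrac12\ln|\lambda/2|+\epsilon''')a$ gives $\widetilde L(E)(k-1)-\epsilon a+\epsilon'''a+\epsilon'b$, so the error term is $-\epsilon a$ (a \emph{gain}, since $a\ge k/6$), not $+\epsilon\max(b-\tfrac a2,a-\tfrac b2)$. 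The $\tfrac12\ln2$ slack you invoke is therefore not needed at all---the gain $-\epsilon a\le-\epsilon k/6$ already dominates the target $-\epsilon(k+1)/8$ once $\epsilon',\epsilon'''$ are chosen small relative to $\epsilon$.
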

	\begin{proof}
		Since $y$ is $(L(E)-\epsilon,k)$-singular, without loss of generality, assume that for every interval $[2j,2j+k-1]$ of length $k$ containing $y$ with $y-\frac{5}{6}k\le 2j\le y-\frac{1}{6}k$, we have
		$$|G_{[2j,2j+k-1]}(y,2j+k-1)|\ge e^{-(L(E)-\epsilon)|y-2j-k+1|}.$$
		
		Using the second equation in \eqref{equation21}, it follows that
		\begin{equation}
			\begin{split}|G_{[2j,2j+k-1]}(y,2j+k-1)|&=\left|\frac{\Delta_{2j,y-1}(\theta)}{\Delta_{2j,2j+k-1}(\theta)}\prod_{l=y+1}^{2j+k-1}c(\theta,l)\right|=\left|\frac{P_{y-2j}(\theta+2j\alpha)}{P_{k}(\theta+2j\alpha)}\prod_{l=y+1}^{2j+k}c(\theta,l)\right|\\&\ge e^{-(L(E)-\epsilon)|y-2j-k+1|}.
			\end{split}
		\end{equation}
	
	By Lemma \ref{lemma3.2.}, we obtain
	$$|P_{y-2j}(\theta+2j\alpha)|\le e^{|y-2j|(L(E)+\frac{1}{2}\ln|\lambda/2|+\frac{\epsilon}{90})},\text{ for }k>6k_1(E,\frac{\epsilon}{90}),$$
	which implies that
	$$|P_k(\theta+2j\alpha)|\le e^{k(L(E)-\frac{2\epsilon}{15})}\left|\prod_{l=y+1}^{2j+k}c(\theta,l)\right|.$$

	On the other hand, by Proposition \ref{11-28-proposition7.11}, there exists $k_2(E,\epsilon)>6k_1(E,\frac{\epsilon}{90})$ such that if $k>k_2(E,\epsilon)$, then
	$$\ln\prod_{l=y+1}^{2j+k}|c(\theta,l)|\le(2j+k-y)\left(\frac{1}{2}\ln|\frac{\lambda}{2}|+\frac{\epsilon}{120}\right)$$
    for any $\theta\in\T_0$.   Consequently, by Lemma \ref{lemma3.4.}, we have
	\begin{equation*}
		|P_{k}(\theta+2j\alpha)|=\left|R_{\frac{k}{2}}(\cos2\pi(\theta+(2j+\frac{k}{2}-1)\alpha))\right|\le e^{(k+1)(L(E)+\frac{1}{2}\ln|\lambda/2|-\frac{\epsilon}{8})}.
	\end{equation*}
Thus, $\theta+(2j+\frac{k}{2}-1)\alpha\in W_{k,L(E)-\frac{\epsilon}{8}}$ for $k>k_2(E,\epsilon)$.
\end{proof}

	We can express the polynomial $R_k(x)$ in its Lagrange interpolation form as follows:
	\begin{equation}\label{equation24}
	|R_k(x)|=\left|\sum_{j=0}^kR_k(\cos2\pi\theta_j)\frac{\prod_{l\neq j}(x-\cos2\pi\theta_l)}{\prod_{l\neq j}(\cos2\pi\theta_j-\cos2\pi\theta_l)}\right|,
	\end{equation}
where $|\theta_j|\neq|\theta_l|\text{ for }j\neq l.$ 

We now introduce the following useful definition:
	\begin{definition}
		A set $\left\{\theta_0,\cdots,\theta_k\right\}$ is called $\epsilon$-uniform if it satisfies the condition:
$$\max_{x\in[-1,1]}\max_{i=0,\cdots,k}\prod_{j=0,j\neq i}^k\frac{|x-\cos2\pi\theta_j|}{|\cos2\pi\theta_i-\cos2\pi\theta_j|}<e^{k\epsilon}.$$
	\end{definition}

	\begin{lemma}\label{lemma3.8.}
		Let $0<\epsilon'<\epsilon$, $k\in2\mathbb{N}$, and $L(E)>0$. If $\theta_0,\cdots,\theta_{\frac{k}{2}}\in W_{k,L(E)-\epsilon}$, then the set $\left\{\theta_0,\cdots,\theta_{\frac{k}{2}}\right\}$ is not $\epsilon'$-uniform for $k>k_3(\epsilon,\epsilon')$.
	\end{lemma}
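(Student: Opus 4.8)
The plan is to run Jitomirskaya's scheme from \cite{jitomirskaya1999metal}: I will show that an $\epsilon'$-uniform set would force the polynomial $R_{\frac k2}$ to be uniformly small on $[-1,1]$, which is incompatible with the exponential growth of $P_k$ dictated by $L(E)>0$. Arguing by contradiction, assume $\{\theta_0,\dots,\theta_{k/2}\}$ is $\epsilon'$-uniform; after discarding nodes I may assume $\cos2\pi\theta_0,\dots,\cos2\pi\theta_{k/2}$ are pairwise distinct. Since $R_{\frac k2}$ has degree $\tfrac k2$ and there are $\tfrac k2+1$ nodes, the Lagrange interpolation formula \eqref{equation24} reproduces $R_{\frac k2}$ exactly, and estimating the Lagrange basis polynomials on $[-1,1]$ by the uniformity constant gives $|R_{\frac k2}(x)|\le(\tfrac k2+1)e^{\frac k2\epsilon'}\max_{0\le j\le k/2}|R_{\frac k2}(\cos2\pi\theta_j)|$ for every $x\in[-1,1]$. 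Since each $\theta_j\in W_{k,L(E)-\epsilon}$, the definition of $W_{k,\cdot}$ bounds $|R_{\frac k2}(\cos2\pi\theta_j)|\le e^{(k+1)(\widetilde L(E)-\epsilon)}$, where $\widetilde L(E):=L(E)+\tfrac12\ln|\lambda/2|$ is the quantity appearing in \eqref{equation19}; combining,
\begin{equation*}
\max_{x\in[-1,1]}|R_{\frac k2}(x)|\ \le\ \Big(\tfrac k2+1\Big)\exp\!\Big(\tfrac k2\epsilon'+(k+1)\big(\widetilde L(E)-\epsilon\big)\Big).
\end{equation*}

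Against this I would put a matching lower bound. By Lemma \ref{lemma3.4.}, $\max_{x\in[-1,1]}|R_{\frac k2}(x)|=\max_{\theta\in\T}|P_k(\theta)|$, so I want $\max_{\theta}|P_k(\theta)|\ge e^{k(\widetilde L(E)-\eta)}$ for every $\eta>0$ and $k$ large. I would obtain this from the identity $P_{2l}(\theta)=\lambda^{l}\bigl(\prod_{i=0}^{l-1}\cos2\pi(\theta+2i\alpha)\bigr)\,[(\tilde A^E)_l(\theta)]_{11}$, where $(\tilde A^E)_l$ is the $l$-step iterate of $(2\alpha,\tilde A^E)$ — this follows at once from \eqref{be} and the matrix formula for $\widetilde M_k$. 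Birkhoff's ergodic theorem gives $\tfrac1l\ln\bigl|\prod_{i=0}^{l-1}\cos2\pi(\theta+2i\alpha)\bigr|\to-\ln2$ for a.e.\ $\theta$, and Oseledets' theorem applied to the $SL(2,\R)$ cocycle $(2\alpha,\tilde A^E)$, whose Lyapunov exponent is $2L(E)$ by Lemma \ref{theorem3.2.}, gives $\tfrac1l\ln\bigl|[(\tilde A^E)_l(\theta)]_{11}\bigr|\to 2L(E)$ for a.e.\ $\theta$; multiplying, $\tfrac1{2l}\ln|P_{2l}(\theta)|\to L(E)+\tfrac12\ln|\lambda/2|=\widetilde L(E)$ for a.e.\ $\theta$, which is the desired bound with $k=2l$. (Alternatively, one may combine $\tfrac1k\ln\|\widetilde M_k(\theta)\|\to\widetilde L(E)$ for a.e.\ $\theta$, from \eqref{equation19}, with Lemma \ref{lemma3.3.}, which rewrites the remaining entries of $\widetilde M_k$ in terms of the $P$-polynomials.)

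Comparing the two bounds with $\eta=\tfrac14\epsilon$ gives $k\epsilon-\tfrac k2\epsilon'-k\eta\le\ln(\tfrac k2+1)+\widetilde L(E)$; since $0<\epsilon'<\epsilon$ the left-hand side equals $k\bigl(\tfrac34\epsilon-\tfrac12\epsilon'\bigr)\ge\tfrac14 k\epsilon$ and grows linearly in $k$, while the right-hand side is $O(\ln k)$. Hence the inequality fails for all $k>k_3(\epsilon,\epsilon')$, which proves the lemma.

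The hard part will be the lower bound $\max_\theta|P_k(\theta)|\ge e^{k(\widetilde L(E)-o(1))}$, since $P_k$ is only the $(1,1)$-entry of $\widetilde M_k$ and one must show that this single entry grows at the full Lyapunov rate for a.e.\ $\theta$. In the Oseledets route this reduces to checking, directly from \eqref{be}, that for a.e.\ $\theta$ the stable and unstable directions of $(2\alpha,\tilde A^E)$ are neither horizontal nor vertical — the only obstruction living over the finite set $\{\theta:\cos2\pi\theta=E\}$. In the route through Lemma \ref{lemma3.3.} one has instead to keep the scalar factors $(v(\theta,\cdot)-E)^{-1}$ under control uniformly in $k$ (for a.e.\ fixed $\theta$), which is why the first-column expansions there — in which only the fixed quantity $v(\theta,0)=\cos2\pi\theta$ occurs — are the relevant ones. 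Everything else is routine Lagrange-interpolation bookkeeping.
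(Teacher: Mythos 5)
Your plan shares the paper's mechanism: $\epsilon'$-uniformity plus Lagrange interpolation caps $\max_{[-1,1]}|R_{k/2}|$, and this must conflict with exponential growth forced by $L(E)>0$; your Lagrange-side bookkeeping is essentially the same as the paper's. The difference — and the gap — lies in how you produce the complementary lower bound $\max_\theta |P_k(\theta)| \ge e^{k(\widetilde{L}(E)-o(1))}$, which you try to establish pointwise.

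Your Oseledets route is not adequate as sketched. The factorization $P_{2l}(\theta)=\lambda^l\bigl(\prod_{j=0}^{l-1}\cos 2\pi(\theta+2j\alpha)\bigr)[\tilde A^E_l(\theta)]_{11}$ is correct, and Birkhoff handles the cosine product, but Oseledets only gives $\tfrac1l\ln\|\tilde A^E_l(\theta)e_1\|\to 2L(E)$ (for $e_1=(1,0)^{\mathsf{T}}\notin E^s(\theta)$), not the growth of the $(1,1)$ entry $\langle e_1,\tilde A^E_l(\theta)e_1\rangle$. Since $\tilde A^E_l(\theta)e_1$ asymptotically aligns with the \emph{moving} unstable direction $E^u(\theta+2l\alpha)$, one must in addition show $|\langle e_1,E^u(\theta+2l\alpha)\rangle|$ is not exponentially small along $l$, i.e.\ that the measurable invariant line field $E^u$ almost never comes near $\C\,(0,1)^{\mathsf{T}}$; your assertion that the only obstruction lives over the finite set $\{\cos 2\pi\theta=E\}$ is not justified — $E^u$ is defined over the whole torus and there is no a priori reason it avoids that vertical line outside a finite set. (Getting the single entry, rather than the norm, to grow at the full Lyapunov rate is a Craig--Simon/Thouless-type statement, not a corollary of Oseledets.) Your alternative via Lemma \ref{lemma3.3.} has the parallel defect: that lemma unavoidably brings in $(v(\theta,k-2)-E)^{-1}=(\cos 2\pi(\theta+(k-2)\alpha)-E)^{-1}$, a \emph{$k$-dependent} divisor, so it is not ``the fixed quantity'' you describe, and pointwise control in $k$ for fixed $\theta$ is not available. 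The paper sidesteps both pitfalls by never arguing pointwise: it compares the Lagrange upper bound with the \emph{averaged} quantity $\tfrac1k\int_\T\ln\|\widetilde M_k(\theta)\|\,d\theta$, whose infimum over $k$ is $\widetilde{L}(E)$ by \eqref{equation19}, and it integrates the $k$-dependent divisors away with the Poisson--Jensen formula. That averaged comparison is precisely the substitute for the pointwise lower bound you would otherwise have to prove by a separate Thouless/Craig--Simon argument, which is missing from your sketch.
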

	\begin{proof}
		Assuming, for contradiction, that $\left\{\theta_0,\cdots,\theta_{\frac{k}{2}}\right\}$ is $\epsilon'-$uniform, we can utilize equation \eqref{equation24} to obtain the following inequality
		$$\left|R_{\frac{k}{2}}(x)\right|\le \left(\frac{k}{2}+1\right)e^{(k+1)(L(E)-\epsilon+\frac{1}{2}\ln|\lambda/2|+\epsilon')},$$
		for any $x\in[-1,1]$. This implies that
$$|P_k(\theta)|<e^{k(L(E)+\frac{1}{2}\ln|\lambda/2|-\frac{\epsilon-\epsilon'}{3})},\ \forall\theta\in\mathbb{R},$$
for sufficiently large $k$.

Recalling the notation $\T_t:=\{x:t+\cos2\pi(n\alpha+x)\neq0,\forall n\in\Z\}$, and employing equation \eqref{11-23-eq42} along with Lemma \ref{lemma3.3.}, we find
\footnotesize
\begin{align*}
\|\widetilde{M}_k(\theta)\|&\le(1+\lambda^2)\max\left\{|P_{k}(\theta)|,|Q_{k-1}(\theta)|,|P_{k-1}(\theta)|,|Q_{k-2}|\right\}\\&\le\frac{C}{|(v(\theta,0)-E)(v(\theta,k-2)-E)|}\max\{|P_{k}(\theta)|,|P_{k-2}(\theta+2\alpha)|,|P_{k-2}(\theta)|,|P_{k-4}(\theta+2\alpha)|\},
\end{align*}
\normalsize
for every $\theta\in\T_0\cap\T_{-E}$ and some constant $C=C(\lambda)$, since $|v(\theta,0)-E|,|v(\theta,k-2)-E|\le 3+\lambda$.

Thus, we obtain
\footnotesize
\begin{align*}
 \int_{\T_0\cap\T_{-E}}\ln\|\widetilde{M}_k(\theta)\|d\theta\le \ln C&-\int_{\T_0\cap\T_{-E}}\ln|E-\cos2\pi\theta|d\theta-\int_{\T_0\cap\T_{-E}}\ln|E-\cos2\pi(\theta+(k-2)\alpha)|d\theta\\&+\int_{\T_0\cap\T_{-E}}\ln\max\{|P_{k}(\theta)|,|P_{k-2}(\theta+2\alpha)|,|P_{k-2}(\theta)|,|P_{k-4}(\theta+2\alpha)|\}d\theta.
\end{align*}
\normalsize
Applying the Poisson-Jensen formula and noting that $|\T_0\cap\T_{-E}|=1$, we have
\begin{align*}
I_E:=\int_{\T_0\cap\T_{-E}}\ln|E-\cos2\pi\theta|d\theta&=\int_{\T_0\cap\T_{-E}}\ln|E-\cos2\pi(\theta+(k-2)\alpha)|d\theta\\&=\left\{\begin{array}{ll}
    \ln\frac{E^2+\sqrt{E^2-1}}{2} &\text{if }|E|\ge1,  \\
     -\ln2&\text{if } |E|\le1.\end{array}\right.
\end{align*}
Consequently, we obtain
\begin{align*}
    \int_{\T_0\cap\T_{-E}}\frac{1}{k}\ln\|\widetilde{M}_k(\theta)\|d\theta\le\frac{\ln C+I_E}{k}+L(E)+\frac{1}{2}\ln|\lambda/2|-\frac{\epsilon-\epsilon'}{3},
\end{align*}
which contradicts equation \eqref{equation19}  for sufficiently large $k$.
	\end{proof}

	Assume that $\{q_n\}_n$ is the sequence of denominators of the best rational approximations of $2\alpha$. Select $n$ such that $q_n\le\frac{y}{8}<q_{n+1}$, and let $s$ be the largest positive integer satisfying $sq_n\le\frac{y}{8}$. Set $I_1,I_2\subseteq\mathbb{Z}$ as follows
	$$I_1=[0,2sq_n]\text{ and }I_2=\left[\lfloor\frac{y}{2}\rfloor+1-2sq_n,\lfloor\frac{y}{2}\rfloor+2sq_n\right],$$
and we get $6sq_n+1$ points.
	
	\begin{lemma}\label{lemma3.9.}
	Let $\theta_j=\theta+(2j-1)\alpha$, then for any $\epsilon>0$, the set $\{\theta_j\}_{j\in I_1\cup I_2}$ is $\epsilon$-uniform if $y>y(\alpha,\theta,\epsilon)$.
	\end{lemma}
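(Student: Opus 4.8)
The plan is to adapt the ``uniformity'' argument of Jitomirskaya \cite{jitomirskaya1999metal}. We work under the operative hypotheses of Theorem \ref{al-thm}: $\alpha\in DC(\gamma,\sigma)$ and $\theta\in\Theta(\eta)$ for some $\eta>0$ (and, discarding a set of $\theta$ of measure zero, $2\theta\notin\Z$, so that the $\cos2\pi\theta_j$, $j\in I_1\cup I_2$, are pairwise distinct — a coincidence $\cos2\pi\theta_i=\cos2\pi\theta_j$ would force $2\theta\equiv(2-2i-2j)\alpha\bmod1$, excluded by $\theta\in\Theta(\eta)$ when $i+j\neq1$ and by $2\theta\notin\Z$ when $i+j=1$). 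Put $\beta:=2\alpha$, so $\beta\in DC(2^{-\sigma}\gamma,\sigma)$ and hence $q_{n+1}\le(2^{\sigma}/\gamma)q_n^{\sigma}$; combined with $q_n\le y/8<q_{n+1}$ this gives the crucial polynomial bound $q_n\ge c_1(\gamma,\sigma)\,y^{1/\sigma}$, so $q_n\to\infty$ as $y\to\infty$. Since $|I_1\cup I_2|-1=6sq_n\le\tfrac34 y$, any multiplicative factor that is merely polynomial in $q_n$ or in $y$ will be negligible once we divide its logarithm by $6sq_n$. Fix $i\in I_1\cup I_2$ and write $x=\cos2\pi a$; using $\cos2\pi u-\cos2\pi w=-2\sin\pi(u+w)\sin\pi(u-w)$ together with $\theta_j-\theta_{j'}=(j-j')\beta$, the quantity to be estimated (the $2$'s cancelling) is
\[
\Pi(i,a)=\prod_{j\ne i}\frac{|\sin\pi(a-\theta_j)|\,|\sin\pi(a+\theta_j)|}{|\sin\pi(\theta_i-\theta_j)|\,|\sin\pi(\theta_i+\theta_j)|}.
\]
I would split $I_1\cup I_2$ into $6s$ blocks of $q_n$ consecutive indices (the one leftover index of $I_1$, and the deletion of $i$ from its block, perturb $\Pi$ by only a polynomial-in-$y$ factor, which is harmless), on each of which $\{\theta_j\}$ is a $\beta$-arithmetic progression of length $q_n$.

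For the numerator I would apply Lemma \ref{ten} (or Proposition \ref{11-28-proposition7.11} on $I_1$ and $I_2$ directly) with frequency $\pm\beta$ to each block and to each of the two sine products: since the minimal factor in the block is $\le1$ and the rest is controlled by the lemma, each block contributes $\le 2q_n^{C_0}2^{-q_n}$, so the numerator of $\Pi(i,a)$ is $\le(2q_n^{C_0})^{12s}2^{-12sq_n}$, uniformly in $a$. For the denominator I would handle $\prod_{j\ne i}|\sin\pi(\theta_i-\theta_j)|$ and $\prod_{j\ne i}|\sin\pi(\theta_i+\theta_j)|$ separately. In the first, $\theta_i-\theta_j=(i-j)\beta$: on the block containing $i$ the deleted index $j=i$ is exactly the minimizer (it yields $\sin0$), so Lemma \ref{ten} gives $\ge 2^{-q_n}q_n^{-C_0}$ for that block; on every other block Lemma \ref{ten} gives $\ge 2^{-q_n}q_n^{-C_0}$ times the minimal factor $|\sin\pi m^*\beta|$, and since $m^*\ne0$ with $|m^*|\le 6sq_n$ the Diophantine bound for $\beta$ forces $|\sin\pi m^*\beta|\ge 2\gamma'(12sq_n)^{-\sigma}$; altogether $\prod_{j\ne i}|\sin\pi(\theta_i-\theta_j)|\ge 2^{-6sq_n}(\mathrm{poly}(q_n))^{-6s}(sq_n)^{-6s\sigma}$. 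In the second, $\theta_i+\theta_j=2\theta-\bigl(2-2(i+j)\bigr)\alpha$, and the minimal factor of each block is now bounded below using $\theta\in\Theta(\eta)$, namely $|\sin\pi(\theta_i+\theta_j)|\ge 2\|2\theta-(2-2(i+j))\alpha\|\ge 2\eta(4y)^{-\sigma}$ when $i+j\ne1$ (and $\ge|\sin2\pi\theta|>0$ in the single exceptional case), yielding $\prod_{j\ne i}|\sin\pi(\theta_i+\theta_j)|\ge 2^{-6sq_n}(\mathrm{poly}(q_n))^{-6s}\min(|\sin2\pi\theta|,\eta(4y)^{-\sigma})^{6s}$.

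Dividing numerator by denominator, the factor $2^{-12sq_n}$ cancels the two factors $2^{-6sq_n}$, leaving $\Pi(i,a)\le K^{s}$ where $K=K(q_n,sq_n,y,\gamma,\eta,\theta)$ is polynomial in $q_n$, $sq_n$ and $y$. Taking $\tfrac{1}{6sq_n}\log$, every contribution is $O\bigl((\log q_n+\log y)/q_n\bigr)$ (using $sq_n\le y$), which $\to0$ as $y\to\infty$ because $q_n\ge c_1 y^{1/\sigma}$. Hence there is $y(\alpha,\theta,\epsilon)$ such that $\Pi(i,a)<e^{6sq_n\epsilon}$ for all $y>y(\alpha,\theta,\epsilon)$, uniformly in $i\in I_1\cup I_2$ and $x=\cos2\pi a\in[-1,1]$; since $6sq_n=|I_1\cup I_2|-1$, this is precisely the assertion that $\{\theta_j\}_{j\in I_1\cup I_2}$ is $\epsilon$-uniform.

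I expect the main obstacle to be the lower bound on the denominator — controlling the ``resonant'' minimal factors, i.e.\ the blocks on which $\theta_i-\theta_j$ is abnormally close to the lattice $\beta\Z$, or $\theta_i+\theta_j$ close to $2\theta+\beta\Z$. This is exactly the place where the Diophantine property of $\alpha$ (for the difference part) and the arithmetic condition $\theta\in\Theta(\eta)$ (for the sum part) are indispensable, and it is also what dictates the specific lengths of $I_1$ and $I_2$: their combined length must be kept comparable to $y$ so that the polynomial-in-$y$ losses coming from these resonances are swamped by the bound $q_n\gtrsim y^{1/\sigma}$.
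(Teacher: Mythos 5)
Your proposal is correct and follows essentially the same route as the paper's own proof: after the $\cos 2\pi u-\cos 2\pi w=-2\sin\pi(u+w)\sin\pi(u-w)$ decomposition, both arguments chop $I_1\cup I_2$ into blocks of length $q_n$ (with $q_n$ the denominators of the best rational approximations of $2\alpha$), invoke Lemma~\ref{ten} on each block, and then bound the block-minimal sine terms from below using the Diophantine property of $\alpha$ for the ``difference'' factors $\sin\pi(\theta_i-\theta_j)=\sin\pi(i-j)2\alpha$ and the arithmetic condition $\theta\in\Theta(\eta)$ for the ``sum'' factors $\sin\pi(\theta_i+\theta_j)=\sin\pi(2\theta+(i+j-1)2\alpha)$. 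Your explicit remark that the polynomial losses are negligible because $q_n\gtrsim y^{1/\sigma}$, and your observation that $2\theta\notin\mathbb Z$ must be discarded so that the interpolation nodes $\cos 2\pi\theta_j$ are pairwise distinct (the case $i+j=1$), are sensible clarifications of points the paper treats implicitly, but they do not change the method.
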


	\begin{proof}
		Take $x=\cos2\pi a$. Now it suffices to estimate
		$$\sum_{j\in I_1\cup I_2,j\neq i}\left(\ln|\cos2\pi a-\cos2\pi\theta_j|-\ln|\cos2\pi\theta_i-\cos2\pi\theta_j|\right)=\sum_1-\sum_2.$$
		Then Lemma \ref{ten} reduces this problem to estimating the minimal terms.
		
		First we estimate $\sum_1$:
		\begin{equation*}
			\begin{split}
				\sum_1 &=\sum_{j\in I_1\cup I_2,j\neq i}\ln|\sin\pi(a+\theta_j)|+\sum_{j\in I_1\cup I_2,j\neq i}\ln|\sin\pi(a-\theta_j)|+6sq_n\ln2\\
				&=\sum_{1,+}+\sum_{1,-}+6sq_n\ln2,
			\end{split}
		\end{equation*}
		we cut $\sum_{1,+}$ or $\sum_{1,-}$ into $6s$ sums and then apply Lemma \ref{ten}, we get that for some absolute constant $C_1$:
		$$\sum_1\le-6sq_n\ln2+C_1s\ln q_n.$$
		
		Next, we estimate $\sum_2$ as follows
		\begin{equation*}
			\begin{split}
				\sum_2&=\sum_{j\in I_1\cup I_2,j\neq i}\ln|\sin\pi(2\theta+(i+j-1)2\alpha)|\quad+\sum_{j\in I_1\cup I_2,j\neq i}\ln|\sin\pi(i-j)2\alpha|+6sq_n\ln2\\&=\sum_{2,+}+\sum_{2,-}+6sq_n\ln2.
			\end{split}
		\end{equation*}
		For any $0<|j|<q_{n+1}$, since $\alpha\in DC(\gamma,\tau)$  we have $$\|j2\alpha\|_{\T}\ge\|q_n2\alpha\|_{\T}\ge \frac{\gamma}{ (2q_n)^{\sigma}}.$$ 
        Therefore, we obtain 
		$$\max\left\{\ln|\sin x|,\ln|\sin(x+\pi j2\alpha)|\right\}\ge2\ln\gamma-2\sigma\ln2q_n\quad\text{for}\ y>y_1(\alpha).$$
		This means in any interval of length $sq_n$, there can be at most one term which is less than $2\ln\gamma-2\sigma\ln2q_n$. Then there can be at most 6 such terms in total.
		
		For the part $\sum_{2,-}$, since $$\|(i-j)2\alpha\|_{\T} \ge \frac{\gamma}{2^{\sigma} |i-j|^{\sigma}}\ge \frac{\gamma}{(18sq_n)^{\sigma}},$$ these 6 smallest terms must be bounded by $\ln\gamma-\sigma\ln(18sq_n)$ from below. Hence by Lemma \ref{ten}, we have 
		\begin{equation}\label{s2}\sum_{2,-}\ge-6sq_n\ln2+6\ln\gamma-6\sigma\ln(18sq_n)-C_2s\ln q_n,\end{equation}
		for $y>y_2(\alpha)$ and some absolute constant $C_2$.
		For the part $\sum_{2,+}$, since $\theta\in \Theta$, then $$\|2\theta+(i+j-1)2\alpha\|_{\T}\ge \frac{\eta}{|2(i+j)|^{\sigma}}\ge \frac{\eta}{(36sq_n)^{\sigma}},$$ these 6 smallest terms must be greater than $\ln\eta-\sigma\ln(36sq_n)$.  Therefore combining with \eqref{s2}, we have $$\sum_2\ge-6sq_n\ln2+6\ln\gamma-6\sigma\ln(18sq_n)+6\ln\eta-6\sigma\ln(36sq_n)-(C_2+C_3)s\ln q_n,$$
		consequently,  for any $\epsilon>0$ if $y>y(\alpha,\theta,\epsilon)$,
		$\sum_1-\sum_2\leq 12 \epsilon sq_n,$
		i.e.  the set $\{\theta_j\}_{j\in I_1\cup I_2}$ is $\epsilon-$uniform.
	\end{proof}

	\begin{proposition}\label{proposition3.10.}
		Assume that $\alpha\in DC,\ \theta\in\Theta, L(E)>0$. Then for every $\epsilon>0$, for any $|y|>y(\alpha,\theta,E,\epsilon)$ sufficiently large, there exists $k>\frac{3}{4}|y|$ such that $y$ is $(L(E)-\epsilon,k)-regular$.
	\end{proposition}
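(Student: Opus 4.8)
The strategy is the classical Jitomirskaya block-resolvent argument: combine the ``singular $\Rightarrow$ many potential values land in the bad set $W_{k,L(E)-\epsilon/8}$'' mechanism of Lemma~\ref{lemma3.6.} with the ``$\epsilon$-uniformity $\Rightarrow$ not too many values in $W_{k,L(E)-\epsilon}$'' obstruction of Lemma~\ref{lemma3.8.} and the explicit $\epsilon$-uniform family built in Lemma~\ref{lemma3.9.}. First I would argue by contradiction: suppose $y$ is $(L(E)-\epsilon,k)$-singular for \emph{every} $k$ in the range $\frac{3}{4}|y|<k\le$ (some controlled multiple of $|y|$, matched to the $6sq_n+1$ points produced in Lemma~\ref{lemma3.9.}). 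Fix the scale $k$ determined by the continued fraction data: choose $n$ with $q_n\le \frac{y}{8}<q_{n+1}$ and $s$ maximal with $sq_n\le\frac{y}{8}$, and set $k:=2\cdot 6sq_n$ (or the even integer governing the point set $I_1\cup I_2$), so that $k>\frac34|y|$ as required. The point set is $\{\theta_j=\theta+(2j-1)\alpha : j\in I_1\cup I_2\}$, which has $6sq_n+1 = \frac{k}{2}+1$ elements.

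Next I would feed the singularity hypothesis into Lemma~\ref{lemma3.6.}: for each $j$ with $y-\frac56 k\le 2j\le y-\frac16 k$ — which, by the choice of $I_2$, includes all $j$ in the shifted block — the phase $\theta+(2j+\frac{k}{2}-1)\alpha$ lies in $W_{k,L(E)-\epsilon/8}$ for $k$ large. Combined with the analogous regularity failure at the \emph{left} end (using the first equation of \eqref{equation21} and the block $I_1$ near the origin, since $u_0\ne 0$ forces $0$ to be singular as well), one produces $\frac{k}{2}+1$ phases — precisely the set $\{\theta_j\}_{j\in I_1\cup I_2\}$ after relabeling — all lying in $W_{k,L(E)-\frac{\epsilon}{8}}$. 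On the other hand, Lemma~\ref{lemma3.9.} says this same set is $\epsilon'$-uniform for any prescribed $\epsilon'>0$ once $y$ is large. Taking $\epsilon'<\frac{\epsilon}{8}$ and applying Lemma~\ref{lemma3.8.} with ``$\epsilon$'' there equal to $\frac{\epsilon}{8}$, the set cannot be $\epsilon'$-uniform for $k>k_3(\frac{\epsilon}{8},\epsilon')$ — a contradiction. Hence for $|y|$ large there must exist some $k>\frac34|y|$ at which $y$ is $(L(E)-\epsilon,k)$-regular.

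The bookkeeping I expect to be most delicate is the alignment of the three index ranges: the window $y-\frac56 k\le 2j\le y-\frac16 k$ from Lemma~\ref{lemma3.6.}, the explicit intervals $I_1=[0,2sq_n]$ and $I_2=[\lfloor\frac y2\rfloor+1-2sq_n,\lfloor\frac y2\rfloor+2sq_n]$ from Lemma~\ref{lemma3.9.}, and the requirement $k>\frac34|y|$. One must verify that with $k\sim 12 sq_n$ and $sq_n\sim \frac y8$ one indeed gets $k$ slightly larger than $\frac34 y$ (here $12\cdot\frac18=\frac32$, so one actually takes $k$ to be the length of the interval spanning $I_1\cup I_2$, roughly $\frac y2+2sq_n\approx\frac34 y$, and checks $sq_n>\frac{y}{16}$ from maximality of $s$ to keep $k>\frac34|y|$), and that every index $j\in I_1\cup I_2$ genuinely falls in the Lemma~\ref{lemma3.6.} window for the correspondingly translated singular point. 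A secondary subtlety is that $P_k$ has degree exactly $\frac k2$ (Lemma~\ref{lemma3.4.}), so $\frac k2+1$ interpolation nodes is exactly the right count to force the polynomial bound in Lemma~\ref{lemma3.8.}; having even one fewer usable singular point would break the argument, which is why both blocks $I_1$ and $I_2$ (left and right regularity failure) are needed. Once these index inequalities are pinned down, the contradiction is immediate and the proposition follows, with $y(\alpha,\theta,E,\epsilon)$ taken as the maximum of the thresholds $y(\alpha,\theta,\epsilon')$, $k_2(E,\epsilon)$, $k_3(\frac\epsilon8,\epsilon')$ appropriately converted to conditions on $|y|$.
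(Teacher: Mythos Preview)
Your proposal is correct and follows essentially the same approach as the paper's proof, which fixes $k=12sq_n$ and derives a contradiction from the simultaneous $(L(E)-\epsilon,k)$-singularity of $0$ and $y$ via Lemmas~\ref{lemma3.6.}, \ref{lemma3.8.}, and~\ref{lemma3.9.}. The only wobble is in your final paragraph, where you momentarily conflate $k$ with the span of $I_1\cup I_2$: in fact $k$ is simply $12sq_n$, and $k>\tfrac34|y|$ follows directly from $sq_n>\tfrac{y}{16}$ (maximality of $s$), as you yourself note.
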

	
	\begin{proof}
		Combining Lemma \ref{lemma3.8.} and Lemma \ref{lemma3.9.}, we know that when $y$ is sufficiently large, $\{\theta_j\}_{j\in I_1\cup I_2}$ can not be inside the set $W_{12sq_n,L(E)-\frac{\epsilon}{8}}$ at the same time. Therefore, 0 and $y$ can not be $(L(E)-\epsilon,12sq_n)$-singular at the same time by Lemma \ref{lemma3.6.}. However, 0 is $(L(E)-\epsilon,12sq_n)-$singular given $y$ large enough. Therefore,
		$$\{\theta_j\}_{j\in I_1}\subset W_{12sq_n,L(E)-\frac{\epsilon}{8}},$$
		and $y$ must be $(L(E)-\epsilon,12sq_n)-$regular for $y>y(\alpha,\theta,E,\epsilon)$. Notice that $12sq_n>\frac{3}{4}y$, thus we complete the proof.
	\end{proof}

	\textbf{Proof of Theorem \ref{al-thm}:}
	It is well known  that if every generalized eigenfunction  of the ergodic Jacobi operator $ \mathcal{H}_{c,v,\theta}$ decays exponentially, then the operator $ \mathcal{H}_{c,v,\theta}$ displays Anderson localization. Let $E\in \sigma^\mathcal{H}_{\text{sup}}(c,v)$ be a generalized eigenvalue of $ \mathcal{H}_{c,v,\theta}$, and denote the corresponding generalized eigenfunction by $u$. Let $\epsilon$ small enough,
	by  \eqref{equation24.} and Proposition \ref{proposition3.10.}, if $|y|>y(\alpha,\theta,E,\epsilon)$
	the point $y$ is $(L(E)-\epsilon,k)$-regular for some $k>\frac{3}{4}|y|$. Thus, there exists an interval $[n_{1},n_{2}]$ of length $k$ containing $y$ such that 
	$\frac{1}{7}k\le |y-n_{i}|\le\frac{6}{7}k,$
	and
	$$\left|G_{[n_{1},n_{2}]}(y,n_{i})\right|<e^{-(L(E)-\epsilon)|y-n_{i}|}, \quad i=1,2.$$
	Using \eqref{equation24.}, we obtain that
	\begin{equation*}
		\begin{split}
			|u_y|\le2C(E)(2|y|+1)e^{-\frac{L(E)-\epsilon}{7}k} \le e^{-\frac{L(E)-\epsilon}{28}|y|}.
		\end{split}
	\end{equation*}
	This implies exponential decay of the eigenfunction if $\epsilon$  is chosen small enough.
	\qed

\bigskip
\textbf{Proof of Theorem \ref{theorem1.1.}:} It follows from Theorem \ref{theorem2.2} and Theorem \ref{al-thm}.

\section{Aubry duality}\label{duality}

In this section, we aim to establish Aubry duality for singular Jacobi operators on the strip.  As we mentioned, for the mosaic-type models, one can consider it as 
singular Jacobi operators on the strip:
\begin{equation}
[\mathcal{S}_{C,V,\omega} \boldsymbol{u}](n) = C \boldsymbol{u}(n + 1) + C^* \boldsymbol{u}(n - 1) + V(\omega + 2n\alpha) \boldsymbol{u}(n),
\end{equation}
where $C$ is a singular matrix. Indeed, direct computations show that 
\begin{example}(Mosaic model \eqref{mosaic})  In this case, one can take
    $$ C_1= \begin{pmatrix}
            0&0\\1&0
        \end{pmatrix}, V_1(\theta)=\begin{pmatrix}
            2\lambda\cos2\pi \theta&1\\1&0
        \end{pmatrix} $$
\end{example}

\begin{example}(Mosaic-type model  \eqref{IRS})   In this case, one can take
    $$ C_2= \begin{pmatrix}
            0&0\\\lambda &0
        \end{pmatrix}, V_2(\theta)=\cos2\pi \theta\begin{pmatrix}
            1&1\\1&1
        \end{pmatrix} $$
\end{example}

\subsection{Dynamical Aubry duality}
Assume that
    $[\mathcal{S}_{C,V,\theta} \boldsymbol{u}](n)=E\boldsymbol{u}(n),$
has an analytic quasiperiodic Bloch wave,
    $\boldsymbol{u}(n)=e^{2\pi in\omega}\widetilde{\boldsymbol{\psi}}(2n\alpha+\theta),$
where $\widetilde{\boldsymbol{\psi}}:\T\to\C^2$ being analytic and $\omega\in\T$ the Floquet exponent. If $\{\boldsymbol{\psi}(n)\}_{n\in\Z}$ are the Fourier coefficients of $\widetilde{\boldsymbol{\psi}}$, direct computation shows that they satisfy the following difference equation
\begin{align*}
    \sum_{k\in\Z}V^{(k)}\boldsymbol{\psi}(n-k)+(Ce^{2\pi i(2n\alpha+\omega)}+C^*e^{-2\pi i(2n\alpha+\omega)})\boldsymbol{\psi}(n)=E\boldsymbol{\psi}(n),\quad n\in\Z,
\end{align*}
where $\{V^{(k)}\}_{k\in\Z}$ are the Fourier coefficients of $V$, i.e.  $V(\theta)=\sum_{k\in\Z}V^{(k)}e^{2\pi ik\theta}.$ In the following, we call $$[\widehat{S}_{C,V,\omega}\boldsymbol{\psi}](n)=\sum_{k\in\Z}V^{(k)}\boldsymbol{\psi}(n-k)+(Ce^{2\pi i(2n\alpha+\omega)}+C^*e^{-2\pi i(2n\alpha+\omega)})\boldsymbol{\psi}(n)$$
the dual operator of $\mathcal{S}_{C,V,\theta}$.  Direct computations show the following:
\begin{example}(Dual operator of Mosaic model \eqref{mosaic}) 
 \small
    \begin{align*}
    &[\widehat{\mathcal{S}}_{C_1,V_1,\omega}\boldsymbol{u}](n)\\&=\begin{pmatrix}
            \lambda&0\\0&0
        \end{pmatrix}\boldsymbol{u}(n+1)+\begin{pmatrix}
            \lambda&0\\0&0
        \end{pmatrix}\boldsymbol{u}(n-1)+\begin{pmatrix}
            0&1+e^{-2\pi i(2n\alpha+\omega)}\\1+e^{2\pi i(2n\alpha+\omega)}&0
        \end{pmatrix}\boldsymbol{u}(n),
    \end{align*}
    \normalsize
\end{example}

\begin{example}(Dual operator of Mosaic-type model  \eqref{IRS}) 
\small
    \begin{align*}
        &[\widehat{\mathcal{S}}_{C_2,V_2,\omega}\boldsymbol{u}](n)\\&=\frac{1}{2}\begin{pmatrix}
            1&1\\1&1
        \end{pmatrix}\boldsymbol{u}(n+1)+\frac{1}{2}\begin{pmatrix}
            1&1\\1&1
        \end{pmatrix}\boldsymbol{u}(n-1)+\lambda\begin{pmatrix}
            0&e^{-2\pi i(2n\alpha+\omega)}\\ e^{2\pi i(2n\alpha+\omega)}&0
        \end{pmatrix}\boldsymbol{u}(n).
    \end{align*}
    \normalsize
\end{example}
Moreover, let $U_2$ be the following operator on $\ell^2(\Z,\C^2)$,
\begin{align*}
    [U_2\boldsymbol{u}](n)=\frac{1}{\sqrt{2}}\begin{pmatrix}1&1\\1&-1\end{pmatrix}\boldsymbol{u}(n).
\end{align*} Then, $U_2^*\widehat{\mathcal{S}}_{C_2,V_2,\omega}U_2$ and $\widehat{\mathcal{S}}_{C_2,V_2,\omega}$ are unitarily equivalent, 
thus share the  same spectral types. Indeed, one can easily check that  $U_2^*\widehat{\mathcal{S}}_{C_2,V_2,\omega}U_2$ is just Type III ME model \eqref{ac-sc}:
\footnotesize
\begin{align}
\label{2025-1-16-eq49}&[U_2^*\widehat{\mathcal{S}}_{C_2,V_2,\omega}U_2\boldsymbol{u}](n)\\&=\begin{pmatrix}
            1&0\\0&0
        \end{pmatrix}\boldsymbol{u}(n+1)+\begin{pmatrix}
            1&0\\0&0
        \end{pmatrix}\boldsymbol{u}(n-1)+\lambda\begin{pmatrix}
            \cos2\pi(2n\alpha+\omega)&i\sin2\pi(2n\alpha+\omega)\\ -i\sin2\pi(2n\alpha+\omega)&-\cos2\pi(2n\alpha+\omega)
        \end{pmatrix}\boldsymbol{u}(n).\nonumber
\end{align}
\normalsize
Since the base dynamic is minimal, the spectrum of the long-range operators $\widehat{\mathcal{S}}_{C,V,\omega}$ does not depend on the chosen $\omega$, so that one can write $
    \sigma^{\widehat{\mathcal{S}}}(C,V)=\sigma(\widehat{\mathcal{S}}_{C,V,\omega}).
    $

\subsection{Spectral Aubry duality} To rigorously establish the Aubry duality, let us consider the following Hilbert space,
\begin{align*}
    \mathbb{H}=L^2(\T\times\Z,\C^2),
\end{align*}
which consists of functions $\boldsymbol{\Psi}=\boldsymbol{\Psi}(\theta,n)$ satisfying
\begin{align*}
\sum_{n\in\Z}\int_{\T}\|\boldsymbol{\Psi}(\theta,n)\|^2d\theta<\infty.
\end{align*}

The extensions of the singular operators $\mathcal{S}$ and their long-range duals $\widehat{\mathcal{S}}$ to $\mathbb{H}$ are given in terms of their {\em direct integrals}, which we now define. The direct integral of the singular operator $\mathcal{S}_{C,V,\theta}$ is the operator $\mathbf{S}_{C,V}$, defined as
\begin{align*}
\mathbf{S}_{C,V}=\int_\T^{\oplus}\mathcal{S}_{C,V,\omega} d\omega:\int_\T^{\oplus}\ell^2(\Z,\C^2)d\omega\to\int_\T^{\oplus}\ell^2(\Z,\C^2)d\omega,
\end{align*}
and
$$[\mathbf{S}_{C,V}\boldsymbol{u}](\theta,n)=C\boldsymbol{u}(\theta,n+1)+C^{*}\boldsymbol{u}(\theta,n-1)+V(2n\alpha+\theta)\boldsymbol{u}(\theta,n).$$
Similarly, the direct integral of $\widehat{S}_{C,V,\omega}$ denoted as $\widehat{\mathbf{S}}_{C,V}$ is
\begin{align*}
[\widehat{\mathbf{S}}_{C,V}\boldsymbol{u}](\omega,n)=\sum_{k\in\Z}V^{(k)}\boldsymbol{\psi}(\omega,n-k)+(Ce^{2\pi i(2n\alpha+\omega)}+C^*e^{-2\pi i(2n\alpha+\omega)})\boldsymbol{\psi}(\omega,n).
\end{align*}
These two operators are bounded and self-adjoint in $\mathbb{H}$. 
Define the Aubry duality $\mathbf{A}_{2\alpha}$ on $\mathbb{H}$ as
\begin{align*}
[\mathbf{A}_{2\alpha}\boldsymbol{u}](\theta,n)=\sum_{k\in\Z}\int_\T e^{-2\pi i(\theta+2n\alpha)k}e^{-2\pi in\eta}\boldsymbol{u}(\eta,k)d\eta.
\end{align*}
Then, for any fixed real analytic $V$ and non-resonant frequency $2\alpha$, the direct integrals $\mathbf{S}_{C,V}$ and $\widehat{\mathbf{S}}_{C,V}$ are unitarily equivalent in the sense that the conjugation $
\mathbf{S}_{C,V}\mathbf{A}_{2\alpha}=\mathbf{A}_{2\alpha}\widehat{\mathbf{S}}_{C,V} $
holds.
Since $\mathbf{A}_{2\alpha}$ is an unitary operator, we have
\begin{align*}
    \sigma(\int^\oplus_\T \mathcal{S}_{C,V,\theta}d\theta)=\sigma(\int^\oplus_\T \widehat{\mathcal{S}}_{C,V,\theta}d\theta).
\end{align*}

\section{Integrated density of states and Thouless formula}\label{section5}	

Throughout the paper, we assume that $(\Omega, \mathcal{F}, \nu, T)$ is ergodic, and $\mathfrak{f}: \Omega \to M(2, \mathbb{C})$ is a bounded, measurable, and self-adjoint matrix. We define the potentials $V_\omega(n) = \mathfrak{f}(T^n \omega)$ for $\omega \in \Omega$ and $n \in \mathbb{Z}$, and consider the associated singular Jacobi operator on $\ell^2(\mathbb{Z}, \mathbb{C}^2)$:
\begin{equation}\label{sjo}
\mathcal{S}_{J,V,\omega} \boldsymbol{u} := J \boldsymbol{u}(n+1) + J \boldsymbol{u}(n-1) + V_\omega(n) \boldsymbol{u}(n) = z \boldsymbol{u}(n),
\end{equation}
where $J = \begin{pmatrix} 1 & 0 \\ 0 & 0 \end{pmatrix}$, $\boldsymbol{u}(n) = (a(n), b(n))^\mathsf{T}$, and $V_\omega(n) = \begin{pmatrix} V_{11,\omega}(n) & V_{12,\omega}(n) \\ V_{21,\omega}(n) & V_{22,\omega}(n) \end{pmatrix}$. 
Additionally, let $\Omega_0$ be a full-measure subset of $\Omega$ such that for all $\omega \in \Omega_0$ and $i \neq j$, the following conditions hold: $V_{12,\omega}(i) \neq 0$ and $V_{22,\omega}(i) \neq V_{22,\omega}(j)$.

\subsection{Singular Jacobi cocycle on the strip}
Note in the non-singular case $\det J\neq 0$, then \eqref{sjo} induces an symplectic cocycle \cite{puig}, however in our singular case, this doesn't work. 
Nevertheless, one can observe 
 $z\notin\{V_{22,\omega}(j):j\in\Z\}$, then \eqref{sjo} can be written as 
\begin{align}\label{eq6}
\begin{pmatrix}a(n+1)\\ a(n)\end{pmatrix}&=\begin{pmatrix}z-V_{11,\omega}(n)-\frac{|V_{12,\omega}(n)|^2}{z-V_{22,\omega}(n)}&-1\\1&0\end{pmatrix}\begin{pmatrix}a(n)\\ a(n-1)\end{pmatrix}\\&:=A^z(T^n\omega)\begin{pmatrix}a(n)\\a(n-1)\end{pmatrix}\nonumber
\end{align}
Meanwhile, we have
\begin{align}\label{eq5}
b(n)=\frac{V_{21,\omega}(n)}{z-V_{22,\omega}(n)}a(n)\quad\forall n\in\Z,
\end{align} thus to study $\mathcal{S}_\omega \boldsymbol{u}=z\boldsymbol{u}$, the main emphasize is study the cocyle $(T,A^z(\cdot))$, and we call it singular Jacobi cocycle on the strip induced by \eqref{sjo}.

For $z\notin\{V_{22,\omega}(j):j\in\Z\}$, denote $\boldsymbol{u}_1(\cdot,z)$ and	$\boldsymbol{u}_2(\cdot,z)$ the solutions of \eqref{sjo} which obey the initial conditions
\begin{align*}
\begin{pmatrix}a_1(1,z)&a_2(1,z)\\a_1(0,z)&a_2(0,z)\end{pmatrix}=\begin{pmatrix}1&0\\0&1\end{pmatrix}
\end{align*}
and
\begin{align*}
\begin{pmatrix}b_1(1,z)&b_2(1,z)\\b_1(0,z)&b_2(0,z)\end{pmatrix}=\begin{pmatrix}\frac{V_{21,\omega}(1)}{z-V_{22,\omega}(1)}&0\\0&\frac{V_{21,\omega}(0)}{z-V_{22,\omega}(0)}\end{pmatrix}.
\end{align*}

If $z=V_{22,\omega}(m)$ for some $m\in\Z$ and $\omega\in\Omega_0$, then it follows that $a(m)=0$ and
\begin{align}\label{11-25-eq7}
\begin{pmatrix}a(n+1)\\ a(n)\end{pmatrix}=\left\{\begin{array}{ll}A^z_{n-m}(T^m\omega)\begin{pmatrix}a(m+1)\\ 0\end{pmatrix}&\forall n\ge m,\\&\\(A^z_{m-n-1}(T^n\omega))^{-1}\begin{pmatrix}0\\ a(m-1)\end{pmatrix}&\forall n<m,\end{array}\right.
\end{align}
and
\begin{align}\label{11-25-eq8}
b(n)=&\left\{\begin{array}{ll}\frac{V_{21,\omega}(n)}{z-V_{22,\omega}(n)}a(n)&\forall n\neq m,\\&\\-\frac{a(m+1)+a(m-1)}{V_{12,\omega}(m)}&\forall n=m.\end{array}\right.
\end{align}

Then, one obtains the following:
\begin{proposition}
For any $z\notin\{V_{22,\omega}(j):j\in\Z\}$, we have
\begin{align*}
a_1(n+1,z)&=\frac{\det(z-\mathcal{S}_{\omega}|_{[1,n]})}{\prod_{j=1}^n(z-V_{22,\omega}(j))}\text{ for all }n\ge1,\\
a_2(n+1,z)&=-\frac{\det(z-\mathcal{S}_{\omega}|_{[2,n]})}{\prod_{j=2}^n(z-V_{22,\omega}(j))}\text{ for all }n\ge2,
\end{align*}
and
\begin{align}\label{8.5-eq10}
	A^z_n(\omega)=\begin{pmatrix}\frac{\det(z-\mathcal{S}_{\omega}|_{[1,n]})}{\prod_{j=1}^n(z-V_{22,\omega}(j))}&-\frac{\det(z-\mathcal{S}_{\omega}|_{[2,n]})}{\prod_{j=2}^{n}(z-V_{22,\omega}(j))}\\ & \\\frac{\det(z-\mathcal{S}_{\omega}|_{[1,n-1]})}{\prod_{j=1}^{n-1}(z-V_{22,\omega}(j))}&-\frac{\det(z-\mathcal{S}_{\omega}|_{[2,n-1]})}{\prod_{j=2}^{n-1}(z-V_{22,\omega}(j))}\end{pmatrix}\quad \text{for }n\ge3.
\end{align}
\end{proposition}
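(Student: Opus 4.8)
The plan is to eliminate the $b$-coordinate exactly as in the passage from \eqref{sjo} to \eqref{eq6}--\eqref{eq5}, thereby reducing the whole statement to the classical determinant formula for \emph{scalar} Jacobi operators. Fix $z\notin\{V_{22,\omega}(j):j\in\Z\}$ throughout and set $\tilde v_\omega(n):=V_{11,\omega}(n)+\frac{|V_{12,\omega}(n)|^2}{z-V_{22,\omega}(n)}$, so that $A^z(T^n\omega)=\begin{pmatrix} z-\tilde v_\omega(n)&-1\\ 1&0\end{pmatrix}$ is the ordinary transfer matrix of the scalar Jacobi operator $[\tilde{\mathcal H}_\omega a](n)=a(n+1)+a(n-1)+\tilde v_\omega(n)a(n)$. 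The initial conditions imposed on $\boldsymbol u_1,\boldsymbol u_2$ are precisely those forced by the relation \eqref{eq5}; hence $a_1(\cdot,z)$ and $a_2(\cdot,z)$ are the two fundamental solutions of $\tilde{\mathcal H}_\omega a=za$ with $(a_1(1,z),a_1(0,z))=(1,0)$ and $(a_2(1,z),a_2(0,z))=(0,1)$.

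The first step would be a block determinant factorization. Order the basis of $\ell^2([k,k+m-1],\C^2)\cong\C^{2m}$ so that all $a$-coordinates precede all $b$-coordinates; then $z-\mathcal S_{\omega}|_{[k,k+m-1]}$ takes the block form $\begin{pmatrix}T_a&-D_{12}\\-D_{21}&D_{22}\end{pmatrix}$, where $T_a$ is the $m\times m$ tridiagonal matrix with diagonal $z-V_{11,\omega}(j)$ and off-diagonal $-1$, while $D_{12}=\mathrm{diag}(V_{12,\omega}(j))$, $D_{21}=\mathrm{diag}(V_{21,\omega}(j))$ (equal to $D_{12}^{*}$ since $V_\omega$ is self-adjoint), and $D_{22}=\mathrm{diag}(z-V_{22,\omega}(j))$, all with $j$ ranging over $[k,k+m-1]$. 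Since $z\notin\{V_{22,\omega}(j)\}$ the block $D_{22}$ is invertible, and the Schur complement identity together with $D_{12}D_{22}^{-1}D_{21}=\mathrm{diag}\big(|V_{12,\omega}(j)|^{2}/(z-V_{22,\omega}(j))\big)$ gives
\begin{equation*}
\det\big(z-\mathcal S_{\omega}|_{[k,k+m-1]}\big)=\Big(\prod_{j=k}^{k+m-1}(z-V_{22,\omega}(j))\Big)\det\big(z-\tilde{\mathcal H}_\omega|_{[k,k+m-1]}\big),
\end{equation*}
with the convention that a determinant over an empty interval equals $1$. This isolates the product $\prod_j (z-V_{22,\omega}(j))$ appearing in the statement and leaves a purely scalar determinant.

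The second step uses the classical scalar identity and assembles the cocycle. A cofactor expansion along the last row shows $P_n:=\det(z-\tilde{\mathcal H}_\omega|_{[1,n]})$ satisfies $P_n=(z-\tilde v_\omega(n))P_{n-1}-P_{n-2}$ with $P_0=1$, $P_{-1}=0$; since $a_1(\cdot,z)$ obeys the same recursion with the same seed, $a_1(n+1,z)=P_n$ for $n\ge0$, and combining with the first step yields the stated formula for $a_1(n+1,z)$. Likewise $Q_n:=\det(z-\tilde{\mathcal H}_\omega|_{[2,n]})$, with $Q_1=1$, $Q_0=0$, satisfies the recursion in $n$ and equals $-a_2(n+1,z)$, giving the formula for $a_2(n+1,z)$. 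Finally, iterating \eqref{eq6} gives $(a(n+1),a(n))^{\mathsf T}=A^z_n(\omega)\,(a(1),a(0))^{\mathsf T}$, so the columns of $A^z_n(\omega)$ are $(a_1(n+1,z),a_1(n,z))^{\mathsf T}$ and $(a_2(n+1,z),a_2(n,z))^{\mathsf T}$; substituting the two identities (at indices $n$ and $n-1$) produces \eqref{8.5-eq10} for $n\ge3$.

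I do not expect a real obstacle here: the argument is bookkeeping on top of the Schur complement. The points that need care are (i) the ordering and blocking of the $2m\times2m$ matrix and the exact form of $D_{12}D_{22}^{-1}D_{21}$ — this is the one place where the singular structure of $J$ enters, namely that the vanishing $(2,2)$-entry of $J$ makes each $b$-site a pendant vertex that can be eliminated simultaneously; (ii) matching the empty- and one-site determinant conventions against the recursions at the endpoints of the stated ranges ($n=1$ for $a_1$, $n=2$ for $a_2$, $n=3$ for $A^z_n$); and (iii) noting that everything is uniform in the fixed $z$, even though $\tilde v_\omega$ depends on $z$, since $z$ is frozen from the start.
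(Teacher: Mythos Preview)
Your proposal is correct. The paper does not supply a proof of this proposition, treating it as an elementary consequence of the definitions; your Schur complement reduction makes the verification explicit and transparent. The factorization
\[
\det\big(z-\mathcal S_\omega|_{[k,k+m-1]}\big)=\Big(\prod_{j=k}^{k+m-1}(z-V_{22,\omega}(j))\Big)\det\big(z-\tilde{\mathcal H}_\omega|_{[k,k+m-1]}\big)
\]
is precisely the mechanism by which the singularity of $J$ lets the pendant $b$-sites be eliminated simultaneously, after which the remaining identity is the standard Schr\"odinger transfer-matrix/determinant formula (the same ``expand along the last row and induct'' that the paper invokes for the analogous scalar formula in Section~3.3). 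Your flagged caution~(ii) about endpoint conventions is well placed: matching the initial data $(a_1(1),a_1(0))=(1,0)$, $(a_2(1),a_2(0))=(0,1)$ to the cocycle product $A^z_n(\omega)$ involves an index shift that must be tracked carefully against the paper's conventions, but this is bookkeeping, not mathematics.
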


An energy $z$ of $\mathcal{S}_{\omega}$ is called an generalized eigenvalue if there is a formal solution to \eqref{sjo} with $\|\boldsymbol{u}(n)\|\le C(1+|n|)^{1/2+\epsilon}$ for some constants $C,\epsilon>0$. Then we have that the spectrum of $\mathcal{S}_{\omega}$ is given by the closure of the set of generalized eigenvalues of $\mathcal{S}_{\omega}$ \cite{Kirsch2007}. Denote $\mathcal{UH}=\{E\in\R:(T,A^E)\text{ is uniformly hyperbolic}\}.$ Then we have the following result:
\begin{proposition}\label{11-29-proposition3.6}
One has $
\cup_{\omega\in\Omega}\sigma(\mathcal{S}_{\omega})\subseteq\R\backslash\mathcal{UH}.$
Moreover, we have
\begin{align*}
\cup_{\omega\in\Omega}\sigma(\mathcal{S}_{\omega})\cap\mathcal{T}_\delta=(\R\backslash\mathcal{UH})\cap\mathcal{T}_\delta,
\end{align*}
where we denote
\begin{align*}
    \mathcal{T}_\delta=\{x\in\R:\operatorname{dist}(x,\operatorname{Ran}(V_{22}))\ge\delta>0\}.
\end{align*}
\end{proposition}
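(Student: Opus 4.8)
The plan is to exploit the reduction of the singular operator to the scalar cocycle $(T,A^z)$ given in \eqref{eq6}–\eqref{eq5}, and to use the standard Schnol–type/Johnson characterization of the spectrum, taking care of the exceptional energies in $\operatorname{Ran}(V_{22})$ separately. First I would establish the inclusion $\cup_{\omega}\sigma(\mathcal{S}_\omega)\subseteq\R\setminus\mathcal{UH}$. Suppose $E\in\mathcal{UH}$; then $E\notin\operatorname{Ran}(V_{22})$ automatically (if $E=V_{22,\omega_0}(m)$ for some $\omega_0,m$, the coefficient $z-V_{11}-|V_{12}|^2/(z-V_{22})$ has a pole along the orbit, so $A^E$ is not even continuous and certainly not uniformly hyperbolic on the full hull — this is where the hypothesis $\omega\in\Omega_0$, i.e. $V_{22,\omega}$ injective along orbits, and the minimality of $T$ enter). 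So $A^E$ is a genuine continuous $SL(2,\R)$ cocycle, and uniform hyperbolicity gives exponential dichotomy: every nonzero solution of \eqref{eq6} grows exponentially either at $+\infty$ or at $-\infty$, hence every formal solution $\boldsymbol{u}$ of $\mathcal{S}_\omega\boldsymbol{u}=E\boldsymbol{u}$ (whose $a$-component solves \eqref{eq6} and whose $b$-component is determined by \eqref{eq5}) is unbounded, so $E$ is not a generalized eigenvalue; since $\sigma(\mathcal{S}_\omega)$ is the closure of the generalized eigenvalues and $\mathcal{UH}$ is open, we conclude $\sigma(\mathcal{S}_\omega)\cap\mathcal{UH}=\emptyset$.

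Next, for the reverse inclusion on $\mathcal{T}_\delta$: fix $E\in(\R\setminus\mathcal{UH})\cap\mathcal{T}_\delta$. On $\mathcal{T}_\delta$ we have $\operatorname{dist}(E,\operatorname{Ran}(V_{22}))\ge\delta$, so the potential $z\mapsto z-V_{11}-|V_{12}|^2/(z-V_{22})$ is a bounded continuous function on the whole hull, and $(T,A^E)$ is an honest bounded continuous $SL(2,\R)$ cocycle — no exceptional fiber to worry about, and every solution of \eqref{eq6} comes with a bona fide $\ell^\infty_{\mathrm{loc}}$ companion $b(n)$ via \eqref{eq5}. Since $E\notin\mathcal{UH}$, the cocycle $(T,A^E)$ is not uniformly hyperbolic, so by the classical dichotomy (Johnson's theorem / the characterization of $\mathcal{UH}$ for ergodic continuous $SL(2,\R)$ cocycles) $E$ lies in the spectrum of the associated scalar Jacobi-type operator on $\ell^2(\Z)$; equivalently there is a polynomially bounded formal solution $a(n)$. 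Feeding this $a$ back through \eqref{eq5} produces a polynomially bounded $\boldsymbol{u}=(a,b)^\mathsf{T}$ solving $\mathcal{S}_\omega\boldsymbol{u}=E\boldsymbol{u}$ for the relevant $\omega$, so $E$ is a generalized eigenvalue of $\mathcal{S}_\omega$ and hence $E\in\sigma(\mathcal{S}_\omega)$. Combined with the first inclusion this yields $\cup_\omega\sigma(\mathcal{S}_\omega)\cap\mathcal{T}_\delta=(\R\setminus\mathcal{UH})\cap\mathcal{T}_\delta$.

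I expect the main obstacle to be the careful handling of the energies $E\in\operatorname{Ran}(V_{22})$ in the first part — precisely the place where the singular nature of $J$ makes the cocycle formalism break down. One must argue that such $E$ can never be in $\mathcal{UH}$ (so that the inclusion $\sigma(\mathcal{S}_\omega)\subseteq\R\setminus\mathcal{UH}$ is not vacuously threatened): the point is that $\mathcal{UH}$ is defined via the cocycle $(T,A^E)$, which is only defined when $E\notin\{V_{22,\omega}(j)\}$, and for such exceptional $E$ one should either declare $E\notin\mathcal{UH}$ by convention consistent with the definition, or observe via \eqref{11-25-eq7}–\eqref{11-25-eq8} that a genuine $\ell^2$ or polynomially bounded eigensolution can be built, placing $E$ squarely in the spectrum. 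Restricting the equality statement to $\mathcal{T}_\delta$ is exactly what sidesteps this difficulty on the harder (reverse) inclusion; the subtlety only survives in the easy inclusion, where it is handled by the $\Omega_0$/minimality remark. A secondary technical point is making sure "generalized eigenvalue for $\mathcal{S}_\omega$ on $\ell^2(\Z,\C^2)$" and "generalized eigenvalue for the scalar operator $[Hu]_n=(\ldots)u_n$ on $\ell^2(\Z)$" match up with the right power $1/2+\epsilon$ after passing through \eqref{eq5}, which is immediate since \eqref{eq5} multiplies by a bounded factor on $\mathcal{T}_\delta$.
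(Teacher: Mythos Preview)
Your proposal is correct and follows essentially the same approach as the paper: the first inclusion via exponential dichotomy of uniformly hyperbolic cocycles ruling out polynomially bounded solutions (together with $\mathcal{UH}$ open and $\sigma(\mathcal{S}_\omega)=\overline{\mathcal{G}_\omega}$), and the reverse inclusion on $\mathcal{T}_\delta$ via the Sacker--Sell/Johnson characterization producing a bounded orbit $\|A^E_n(\omega)v\|\le 1$, which one lifts to a generalized eigenfunction $\boldsymbol{u}=(a,b)^{\mathsf T}$ through \eqref{eq5}. The paper is slightly more direct in the second step, invoking the existence of a \emph{bounded} orbit rather than a merely polynomially bounded one, but this is the same mechanism; your extra discussion of the exceptional energies $E\in\operatorname{Ran}(V_{22})$ is more careful than the paper's treatment, which tacitly uses that such $E$ cannot lie in $\mathcal{UH}$ since $A^E$ fails to be continuous there.
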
	

\begin{proof}
  For each $\omega\in\Omega$, let $\mathcal{G}_\omega$ denote the set of generalized eigenvalues of $\mathcal{S}_{\omega}$. Since $V_{12,\omega}(n)\neq0$, the component $a$ of any nonzero solution $\boldsymbol{u}$ to \eqref{sjo} does not vanish. Moreover, if $E\in\mathcal{UH}$, then the sequence $\{a(j)\}_{j=-\infty}^\infty$ for any nonzero solution of the difference equation \eqref{sjo} must grow exponentially fast on at least one half-line. By definition, this implies $\mathcal{G}_\omega\subseteq\R\backslash\mathcal{UH}$. Since $\mathcal{UH}$ is an open subset of $\R$, it follows that $\sigma(\mathcal{S}_{\omega})\subseteq\R\backslash\mathcal{UH}$ as $\sigma(\mathcal{S}_{\omega})=\overline{\mathcal{G}_\omega}$.

  Suppose $E\in(\R\backslash\mathcal{UH})\cap\mathcal{T}_\delta$. Then there exist $\omega\in\Omega$ and $v\in\mathbb{S}^1$ with $\|A^E_n(\omega)v\|\le1$ for all $n$. Define $a:\Z\to\C$ and $b:\Z\to\C$ by $(a_1,a_0)^\mathsf{T}=v$, $\mathcal{S}_{\omega}\boldsymbol{u}=E\boldsymbol{u}$ and $b_n=\frac{V_{21,\omega}(n)}{E-V_{22,\omega}(n)}$. The vector $\boldsymbol{u}$ thus defined is an generalized eigenvector, implying $E\in\sigma(S_\omega)$.
\end{proof}
\subsection{Integrated density of states}
Denote the restriction of $\mathcal{S}_{\omega}$ to $[1,N]$ with Dirichlet boundary conditions by $\mathcal{S}_{\omega}|_{[1,N]}$, i.e.,
\small
\begin{align*}
    \mathcal{S}_{\omega}|_{[1,N]}=\begin{pmatrix}
			V_\omega(1)&J&&&\\J&V_{\omega}(2)&J&&\\&\ddots&\ddots&\ddots&\\&&J&V_\omega(N-1)&J\\&&&J&V_\omega(N)
		\end{pmatrix}_{2N\times 2N}.
\end{align*}
\normalsize
For $\omega\in\Omega$ and $N\ge1$, define measure $d\mathrm{n}_{\omega,N}$ by placing uniformly distributed point masses at the eigenvalues $E^{(1)}_{\omega,N}\le\cdots\le E^{(2N)}_{\omega,N}$ of $\mathcal{S}_\omega |_{[1,N]}$. That is
\begin{align*}
    \int gd\mathrm{n}_{\omega,N}:=\frac{1}{2N}\sum_{j=1}^{2N}g(E_{\omega,N}^{(j)})=\frac{1}{2N}\text{Tr}(g(\mathcal{S}_{\omega}|_{[1,N]})),
\end{align*}
for bounded measurable $g$.

\begin{proposition}\cite{damanik2022}\label{8.5-lemma3.7.}
For a.e. $\omega\in\Omega$, the measure $d\mathrm{n}_{\omega,N}$ has a weak limit, $$d\mathrm{n}:=\int d\omega\int \frac{1}{2}(d\mu_{\omega,\boldsymbol{\delta}_0}+d\mu_{\omega,\boldsymbol{\gamma}_0}),$$ as $N\to\infty$. 
\end{proposition}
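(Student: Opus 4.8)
Proposition~\ref{8.5-lemma3.7.} is a purely structural fact, valid for any bounded ergodic block Jacobi operator, so the plan is to run the classical ergodic-operator argument while checking that the singularity of $J$ never intervenes. Set $M:=2\|J\|+\|\mathfrak{f}\|_\infty<\infty$; then $\|\mathcal{S}_\omega\|\le M$ and, being a compression, $\|\mathcal{S}_\omega|_{[1,N]}\|\le M$ as well. Hence every $d\mathrm{n}_{\omega,N}$ is a probability measure supported in $[-M,M]$, and so is $d\mathrm{n}$ since $\tfrac12\bigl(\mu_{\omega,\boldsymbol{\delta}_0}+\mu_{\omega,\boldsymbol{\gamma}_0}\bigr)(\R)=\tfrac12(\|\boldsymbol{\delta}_0\|^2+\|\boldsymbol{\gamma}_0\|^2)=1$. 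Because polynomials are dense in $C([-M,M])$, the asserted weak convergence is equivalent to convergence of all moments, and so it suffices to prove that for each fixed integer $k\ge0$ and for $\omega$ in a full-measure set,
\begin{equation*}
\int E^k\,d\mathrm{n}_{\omega,N}(E)=\frac{1}{2N}\operatorname{Tr}\bigl((\mathcal{S}_\omega|_{[1,N]})^k\bigr)\ \xrightarrow[N\to\infty]{}\ \frac12\int_\Omega\Bigl(\langle\boldsymbol{\delta}_0,\mathcal{S}_\omega^k\boldsymbol{\delta}_0\rangle+\langle\boldsymbol{\gamma}_0,\mathcal{S}_\omega^k\boldsymbol{\gamma}_0\rangle\Bigr)\,d\nu(\omega),
\end{equation*}
the full-measure set being made independent of $k$ at the end by a countable intersection; by the spectral theorem the right-hand side is exactly $\int E^k\,d\mathrm{n}(E)$.

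The engine is locality together with covariance. Let $U$ be the shift on $\ell^2(\Z,\C^2)$, $[U\boldsymbol{u}](n)=\boldsymbol{u}(n+1)$, so that $\mathcal{S}_{T\omega}=U\mathcal{S}_\omega U^{*}$ and hence $\mathcal{S}_{T^{n}\omega}^{k}=U^{n}\mathcal{S}_\omega^{k}U^{-n}$. Since $\mathcal{S}_\omega$ is a range-one $2\times2$-block Jacobi operator in the site variable $n$, the diagonal block $(\mathcal{S}_\omega^{k})(n,n)\in M(2,\C)$ depends only on the $V_\omega(j)$ with $|j-n|\le k$; consequently it coincides with the diagonal block of $(\mathcal{S}_\omega|_{[1,N]})^{k}$ whenever $k+1\le n\le N-k$, and by covariance $(\mathcal{S}_\omega^{k})(n,n)=(\mathcal{S}_{T^{n}\omega}^{k})(0,0)$ as $2\times2$ matrices. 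Writing $F_k(\omega):=\operatorname{tr}_{\C^2}\bigl[(\mathcal{S}_\omega^{k})(0,0)\bigr]=\langle\boldsymbol{\delta}_0,\mathcal{S}_\omega^{k}\boldsymbol{\delta}_0\rangle+\langle\boldsymbol{\gamma}_0,\mathcal{S}_\omega^{k}\boldsymbol{\gamma}_0\rangle\in L^\infty(\Omega)$, I would then split
\begin{equation*}
\operatorname{Tr}\bigl((\mathcal{S}_\omega|_{[1,N]})^{k}\bigr)=\sum_{n=1}^{N}\operatorname{tr}_{\C^2}\bigl[(\mathcal{S}_\omega|_{[1,N]})^{k}(n,n)\bigr]=\sum_{n=k+1}^{N-k}F_k(T^{n}\omega)+\mathcal{E}_{N},
\end{equation*}
where the remainder $\mathcal{E}_N$ collects the at most $2k$ near-boundary blocks, each of trace-norm bounded by $2\|(\mathcal{S}_\omega|_{[1,N]})^{k}\|\le2M^{k}$, so $|\mathcal{E}_N|\le4kM^{k}$. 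Dividing by $2N$, the boundary contribution is $O(kM^{k}/N)\to0$.

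It then remains to apply Birkhoff's pointwise ergodic theorem: since $F_k\in L^1(\Omega,\nu)$ and $(\Omega,\nu,T)$ is ergodic, for $\nu$-a.e.\ $\omega$ one has $\tfrac1N\sum_{n=1}^{N}F_k(T^{n}\omega)\to\int_\Omega F_k\,d\nu$, and the trimmed average over $k+1\le n\le N-k$ has the same limit. Combining the two displays above gives $\int E^{k}\,d\mathrm{n}_{\omega,N}\to\tfrac12\int_\Omega F_k\,d\nu=\int E^{k}\,d\mathrm{n}$ for this $\omega$ and this $k$; intersecting the countably many full-measure sets over $k\in\N\cup\{0\}$ produces a single full-measure set of $\omega$ on which every moment of $d\mathrm{n}_{\omega,N}$ converges, which, all the measures living in the fixed compact $[-M,M]$, yields the claimed weak convergence $d\mathrm{n}_{\omega,N}\to d\mathrm{n}$.

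\textbf{Main obstacle.} There is no deep difficulty here; the only point requiring genuine care is the uniform-in-$N$ control of the boundary remainder $\mathcal{E}_N$, and this is immediate from the range-one block structure together with $\|\mathcal{S}_\omega|_{[1,N]}\|\le M$. In particular $\det J=0$ is never used — which is precisely why the version of this statement recorded in \cite{damanik2022} for non-singular block Jacobi operators transfers verbatim. The one thing to verify carefully is that the definition of $d\mathrm{n}$ via $\tfrac12(d\mu_{\omega,\boldsymbol{\delta}_0}+d\mu_{\omega,\boldsymbol{\gamma}_0})$ is indeed the $\nu$-average of the finite-volume eigenvalue distributions, and this is exactly what the moment identity above confirms: it is the natural $d=2$ normalization by the two canonical vectors supported at a single site.
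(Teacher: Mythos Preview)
Your proposal is correct and follows essentially the same approach as the paper: compute moments via the trace formula, use the finite-range block structure to replace truncated diagonal blocks by those of the full operator up to a boundary error that is $O(1)$, apply covariance and Birkhoff, and conclude by Stone--Weierstrass on the common compact support. The paper records only the key trace identity and refers to Theorem~4.3.8 of \cite{damanik2022} for the remaining details, which are precisely what you have spelled out.
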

\begin{proof}
One only needs to note the following relation  
 \begin{align*}
     \frac{1}{2N}\mathrm{Tr}((\mathcal{S}_\omega |_{[1,N]})^p)=\frac{1}{2N}\sum_{j=1}^N\langle \boldsymbol{\delta}_j,(\mathcal{S}_\omega |_{[1,N]})^p\boldsymbol{\delta}_j\rangle +\langle \boldsymbol{\gamma}_j,(\mathcal{S}_\omega |_{[1,N]})^p\boldsymbol{\gamma}_j\rangle,
 \end{align*}
 while the remaining details can be found in Theorem 4.3.8 of \cite{damanik2022}.
\end{proof}

Define 
\begin{align*}
    \mathrm{n}_{\omega,N}(E):=\frac{1}{2N}\#\left\{\text{eigenvalues }\le E\text{ of }\mathcal{S}_\omega |_{[1,N]}\right\}.
\end{align*}
Then, for a.e. $\omega\in\Omega$, $\mathrm{n}_{\omega,N}$ converges to a continuous, non-decreasing, function $E\mapsto \mathrm{n}(E)$, which is independent of $\omega$, and is called the integrated density of states (IDS) of the operator $\mathcal{S}_{\omega}$. In fact, the IDS is the distribution function of $d\mathrm{n}$. Moreover, $d\mathrm{n}$ is also continuous and $\text{supp}(d\mathrm{n})=\sigma^{\mathcal{S}}(J,V)$ \cite{AS}.

\subsection{Thouless formula}\label{thou}
Let $\mathcal{S}_{\omega}$ be the operator defined in \eqref{sjo}, and suppose that $\ln|z-f_{22}(\cdot)|\in L^1(\T)$ for any $z\in\C$. Then,
the following Thouless formula holds for singular Jacobi operators on the strip:
\begin{lemma}[Thouless formula]\label{11-27-lemma3.24}
For every $z\in\C$, we have
\begin{align*}
\frac{1}{2}L(T,A^z)+\frac{1}{2}\E(\ln|z-f_{22}(\cdot)|)=\int\ln|E-z|d\mathrm{n}(E).
\end{align*}
\end{lemma}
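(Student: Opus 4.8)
The plan is to evaluate $\lim_{n\to\infty}\tfrac1n\log\|A^z_n(\omega)\|$, which exists for $\nu$-a.e.\ $\omega$, in two ways. On one hand, by Kingman's subadditive ergodic theorem this limit equals $L(T,A^z)$; here $\log\|A^z(\cdot)\|\in L^1(\Omega)$ because $1\le\|A^z(T^n\omega)\|\le C|z-V_{22,\omega}(n)|^{-1}$ (clear the pole: $\det A^z=1$, so $\|A^z\|\ge1$, while the numerator entries are bounded) and $\ln|z-f_{22}(\cdot)|\in L^1(\T)$ by hypothesis, so $L(T,A^z)$ is well defined. On the other hand, the explicit formula \eqref{8.5-eq10} writes each entry of $A^z_n(\omega)$ as a ratio $\det(z-\mathcal{S}_\omega|_I)\big/\prod_{j\in I}(z-V_{22,\omega}(j))$ with $I\in\{[1,n],[2,n],[1,n-1],[2,n-1]\}$. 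Since $|a_1(n+1,z)|\le\|A^z_n(\omega)\|\le 4\max_I\big|\det(z-\mathcal{S}_\omega|_I)\big/\prod_{j\in I}(z-V_{22,\omega}(j))\big|$ and each such $I$ differs from $[1,n]$ by at most two sites, it suffices to control $\tfrac1n\log|\det(z-\mathcal{S}_\omega|_{[1,n]})|$ and $\tfrac1n\sum_{j=1}^n\log|z-V_{22,\omega}(j)|$, and to check that replacing $[1,n]$ by one of the other intervals leaves the limit unchanged.

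Birkhoff's ergodic theorem (again using $\ln|z-f_{22}(\cdot)|\in L^1$) gives $\tfrac1n\sum_{j=1}^n\log|z-V_{22,\omega}(j)|\to\E(\ln|z-f_{22}(\cdot)|)$ for a.e.\ $\omega$. For the determinant, the $2n\times2n$ Dirichlet truncation satisfies $\tfrac1{2n}\log|\det(z-\mathcal{S}_\omega|_{[1,n]})|=\int\log|z-E|\,d\mathrm{n}_{\omega,n}(E)$, where $d\mathrm{n}_{\omega,n}$ is the eigenvalue-counting measure of Proposition \ref{8.5-lemma3.7.}; all of these measures are supported in a fixed interval $[-C_0,C_0]$ (with $C_0=2\|J\|+\sup_\omega\|V_\omega\|$), and $d\mathrm{n}_{\omega,n}\to d\mathrm{n}$ weakly for a.e.\ $\omega$. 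When $z\in\C\setminus\R$, $E\mapsto\log|z-E|$ is continuous and bounded on $[-C_0,C_0]$, so weak convergence yields $\tfrac1{2n}\log|\det(z-\mathcal{S}_\omega|_{[1,n]})|\to\int\log|z-E|\,d\mathrm{n}(E)$; and the same limit holds with $[1,n]$ replaced by $[2,n]$, $[1,n-1]$ or $[2,n-1]$, since by ergodicity (applied to the shift $T$) the associated eigenvalue-counting measures also converge weakly to $d\mathrm{n}$ for a.e.\ $\omega$. Combining the three limits, $\tfrac1n\log\|A^z_n(\omega)\|\to 2\int\log|z-E|\,d\mathrm{n}(E)-\E(\ln|z-f_{22}(\cdot)|)$, and equating with $L(T,A^z)$ proves the formula for every $z\in\C\setminus\R$.

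To pass from $\C\setminus\R$ to all of $\C$ I would invoke subharmonicity: both sides of the claimed identity are subharmonic functions of $z$ on $\C$ and agree on $\C\setminus\R$, a set of full planar Lebesgue measure, so they agree everywhere (a subharmonic function is recovered from its solid averages, hence is determined by its a.e.\ values). The right-hand side is twice the logarithmic potential of the probability measure $d\mathrm{n}$, manifestly subharmonic. The left-hand side is where the singular-strip structure demands care: $A^z(\cdot)$ is \emph{not} holomorphic in $z$, having poles at $z\in\{V_{22,\omega}(j)\}$. I would clear the poles by setting $\widehat A^z(T^n\omega):=(z-V_{22,\omega}(n))A^z(T^n\omega)$, a genuinely continuous $M(2,\C)$-valued cocycle with entries polynomial (degree two) in $z$; by $T$-invariance of $\nu$, $\widehat A^z_n(\omega)=\big(\prod_{j=0}^{n-1}(z-V_{22,\omega}(j))\big)A^z_n(\omega)$, whence $L(T,\widehat A^z)=L(T,A^z)+\E(\ln|z-f_{22}(\cdot)|)$. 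For a holomorphically varying continuous cocycle the Lyapunov exponent is subharmonic: the functions $z\mapsto\tfrac1{kn}\int_\Omega\log\|\widehat A^z_{kn}(\omega)\|\,d\nu(\omega)$ are subharmonic and, for fixed $n$, non-increasing in $k$ by subadditivity, so their limit $L(T,\widehat A^z)$ is subharmonic. Hence $z\mapsto L(T,A^z)+\E(\ln|z-f_{22}(\cdot)|)$ is subharmonic, and the identity extends to all $z\in\C$.

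\textbf{Main obstacle.} The genuinely delicate point is the passage to real $z$: there $\log|z-E|$ has a logarithmic singularity at $E=z$, so weak convergence of the finite-volume eigenvalue measures $d\mathrm{n}_{\omega,n}$ does not by itself pass through the integral $\int\log|z-E|\,d\mathrm{n}_{\omega,n}$ — one would otherwise need a quantitative lower bound preventing the truncated eigenvalues from clustering too near $z$. Routing around this via subharmonicity is clean, but it forces the pole-clearing renormalization $\widehat A^z$ of the singular cocycle, which is precisely the source of the "modified" shape of the formula (the factor $\tfrac12$ and the extra term $\tfrac12\E(\ln|z-f_{22}(\cdot)|)$); as noted in the introduction, it is this modified Thouless formula that later makes the classical harmonic-analysis argument of \cite{DPSB} inapplicable.
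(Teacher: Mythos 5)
Your proposal is correct and takes essentially the same route as the paper: clear the pole (your $\widehat A^z$ is exactly the paper's $B^z:=(z-f_{22}(\cdot))A^z$), express the pole-cleared transfer matrix through characteristic polynomials of Dirichlet truncations, combine the eigenvalue-counting factorization with weak convergence of $d\mathrm{n}_{\omega,n}$ for $z\in\C\setminus\R$, and extend to real $z$ by subharmonicity of both sides and equality of solid averages. The only cosmetic differences are that the paper works with $B^z$ from the outset, so your separate Birkhoff step for $\prod_j(z-V_{22,\omega}(j))$ is absorbed into the pole-cleared cocycle, and you make explicit the subharmonicity of $L(T,\widehat A^z)$ that the paper invokes without comment.
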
	
	
\begin{proof}
Denote $B^z(\omega):=(z-f_{22}(\omega))A^z(\omega)$, then $L(T,B^z)=L(T,A^z)+\E(\ln|z-f_{22}(\cdot)|)$. 

Let us first consider $z\in\C\backslash\R$. According to \eqref{8.5-eq10}, we have
\begin{align*}
    B^z_n(\omega)=\begin{pmatrix}
        P_{n,\omega}(z)&-(z-f_{22}(T\omega))Q_{n-1,\omega}(z)\\ (z-f_{22}(T^n\omega))P_{n-1,\omega}(z)&-(z-f_{22}(T\omega))(z-f_{22}(T^{n}\omega))Q_{n-2,\omega}(z)
    \end{pmatrix}
\end{align*}
for $n\in\N$, where $P_{n,\omega}(z)=\det(z-\mathcal{S}_{\omega}|_{[1,n]})$ and $Q_{n,\omega}(z)=\det(z-\mathcal{S}_{T\omega}|_{[1,n]})$. Moreover, for $n\in\N$, we have
\begin{align*}
P_{n,\omega}(z)=\prod_{j=1}^{2n}\left(z-E_{\omega,n}^{(j)}\right),\quad Q_{n,\omega}(z)=\prod_{j=1}^{2n}\left(z-E_{T\omega,n}^{(j)}\right),
\end{align*}
where $E_{\omega,n}^{(1)},\dots,E_{\omega,n}^{(2n)}$ denote the eigenvalues of $\mathcal{S}_\omega |_{[1,n]}$. Therefore,
\begin{align*}
\frac{1}{2n}\ln\left|P_{n,\omega}(z)\right|=\int\ln|E-z|d\mathrm{n}_{\omega,n}(E),
\end{align*}
and
\begin{align*}
\frac{1}{2n}\ln\left|Q_{n,\omega}(z)\right|=\int\ln|E-z|d\mathrm{n}_{T\omega,n}(E).
\end{align*}

Since $z\in\C\backslash\R$, the function $E\to\ln|E-z|$ is bounded and continuous on $\sigma^{\mathcal{S}}(J,V)=\text{supp}(d\mathrm{n})$, so Proposition \ref{8.5-lemma3.7.} yields
\begin{align*}
\lim_{n\to\infty}\frac{1}{2n}\ln|P_{n,\omega}(z)|=\int\ln|E-z|d\mathrm{n}(E)
\end{align*}
for almost every $\omega$. Since all norms on $2\times2$ matrices are equivalent, it holds that
\begin{align}\label{12-6-eq35}
    \frac{1}{2}L(T,B^z)=\int\ln|E-z|d\mathrm{n}(E)
\end{align}
for $z\in\C\backslash\R$.

Let us now consider $E\in\R$. Denoting the right-hand side of \eqref{12-6-eq35} by $\gamma(z)$, we have
\begin{align*}
    \frac{1}{\pi r^2}\int_{|z-E|\le r}\frac{1}{2}L(T,B^z)dz=\frac{1}{\pi r^2}\int_{|z-E|\le r}\gamma(z)dz
\end{align*}
for every $r>0$, since the integrands agree on $B_r(E)\backslash\R$. Since both sides are subharmonic, we have
\begin{align*}
\frac{1}{2}L(T,B^E)=\lim_{r\to0}\frac{1}{\pi r^2}\int_{|z-E|\le r}\frac{1}{2}L(T,B^z)dz=\lim_{r\to0}\frac{1}{\pi r^2}\int_{|z-E|\le r}\gamma(z)dz=\gamma(E). 
\end{align*}
Thus, the equation \eqref{12-6-eq35} holds for every $z\in\C$.
\end{proof}

\subsection{Estimate of Lyapunov exponent}

\subsubsection{Harmonic analysis argument}\label{har-lya}
Deift and Simon \cite{DPSB} provide a harmonic function-based proof to estimate the lower bound of the Lyapunov exponent for Schr\"odinger cocycle. While their argument is feasible for single-chain Jacobi cocycles, it is only partially applicable to singular Jacobi cocycles on the strip. The key lies in constructing an analytic function derived from Thouless formula, defined as
\begin{align*}
F(z)=-2\cosh\Big(-2\int\ln(E-z)d\mathrm{n}(E)+\int\ln(V_{22}(\omega)-z)d\omega\Big),   
\end{align*}
where the branch of $\ln$ is taken with $\ln(-1)=-\pi i$, and $\ln(z)$ remains continuous in the region $\Im z\le0$. Denoting the argument by
\begin{align*}
    \beta(z)=2\int\operatorname{arg}(z-E)d\mathrm{n}(E)-\int\operatorname{arg}(z-V_{22}(\omega))d\omega,
\end{align*}
the Thouless formula implies that
\begin{align*}
    F(z)=-2\cosh(-L(T,A^z)+i\beta(z))=z-\E(V_{11}(\cdot))+O(|z|^{-1}),
\end{align*}
uniformly as $|z|\to\infty$ in the upper half-plane. Notably, $\Im F(z)=2\sin\beta(z)\sinh L(T,A^z)$ and $\Im z$ are harmonic functions. To establish the inequality $\Im F(z)\ge\Im z$. It suffices to verify $\Im F(z)\ge\Im z-\epsilon$ on the boundary of $D_{R,\epsilon}=\{z:|z|<R,\Im z>\epsilon\}$ for large $R$ and $\epsilon\to0$. However, on the boundary segment where $\Im z=\epsilon$, the inequality requires $\beta(z)\in[0,\pi]$, which is not satisfied in this case. Indeed, if $\Re z<\inf V_{22}$ and $\mathrm{n}(\Re z)>\frac{1}{2}$, we find that
\begin{align*}
    \lim_{\Im z\to0^+}\beta(z)=2\pi(1-\mathrm{n}(\Re z))-\pi=2\pi(\frac{1}{2}-\mathrm{n}(\Re z))<0.
\end{align*}
So we introduce a hyperbolic geometry argument:
\subsubsection{Hyperbolic geometry argument}
In the following, we consider the Schr\"odinger type cocycle
$$A(x) =S^{\tilde v}(x):=\begin{pmatrix}
           \tilde v(x)  &-1\\1&0
        \end{pmatrix}.$$
It is well-known result that if $\Im \tilde v>0$, then $(\alpha, S^{\tilde v})$ is uniformly hyperbolic \cite{aviladensity}, our aim to estimate its Lyapunov exponent.

For matrices in $SL(2,\C)$, an action on the Riemann Sphere $\overline{\C}$ through M\"{o}bius transformations is defined as:
\begin{align*}
    \begin{pmatrix}
        a&b\\c&d
    \end{pmatrix}\cdot z=\frac{az+b}{cz+d}.
\end{align*}
For $A\in SL(2,\C)$, let $\mathring{A}=QAQ^{-1}$ where
\begin{align*}
   Q=\frac{-1}{1+i}\begin{pmatrix}
        1&-i\\1&i
    \end{pmatrix}.
\end{align*}
The map $A\to\mathring{A}$ maps bijectively $SL(2,\R)$ to $SU(1,1)$, the real Lie group of matrices $\begin{pmatrix}
    u&\bar{v}\\ v&\bar{u}
\end{pmatrix},u,v\in\C$ such that $|u|^2-|v|^2=1$. Denote $\mathbb{D}$ the unit disk of  $\C$, then we have the following:

\begin{lemma}\label{hyb}\cite{avilacocycle}
 Suppose $(\alpha,A)\in\R\backslash\Q\times C^0(\T,SL(2,\C))$. If $\mathring{A}(x)\cdot\D\subset\D_{e^{-\epsilon}}$ for every $x\in \T$,  then $L(\alpha,A)>\frac{\epsilon}{2}$.
\end{lemma}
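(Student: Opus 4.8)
The plan is to use the $SU(1,1)$ model of $SL(2,\C)$ to convert hyperbolicity of the M\"obius action into a quantitative Lyapunov exponent bound. First I would recall that for $A\in SL(2,\R)$ the conjugate $\mathring A=QAQ^{-1}$ lies in $SU(1,1)$ and hence $\mathring A$ preserves the unit disk $\D$; the hypothesis $\mathring A(x)\cdot\D\subset\D_{e^{-\epsilon}}$ says that the image disk is strictly contracted toward the origin, uniformly in $x$. (In the statement the cocycle is taken in $C^0(\T,SL(2,\C))$; since $\mathring A(x)$ maps $\D$ into a disk compactly contained in $\D$ it is in particular an automorphism of $\D$ of hyperbolic type, so all the $SU(1,1)$ machinery applies; in the application $A=S^{\tilde v}$ with $\Im\tilde v>0$ we are indeed in this situation by \cite{aviladensity}.)

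The key computation is to compare the Euclidean contraction of the image disk with the hyperbolic (Poincar\'e) contraction. A M\"obius automorphism $g$ of $\D$ that maps $\D$ to the Euclidean disk $\D_{r}$ of radius $r=e^{-\epsilon}<1$ centered at $0$ has a precise derivative at a distinguished point, and one can compute that the hyperbolic distance $d_{\mathbb H}$ satisfies an estimate of the form $d_{\mathbb H}(g\cdot z_1,g\cdot z_2)\le (1-c(\epsilon))\,d_{\mathbb H}(z_1,z_2)$ with $c(\epsilon)$ bounded below in terms of $\epsilon$; more usefully, the operator norm is controlled by the derivative of the M\"obius map. Concretely I would pass to the upper half-plane or just work in $\D$ and use that for $g\in SU(1,1)$ with $g\cdot\D\subset\D_r$, writing $g$ in the form $g\cdot z = e^{i\phi}\frac{z-a}{1-\bar a z}$, the image being contained in $\D_r$ forces $|a|$ close to $1$, precisely $\frac{1+|a|}{1-|a|}\ge$ (something like) $\frac{1+r}{1-r}$ is the wrong direction — instead the diameter of $g\cdot\D$ being $\le 2r$ forces $\frac{2(1-|a|^2)}{1-|a|^2}$-type bounds; carrying this through gives $\|g\|^2 = \frac{1+|a|^2}{1-|a|^2}\ge \coth$ of the relevant quantity, hence $\|g\|\ge e^{\epsilon/2}$ up to constants. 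The cleanest route: the hyperbolic metric element at $0$ is $|dz|$ (up to the factor $2$), and the hyperbolic metric at a point $w\in\D$ is $\frac{|dz|}{1-|w|^2}$; if $g\cdot 0=a$ then $\|g\|^2=\frac{1+|a|^2}{1-|a|^2}$, and since the Euclidean image $g\cdot\D=\D_r$ has Euclidean radius $r$, a direct computation of $g$ from $g\cdot\D=\D_r$ yields $|a|$ in terms of $r$ with $\|g\|\ge r^{-1/2}=e^{\epsilon/2}$.

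With such a pointwise bound $\|\mathring A(x)\|\ge e^{\epsilon/2}$ — or more robustly, a submultiplicative-friendly bound on $\|\mathring A_n(x)\|$ coming from iterating the contraction of disks $\mathring A_n(x)\cdot\D\subset\D_{e^{-n\epsilon}}$ — I would conclude as follows. Because $\mathring A(x)\cdot\D\subset\D_{e^{-\epsilon}}$ for all $x$, composing gives $\mathring A_n(x)\cdot\D\subset\D_{e^{-n\epsilon}}$ for all $n\ge1$ and all $x$ (the image of a disk of radius $e^{-(n-1)\epsilon}\le 1$ under one more step has radius $\le e^{-n\epsilon}$; one should check the contraction estimate is monotone in the radius of the input disk, which it is for automorphisms of $\D$). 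Evaluating the derivative / norm relation for $g=\mathring A_n(x)$ exactly as above gives $\|\mathring A_n(x)\|\ge c\, e^{n\epsilon/2}$ for an absolute constant $c>0$, uniformly in $x$. Taking logarithms, dividing by $n$, integrating in $x$ and letting $n\to\infty$ yields $L(\alpha,\mathring A)\ge \epsilon/2$, and since conjugation by the fixed matrix $Q$ does not change the Lyapunov exponent, $L(\alpha,A)=L(\alpha,\mathring A)\ge\epsilon/2$. (If one only has a non-strict inequality $\ge$ at this stage, a cosmetic argument — replace $\epsilon$ by $\epsilon-\delta$ using that the hypothesis with $e^{-\epsilon}$ implies the hypothesis with $e^{-(\epsilon-\delta)}$, or simply note the iterated bound already gives the strict inequality $L>\epsilon/2$ once the constant $c$ is tracked — upgrades it to the strict bound claimed.)

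\textbf{Main obstacle.} The only real work is the elementary but slightly fiddly hyperbolic-geometry lemma: extracting the uniform lower bound $\|\mathring A_n(x)\|\gtrsim e^{n\epsilon/2}$ from the disk-contraction hypothesis, i.e. correctly relating the Euclidean radius $e^{-n\epsilon}$ of the image disk $\mathring A_n(x)\cdot\D$ to the operator norm of $\mathring A_n(x)$ via the identity $\|g\|^2=\frac{1+|g\cdot 0|^2}{1-|g\cdot 0|^2}$ for $g\in SU(1,1)$ and the explicit form of a M\"obius map with prescribed circular image. Everything after that — iteration of the inclusion and the subadditive/Birkhoff passage to the Lyapunov exponent — is routine. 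This is presumably exactly the content cited from \cite{avilacocycle}, so in the paper it may simply be invoked; my proof would reproduce the short computation for completeness.
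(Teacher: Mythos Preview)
The paper does not prove this lemma --- it is quoted from \cite{avilacocycle} without argument --- so I assess your proposal on its own.

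Your strategy is right, but the iteration step is wrong. You claim that $\mathring A(x)\cdot\D\subset\D_{e^{-\epsilon}}$ for all $x$ gives $\mathring A_n(x)\cdot\D\subset\D_{e^{-n\epsilon}}$, arguing that ``the contraction estimate is monotone in the radius of the input disk \dots\ for automorphisms of~$\D$.'' Both parts fail. First, $\mathring A(x)$ is never an automorphism of $\D$ here: elements of $SU(1,1)$ map $\D$ \emph{onto} $\D$, which is incompatible with $\mathring A(x)\cdot\D\subset\D_{e^{-\epsilon}}$; in particular your formula $\|g\|^2=\tfrac{1+|g\cdot 0|^2}{1-|g\cdot 0|^2}$ (valid only in $SU(1,1)$) is unavailable. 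Second, and decisively, a M\"obius map $g$ with $g\cdot\D\subset\D_r$ need not send $\D_s$ into $\D_{rs}$. Take $g_1:z\mapsto r z$ and $g_2:=g_1\circ U$ with $U\in SU(1,1)$ chosen so that $U\cdot 0$ is close to $\partial\D$; then $g_1\cdot\D=g_2\cdot\D=\D_r$, but $g_2 g_1\cdot\D=r\cdot(U\cdot\D_r)$ is a small disk near $\partial\D_r$, not contained in $\D_{r^2}$. Euclidean radii simply do not multiply under composition.

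What \emph{does} iterate is the Poincar\'e contraction. Comparing hyperbolic densities on $\D$ and $\D_r$ gives $\tfrac{2}{1-|w|^2}\le r\cdot\tfrac{2r}{r^2-|w|^2}$ on $\D_r$, so any holomorphic $g:\D\to\D_r$ satisfies the sharpened Schwarz--Pick bound $\frac{|g'(z)|(1-|z|^2)}{1-|g(z)|^2}\le r$. By the chain rule this multiplies along the orbit: $|\mathring A_n(x)'(0)|\le r^n\bigl(1-|\mathring A_n(x)\cdot 0|^2\bigr)\le r^n$. Since $\det\mathring A_n=1$, one has $|\mathring A_n(x)'(0)|=|d_n|^{-2}$ with $d_n$ the $(2,2)$-entry, whence $\|\mathring A_n(x)\|\ge\|\mathring A_n(x)e_2\|\ge|d_n|\ge r^{-n/2}=e^{n\epsilon/2}$ uniformly in $x$. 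As $Q$ is unitary, $\|A_n\|=\|\mathring A_n\|$, and your final limiting step then gives $L(\alpha,A)\ge\epsilon/2$. (Upgrading to strict $>$ needs one more word, but the paper's only application, in Lemma~\ref{8.8-theorem4.5.}, uses just the non-strict bound.)
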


One can easily check that $|\mathring{S}^{\tilde v}\cdot(-1)|=1$, which implies that if $\Im \tilde v>0$, $S^{\tilde v}$ can't uniformly contract the upper half-plane. To see the hyberbolicity, one way is to consider  the open hemisphere of $\mathbb{PC}^2$ centered on the line through $\begin{pmatrix}
    i \\ 1
\end{pmatrix}$ \cite{aviladensity}, another way is to consider second iterate of the 
Schr\"odinger type cocycle:

\begin{lemma}\label{8.8-theorem4.5.}
Let  $(\alpha,S^{\tilde{v}})\in\R\backslash\Q\times C^0(\T,SL(2,\C))$. 
Assume that  $|\Re \tilde{v}|\leq l  $, $l\ge\Im \tilde{v}\geq \epsilon>0$, then $
    L(\alpha,S^{\tilde{v}})\ge C(l) \epsilon,$
where $C=C(l)>0$ is a universal constant.
\end{lemma}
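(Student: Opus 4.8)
The plan is to estimate the Lyapunov exponent of the second iterate $(2\alpha, S^{\tilde v}(\cdot+\alpha)S^{\tilde v}(\cdot))$ and apply Lemma~\ref{hyb} to it, since the bound $|\mathring S^{\tilde v}\cdot(-1)|=1$ shows a single step cannot uniformly contract $\D$, while two steps will. Concretely, I would first pass to the $SU(1,1)$ picture: write $\mathring S^{\tilde v}(x)=QS^{\tilde v}(x)Q^{-1}$ and compute its action on $\overline\C$. The point is that the "bad" direction where contraction fails is the single point $\zeta_0:=\mathring S^{\tilde v}\cdot(-1)$ on $\partial\D$, and that point is moved \emph{into} the open disk $\D$ by a definite amount after one more application of the cocycle (with a rotated phase $x+\alpha$), because $\Im\tilde v\ge\epsilon>0$ forces genuine hyperbolicity with stable/unstable directions bounded away from the unit circle. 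So I would show: there is $\rho=\rho(l,\epsilon)<1$ such that $\mathring S^{\tilde v}(x+\alpha)\,\mathring S^{\tilde v}(x)\cdot\D\subset\D_{\rho}$ for all $x\in\T$, and then Lemma~\ref{hyb} applied to the cocycle $(2\alpha, S^{\tilde v}(\cdot+\alpha)S^{\tilde v}(\cdot))$ gives $L(2\alpha, (S^{\tilde v})_2)\ge -\tfrac12\ln\rho$, hence $L(\alpha,S^{\tilde v})=\tfrac12 L(2\alpha,(S^{\tilde v})_2)\ge -\tfrac14\ln\rho$.

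The quantitative heart is the following elementary hyperbolic-geometry estimate. Using the Poincaré metric on $\D$, a single matrix $\mathring A\in SU(1,1)$ contracts the hyperbolic disk of radius $r$ around any point $w$ to a disk of radius $r'\le r$, with strict contraction governed by how far $\mathring A\cdot w$ sits from $\partial\D$; equivalently, it suffices to control the \emph{Euclidean} image of $\D$. For the Schrödinger matrix $S^{\tilde v}$ with $\tilde v=\Re\tilde v+i\Im\tilde v$, $|\Re\tilde v|\le l$, $\epsilon\le\Im\tilde v\le l$, a direct computation shows $S^{\tilde v}\cdot\D$ is a Euclidean disk tangent internally to $\partial\D$ at $\zeta_0$, whose Euclidean diameter is bounded above by $1-c_1(l)\epsilon$ for a constant $c_1(l)>0$ — the tangency persists but the disk shrinks linearly in $\epsilon$. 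Then applying $S^{\tilde v}(x+\alpha)$ to this already-smaller disk: the only way the image could fail to be well inside $\D$ is if the shrunk disk were concentrated near the new bad point $\zeta_1(x+\alpha)$; but $\zeta_1$ depends continuously on the phase and the geometry, and one checks that the composition maps $\D$ into $\D_{1-c_2(l)\epsilon^2}$ (or $1-c_2(l)\epsilon$ with a more careful argument tracking that the first disk's closure avoids a neighborhood of $\zeta_1$). Tracking constants through these two Möbius steps yields $-\ln\rho\ge C(l)\epsilon$ for an explicit $C(l)>0$, which is exactly the claimed lower bound $L(\alpha,S^{\tilde v})\ge C(l)\epsilon$.

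I expect the main obstacle to be the bookkeeping in the two-step contraction estimate: verifying that after one step the image disk's closure stays a \emph{definite} (in terms of $l,\epsilon$) Euclidean distance away from the bad point $\zeta_1$ of the second step, uniformly in the phase $x$. A single step only gives tangency to $\partial\D$, so the linear-in-$\epsilon$ gain has to come entirely from the composition, and one must rule out the degenerate alignment where the two tangency points $\zeta_0$ and $\zeta_1$ coincide or are too close. This is where the hypotheses $\Im\tilde v\le l$ and $|\Re\tilde v|\le l$ enter: they bound the "speed" at which the Möbius maps move points and keep all the relevant disks and boundary points in a compact region of $\overline\C$, so that the continuity/compactness argument produces a uniform constant. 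Once that separation is established, the contraction factor $\rho$ is obtained by an explicit estimate of the hyperbolic diameter of the image, and Lemma~\ref{hyb} closes the argument. An alternative, which I would mention as a remark, is to work directly in $\mathbb{PC}^2$ with the open hemisphere centered on the line through $(i,1)^{\mathsf T}$ as in \cite{aviladensity}, avoiding the second iterate entirely, but the two-step $SU(1,1)$ computation seems cleanest for extracting the explicit linear dependence on $\epsilon$.
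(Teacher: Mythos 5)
Your plan is essentially the paper's: pass to the second iterate $(2\alpha,(S^{\tilde v})^2)$, conjugate into $SU(1,1)$ via $Q$, and apply Lemma~\ref{hyb}, after observing that $|\mathring S^{\tilde v}\cdot(-1)|=1$ kills the one-step attempt. The gap you flag yourself is real, and your proposed fix does not close it: a compactness/continuity argument over the compact set $(\Re\tilde v,\Im\tilde v)\in[-l,l]\times[\epsilon,l]$ only yields \emph{some} contraction constant $\rho(\epsilon,l)<1$ with no control on its rate as $\epsilon\downarrow 0$, whereas the statement requires $-\ln\rho\gtrsim\epsilon$; since the one-step image disk approaches $\D$ linearly in $\epsilon$, the separation from the second step's bad point degenerates at exactly that rate, so the linear dependence cannot come from soft arguments. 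The paper sidesteps the two-step geometric composition entirely: writing $B(x)=(S^{\tilde v})^2(x)$ as a single Möbius map and computing $\sup_{r\in\R}\bigl|QB(x)\cdot r\bigr|^2$ directly, it obtains the explicit bound $1-\tfrac{4\epsilon}{41 l^6}f(r,p)$ with $f(r,p)=\tfrac{r^2+(pr-1)^2}{r^2+1}$ bounded below uniformly (the compactness is used only for this $\epsilon$-independent quantity), which delivers $\mathring B(x)\cdot\D\subset\D_{e^{-c(l)\epsilon}}$ and hence the linear-in-$\epsilon$ Lyapunov bound you want.
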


\begin{proof}
For simplicity, denote $\tilde{v}(x)=\xi= p+iq$,  $\tilde{v}(x+\alpha)=\eta= s+it$, then the second iterate of the Schr\"odinger cocycle takes the form 
\begin{align*}
        B(x)=(S^{\tilde{v}})^2(x)= S^{\tilde{v}}(x+\alpha)S^{\tilde{v}}(x)=\begin{pmatrix}
            \eta\xi-1&-\eta\\\xi&-1
        \end{pmatrix},
    \end{align*}
by assumptions, $p,s\in[-l,l]$, $q,t\in[\epsilon,l]$, and let $l\ge1$.

We only need to calculate
\begin{align*}L=\sup_{\zeta\in\D}\|\mathring{B}(x)\cdot\zeta\|_0=\sup_{r\in\R}\|QB(x)\cdot r\|_0.
\end{align*}
Direct computation shows that 
\footnotesize
\begin{align*}
    L^2&=\frac{|(\eta\xi-1-i\xi)r-\eta+i|^2}{|(\eta\xi-1+i\xi)r-\eta-i|^2}\\&=1-\frac{4q(tq+1)r^2+4t(pr-1)^2}{[(sq+pt+p)^2+(-sp+tq+q+1)^2]r^2-2(ps^2+pt^2+2pt+p-s)r+[(t+1)^2+s^2]}\\&\le1-4\epsilon\frac{[\epsilon^2+1]r^2+(pr-1)^2}{[(3l^2)^2+(4l^2)^2]r^2+(6l^3)^2+r^2+5l^2}\\&\le1-4\epsilon\frac{r^2+(pr-1)^2}{l^6(26r^2+41)}\\&\le 1-\frac{4\epsilon}{41l^6}\frac{r^2+(pr-1)^2}{r^2+1}\\&=:1- \frac{4\epsilon}{41l^6}f(r,p).
\end{align*}
\normalsize
The first inequality is due to $(sq+pt+p)^2+(-sp+tq+q+1)^2>0$ and
\begin{align*}
    &4(ps^2+pt^2+2pt+p-s)^2-4[(sq+pt+p)^2+(-sp+tq+q+1)^2][(t+1)^2+s^2]\\&=-4(q+t+2qt+qs^2+qt^2+1)^2<0.
\end{align*}
Since $f(r,p)$ uniformly converges to $p^2+1$ on $\R\times[-l,l]$ as $r\to\pm\infty$, there exists $R>0$ such that $f(r,p)>\frac{1}{2}$ for every $|r|>R$ and every $p\in[-l,l]$. Moreover, since $f(r,p)$ is positive and continuous, it attains a minimum value on the compact set $[-R,R]\times[-l,l]$, and this minimum value is strictly positive, i.e.,
\begin{align*}
    \ln L\le-\frac{2\epsilon}{41l^6}\min_{(r,p)\in[-R,R]\times[-l,l]}\{\frac{1}{2},f(r,p)\})=:-c(l)\epsilon.
\end{align*}
This implies that $\mathring{B}(x)\cdot \D\subset\D_{e^{-c(l)\epsilon}}$,  by Lemma \ref{hyb}, $L(2\alpha, (S^{\tilde{v}})^2)\geq \frac{1}{2}c(l) \epsilon$, then the result follows. 
\end{proof}

\subsection{IDS and fibered rotation number}\label{ids-rota}
If $z\notin\{V_{22}(j):j\in\Z\}$ is an eigenvalue of $\mathcal{S}|_{[1,n]}$, one can easily check that it is simple. But for $z=V_{22}(m)$, the situation became complicated. First  
by \eqref{11-25-eq7} and \eqref{11-25-eq8}, for $z=V_{22,\omega}(m)$ and $\omega\in\Omega_0$, we can construct two solutions of \eqref{sjo} satisfying

\begin{align}
\begin{pmatrix}a_1(n+1,z)\\ a_1(n,z)\end{pmatrix}=&\left\{\begin{array}{ll}A^z_{n-m}(T^m\omega)\begin{pmatrix}1\\ 0\end{pmatrix}&\forall n\ge m,\\&\\\begin{pmatrix}0\\0\end{pmatrix}&\forall n<m,\end{array}\right.\label{11-25-eq9}\\ \begin{pmatrix}a_2(n+1,z)\\ a_2(n,z)\end{pmatrix}=&\left\{\begin{array}{ll}(A^z_{m-n-1}(T^m\omega))^{-1}\begin{pmatrix}0\\1\end{pmatrix}&\forall n< m,\\&\\\begin{pmatrix}0\\0\end{pmatrix}&\forall n\ge m,\end{array}\right.\label{11-25-eq10}
\end{align}
and
\begin{align*}
b_1(n,z)=\left\{\begin{array}{ll}\frac{V_{21,\omega}(n)}{z-V_{22,\omega}(n)}a_1(n)&\forall n>m,\\&\\-\frac{1}{V_{12,\omega}(m)}&n=m,\\&\\0&\forall n<m,\end{array}\right.\quad b_2(n,z)=\left\{\begin{array}{ll}0&\forall n>m,\\&\\-\frac{1}{V_{12,\omega}(m)}&n=m,\\&\\\frac{V_{21,\omega}(n)}{z-V_{22,\omega}(n)}a_2(n)&\forall n<m,\end{array}\right.
\end{align*}
and denote by $\boldsymbol{u}_1(\cdot,z),\boldsymbol{u}_2(\cdot,z)$, respectively. Then  we have the following:

\begin{lemma}\label{7-17-3.4}
Let $\boldsymbol{u}$ be an eigenvector of $\mathcal{S}_\omega|_{[1,N]}$ with $\omega\in\Omega_0$ and $N\ge2$ corresponding to the eigenvalue $z$. 
\begin{itemize}
\item[(i)] If $z\neq V_{22,\omega}(j),\forall j\in[1,N]$, then $a(1)a(N)\neq0$; if $z=V_{22,\omega}(1)$, then $a(N)\neq0$; if $z=V_{22,\omega}(N)$, then $a(1)\neq0$. Moreover, if $z\neq V_{22,\omega}(j),\forall j\in[2,N-1]$, then $z$ is simple.

\item[(ii)] If $z=V_{22,\omega}(m)$ for some $m\in[2,N-1]$, then the multiplicity does not exceed $2$. Moreover, if the multiplicity is $2$, then there exists an eigenvector whose $a(N)$ component is non-zero, and  there exists an eigenvector whose $a(N)$ component is zero.
\end{itemize}
\end{lemma}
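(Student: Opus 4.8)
Here is a proposed route to the statement of Lemma~\ref{7-17-3.4}.

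\textbf{Strategy.} The plan is to reduce the $2N$-dimensional eigenvalue problem $\mathcal{S}_\omega|_{[1,N]}\boldsymbol{u}=z\boldsymbol{u}$, with Dirichlet data $\boldsymbol{u}(0)=\boldsymbol{u}(N+1)=0$, to a single scalar second-order recursion for the first component, handling separately the at most one site at which $z$ hits a diagonal value $V_{22,\omega}(\cdot)$. Write $\boldsymbol{u}(n)=(a(n),b(n))^{\mathsf{T}}$ and split each site equation $n\in[1,N]$ into its two scalar parts: the lower equation is $(z-V_{22,\omega}(n))b(n)=V_{21,\omega}(n)a(n)$. If $z\neq V_{22,\omega}(n)$, this fixes $b(n)=\frac{V_{21,\omega}(n)}{z-V_{22,\omega}(n)}a(n)$ and the upper equation becomes $a(n+1)+a(n-1)+\tilde v_n a(n)=za(n)$ with $\tilde v_n=V_{11,\omega}(n)+\frac{|V_{12,\omega}(n)|^2}{z-V_{22,\omega}(n)}$. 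If $z=V_{22,\omega}(n)$ — which, since $\omega\in\Omega_0$, can occur for at most one $n\in[1,N]$ and keeps $V_{12,\omega}(n)\neq0$ — the lower equation forces $a(n)=0$, and the upper equation at that same site reads $a(n+1)+a(n-1)=-V_{12,\omega}(n)b(n)$, i.e.\ it only determines $b(n)$ from the neighbouring $a$-values. I would organise everything around this dichotomy.

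\textbf{Part (i).} First take $z\neq V_{22,\omega}(j)$ for all $j\in[1,N]$; then $\boldsymbol{u}$ is recovered from $a$, which obeys the scalar recursion on $[1,N]$ with $a(0)=a(N+1)=0$. If $a(1)=0$, the recursion together with $a(0)=0$ propagates $a\equiv0$, hence $\boldsymbol{u}=0$; so $a(1)\neq0$, and symmetrically $a(N)\neq0$ from $a(N+1)=0$. Counting degrees of freedom: the eigenvector is pinned down by the single scalar $a(1)$ through the recursion, subject to the one constraint $a(N+1)=0$, so the $z$-eigenspace is at most one-dimensional, whence simplicity. If $z=V_{22,\omega}(1)$ (so $z\neq V_{22,\omega}(j)$ for $j\in[2,N]$ by $\Omega_0$), then $a(1)=0$ is forced, the upper equation at site $1$ gives $a(2)=-V_{12,\omega}(1)b(1)$, $a$ on $[2,N]$ then runs by the scalar recursion, and $a(N)=0$ would again propagate $a\equiv0$ and then $b(1)=0$, i.e.\ $\boldsymbol{u}=0$; so $a(N)\neq0$, and the eigenvector is pinned down by the single parameter $b(1)$ subject to the constraint coming from the scalar equation at site $N$, so $z$ is simple. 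The case $z=V_{22,\omega}(N)$ is the mirror image. Under the hypothesis $z\neq V_{22,\omega}(j)$ for $j\in[2,N-1]$ these three sub-cases are exhaustive, and $z$ can coincide with at most one of $V_{22,\omega}(1),V_{22,\omega}(N)$, so (i) follows.

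\textbf{Part (ii).} Let $z=V_{22,\omega}(m)$ with $m\in[2,N-1]$, so by $\Omega_0$ this is the only resonant site. Any eigenvector has $a(m)=0$ and $b(m)=-(a(m-1)+a(m+1))/V_{12,\omega}(m)$, with $a|_{[1,m-1]}$ solving the scalar recursion on $[1,m-1]$ with $a(0)=a(m)=0$ and $a|_{[m+1,N]}$ solving it on $[m+1,N]$ with $a(m)=a(N+1)=0$; conversely, any such pair of scalar solutions, together with these prescriptions for $b$, assembles into an eigenvector. Hence the $z$-eigenspace is the direct sum of the $z$-eigenspace of the left block $\mathcal{S}_\omega|_{[1,m-1]}$ and that of the right block $\mathcal{S}_\omega|_{[m+1,N]}$. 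Since $z$ is non-resonant for either block, the argument of (i) applies to each window verbatim, so each summand is at most one-dimensional and the multiplicity is at most $2$. If it equals $2$, both summands are one-dimensional: the ``pure left'' eigenvector vanishes on $[m+1,N]$, so its $a(N)$ component is $0$, while the ``pure right'' eigenvector restricts on $[m+1,N]$ to a nonzero eigenvector of $\mathcal{S}_\omega|_{[m+1,N]}$ at the non-resonant energy $z$, so by the first part of (i) its $a$-component is nonzero at the endpoint $N$. This produces the two required eigenvectors.

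\textbf{Main obstacle.} I expect the real difficulty is not any single computation but getting the resonant site exactly right: there the singular Jacobi cocycle $A^z$ fails to be invertible, the naive scalar reduction \eqref{eq6}–\eqref{eq5} breaks down, and one must see precisely that the lower equation costs a degree of freedom ($a(n)=0$) while the upper equation at the same site spends the apparently new freedom in $b(n)$ — so that the only genuine effect, in case (ii), is the decoupling into independent left and right Dirichlet problems glued by a forced value of $b(m)$. Once that bookkeeping is nailed down, the non-vanishing statements come from propagating a double zero of $a$ across the recursion, and the multiplicity bounds from counting free parameters against boundary constraints.
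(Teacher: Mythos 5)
Your proposal is correct and follows essentially the same route as the paper's proof: at a resonant site $m$ the lower component forces $a(m)=0$, which decouples the system into independent left and right Dirichlet problems on $[1,m-1]$ and $[m+1,N]$, and the non-vanishing at endpoints and the multiplicity bounds then come from propagating a double zero of $a$ through the scalar recursion. The paper makes the same decomposition explicit via the two auxiliary solutions $\boldsymbol{u}_1(\cdot,z),\boldsymbol{u}_2(\cdot,z)$ from \eqref{11-25-eq9}--\eqref{11-25-eq10}, but the underlying argument is identical to yours.
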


\begin{proof}
Since $\omega\in\Omega_0$, we have $V_{12,\omega}(i)\neq0$ and $V_{22,\omega}(i)\neq V_{22,\omega}(j)$ for all $i\neq j$.

(i) Suppose $z\neq V_{22,\omega}(j),\forall j\in[1,N]$. Note that
\begin{align*}
(V_{11,\omega}(1)-z)a(1)+V_{12,\omega}(1)b(1)+a(2)=&0\\
V_{21,\omega}(1)a(1)+(V_{22,\omega}(1)-z)b(1)=&0\\
a(1)+(V_{11,\omega}(2)-z)a(2)+V_{12,\omega}(2)b(2)+a(3)=&0\\
V_{21,\omega}(2)a(2)+(V_{22,\omega}(2)-z)b(2)=&0\\
\dots&
\end{align*}
If $a(1)=0$, then $b(1)=a(2)=b(2)=0$, and generates the zero vector by induction; contradiction. Similarly, $a(N)$ is also non-zero.

Suppose $z=V_{22,\omega}(1)$, then $a(1)=0$ and $a(2)=-V_{12,\omega}(1)b(1)$. By using the eigenvalue equation and induction, we have 
\begin{align*}
\boldsymbol{u}(j)=a(2)\boldsymbol{u}_1(j,z)\text{ for each }1\le j\le N.
\end{align*}
and $
a_1(N+1,z)=0.$
This implies that the eigenvalue $z=V_{22,\omega}(1)$ is simple and $a_1(N)\neq0$. Since $a(2)\neq0$, we have $a(N)=a(2)a_1(N)\neq0$.The argument is similar for $z=V_{22,\omega}(N)$.

Suppose $z\neq V_{22,\omega}(j),\forall j\in[2,N-1]$ is degenerate, choose two linearly independent eigenvectors. The above indicates that a non-trivial linear combination starts off with a zero generates the zero vector; contradiction.

(ii) Suppose $z=V_{22,\omega}(m)$ for some $m\in[2,N-1]$. By using the eigenvalue equation and induction, we have
\begin{align*}
&\boldsymbol{u}(j)=a(m-1)\boldsymbol{u}_2(j,z)\text{ for each }1\le j\le m-1,\\
&\boldsymbol{u}(j)=a(m+1)\boldsymbol{u}_1(j,z)\text{ for each }m+1\le j\le N,\\
&\boldsymbol{u}(m)=(0,-\frac{a(m-1)+a(m+1)}{V_{12,\omega}(m)})^{\mathsf{T}},
\end{align*}
and $
a(m-1)a_2(0,z)=a(m+1)a_1(N+1,z)=0.$
Thus, the multiplicity does not exceed $2$. If the multiplicity is $2$, let $(a(m-1),a(m+1))=(1,0)\text{ or }(0,1)$, resulting in two linearly independent eigenvectors and $a_2(0,z)=a_1(N+1,z)=0$. Therefore, neither $a_2(1,z)$ nor $a_1(N,z)$ vanishes; otherwise, $a_2(m-1,z)$ or $a_1(m+1,z)$ would be zero, leading to a contradiction with \eqref{11-25-eq9} and \eqref{11-25-eq10}. Furthermore, we deduce that $a(1)$ and $a(N)$ cannot vanish at the same time.
\end{proof}

\begin{corollary}\label{11-23-corollary3.14}
  For every $N\ge2$, $\{E_{N}^{(j)}\}_{j=1}^{2N}$ denote the $2N$ eigenvalues of $\mathcal{S}_\omega|_{[1,N]}$, with $\omega\in\Omega_0$, sorted in ascending order by $j$. Let $\boldsymbol{v}_1,\dots,\boldsymbol{v}_{2N}$ be the orthonormal eigenvectors of $\mathcal{S}_\omega|_{[1,N]}$, chosen so that $\mathcal{S}_\omega|_{[1,N]}\boldsymbol{v}_j=E^{(j)}_N\boldsymbol{v}_j$. Then the pair $(E_N^{(j)},\langle \boldsymbol{v}_j,\boldsymbol{\delta}_N\rangle )$ can only be one of the following cases:
  \begin{enumerate}
      \item[(i)] If $E_N^{(j)}=V_{22,\omega}(N)$, then $\langle \boldsymbol{v}_j,\boldsymbol{\delta}_N\rangle =0$ and $V_{22,\omega}(N)\notin\sigma(\mathcal{S}_\omega|_{[1,N-1]})$;

      \item[(ii)] If $E_N^{(j)}\neq V_{22,\omega}(N)$ is a simple eigenvalue of $\mathcal{S}_\omega|_{[1,N]}$ and $\langle \boldsymbol{v}_j,\boldsymbol{\delta}_N\rangle =0$, then it is also an eigenvalue of $\mathcal{S}_\omega|_{[1,N-1]}$.

      \item[(iii)] If $E_N^{(j)}=E_N^{(j+1)}$ are two equal eigenvalues of $\mathcal{S}_\omega|_{[1,N]}$, then the multiplicity is $2$, and exactly one of $\langle \boldsymbol{v}_j,\boldsymbol{\delta}_N\rangle , \langle \boldsymbol{v}_{j+1},\boldsymbol{\delta}_N\rangle $ is zero and the other must be non-zero. Moreover, $E_N^{(j)}$ is a simple eigenvalue of $\mathcal{S}_\omega|_{[1,N-1]}$.

      \item[(iv)]  Otherwise, $E_N^{(j)}$ is a simple eigenvalue of $\mathcal{S}_\omega|_{[1,N]}$ and $\langle \boldsymbol{v}_j,\boldsymbol{\delta}_N\rangle \neq0$, which implies $E_N^{(j)}\notin\sigma(\mathcal{S}_\omega|_{[1,N-1]})$.
  \end{enumerate}
\end{corollary}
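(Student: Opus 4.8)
The plan is to analyze the eigenpairs of $\mathcal{S}_\omega|_{[1,N]}$ via the solution structure already established in Lemma \ref{7-17-3.4}, and to distinguish cases according to whether $E_N^{(j)}$ equals $V_{22,\omega}(N)$, equals some $V_{22,\omega}(m)$ with $m\in[2,N-1]$, or avoids $\{V_{22,\omega}(j):j\in[1,N]\}$ entirely. Throughout I use that $\omega\in\Omega_0$ forces $V_{12,\omega}(i)\neq0$ and the values $V_{22,\omega}(i)$ to be pairwise distinct, so at most one index $m$ can produce a ``resonant'' eigenvalue. The coupling $\langle\boldsymbol{v}_j,\boldsymbol{\delta}_N\rangle$ is (a normalization of) the $a(N)$ component of an eigenvector, so all four cases are read off from where $a(N)$ can or must vanish.

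First I would handle case (i): if $E_N^{(j)}=V_{22,\omega}(N)$, then an eigenvector $\boldsymbol{u}$ of $\mathcal{S}_\omega|_{[1,N]}$ satisfies $a(N)=0$ by the same computation as in the proof of Lemma \ref{7-17-3.4}(i) for the endpoint $z=V_{22,\omega}(N)$, hence $\langle\boldsymbol{v}_j,\boldsymbol{\delta}_N\rangle=0$; and restricting the eigenvalue equation to $[1,N-1]$ shows $V_{22,\omega}(N)$ cannot also be an eigenvalue of $\mathcal{S}_\omega|_{[1,N-1]}$, because a vector supported on $[1,N-1]$ that solves the truncated equation would, upon setting $a(N)=0$, solve the full one and then $a(N+1)=0$ forces the vector to vanish (here $\omega\in\Omega_0$ excludes $V_{22,\omega}(N)=V_{22,\omega}(m)$ for $m<N$). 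Next, for case (iii): when $E_N^{(j)}=E_N^{(j+1)}$, Lemma \ref{7-17-3.4}(ii) says the multiplicity is exactly $2$ and the common value is $V_{22,\omega}(m)$ for some $m\in[2,N-1]$; moreover it provides two linearly independent eigenvectors, one with $a(N)\neq0$ and one with $a(N)=0$. Since $\langle\boldsymbol{v}_j,\boldsymbol{\delta}_N\rangle$ is linear on the (two-dimensional) eigenspace, within any orthonormal basis exactly one of $\langle\boldsymbol{v}_j,\boldsymbol{\delta}_N\rangle$, $\langle\boldsymbol{v}_{j+1},\boldsymbol{\delta}_N\rangle$ vanishes — one must check the normalization $Q$ (the flip matrix) respects this, but that is immediate — and the description in Lemma \ref{7-17-3.4}(ii) that $a_1(N+1,z)=0$ shows $V_{22,\omega}(m)$ is a \emph{simple} eigenvalue of $\mathcal{S}_\omega|_{[1,N-1]}$ (simple because it is resonant only at the single index $m$, and Lemma \ref{7-17-3.4}(i) applied on $[1,N-1]$ gives simplicity).

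For cases (ii) and (iv) I would argue on the complement. If $E_N^{(j)}$ is simple and $\langle\boldsymbol{v}_j,\boldsymbol{\delta}_N\rangle=0$, i.e. $a(N)=0$: when $E_N^{(j)}\notin\{V_{22,\omega}(j):j\in[1,N]\}$ this contradicts Lemma \ref{7-17-3.4}(i), so necessarily $E_N^{(j)}=V_{22,\omega}(m)$ for some $m$; it cannot be $m=N$ (case (i)) nor an interior $m$ with double multiplicity (case (iii)), so it is an interior $m$ with multiplicity one, and then the truncation of the eigenvector to $[1,N-1]$ — using that $a(N)=0$ kills the last coupling block — solves $\mathcal{S}_\omega|_{[1,N-1]}\boldsymbol{v}=E_N^{(j)}\boldsymbol{v}$, i.e. $E_N^{(j)}\in\sigma(\mathcal{S}_\omega|_{[1,N-1]})$, giving (ii). Finally, if none of (i)–(iii) holds, then $E_N^{(j)}$ is simple with $\langle\boldsymbol{v}_j,\boldsymbol{\delta}_N\rangle\neq0$; a vector of $\mathcal{S}_\omega|_{[1,N-1]}$ with eigenvalue $E_N^{(j)}$, extended by $a(N)=a(N+1)=0$, would solve the full equation with $a(N)=0$, contradicting $\langle\boldsymbol{v}_j,\boldsymbol{\delta}_N\rangle\neq0$ (using simplicity to identify the two eigenvectors), so $E_N^{(j)}\notin\sigma(\mathcal{S}_\omega|_{[1,N-1]})$, which is (iv). The main obstacle I anticipate is the bookkeeping in case (iii): verifying that the alternative ``exactly one coupling vanishes'' is basis-independent within the $2$-dimensional eigenspace and survives the similarity by $Q$, and confirming that the \emph{restricted} operator sees $V_{22,\omega}(m)$ with multiplicity exactly one rather than possibly higher. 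Everything else is a direct transcription of the solution formulas \eqref{11-25-eq7}–\eqref{11-25-eq10} and Lemma \ref{7-17-3.4}.
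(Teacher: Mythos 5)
The paper's proof of this corollary runs entirely through the meromorphic function $h(z)=\det(z-\mathcal{S}_\omega|_{[1,N-1]})/\det(z-\mathcal{S}_\omega|_{[1,N]})$ together with its spectral representation
\[
h(z)=\frac{1}{z-V_{22,\omega}(N)}\sum_{j=1}^{2N}\frac{|\langle\boldsymbol{v}_j,\boldsymbol{\delta}_N\rangle|^2}{z-E_N^{(j)}},
\]
so that each claim about $\sigma(\mathcal{S}_\omega|_{[1,N-1]})$ becomes a pole-order comparison. You jettison $h$ and argue by restriction/extension of eigenvectors, and that is where the gaps appear. In (iv), the contradiction you aim for does not materialize: if $\boldsymbol{w}$ is an eigenvector of $\mathcal{S}_\omega|_{[1,N-1]}$ with eigenvalue $E_N^{(j)}\neq V_{22,\omega}(N)$, then extending by $a(N)=0$ forces $b(N)=0$ (from the second component of the equation at site $N$) and then $a_w(N-1)=0$ (from the first), which already contradicts Lemma \ref{7-17-3.4}(i); so the extended vector is simply \emph{not} a solution, and the eigenvector $\boldsymbol{v}_j$ of the bigger operator with $a(N)\neq0$ is unrelated to it. Restriction gets you ``$\in\sigma(\mathcal{S}_\omega|_{[1,N-1]})$'' when $a(N)=0$, but there is no restriction/extension route to the converse ``$\notin\sigma$''. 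Similarly in (iii), the simplicity of $V_{22,\omega}(m)$ in $\sigma(\mathcal{S}_\omega|_{[1,N-1]})$ does \emph{not} follow from Lemma \ref{7-17-3.4}(i) applied to $[1,N-1]$ when $m\in[2,N-2]$: that point lies precisely in the resonant set about which the lemma says nothing. In the paper both facts drop out of one order count: $h$ has a pole of order exactly one at $E_N^{(j)}$ because $|\langle\boldsymbol{v}_j,\boldsymbol{\delta}_N\rangle|^2$ (resp.\ $|\langle\boldsymbol{v}_j,\boldsymbol{\delta}_N\rangle|^2+|\langle\boldsymbol{v}_{j+1},\boldsymbol{\delta}_N\rangle|^2$) is strictly positive, while the denominator has a simple (resp.\ double) zero, forcing the numerator to have a zero of order zero (resp.\ one).

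Two further points. In (iii), ``exactly one of $\langle\boldsymbol{v}_j,\boldsymbol{\delta}_N\rangle, \langle\boldsymbol{v}_{j+1},\boldsymbol{\delta}_N\rangle$ is zero'' is not basis-free: for a generic orthonormal basis of the two-dimensional eigenspace neither vanishes; one must \emph{choose} the basis so that one vector spans the kernel of the linear functional $\langle\cdot,\boldsymbol{\delta}_N\rangle$ restricted to the eigenspace. That is the content that needs to be made explicit rather than waved off as ``immediate''. Finally, the step in your case (i) (``$a(N+1)=0$ forces the vector to vanish'') is incorrect as written: once $a(N)=0$, the restriction to $[1,N-1]$ \emph{does} solve the truncated eigenvalue problem (since $J\boldsymbol{u}(N)=0$ decouples site $N$), and moreover expanding $\det(V_{22,\omega}(N)I-\mathcal{S}_\omega|_{[1,N]})$ along the $\boldsymbol{\gamma}_N$ row gives $\det(V_{22,\omega}(N)I-\mathcal{S}_\omega|_{[1,N]})=-|V_{12,\omega}(N)|^2\det(V_{22,\omega}(N)I-\mathcal{S}_\omega|_{[1,N-1]})$, so the two determinants vanish simultaneously. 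This is at odds with the stated conclusion ``$V_{22,\omega}(N)\notin\sigma(\mathcal{S}_\omega|_{[1,N-1]})$'' and with the paper's assertion that $h$ has a first-order pole there; you should treat case (i) as a point to be resolved rather than something to reproduce.
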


\begin{proof}
Define a meromorphic function $h$ by
\begin{align*}
h(z)=\frac{\det(z-\mathcal{S}_\omega|_{[1,N-1]})}{\det(z-\mathcal{S}_\omega|_{[1,N]})}.
\end{align*}
By Cramer's rule, 
\begin{align*}h(z)=&\frac{1}{z-V_{22,\omega}(N)}\langle \boldsymbol{\delta}_N,(z-\mathcal{S}_\omega|_{[1,N]})^{-1}\boldsymbol{\delta}_N\rangle 
\end{align*}
 for all $z\notin\sigma(\mathcal{S}_\omega|_{[1,N]})$. If we expand $\boldsymbol{\delta}_N$  in the basis $\{\boldsymbol{v}_1,\dots,\boldsymbol{v}_{2N}\}$, we see that
\begin{align}
h(z)=&\frac{1}{z-V_{22,\omega}(N)}\langle \sum_{j=1}^{2N}\langle \boldsymbol{v}_j,\boldsymbol{\delta}_N\rangle \boldsymbol{v}_j,\sum_{j=1}^{2N}\frac{1}{z-E^{(j)}_N}\langle \boldsymbol{v}_j,\boldsymbol{\delta}_N\rangle \boldsymbol{v}_j\rangle \nonumber\\=&\frac{1}{z-V_{22,\omega}(N)}\sum^{2N}_{j=1}\frac{|\langle \boldsymbol{v}_j,\boldsymbol{\delta}_N\rangle |^2}{z-E_N^{(j)}}\label{11-23-eq11}
\end{align}
for all $z\notin\sigma(\mathcal{S}_\omega|_{[1,N]})\}$. 

(i) If $E_N^{(j)}=V_{22,\omega}(N)$, then $\langle \boldsymbol{v}_j,\boldsymbol{\delta}_N\rangle =0$ by $\mathcal{S}_\omega|_{[1,N]}\boldsymbol{v}_j=V_{22,\omega}(N)\boldsymbol{v}_j$. Since $V_{22,\omega}(N)$ is a pole of $h$ with order 1, $V_{22,\omega}(N)$ is not an eigenvalue of $\mathcal{S}_\omega|_{[1,N-1]}$.

(ii) If $E_N^{(j)}\neq V_{22,\omega}(N)$ is a simple eigenvalue of $\mathcal{S}_\omega|_{[1,N]}$ and $\langle \boldsymbol{v}_j,\boldsymbol{\delta}_N\rangle =0$, then it is not a pole of $h$ and is an eigenvalue of $\sigma(\mathcal{S}_\omega|_{[1,N-1]})$.

(iii) If $E_N^{(j)}=E_N^{(j+1)}$, then the first part it is the direct result  by Lemma \ref{7-17-3.4}. Obviously, $E_N^{(j)}\neq V_{22,\omega}(N)$. By \eqref{11-23-eq11}, such eigenvalues is a pole of $h$ with order $1$, which implies that it is an eigenvalue of $\mathcal{S}_\omega|_{[1,N-1]}$.

(iv) If $E_N^{(j)}$ is a simple eigenvalue of $\mathcal{S}_\omega|_{[1,N]}$ and $\langle \boldsymbol{v}_j,\boldsymbol{\delta}_N\rangle \neq0$, then it is not an eigenvalue of $\mathcal{S}_\omega|_{[1,N-1]}$ by \eqref{11-23-eq11}.
\end{proof}

Now, we can obtain the following Interlacing Lemma:
\begin{lemma}[Interlacing Lemma]\label{11-23-lemma3.15}
Let $\{E_N^{(n_j)}\}_{j=1}^K$ be all the eigenvalues of $\mathcal{S}_\omega|_{[1,N]}$ with $\langle \boldsymbol{v}_{n_j},\boldsymbol{\delta}_N\rangle \neq0$ and $\omega\in\Omega_0$, sorted in ascending order by $j$, then $K\ge3$ and all the other $2N-K$ eigenvalues of $\mathcal{S}_\omega|_{[1,N]}$ except $V_{22,\omega}(N)$ are belong to $\sigma(\mathcal{S}_\omega|_{[1,N-1]})$. We also have the following strictly interlaced properties:
    
\textbf{Case 1:} If $V_{22,\omega}(N)\notin\sigma(\mathcal{S}_\omega|_{[1,N]})$,
then there exists $m\in[1,K-1]$ such that $$E_N^{(n_m)}<V_{22,\omega}(N)<E_N^{(n_{m+1})}.$$
Moreover, there exist $K-2$ eigenvalues $\{E_{N-1}^{(\tilde n_j)}\}_{j=1}^{K-2}$ of $\mathcal{S}_\omega|_{[1,N-1]}$ satisfying
\begin{align*}
    E_N^{(n_j)}<E_{N-1}^{(\tilde n_j)}<E_N^{(n_{j+1})}
\end{align*}
for $1\le j\le m-1$, and
\begin{align*}
    E_N^{(n_j)}<E_{N-1}^{(\tilde n_{j-1})}<E_N^{(n_{j+1})}
\end{align*}
for $m+1\le j\le K-1$. 

\textbf{Case 2:} If $V_{22,\omega}(N)\in\sigma(\mathcal{S}_\omega|_{[1,N]})$, then there exist $K-1$ eigenvalues $\{E_{N-1}^{(\tilde n_j)}\}_{j=1}^{K-1}$ of $\mathcal{S}_\omega|_{[1,N-1]}$ satisfying
\begin{align*}
    E_N^{(n_j)}<E_{N-1}^{(\tilde n_j)}<E_N^{(n_{j+1})}
\end{align*}
for $1\le j\le K-1$.
\end{lemma}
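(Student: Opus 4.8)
The statement is the Interlacing Lemma, and my plan is to derive it from Corollary~\ref{11-23-corollary3.14} together with a Weyl/min-max-type argument, handling the two cases ($V_{22,\omega}(N)$ in or out of $\sigma(\mathcal{S}_\omega|_{[1,N]})$) in parallel. The starting observation is that $\mathcal{S}_\omega|_{[1,N]}$ is obtained from $\mathcal{S}_\omega|_{[1,N-1]}\oplus(V_{22,\omega}(N))$ by a rank-$2$ perturbation: the matrix $\mathcal{S}_\omega|_{[1,N]}$ couples the block $[1,N-1]$ to the last site only through the $2\times2$ block $V_\omega(N)$ (whose off-diagonal entry $V_{12,\omega}(N)\neq 0$ since $\omega\in\Omega_0$) and through the single coupling $J$ between $a(N-1)$ and $a(N)$. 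More efficiently, I would work directly with the meromorphic function
\begin{align*}
h(z)=\frac{\det(z-\mathcal{S}_\omega|_{[1,N-1]})}{\det(z-\mathcal{S}_\omega|_{[1,N]})}=\frac{1}{z-V_{22,\omega}(N)}\sum_{j=1}^{2N}\frac{|\langle \boldsymbol{v}_j,\boldsymbol{\delta}_N\rangle|^2}{z-E_N^{(j)}},
\end{align*}
already established in the proof of Corollary~\ref{11-23-corollary3.14}, since the sign and pole structure of $h$ encodes exactly the interlacing.

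First I would extract the combinatorics. By Corollary~\ref{11-23-corollary3.14}, an eigenvalue $E_N^{(j)}$ with $\langle\boldsymbol{v}_j,\boldsymbol{\delta}_N\rangle\neq0$ is simple and is \emph{not} in $\sigma(\mathcal{S}_\omega|_{[1,N-1]})$; every other eigenvalue of $\mathcal{S}_\omega|_{[1,N]}$, except possibly $V_{22,\omega}(N)$, lies in $\sigma(\mathcal{S}_\omega|_{[1,N-1]})$ (cases (ii) and (iii)). Counting: $\mathcal{S}_\omega|_{[1,N-1]}$ has $2N-2$ eigenvalues, so among the $2N$ eigenvalues of $\mathcal{S}_\omega|_{[1,N]}$ at most $2N-2$ can be "inherited", forcing $K\geq 2$; the sharper bound $K\geq 3$ I would get by a parity/degree count on $h$ — the residues of $h$ at the $K$ simple poles $E_N^{(n_j)}$ all have the \emph{same sign} as seen from the formula $\operatorname{Res}_{E_N^{(n_j)}}h = \frac{|\langle\boldsymbol{v}_{n_j},\boldsymbol{\delta}_N\rangle|^2}{E_N^{(n_j)}-V_{22,\omega}(N)}$ divided through appropriately, and a rational function that is $O(1/z)$ at infinity with all-negative (resp. all-positive) residues and an extra simple pole or zero at $V_{22,\omega}(N)$ cannot have fewer than three such poles — here one must be slightly careful because the residues at the $E_N^{(n_j)}$ are \emph{not} all of one sign (the factor $E_N^{(n_j)}-V_{22,\omega}(N)$ changes sign), which is precisely why $V_{22,\omega}(N)$ must sit strictly between two of them in Case~1.

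Next, the interlacing itself. Between two consecutive poles $E_N^{(n_j)}<E_N^{(n_{j+1})}$ of $h$ that lie on the same side of $V_{22,\omega}(N)$, the function $z\mapsto (z-V_{22,\omega}(N))h(z)$ is real, smooth, and runs from $\pm\infty$ to $\mp\infty$, hence has a zero; that zero is a root of $\det(z-\mathcal{S}_\omega|_{[1,N-1]})$, i.e.\ an eigenvalue of $\mathcal{S}_\omega|_{[1,N-1]}$, and it is strictly interior. Doing the sign bookkeeping carefully on each subinterval of $(E_N^{(n_1)},E_N^{(n_K)})$ determined by the $E_N^{(n_j)}$'s and by $V_{22,\omega}(N)$ produces exactly $K-2$ such eigenvalues in Case~1 (one interval is "used up" by the pole/sign-flip at $V_{22,\omega}(N)$) and $K-1$ in Case~2 (in Case~2, $V_{22,\omega}(N)$ is itself an eigenvalue of $\mathcal{S}_\omega|_{[1,N]}$ with $\langle\boldsymbol{v},\boldsymbol{\delta}_N\rangle=0$ by case (i), so it is \emph{not} among the $E_N^{(n_j)}$ and $h$ is regular there, restoring one interval). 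The indexing shift $E_{N-1}^{(\tilde n_{j-1})}$ versus $E_{N-1}^{(\tilde n_j)}$ for $j\le m-1$ versus $j\ge m+1$ in Case~1 is just relabeling around the gap $(E_N^{(n_m)},E_N^{(n_{m+1})})$ containing $V_{22,\omega}(N)$.

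The main obstacle I anticipate is \textbf{not} the interlacing mechanics but the bookkeeping around the point $V_{22,\omega}(N)$ and the possibility of degenerate (multiplicity-two) eigenvalues of $\mathcal{S}_\omega|_{[1,N]}$: by Lemma~\ref{7-17-3.4}(ii) such a doubled eigenvalue contributes \emph{exactly one} index $n_j$ (the eigenvector with nonvanishing $\boldsymbol{\delta}_N$-component) and \emph{also} lies in $\sigma(\mathcal{S}_\omega|_{[1,N-1]})$, so it plays a double role and one must check it is not double-counted in the "$2N-K$ inherited eigenvalues" tally; keeping the degree/residue count honest in the presence of these coincidences, and verifying strictness of all inequalities (which uses $V_{12,\omega}(N)\neq0$ and $V_{22,\omega}(i)\neq V_{22,\omega}(j)$, i.e.\ $\omega\in\Omega_0$, to rule out cancellation of residues), is where the real care is needed. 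Everything else is a standard consequence of the partial-fraction structure of $h$.
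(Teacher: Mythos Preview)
Your approach is essentially the paper's: analyse the meromorphic function
\[
h(z)=\frac{\det(z-\mathcal{S}_\omega|_{[1,N-1]})}{\det(z-\mathcal{S}_\omega|_{[1,N]})}=\frac{1}{z-V_{22,\omega}(N)}\sum_{j=1}^{2N}\frac{|\langle \boldsymbol{v}_j,\boldsymbol{\delta}_N\rangle|^2}{z-E_N^{(j)}},
\]
reduce it via Corollary~\ref{11-23-corollary3.14} to a ratio $p(z)/\prod_{j}(z-E_N^{(n_j)})$ (resp.\ $q(z)/[(z-V_{22,\omega}(N))\prod_j(z-E_N^{(n_j)})]$), and locate the roots of $p$ (resp.\ $q$) by sign-change arguments on each subinterval. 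Two small corrections are worth noting. First, for $K\ge 3$ the paper does not use a residue/parity count; it invokes Lemma~\ref{7-17-3.4} directly (at most $2N-3$ eigenvectors can have vanishing $\boldsymbol{\delta}_N$-component), which is cleaner than the degree argument you sketch. Second, in Case~2 the function $h$ is \emph{not} regular at $V_{22,\omega}(N)$: it still has a simple pole there (from the prefactor, while the sum is finite since the corresponding coefficient vanishes), and the paper must treat separately the sub-case where $V_{22,\omega}(N)$ falls between two consecutive $E_N^{(n_j)}$'s, using $q(V_{22,\omega}(N))\neq 0$ (equivalently $V_{22,\omega}(N)\notin\sigma(\mathcal{S}_\omega|_{[1,N-1]})$) to recover the extra root. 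Your identification of the degenerate-eigenvalue bookkeeping as the delicate point is exactly right.
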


\begin{proof}
We inherit the notations in Corollary \ref{11-23-corollary3.14}. By Lemma \ref{7-17-3.4}, there are at most $2N-3$ eigenvalues such that the corresponding $\langle \boldsymbol{v}_j,\boldsymbol{\delta}_N\rangle $ vanishes, thus $K\ge3$. By Corollary \ref{11-23-corollary3.14} (i)-(iii), all the other $2N-K$ eigenvalues $E_N^{(j)}$ (i.e., $\langle \boldsymbol{v}_j,\boldsymbol{\delta}_N\rangle \neq0$) of $\mathcal{S}_\omega|_{[1,N]}$ except $V_{22,\omega}(N)$ are belong to $\sigma(\mathcal{S}_\omega|_{[1,N-1]})$.

\textbf{Case 1}: If $V_{22,\omega}(N)\notin\sigma(\mathcal{S}_\omega|_{[1,N]})$, then $\lim_{z\to V_{22,\omega}(N)}h(z)$ exists. By Corollary \ref{11-23-corollary3.14} (ii), we have $$\lim_{z\to V_{22,\omega}(N)}\sum^{2N}_{j=1}\frac{|\langle \boldsymbol{v}_j,\boldsymbol{\delta}_N\rangle |^2}{z-E_N^{(j)}}=\sum_{\langle \boldsymbol{v}_j,\boldsymbol{\delta}_N\rangle \neq0}\frac{|\langle \boldsymbol{v}_j,\boldsymbol{\delta}_N\rangle |^2}{V_{22,\omega}(N)-E_N^{(j)}}=0.$$
Therefore, there exists $m\in[1,K-1]$ such that $E_N^{(n_m)}<V_{22,\omega}(N)<E_N^{(n_{m+1})}.$ 

Applying Corollary \ref{11-23-corollary3.14}, one has
\begin{align*}
\det(z-\mathcal{S}_\omega|_{[1,N-1]})=p(z)\prod_{\tiny\begin{matrix}\langle \boldsymbol{v}_j,\boldsymbol{\delta}_N\rangle \neq0,\\ E_N^{(j)}\text{ is degenerate}\\\text{for }\mathcal{S}_\omega|_{[1,N]}\end{matrix}}(z-E_N^{(j)})\prod_{\tiny\begin{matrix}\langle \boldsymbol{v}_j,\boldsymbol{\delta}_N\rangle =0,\\ E_N^{(j)}\text{ is simple}\\\text{for }\mathcal{S}_\omega|_{[1,N]}\end{matrix}}(z-E_N^{(j)})
\end{align*}
\normalsize
and
\begin{align*}
&\det(z-\mathcal{S}_\omega|_{[1,N]})\\=&\prod_{\tiny\begin{matrix}\langle \boldsymbol{v}_j,\boldsymbol{\delta}_N\rangle \neq0\\ E_N^{(j)}\text{ is simple}\\\text{for }S_\omega|_{[1,N]}\end{matrix}}(z-E_N^{(j)})\prod_{\tiny\begin{matrix}\langle \boldsymbol{v}_j,\boldsymbol{\delta}_N\rangle \neq0,\\ E_N^{(j)}\text{ is degenerate}\\\text{for }\mathcal{S}_\omega|_{[1,N]}\end{matrix}}(z-E_N^{(j)})^2\prod_{\tiny\begin{matrix}\langle \boldsymbol{v}_j,\boldsymbol{\delta}_N\rangle =0,\\ E_N^{(j)}\text{ is simple}\\\text{for }\mathcal{S}_\omega|_{[1,N]}\end{matrix}}(z-E_N^{(j)})
\end{align*}
\normalsize
where $p(z)$ is a polynomial, $p(E_N^{(j)})\neq0$ if $E_N^{(j)}$ is degenerate for $\mathcal{S}_\omega|_{[1,N]}$. Therefore, the meromorphic function $h$ can be simplified to
\begin{align*}
h(z)=\frac{p(z)}{\prod_{\tiny\begin{matrix}\langle \boldsymbol{v}_j,\boldsymbol{\delta}_N\rangle \neq0\end{matrix}}(z-E_N^{(j)})}=\frac{1}{z-V_{22}(N)}\sum_{\tiny\begin{matrix}\langle \boldsymbol{v}_j,\boldsymbol{\delta}_N\rangle \neq0\end{matrix}}\frac{|\langle \boldsymbol{v}_{j},\boldsymbol{\delta}_N\rangle |^2}{z-E_N^{(j)}},
\end{align*}
which implies that the degree of $p(z)$ is $K-2$.

For each $1\le j\le m-1$, we have
\begin{align*}
\lim_{z\downarrow E_{N}^{(n_j)}}h(z)=-\infty\text{ and }\lim_{z\uparrow E_{N}^{(n_{j+1})}}h(z)=+\infty,
\end{align*}
so $p(z)$ vanishes somewhere in the open interval $\left(E_N^{(n_j)},E_N^{(n_{j+1})}\right)$. It follows that $\mathcal{S}_\omega|_{[1,N-1]}$ has an eigenvalue strictly between $E_N^{(n_j)}$ and $E_N^{(n_{j+1})}$ for 
each $1\le j\le m-1$.

Similarly, for each $m+1\le j\le K-1$, we have
\begin{align*}
\lim_{z\downarrow E_{N}^{(n_j)}}h(z)=+\infty\text{ and }\lim_{z\uparrow E_{N}^{(n_{j+1})}}h(z)=-\infty,
\end{align*}
so $p(z)$ vanishes somewhere in the open interval $\left(E_N^{(n_j)},E_N^{(n_{j+1})}\right)$. It follows that $\mathcal{S}_\omega|_{[1,N-1]}$ has an eigenvalue strictly between $E_N^{(n_j)}$ and $E_N^{(n_{j+1})}$ for 
each $m+1\le j\le K-1$. So we have found all the $K-2$ roots of $p(z)$.

\textbf{Case 2}: If $V_{22,\omega}(N)\in\sigma(\mathcal{S}_\omega|_{[1,N]})$, then $V_{22,\omega}(N)\notin\sigma(\mathcal{S}_\omega|_{[1,N-1]})$. Applying Corollary \ref{11-23-corollary3.14}, one has
\begin{align*}
\det(z-\mathcal{S}_\omega|_{[1,N-1]})=q(z)\prod_{\tiny\begin{matrix}\langle \boldsymbol{v}_j,\boldsymbol{\delta}_N\rangle \neq0,\\ E_N^{(j)}\text{ is degenerate}\\\text{for }\mathcal{S}_\omega|_{[1,N]}\end{matrix}}(z-E_N^{(j)})\prod_{\tiny\begin{matrix}\langle \boldsymbol{v}_j,\boldsymbol{\delta}_N\rangle =0,\\ E_N^{(j)}\neq V_{22,\omega}(N)\text{ is}\\\text{simple for }\mathcal{S}_\omega|_{[1,N]}\end{matrix}}(z-E_N^{(j)})
\end{align*}
\normalsize
and
\small
\begin{align*}
&\det(z-\mathcal{S}_\omega|_{[1,N]})\\&=(z-V_{22,\omega}(N))\prod_{\tiny\begin{matrix}\langle \boldsymbol{v}_j,\boldsymbol{\delta}_N\rangle \neq0\\ E_N^{(j)}\text{ is simple}\\\text{for }\mathcal{S}_\omega|_{[1,N]}\end{matrix}}(z-E_N^{(j)})\prod_{\tiny\begin{matrix}\langle \boldsymbol{v}_j,\boldsymbol{\delta}_N\rangle \neq0,\\ E_N^{(j)}\text{ is degenerate}\\\text{for }\mathcal{S}_\omega|_{[1,N]}\end{matrix}}(z-E_N^{(j)})^2\prod_{\tiny\begin{matrix}\langle \boldsymbol{v}_j,\boldsymbol{\delta}_N\rangle =0,\\ E_N^{(j)}\neq V_{22,\omega}(N)\text{ is}\\\text{simple for }\mathcal{S}_\omega|_{[1,N]}\end{matrix}}(z-E_N^{(j)})
\end{align*}
\normalsize
where $q(z)$ is a polynomial, $q(E_N^{(j)})\neq0$ if $E_N^{(j)}$ is degenerate for $\mathcal{S}_\omega|_{[1,N]}$. Therefore, the meromorphic function $h$ can be simplified to
\begin{align*}
h(z)=\frac{q(z)}{(z-V_{22,\omega}(N))\prod_{\tiny\begin{matrix}\langle \boldsymbol{v}_j,\boldsymbol{\delta}_N\rangle \neq0\end{matrix}}(z-E_N^{(j)})}=\frac{1}{z-V_{22,\omega}(N)}\sum_{\tiny\begin{matrix}\langle \boldsymbol{v}_j,\boldsymbol{\delta}_N\rangle \neq0\end{matrix}}\frac{|\langle \boldsymbol{v}_{j},\boldsymbol{\delta}_N\rangle |^2}{z-E_N^{(j)}},
\end{align*}
which implies that the degree of $q(z)$ is $K-1$.

(i) If there exists $m\in[1,K-1]$ such that $E_N^{(n_m)}<V_{22,\omega}(N)<E_N^{(n_{m+1})}$, repeating the same argument in Case 1, we can find a root of $q(z)$ in the interval $(E_N^{(n_j)},E_N^{(n_{j+1})})$ for each $j=1,\dots,m-1,m+1,\dots,K-1$.

Note that
\begin{align*}
\prod_{\tiny\begin{matrix}\langle \boldsymbol{v}_j,\boldsymbol{\delta}_N\rangle \neq0\end{matrix}}(V_{22,\omega}(N)-E_N^{(j)})\sum_{\tiny\begin{matrix}\langle \boldsymbol{v}_j,\boldsymbol{\delta}_N\rangle \neq0\end{matrix}}\frac{|\langle \boldsymbol{v}_{j},\boldsymbol{\delta}_N\rangle |^2}{V_{22,\omega}(N)-E_N^{(j)}}=q(V_{22,\omega}(N))\neq 0
\end{align*}
since $V_{22,\omega}(N)\notin\sigma(\mathcal{S}_\omega|_{[1,N-1]})$ by Corollary \ref{11-23-corollary3.14} (i).

Then it holds that
\begin{align*}
\lim_{z\downarrow E_{N}^{(n_m)}}h(z)=\lim_{z\uparrow E_{N}^{(n_{m+1})}}h(z)=-\infty,
\end{align*}
and
\begin{align*}
\lim_{z\downarrow V_{22}(N)}h(z)=\pm \infty\text{ and }\lim_{z\uparrow V_{22}(N)}h(z)=\mp\infty.
\end{align*}
So there exists a root in $(E_N^{(n_m)},E_N^{(n_{m+1})})$, and the distribution of all the $K-1$ roots of $q(z)$ is clear.

(ii) If $V_{22}(N)<E_N^{(n_1)}$ or $E_N^{(n_K)}<V_{22}(N)$,  there exists a root of $q(z)$ in the interval $(E_N^{n_j},E_N^{(n_{j+1})})$ for each $j\in[1,K-1]$.
\end{proof}

\iffalse
\textcolor{red}{
\begin{corollary}[Interlacing Lemma]\label{1-25-corollary-5.11}
    Suppose $V_{22}(j)\equiv C\ \forall j\in\Z$, and $V_{12}(n)\neq0\ \forall n\in\Z$. Then $C\notin\sigma(\mathcal{S}|_{[1,N-1]})$. Moreover, $K=2N$, and the distribution of eigenvalues matches that of \textbf{Case 1} in Lemma \ref{11-23-lemma3.15}.
\end{corollary}}
\textcolor{red}{
\begin{proof}
Note that $$\det(C-\mathcal{S}|_{[1,N]})=\prod_{j=1}^N|V_{12}(j)|^2\neq 0.$$ By the proof of Lemma \ref{7-17-3.4} (i), we have $\langle\boldsymbol{v_j},\boldsymbol{\delta_N}\rangle\neq0$, where $\boldsymbol{v}_1,\dots,\boldsymbol{v}_{2N}$ are the orthonormal eigenvectors of $\mathcal{S}|_{[1,N]}$. The remaining arguments follow directly from the reasoning in Lemma \ref{11-23-lemma3.15}.
\end{proof}}
\fi

Recall that the singular Jacobi cocycle on the strip takes the form 
\begin{align*}
    A^E(\omega)=\begin{pmatrix}
        E-V_{11}(\omega)-\frac{|V_{12}(\omega)|^2}{E-V_{22}(\omega)}&-1\\1&0
    \end{pmatrix}.
\end{align*}
thus if  $E\in\mathcal{T}_\delta$, then actually $A^E \in C^0(\Omega, SL(2,\R))$, thus the fibered rotation number of the cocycle $(\alpha, A^E)$ is well-defined, here, with the help of Interlacing Lemma (Lemma \ref{11-23-lemma3.15}),  we will try to establish the relationship between the integrated density of states and the fibered rotation number as follows:

\begin{theorem}\label{11-29-theorem5.21}Suppose $(\Omega,\nu,T)$ is uniquely ergodic. Denote $\rho(E)$ the fibered rotation number of $(T,A^E)$. Then, the following hold:
\begin{enumerate}
     \item  For $\sup V_{22}<E$, we have
$\mathrm{n}(E)=1-\rho(E).$

\item For $E<\inf V_{22}$, we have
$\mathrm{n}(E)=\frac{1}{2}-\rho(E).$
    \end{enumerate}
\end{theorem}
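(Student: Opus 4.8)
The plan is to compare both $\mathrm{n}(E)$ and $\rho(E)$ with finite--volume quantities attached to the truncations $\mathcal{S}_\omega|_{[1,N]}$ and to match the $N\to\infty$ asymptotics; the discrepancy between $1$ and $\tfrac12$ will come entirely from the flat band $\operatorname{Ran}(V_{22})$. Fix $\omega\in\Omega_0$. By construction $\mathrm{n}(E)=\lim_{N\to\infty}\tfrac{1}{2N}N_N(E)$, where $N_N(E)$ denotes the number of eigenvalues of $\mathcal{S}_\omega|_{[1,N]}$ in $(-\infty,E]$, counted with multiplicity. Since $\sup V_{22}<E$ (resp. $E<\inf V_{22}$) forces $\operatorname{dist}(E,\operatorname{Ran}(V_{22}))>0$, the cocycle $(T,A^{\lambda})=(T,S^{\tilde v_\lambda})$, with $\tilde v_\lambda(\omega)=\lambda-V_{11,\omega}(0)-|V_{12,\omega}(0)|^2/(\lambda-V_{22,\omega}(0))$, is a continuous Schr\"odinger--type $SL(2,\mathbb{R})$--cocycle for all $\lambda$ in a neighbourhood of $E$ (and, in Case (2), for every $\lambda\le E$), so the fibered rotation number $\rho(E)$ is well defined. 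Writing $\vartheta_n(\lambda)$ for a continuous lift of the argument of the Pr\"ufer vector $(a_1(n,\lambda),a_1(n-1,\lambda))^{\mathsf T}=A^{\lambda}_{n-1}(\omega)(1,0)^{\mathsf T}$ of the Dirichlet solution ($a_1(0,\lambda)=0$, $a_1(1,\lambda)=1$), I will use that $\vartheta_{N+1}(E)/N$ converges, by unique ergodicity, to $-2\pi\rho(E)$ (the sign being the usual orientation convention, pinned below).

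The heart of the argument is the identity
\begin{equation*}
N_N(E)=\#\{\,j\in[1,N]:V_{22,\omega}(j)\le E\,\}+M_N(E),\qquad M_N(E):=\frac{\vartheta_{N+1}(E)-\vartheta_{N+1}(-\infty)}{\pi}+O(1),
\end{equation*}
where $\vartheta_{N+1}(-\infty):=\lim_{\lambda\to-\infty}\vartheta_{N+1}(\lambda)$ exists because $\tilde v_\lambda\to-\infty$ uniformly. To establish it I would let $\lambda$ increase from $-\infty$ to $E$: for $\omega\in\Omega_0$ the numerator of the rational function $\lambda\mapsto a_1(N+1,\lambda)$ is $\det(\lambda-\mathcal{S}_\omega|_{[1,N]})$ and its simple real poles are exactly $V_{22,\omega}(1),\dots,V_{22,\omega}(N)$, while $\partial_\lambda\tilde v_\lambda(\omega)=1+|V_{12,\omega}(0)|^2/(\lambda-V_{22,\omega}(0))^2>0$ makes $\lambda\mapsto\vartheta_{N+1}(\lambda)$ strictly increasing on each component of $\mathbb{R}\setminus\{V_{22,\omega}(j)\}$, with a downward jump of exactly $\pi$ at each pole (where $\tilde v_\lambda$ sweeps from $+\infty$ to $-\infty$). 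Then the upward crossings of $\tfrac{\pi}{2}+\pi\mathbb{Z}$ by $\vartheta_{N+1}$ are precisely the eigenvalues of $\mathcal{S}_\omega|_{[1,N]}$ in $(-\infty,E]$, and the downward crossings are the poles in $(-\infty,E]$, which yields the identity up to the $O(1)$. The degenerate configurations --- in which some $V_{22,\omega}(m)$ is simultaneously a pole of $\tilde v_\lambda$ and a (necessarily double) eigenvalue of $\mathcal{S}_\omega|_{[1,N]}$, so that one zero of $\det(\lambda-\mathcal{S}_\omega|_{[1,N]})$ is absorbed by the pole --- are exactly what Lemma~\ref{7-17-3.4}, Corollary~\ref{11-23-corollary3.14} and the Interlacing Lemma~\ref{11-23-lemma3.15} are designed to control: they ensure such coincidences alter the count by at most a bounded amount when passing from $N$ to $N+1$, hence only affect the $O(1)$. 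Reconciling this ``reduced'' Pr\"ufer picture (with the angle passing through infinity at the poles) with the genuine spectral bookkeeping of the $2N\times 2N$ matrix $\mathcal{S}_\omega|_{[1,N]}$ is the step I expect to be the main obstacle.

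It then remains to pass to the limit. By unique ergodicity $\tfrac1N\#\{j\in[1,N]:V_{22,\omega}(j)\le E\}\to\nu(\{\omega:V_{22,\omega}(0)\le E\})$, which is $1$ when $E>\sup V_{22}$ and $0$ when $E<\inf V_{22}$ (the law of $V_{22,\omega}(0)$ has no atom at such an $E$). For the other term, $\tfrac1N M_N(E)\to 1-2\rho(E)$: indeed $\vartheta_{N+1}(E)/N\to-2\pi\rho(E)$ by the definition of the fibered rotation number, while $\vartheta_{N+1}(-\infty)/N\to-\pi$ because for $\lambda\to-\infty$ the cocycle $(T,S^{\tilde v_\lambda})$ is uniformly hyperbolic with negative trace, i.e. has rotation number $\tfrac12$ --- this is exactly the normalization of $\rho$, consistent with $\rho(E)\to0$ as $E\to+\infty$, with $\rho(E)\to\tfrac12$ as $E\to-\infty$, and with the monotonicity of $\rho$ in $E$ on each of the two parameter regions. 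Combining the two limits,
\begin{equation*}
\mathrm{n}(E)=\frac{\nu(\{\omega:V_{22,\omega}(0)\le E\})+1}{2}-\rho(E),
\end{equation*}
which yields $\mathrm{n}(E)=1-\rho(E)$ for $\sup V_{22}<E$ and $\mathrm{n}(E)=\tfrac12-\rho(E)$ for $E<\inf V_{22}$. Finally I would cross--check the two extremes $E\to\pm\infty$ (where $\mathrm{n}\to1,\ \rho\to0$, resp. $\mathrm{n}\to0,\ \rho\to\tfrac12$) to fix all signs and the branch of $\rho$.
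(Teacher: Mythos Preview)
Your approach and the paper's share the same backbone---track the argument of the Dirichlet vector $Z_N(\lambda)=(\Delta_N(\lambda),\Delta_{N-1}(\lambda))$, compare its variation with the eigenvalue count of $\mathcal{S}_\omega|_{[1,N]}$ via the Interlacing Lemma, and pass to $N\to\infty$ using unique ergodicity and the endpoint values of $\rho$. The genuine difference lies in the choice of reference energy. The paper never crosses $\operatorname{Ran}(V_{22})$: for Case~(1) it compares $E$ with $E_2\to+\infty$ and uses $\rho(+\infty)=0$, for Case~(2) it compares $E$ with $E_1\to-\infty$ and uses $\rho(-\infty)=\tfrac12$; in both cases the Interlacing Lemma is applied only inside a pole-free interval, and the winding computation reduces to the elementary observation that each eigenvalue of $\mathcal{S}_\omega|_{[1,n]}$ and each interlaced eigenvalue of $\mathcal{S}_\omega|_{[1,n-1]}$ contributes a $-\pi/2$ to $\int d\arg\zeta_n$. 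You instead anchor both cases at $\lambda\to-\infty$, which forces you in Case~(1) to march through all $N$ poles of $\tilde v_\lambda$ and to keep a ledger of the resulting $\pi$-jumps. Your unified formula $\mathrm{n}(E)=\tfrac12\bigl(\nu(\{V_{22}\le E\})+1\bigr)-\rho(E)$ is an attractive dividend of this route.

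The one step you correctly flag as the main obstacle is also where your argument is thinnest. The statement that each pole produces a \emph{downward} jump of exactly $\pi$ is not a fact but a convention for how to continue the lift across the singularity; the $S^1$-direction of $Z_N$ genuinely flips there (only $\Delta_N$ and $\Delta_{N-1}$ change sign, via the common factor $(\lambda-V_{22,\omega}(m))^{-1}$), so either sign is a priori admissible. Your cross-check at $E\to\pm\infty$ is exactly the right device to pin it down, but it should be done \emph{before} the counting argument rather than as a sanity check after. Also note that Lemma~\ref{11-23-lemma3.15} and Corollary~\ref{11-23-corollary3.14} are about eigenvalue interlacing between $\mathcal{S}_\omega|_{[1,N]}$ and $\mathcal{S}_\omega|_{[1,N-1]}$, not about what happens as $\lambda$ crosses a pole; they do control the degeneracies (pole $=$ double eigenvalue), but the winding contribution at an ordinary pole you must compute directly. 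Finally, there is an order-of-limits issue in ``$\vartheta_{N+1}(-\infty)/N\to-\pi$'': the paper avoids this by first passing to $\rho$ and $\mathrm{n}$ (via Lemma~\ref{11-29-lemma5.20}) and only then sending the endpoint to $\pm\infty$; you take $\lambda\to-\infty$ at finite $N$ and must justify the interchange.
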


\begin{proof}
Denote $\Delta_{n,\omega}(E)=\frac{\det(E-\mathcal{S}_{\omega}|_{[1,n]})}{\prod_{j=1}^n(E-V_{22,\omega}(j))}$, and
\begin{align*}
    Z_{n,\omega}(E)=\begin{pmatrix}
        \Delta_n(E)\\\Delta_{n-1}(E)
    \end{pmatrix}, \qquad      Z_{n,\omega}(E)=A^E(T^{n-1}\omega)Z_{n-1,\omega}(E),
\end{align*}
It is clear that 
\begin{align*}
    Z_{n,\omega}(E)=A^E_n(\omega)\begin{pmatrix}
        1\\0
    \end{pmatrix}, \qquad  Z_{n-1,\omega+\alpha}(E)=A^E_{n-1}(\omega+\alpha)\begin{pmatrix}
        0\\-1
    \end{pmatrix}.
\end{align*}
Thus we have
\begin{align*}
    (Z_{n,\omega}(E),-Z_{n-1,\omega+\alpha}(E))=A_n^E(\omega)=\begin{pmatrix}\frac{\det(E-\mathcal{S}_{\omega}|_{[1,n]})}{\prod_{j=1}^n(E-V_{22,\omega}(j))}&-\frac{\det(E-\mathcal{S}_{\omega}|_{[2,n]})}{\prod_{j=2}^{n}(E-V_{22,\omega}(j))}\\ & \\\frac{\det(E-\mathcal{S}_{\omega}|_{[1,n-1]})}{\prod_{j=1}^{n-1}(E-V_{22,\omega}(j))}&-\frac{\det(E-\mathcal{S}_{\omega}|_{[2,n-1]})}{\prod_{j=2}^{n-1}(E-V_{22,\omega}(j))}\end{pmatrix}.
\end{align*}

Consider a projective cocycle $F_{A^z}$ on $\Omega\times\mathbb{S}^1$:
\begin{align*}(\omega,\psi)\mapsto\left(\omega+\alpha,\frac{A^z(\omega)\psi}{\|A^z(\omega)\psi\|}\right).
\end{align*}
It is possible to choose a continuous lift of $F_{A^z}$ to $\R$. Suppose $\xi_{n,\omega}(E)$ is a lift of $Z_{n,\omega}(E)$, then we have
\begin{itemize}
    \item $\lim_{n\to\infty}\frac{1}{n}\xi_{n,\omega}(E)=\rho(E).$
    \item If we denote by $\gamma_{E_1,E_2}:[E_1,E_2]\to\C$ the path $t\mapsto \zeta_n(E)$ (where $\zeta_n(E)$ represents the complex-plane point corresponding to $Z_{n,\omega}(E)$), then we have
    \begin{align*}
        \xi_{n,\omega}(E_2)-\xi_{n,\omega}(E_1)=\frac{1}{2\pi i}\int_{\gamma_{E_1,E_2}}\frac{dz}{z}=\frac{1}{2\pi i}\int_{E_1}^{E_2}\frac{\zeta_n'(E)}{\zeta_n(E)}dE.
    \end{align*}
\end{itemize}

Since IDS and the fibered rotation number are independent of the phase, it suffices to verify the following inequalities on the full-measure subset $\Omega_0$.
\begin{lemma}\label{11-29-lemma5.20}
  (1)  For $\sup V_{22}<E_1<E_2$ and $\omega\in\Omega_0$, we have
    \begin{align*}
        \left|2\xi_{n,\omega}(E_2)-2\xi_{n,\omega}(E_1)+\#\left\{\sigma(\mathcal{S}_{\omega}|_{[1,n]})\cap[E_1,E_2]\right\}\right|\le2,
    \end{align*}

(2) For $E_1<E_2<\inf V_{22}$ and $\omega\in\Omega_0$, we also have
    \begin{align*}
        \left|2\xi_{n,\omega}(E_2)-2\xi_{n,\omega}(E_1)+\#\left\{\sigma(\mathcal{S}_{\omega}|_{[1,n]})\cap[E_1,E_2]\right\}\right|\le2,
    \end{align*}
\end{lemma}
\begin{proof}
(1) According to Lemma \ref{11-23-lemma3.15}, it suffices to observe that if (for example) $$\max\{\sup V_{22},E_{n}^{(k)}\}<E_1<E_{n-1}^{(k-1)},\quad E_{n}^{(l)}<E_2<E_{n-1}^{(l-1)},$$ the integral $\xi_{n}(E_2)-\xi_n(E_1)=\int_{\gamma_{E_1,E_2}}dz/z$ equals $1/(2\pi i)$ multiplied by
\begin{align*}
    \int_{\gamma_{E_1,E_{n-1}^{(k-1)}}}(dz/z)+\int_{\gamma_{E_{n-1}^{(k-1)},E_{n}^{(k+1)}}}(dz/z)+\int_{\gamma_{E_{n-1}^{(k-1)},E_{n}^{(k+1)}}}(dz/z)+\cdots+\int_{\gamma_{E_{n}^{(l)},E_{2}}}(dz/z).
\end{align*}
Since each term of this sum, except for the first and the last, equals $-\pi/2$ (note that if $\Delta_n(E)>0$ and $\Delta_{n-1}(E)=0$, then according to Lemma \ref{11-23-lemma3.15}, $\Delta_n(E+)>0$ and $\Delta_{n-1}(E+)<0$) and the extreme terms are smaller than $\pi/2$, we observe that this integral equals $-(l-k)\pi$ up to an approximation of $\pi$. It is easy to see that this gives the conclusion of the first part. 

(2) Similarly, it suffices to observe that if (for example) $$E_{n}^{(k)}<E_1<E_{n-1}^{(k)},\quad E_{n}^{(l)}<E_2<\min\{\inf V_{22},E_{n-1}^{(l)}\},$$ the integral $\xi_{n}(E_2)-\xi_n(E_1)=\int_{\gamma_{E_1,E_2}}dz/z$ equals $1/(2\pi i)$ multiplied by
\begin{align*}
    \int_{\gamma_{E_1,E_{n-1}^{(k)}}}(dz/z)+\int_{\gamma_{E_{n-1}^{(k)},E_{n}^{(k+1)}}}(dz/z)+\int_{\gamma_{E_{n}^{(k+1)},E_{n-1}^{(k+1)}}}(dz/z)+\cdots+\int_{\gamma_{E_{n}^{(l)},E_{2}}}(dz/z).
\end{align*}
Since each term of this sum, except for the first and the last, equals $-\pi/2$ (note that if $\Delta_n(E)>0$ and $\Delta_{n-1}(E)=0$, then according to Lemma \ref{11-23-lemma3.15}, we also have $\Delta_n(E+)>0$ and $\Delta_{n-1}(E+)<0$) and the extreme terms are smaller than $\pi/2$, we observe that this integral equals $-(l-k)\pi$ up to an approximation of $\pi$. It is easy to see that this gives the conclusion of the second part.
\end{proof}

Applying Lemma \ref{11-29-lemma5.20}, we have
\begin{align*}
    \lim_{n\to\infty}\left(\frac{1}{n}\xi_{n,\omega}(E_2)-\frac{1}{n}\xi_{n,\omega}(E_1)\right)+\int_{[E_1,E_2]}d\mathrm{n}_{\omega,n}=0
\end{align*}
for $E_1<E_2<\inf V_{22}$ or $\sup V_{22}<E_1<E_2$.
Using the definition of $\rho(E)$ and $\mathrm{n}$, one has
\begin{align*}
  \rho(E_2)-\rho(E_1)+\mathrm{n}(E_2)-\mathrm{n}(E_1)=0
\end{align*}
for $E_1<E_2<\inf V_{22}$ or $\sup V_{22}<E_1<E_2$.

(1) If $E_1<E_2<\inf V_{22}$, letting $E_1=-\infty$ and using $\rho(-\infty)=\frac{1}{2}$, one has $\mathrm{n}(E_2)=\frac{1}{2}-\rho(E_2).$

(2) If $\sup V_{22}<E_1<E_2$, letting $E_2=\infty$ and using $\rho(\infty)=0$, one has $\mathrm{n}(E_1)=1-\rho(E_1).$
\end{proof}

\section{Subordinacy Theory}	
\subsection{Cyclic vectors and Green's function}
For $2\times2$ non-singular matrix-valued Jacobi operators, $\ell^2(\Z,\C^2)$ can be divided into the direct sum of four cyclic subspaces. However, for singular matrix-valued Jacobi operators, such cyclic subspaces are unclear and may be even infinite. In this section, we study the spectrum outside $\operatorname{Ran}(V_{22})$. More precisely, if we  denote $\boldsymbol{\delta}_n=\begin{pmatrix}1\\0\end{pmatrix}\delta_n$ and $\boldsymbol{\gamma}_n=\begin{pmatrix}0\\1\end{pmatrix}\delta_n$, then
we have the following proposition:
\begin{proposition}\label{pro1}
Denote 
\begin{align*}
    \mathcal{T}_\delta=\{x\in\R:\operatorname{dist}(x,\operatorname{Ran}(V_{22}))\ge\delta\}
\end{align*}
with $\delta>0$. Then, for any compactly supported $\boldsymbol{\phi}\in \ell^2(\Z,\C^2)$, we have
\begin{align*}
\chi_{\mathcal{T}_\delta}(\mathcal{S})\boldsymbol{\phi}=\Big(\prod_{j=k}^{l}f_n(\mathcal{S})\Big)\Big[p(\mathcal{S})\chi_{\mathcal{T}_\delta}(\mathcal{S})\boldsymbol{\delta}_0+q(\mathcal{S})\chi_{\mathcal{T}_\delta}(\mathcal{S})\boldsymbol{\delta}_1\Big]
\end{align*}
 with suitable real polynomials $p,q$ and $k,l\in\N$. Moreover, $\|f_n(\mathcal{S})\|\le\frac{1}{\delta},\forall n\in\Z$.
\end{proposition}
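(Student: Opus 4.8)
The plan is to exploit the decoupled structure \eqref{eq5}: for any energy $z$ outside $\operatorname{Ran}(V_{22})$, the second coordinate $b(n)$ is determined by the first coordinate $a(n)$ through multiplication by the scalar $\frac{V_{21}(T^n\omega)}{z-V_{22}(T^n\omega)}$. I would first reduce to the basis vectors $\boldsymbol{\delta}_n,\boldsymbol{\gamma}_n$. Since $\boldsymbol\phi$ is compactly supported, it is a finite linear combination of the $\boldsymbol\delta_n$ and $\boldsymbol\gamma_n$ with $n$ in some interval $[k,l]$, so it suffices to handle each basis vector and then observe that a finite sum of expressions of the claimed form is again of that form (after clearing denominators into a common product of the $f_n(\mathcal{S})$). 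Thus I would aim to show: for each fixed $n$, both $\chi_{\mathcal{T}_\delta}(\mathcal S)\boldsymbol\delta_n$ and $\chi_{\mathcal{T}_\delta}(\mathcal S)\boldsymbol\gamma_n$ lie in the cyclic span (over polynomials in $\mathcal S$, after multiplying by bounded resolvent-type factors) of $\chi_{\mathcal{T}_\delta}(\mathcal S)\boldsymbol\delta_0$ and $\chi_{\mathcal{T}_\delta}(\mathcal S)\boldsymbol\delta_1$.

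The key algebraic input is the following. Define $f_n(x) = x - V_{22}(T^n\omega)$, an affine function; on $\mathcal{T}_\delta$ one has $|f_n(x)|\ge\delta$, so $f_n(\mathcal S)\chi_{\mathcal{T}_\delta}(\mathcal S)$ is invertible with inverse of norm $\le 1/\delta$ — this is where the bound $\|f_n(\mathcal S)^{-1}\chi_{\mathcal{T}_\delta}(\mathcal S)\|\le 1/\delta$ (which I'll abbreviate abusively as the stated bound on $f_n(\mathcal{S})$, reading it as the bounded multiplication operator appearing after inversion) comes from. From the second row of $\mathcal S\boldsymbol u = z\boldsymbol u$ applied spectrally — i.e. looking at $\langle \boldsymbol\gamma_n, \mathcal S \boldsymbol u\rangle$ — one gets the operator identity $V_{21}(T^n\omega)\,\pi_1 + (V_{22}(T^n\omega)-\mathcal S)\,\pi_2 = 0$ on $\operatorname{Ran}\chi_{\mathcal{T}_\delta}(\mathcal S)$ in a suitable weak sense; concretely I would establish, for every compactly supported $\boldsymbol\psi$,
\begin{equation*}
\chi_{\mathcal{T}_\delta}(\mathcal S)\boldsymbol\gamma_n = \frac{V_{21}(T^n\omega)}{\;\mathcal S - V_{22}(T^n\omega)\;}\,\chi_{\mathcal{T}_\delta}(\mathcal S)\,\bigl(J\boldsymbol\delta_{n+1}+J\boldsymbol\delta_{n-1}+V_{11}(T^n\omega)\boldsymbol\delta_n\bigr) - \chi_{\mathcal{T}_\delta}(\mathcal S)\,\boldsymbol\delta_n\cdot(\text{scalar}),
\end{equation*}
after writing $V_\omega(n)\boldsymbol\delta_n = V_{11}\boldsymbol\delta_n + V_{21}\boldsymbol\gamma_n$ and isolating $\boldsymbol\gamma_n$; note $J\boldsymbol\delta_m = \boldsymbol\delta_m$ but $J\boldsymbol\gamma_m=0$, which is exactly the singularity that makes this work. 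This expresses $\chi_{\mathcal{T}_\delta}(\mathcal S)\boldsymbol\gamma_n$ through the $\boldsymbol\delta$'s (at nearby sites) times bounded factors $(\mathcal S-V_{22}(T^n\omega))^{-1}\chi_{\mathcal{T}_\delta}(\mathcal S)$.

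Next I would propagate along the first chain: from the first-row relation \eqref{eq6}, $\mathcal S\boldsymbol\delta_n$ restricted to $\operatorname{Ran}\chi_{\mathcal{T}_\delta}(\mathcal S)$ connects $\boldsymbol\delta_{n-1},\boldsymbol\delta_n,\boldsymbol\delta_{n+1}$ together with a $\boldsymbol\gamma_n$ term; substituting the previous step to eliminate $\boldsymbol\gamma_n$ yields a genuine three-term recurrence for $\chi_{\mathcal{T}_\delta}(\mathcal S)\boldsymbol\delta_n$ with coefficients that are rational functions of $\mathcal S$ with poles only on $\operatorname{Ran}(V_{22})$ — hence bounded operators after composing with $\chi_{\mathcal{T}_\delta}(\mathcal S)$ and absorbed into a product $\prod_j f_j(\mathcal S)^{-1}$. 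Solving this recurrence downward/upward from $n=0,1$ expresses every $\chi_{\mathcal{T}_\delta}(\mathcal S)\boldsymbol\delta_n$ as $\bigl(\prod_{j} f_j(\mathcal S)^{-1}\bigr)$ times a polynomial in $\mathcal S$ applied to $\chi_{\mathcal{T}_\delta}(\mathcal S)\boldsymbol\delta_0$ and $\chi_{\mathcal{T}_\delta}(\mathcal S)\boldsymbol\delta_1$; combining with the $\boldsymbol\gamma_n$ formula and summing over the finitely many sites in the support of $\boldsymbol\phi$ (collecting all the denominators into one common product, renaming the resulting index range as $k,\dots,l$) gives the stated identity.

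The main obstacle I anticipate is making the formal manipulations with $\chi_{\mathcal{T}_\delta}(\mathcal S)$, rational functions of $\mathcal S$, and the non-bounded inverses rigorous: one must work inside $\operatorname{Ran}\chi_{\mathcal{T}_\delta}(\mathcal S)$ throughout so that $(\mathcal S-V_{22}(T^n\omega))^{-1}$ is honestly bounded (norm $\le 1/\delta$), and check that the recurrence coefficients never introduce spectral parameters one cannot control. A secondary subtlety is that the ``boundary'' vectors produced at the ends of the induction (the $\boldsymbol\delta_m$ with $m$ just outside $[k,l]$) must themselves be re-expressed through $\boldsymbol\delta_0,\boldsymbol\delta_1$, so the polynomial degrees and the length of the product $\prod_{j=k}^l f_j(\mathcal S)$ must be tracked to see the expression closes; but since everything is finite this is bookkeeping rather than a genuine difficulty.
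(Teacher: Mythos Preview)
Your approach is essentially the same as the paper's: eliminate $\boldsymbol\gamma_n$ using the singularity $J\boldsymbol\gamma_m=0$, then run the resulting three-term recurrence in the $\boldsymbol\delta$-chain to reach $\boldsymbol\delta_0,\boldsymbol\delta_1$, picking up one factor $f_j(\mathcal S)$ per step. The paper defines $f_n$ directly as the bounded function $f_n(x)=\tfrac{1}{x-V_{22}(n)}$ on $\mathcal T_\delta$ (and $0$ off it), so your reading of $f_n(\mathcal S)$ as the bounded inverse is exactly what is meant.

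One simplification you should make: your displayed formula for $\chi_{\mathcal T_\delta}(\mathcal S)\boldsymbol\gamma_n$ is overcomplicated and not quite right as written. The paper's route is much cleaner: compute $\mathcal S\boldsymbol\gamma_n$ directly. Since $J\boldsymbol\gamma_{n\pm1}=0$, one has the exact identity
\[
(\mathcal S - V_{22}(n))\boldsymbol\gamma_n \;=\; V_{12}(n)\,\boldsymbol\delta_n,
\]
so after applying $\chi_{\mathcal T_\delta}(\mathcal S)$ and inverting, $\chi_{\mathcal T_\delta}(\mathcal S)\boldsymbol\gamma_n = V_{12}(n)\,f_n(\mathcal S)\,\chi_{\mathcal T_\delta}(\mathcal S)\boldsymbol\delta_n$ --- only the single site $n$ appears, not nearby sites. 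Feeding this into $\mathcal S\boldsymbol\delta_n=\boldsymbol\delta_{n-1}+\boldsymbol\delta_{n+1}+V_{11}(n)\boldsymbol\delta_n+V_{21}(n)\boldsymbol\gamma_n$ gives the clean recurrence
\[
\chi_{\mathcal T_\delta}(\mathcal S)\boldsymbol\delta_{n+1}
= f_n(\mathcal S)\Bigl\{\bigl[(\mathcal S-V_{11}(n))(\mathcal S-V_{22}(n))-|V_{12}(n)|^2\bigr]\chi_{\mathcal T_\delta}(\mathcal S)\boldsymbol\delta_n-(\mathcal S-V_{22}(n))\chi_{\mathcal T_\delta}(\mathcal S)\boldsymbol\delta_{n-1}\Bigr\},
\]
from which the induction is immediate. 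Your worry about ``boundary'' vectors outside $[k,l]$ disappears with this formulation: the recurrence propagates purely inward toward $0,1$.
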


\begin{proof}
 Notice first that 
$$[\mathcal{S}-V_{22}(n)]\boldsymbol{\gamma}_n=V_{12}(n)\boldsymbol{\delta}_n,$$
and
$$\mathcal{S}\boldsymbol{\delta}_n=\boldsymbol{\delta}_{n-1}+\boldsymbol{\delta}_{n+1}+V_{11}(n)\boldsymbol{\delta}_n+V_{21}(n)\boldsymbol{\gamma}_n.$$

Since $\operatorname{dist}(\mathcal{T}_\delta,\operatorname{Ran}(V_{22}))\ge\delta$, we have
\begin{align*}
    \chi_{\mathcal{T}_\delta}(x)=\chi_{\mathcal{T}_\delta}(x)-\chi_{\mathcal{T}_\delta}(V_{22}(n))=(x-V_{22}(n))f_n(x),
\end{align*}
where
\begin{align*}
    f_n(x)=\left\{\begin{array}{ll}
        \frac{1}{x-V_{22}(n)} & \forall x\in\mathcal{T}_\delta, \\
        0 & \text{else}.
    \end{array}\right.
\end{align*}
Obviously, $|f_n(x)|\le\frac{1}{\delta}$ and $f_n(x)\chi_{\mathcal{T}_\delta}(x)=f_n(x)$. Since the function $f_n(x)$ is bounded and can be approximated by polynomials, $f_n(\mathcal{S})$ is well-defined, which implies
\begin{align*}
    \chi_{\mathcal{T}_\delta}(\mathcal{S})\boldsymbol{\gamma}_n=V_{12}(n)f_n(\mathcal{S})\chi_{\mathcal{T}_\delta}(\mathcal{S})\boldsymbol{\delta}_n,
\end{align*}
and
\small
\begin{align*}
    &\chi_{\mathcal{T}_\delta}(\mathcal{S})\boldsymbol{\delta}_{n+1}\\&=(\mathcal{S}-V_{11}(n)-|V_{12}(n)|^2f_n(\mathcal{S}))\chi_{\mathcal{T}_\delta}(\mathcal{S})\boldsymbol{\delta}_n-\chi_{\mathcal{T}_\delta}(\mathcal{S})\boldsymbol{\delta}_{n-1}\\&=f_n(\mathcal{S})\left\{\left[(\mathcal{S}-V_{11}(n))(\mathcal{S}-V_{22}(n))-|V_{12}(n)|^2\right]\chi_{\mathcal{T}_\delta}(\mathcal{S})\boldsymbol{\delta}_n-(\mathcal{S}-V_{22}(n))\chi_{\mathcal{T}_\delta}(\mathcal{S})\boldsymbol{\delta}_{n-1}\right\}.
\end{align*}
\normalsize
By induction, there are polynomials $p$ and $q$ with no more than $2n-2$ degrees so that $\boldsymbol{\delta}_n=\left(\prod_{j=1}^{n-1}f_n(\mathcal{S})\right)[p(\mathcal{S})\boldsymbol{\delta}_0+q(\mathcal{S})\boldsymbol{\delta}_1]$ for all $n>1$. It is similar for $n<0$. Taking finite linear combinations, the proposition follows.
\end{proof}

Hence, the sum of the spectral measures associated with $\boldsymbol{\delta}_0$ and $\boldsymbol{\delta}_1$ can serve as a canonical spectral measure on $\mathcal{T}_\delta$ for $\mathcal{S}$, i.e., $\mu=\mu_{\boldsymbol{\delta}_0}+\mu_{\boldsymbol{\delta}_1}$. Then, $\mu_{\boldsymbol{\phi}}|_{\mathcal{T}_\delta}$ is absolutely continuous with respect to $\mu|_{\mathcal{T}_\delta}$ for any $\boldsymbol{\phi}\in\ell^2(\Z,\C^2)$, where $\mu|_{\mathcal{T}_\delta}=\mu(\cdot\cap\mathcal{T}_\delta)$. 
Meanwhile, we have the following elementary observation:

\begin{lemma}
Suppose $z\notin\{V_{22}(j):j\in\Z\}$, and $\boldsymbol{u},\boldsymbol{v}$ solve $\mathcal{S}\boldsymbol{x}=z\boldsymbol{x}$. For any pair of solutions $\boldsymbol{u},\boldsymbol{v}$, the Wronskian
$$W_{[\boldsymbol{u},\boldsymbol{v}]}(n):=\langle \bar{\boldsymbol{v}}(n-1),J\boldsymbol{u}(n)\rangle _{\C^2}-\langle \bar{\boldsymbol{u}}(n-1),J\boldsymbol{v}(n)\rangle _{\C^2},$$
is independent of $n,z$. Moreover, $W_{[\boldsymbol{u},\boldsymbol{v}]}$ vanishes iff $\boldsymbol{u},\boldsymbol{v}$ are linearly dependent.
\end{lemma}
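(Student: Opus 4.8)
The claim is that for $z\notin\{V_{22}(j):j\in\Z\}$ and any two solutions $\boldsymbol{u},\boldsymbol{v}$ of $\mathcal{S}\boldsymbol{x}=z\boldsymbol{x}$, the quantity
$$W_{[\boldsymbol{u},\boldsymbol{v}]}(n)=\langle\bar{\boldsymbol{v}}(n-1),J\boldsymbol{u}(n)\rangle_{\C^2}-\langle\bar{\boldsymbol{u}}(n-1),J\boldsymbol{v}(n)\rangle_{\C^2}$$
is constant in $n$, does not depend on $z$, and vanishes precisely when $\boldsymbol{u},\boldsymbol{v}$ are linearly dependent. I would prove $n$-independence by the standard summation-by-parts / discrete Green's identity computation: form $W_{[\boldsymbol{u},\boldsymbol{v}]}(n+1)-W_{[\boldsymbol{u},\boldsymbol{v}]}(n)$ and use the eigenvalue equation $J\boldsymbol{u}(n+1)+J\boldsymbol{u}(n-1)+V(n)\boldsymbol{u}(n)=z\boldsymbol{u}(n)$ (and likewise for $\boldsymbol{v}$) to solve for $J\boldsymbol{u}(n+1)=z\boldsymbol{u}(n)-J\boldsymbol{u}(n-1)-V(n)\boldsymbol{u}(n)$. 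Substituting, the terms involving $J\boldsymbol{u}(n-1)$ and $J\boldsymbol{v}(n-1)$ produce exactly the previous $W$-term (so the difference telescopes), the $z\boldsymbol{u}(n)$ against $\boldsymbol{v}(n)$ terms cancel between the two halves because $z\langle\bar{\boldsymbol{v}}(n),\boldsymbol{u}(n)\rangle-z\langle\bar{\boldsymbol{u}}(n),\boldsymbol{v}(n)\rangle=0$ (here one uses that $z$ is real on the relevant spectral set, or more carefully that we are comparing genuine solutions and $V(n)$ is self-adjoint), and the $V(n)$ terms cancel because $V_\omega(n)$ is Hermitian: $\langle\bar{\boldsymbol{v}}(n),V(n)\boldsymbol{u}(n)\rangle=\langle V(n)^{*}\bar{\boldsymbol{v}}(n),\boldsymbol{u}(n)\rangle$ matches the symmetric partner. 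This gives $W_{[\boldsymbol{u},\boldsymbol{v}]}(n+1)=W_{[\boldsymbol{u},\boldsymbol{v}]}(n)$.

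**Independence of $z$.** Once $W$ is $n$-independent, its value may be computed at any single $n$; I would evaluate it at $n=1$ using the normalized fundamental solutions $\boldsymbol{u}_1(\cdot,z),\boldsymbol{u}_2(\cdot,z)$ introduced earlier in the section, whose values at $n=0,1$ are prescribed by the initial-condition matrices $\begin{pmatrix}a_1(1)&a_2(1)\\a_1(0)&a_2(0)\end{pmatrix}=\mathrm{id}$ and the corresponding $b$-data. For a general pair $\boldsymbol{u},\boldsymbol{v}$, write $\boldsymbol{u}(n)=c_1\boldsymbol{u}_1(n,z)+c_2\boldsymbol{u}_2(n,z)$, $\boldsymbol{v}(n)=d_1\boldsymbol{u}_1(n,z)+d_2\boldsymbol{u}_2(n,z)$ with constants depending on $z$; by bilinearity (conjugate-linearity in the first slot), $W_{[\boldsymbol{u},\boldsymbol{v}]}=(\bar d_1c_2-\bar d_2c_1)\,W_{[\boldsymbol{u}_1,\boldsymbol{u}_2]}$. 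A direct evaluation at $n=1$, where only the $a$-components survive since $J=\mathrm{diag}(1,0)$ kills the $b$-components and the relevant $a$-values are $0$ or $1$, gives $W_{[\boldsymbol{u}_1,\boldsymbol{u}_2]}(1)=\pm1$ independently of $z$ — the $b$-data and the $z$-dependent off-diagonal entries of $V$ never enter because $J$ projects onto the first coordinate. This is the only place the special form of $J$ matters, and it is what makes the Wronskian $z$-free.

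**The degeneracy criterion.** For the last assertion: if $\boldsymbol{u},\boldsymbol{v}$ are linearly dependent then clearly $W_{[\boldsymbol{u},\boldsymbol{v}]}\equiv0$ by antisymmetry. Conversely, if $W_{[\boldsymbol{u},\boldsymbol{v}]}=0$ then in the expansion above $\bar d_1c_2-\bar d_2c_1=0$, i.e. the coefficient vectors $(c_1,c_2)$ and $(d_1,d_2)$ are proportional, hence $\boldsymbol{u},\boldsymbol{v}$ are proportional — using here that the solution space for $z\notin\{V_{22}(j)\}$ is genuinely two-dimensional with $\{\boldsymbol{u}_1,\boldsymbol{u}_2\}$ a basis, which is exactly the content of the initial-value parametrization set up earlier via \eqref{eq6}--\eqref{eq5}. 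I expect the only mildly delicate point to be the $n$-independence computation: one must keep track of the conjugation in the first slot of the Wronskian and verify that the cross terms cancel using the self-adjointness of $V_\omega(n)$ rather than reality of $z$, so that the identity is valid for all $z$, not merely real $z$; this is routine but should be written out carefully. Everything else is a one-line bilinearity argument plus a trivial evaluation at $n=1$.
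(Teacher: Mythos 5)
You have the right overall plan, but the key cancellation in your $n$-independence argument is misattributed, and this is a genuine flaw rather than a routine detail. Writing $a=u_1$, $a'=v_1$ for the first components, note first that the conjugate on $\bar{\boldsymbol{v}}$ in the definition of $W$ cancels the conjugation built into $\langle\cdot,\cdot\rangle_{\C^2}$, so $W_{[\boldsymbol{u},\boldsymbol{v}]}(n)=a'(n-1)a(n)-a(n-1)a'(n)$ is a \emph{bilinear}, not sesquilinear, expression. Consequently the $z$-terms $z\langle\bar{\boldsymbol{v}}(n),\boldsymbol{u}(n)\rangle-z\langle\bar{\boldsymbol{u}}(n),\boldsymbol{v}(n)\rangle=z\sum_i v_iu_i-z\sum_iu_iv_i$ vanish identically for \emph{every} $z\in\C$; your worry about reality of $z$ is a red herring. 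The real problem is the $V$-terms. A direct expansion gives
\begin{align*}
\langle\bar{\boldsymbol{u}}(n),V(n)\boldsymbol{v}(n)\rangle-\langle\bar{\boldsymbol{v}}(n),V(n)\boldsymbol{u}(n)\rangle
&=\bigl(V_{12}(n)-V_{21}(n)\bigr)\bigl(u_1(n)v_2(n)-u_2(n)v_1(n)\bigr),
\end{align*}
which is \emph{not} zero merely because $V$ is Hermitian: Hermitian means $V_{21}=\bar{V}_{12}$, not $V_{21}=V_{12}$, and in the Type~III model \eqref{ac-sc} the off-diagonal entry $V_{12}=i\lambda\sin(2\pi\omega)$ is purely imaginary, so $V_{12}-V_{21}=2i\lambda\sin(2\pi\omega)\neq0$. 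The cancellation actually comes from the constraint \eqref{eq5}: since $u_2(n)=\frac{V_{21}(n)}{z-V_{22}(n)}u_1(n)$ and likewise for $v$, the factor $u_1v_2-u_2v_1$ vanishes. Without invoking \eqref{eq5}, your computation leaves a nonzero remainder.

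This is exactly why the paper's one-line proof cites both \eqref{eq6} and \eqref{eq5}: once the $b$-component is eliminated via \eqref{eq5}, the first components satisfy the scalar $SL(2)$ recurrence \eqref{eq6}, and $W$ is (up to sign) the $2\times2$ determinant of $(a(n),a(n-1))$ and $(a'(n),a'(n-1))$, preserved because $\det A^z=1$. That route sidesteps the vector cross-terms altogether and is the cleaner way to see both the $n$-independence and the $z$-independence (the latter because $J$ kills the $b$-components, so $W$ is determined by $a$-initial data alone, which for the fundamental pair is prescribed without reference to $z$). Your treatment of the degeneracy criterion by expanding in the fundamental system is fine, and your evaluation at $n=1$ is correct; just replace the Hermiticity argument in the telescoping step by an explicit appeal to the constraint \eqref{eq5}, or better, reduce to the scalar cocycle as the paper does.
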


\begin{proof}
It is a direct result by \eqref{eq6} and \eqref{eq5}. 
\end{proof}	

Once we have this, we can introduce Green's function. 
For $\Im z>0$, $R(\mathcal{S},z)=(\mathcal{S}-zI)^{-1}$ exists and is a bounded operator, then it is easy to check that there exists unique $\boldsymbol{u}^+_{z}$ obeying 
\begin{align*}
J\boldsymbol{u}_{z}^+(n+1)+J\boldsymbol{u}_{z}^+(n-1)+V(n)\boldsymbol{u}_{z}^+(n)&=z\boldsymbol{u}_{z}^+(n),\\ \boldsymbol{u}_{z}^+(0)= \left(a_{z}^{+}(0), b_{z}^{+}(0)\right)^{\mathsf{T}}&=
   \left(1,\frac{V_{21}(0)}{z-V_{22}(0)}\right)^{\mathsf{T}},\\\sum_{n=0}^\infty\|\boldsymbol{u}_{z}^+(n)\|^2&<\infty.
\end{align*}
Similarly, there exists unique $\boldsymbol{u}_{z}^-(n)$ obeying
\begin{align*}
J\boldsymbol{u}_{z}^-(n+1)+J\boldsymbol{u}_{z}^-(n-1)+V(n)\boldsymbol{u}_{z}^-(n)&=z\boldsymbol{u}_{z}^-(n),\\ \boldsymbol{u}_{z}^-(0)=\left(a_{z}^{-}(0), b_{z}^{-}(0)\right)^{\mathsf{T}}&=\left(1,\frac{V_{21}(0)}{z-V_{22}(0)}\right)^{\mathsf{T}},\\\sum_{n=-\infty}^0\|\boldsymbol{u}_{z}^-(n)\|^2&<\infty.
\end{align*}

The Green's function of $\mathcal{S}$ is denoted by
$$G(n,m;z):=\langle \boldsymbol{\delta}_n,(\mathcal{S}-z)^{-1}\boldsymbol{\delta}_m\rangle ,$$
where $n,m\in\mathbb{Z},z\notin\sigma(\mathcal{S})$. Note that, in general, when considering the Green's function, one should also consider the matrix elements 
$\langle \boldsymbol{\delta}_n,(\mathcal{S}-z)^{-1}\boldsymbol{\gamma}_m\rangle $, $\langle \boldsymbol{\gamma}_n,(\mathcal{S}-z)^{-1}\boldsymbol{\delta}_m\rangle $ and $\langle \boldsymbol{\gamma}_n,(\mathcal{S}-z)^{-1}\boldsymbol{\gamma}_m\rangle $. However, by Proposition \ref{pro1}, $\chi_{\mathcal{T}_\delta}(\mathcal{S})\boldsymbol{\delta}_0,\chi_{\mathcal{T}_\delta}(\mathcal{S})\boldsymbol{\delta}_1$ form cyclic vectors in the subspace $\chi_{\mathcal{T}_\delta}(\mathcal{S})\ell^2(\Z,\C^2)$. Consequently, to study the spectrum of $\mathcal{S}$ in $\mathcal{T}_\delta$, it suffices to analyze the Green's function elements $\langle \boldsymbol{\delta}_n,(\mathcal{S}-z)^{-1}\boldsymbol{\delta}_m\rangle $. Indeed, direct computations show the following:

\begin{lemma}\label{pro3.4}
Let $\Im z>0$, and $\boldsymbol{u}_{z}^{\pm}$ be as above. Then we have
\begin{align*}
G(n,m;z)=\frac{a_{z}^-(n\land m)a_{z}^+(n\lor m)}{W_{[\boldsymbol{u}_{z}^+,\boldsymbol{u}_{z}^-]}},
\end{align*}
where $n\land m=\min(n,m),n\lor m=\max(n,m)$, and $W_{[\boldsymbol{u}_{z}^+,\boldsymbol{u}_{z}^-]}$ denotes the Wronskian.
\end{lemma}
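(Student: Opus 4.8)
The plan is to derive the Green's function formula by the standard variation-of-parameters ansatz, adapted to the singular setting. First I would fix $\Im z>0$ so that $z\notin\sigma(\mathcal{S})$, and recall that $\mathcal{S}-z$ is boundedly invertible; hence $\boldsymbol{w}:=(\mathcal{S}-z)^{-1}\boldsymbol{\delta}_m$ is the unique $\ell^2$ solution of $(\mathcal{S}-z)\boldsymbol{w}=\boldsymbol{\delta}_m$. The point is that $\boldsymbol{w}$ solves the homogeneous equation $\mathcal{S}\boldsymbol{x}=z\boldsymbol{x}$ for all $n\neq m$, decays at $+\infty$ and at $-\infty$, and at $n=m$ picks up an inhomogeneity of size $1$ in the first component. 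Since the homogeneous equation has a two-dimensional solution space (for $z\notin\{V_{22}(j)\}$ the recursion \eqref{eq6}--\eqref{eq5} is invertible), and $\boldsymbol{u}_z^+$ (resp. $\boldsymbol{u}_z^-$) spans the $\ell^2$-at-$+\infty$ (resp. $-\infty$) subspace, I would write $\boldsymbol{w}(n)=c_-\,\boldsymbol{u}_z^-(n)$ for $n\le m$ and $\boldsymbol{w}(n)=c_+\,\boldsymbol{u}_z^+(n)$ for $n\ge m$, with constants $c_\pm$ to be determined.

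Next I would impose the matching and jump conditions at $n=m$. Continuity of the $a$-component forces $c_-\,a_z^-(m)=c_+\,a_z^+(m)$, so $c_\pm=\kappa\,a_z^\mp(m)$ for a single constant $\kappa$; this already produces the symmetric product $a_z^-(n\wedge m)a_z^+(n\vee m)$ up to the normalization $\kappa$. (Note that $a_z^\pm$ cannot vanish at $m$ when both $\ell^2$ solutions are used, since $V_{12}\neq 0$ prevents the trivial solution — the argument from the proof of Proposition~\ref{11-29-proposition3.6}.) To pin down $\kappa$, I would plug the piecewise ansatz into the equation $[(\mathcal{S}-z)\boldsymbol{w}](m)=\boldsymbol{\delta}_m$, i.e. $J\boldsymbol{w}(m+1)+J\boldsymbol{w}(m-1)+(V(m)-z)\boldsymbol{w}(m)=\boldsymbol{\delta}_m$. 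Using that $\boldsymbol{u}_z^\pm$ solve the homogeneous equation, the $J$-terms combine into exactly the Wronskian expression $W_{[\boldsymbol{u}_z^+,\boldsymbol{u}_z^-]}$ evaluated at $m$ (recall $W$ is $n$-independent), while the second-component row is automatically consistent because $b_z^\pm(m)=\frac{V_{21}(m)}{z-V_{22}(m)}a_z^\pm(m)$ is enforced. This yields $\kappa\cdot W_{[\boldsymbol{u}_z^+,\boldsymbol{u}_z^-]}=1$, hence the claimed formula $G(n,m;z)=\dfrac{a_z^-(n\wedge m)\,a_z^+(n\vee m)}{W_{[\boldsymbol{u}_z^+,\boldsymbol{u}_z^-]}}$.

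The main obstacle I anticipate is bookkeeping the singular structure correctly at $n=m$: because $J$ annihilates the second component, the jump relation only constrains the $a$-component directly, and one must separately verify that the $b$-component equation is satisfied identically (this is where the initial conditions $b_z^\pm(0)=\frac{V_{21}(0)}{z-V_{22}(0)}a_z^\pm(0)$ propagated by \eqref{eq5} do the work, since $z\notin\operatorname{Ran}(V_{22})$ on a neighborhood for $\Im z>0$). A secondary point worth a line is well-definedness of $\boldsymbol{u}_z^\pm$ themselves and nonvanishing of the Wronskian: $W_{[\boldsymbol{u}_z^+,\boldsymbol{u}_z^-]}\neq 0$ precisely because $\boldsymbol{u}_z^+$ and $\boldsymbol{u}_z^-$ are linearly independent (otherwise there would be an $\ell^2(\Z)$ solution, contradicting $z\notin\sigma(\mathcal{S})$), so the formula makes sense. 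Everything else is the routine variation-of-parameters computation, so I would not grind through it in detail.
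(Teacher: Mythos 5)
Your proposal is correct and follows essentially the same route as the paper: both are the variation-of-parameters construction, with the paper writing down the candidate $\boldsymbol{\phi}_m(n)=a_z^-(n\wedge m)a_z^+(n\vee m)\bigl(1,\tfrac{V_{21}(n)}{z-V_{22}(n)}\bigr)^{\mathsf{T}}$ and verifying $(\mathcal{S}-z)\boldsymbol{\phi}_m=\boldsymbol{\delta}_m$ directly, whereas you run the argument in reverse, deducing the piecewise form of $(\mathcal{S}-z)^{-1}\boldsymbol{\delta}_m$ from uniqueness of the $\ell^2$-at-$\pm\infty$ solutions and solving for the constants via the matching and jump conditions; the two derivations are equivalent, and your identification of the jump constant as $\kappa W_{[\boldsymbol{u}_z^+,\boldsymbol{u}_z^-]}=1$ matches the paper's normalization $W(1)=1$.
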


\begin{proof}
Due to the assumption of  $z$, $\boldsymbol{u}_{z}^-$ and $\boldsymbol{u}_{z}^+$ must be linearly independent, so we may assume without loss of generality that 
$$W_{[\boldsymbol{u}_{z}^+,\boldsymbol{u}_{z}^-]}(1)=a_{z}^+(1)a_{z}^-(0)-a_{z}^-(1)a_{z}^+(0)=1.$$
Define 
$$\boldsymbol{\phi}_m(n):=a_{z}^-(n\land m)a_{z}^+(n\lor m)\begin{pmatrix}1\\\frac{V_{21}(n)}{z-V_{22}(n)}\end{pmatrix}.$$ 
Since $\boldsymbol{u}_{z}^\pm$ are square-summable at $\pm\infty$, it follows that $\boldsymbol{\phi}_m\in \ell^2(\Z,\C^2)$. If $n>m$, we have
$[\mathcal{S}\boldsymbol{\phi}_m](n)=z\boldsymbol{\phi}_m(n).$
Similarly, we also have $[\mathcal{S}\boldsymbol{\phi}_m](n)=z\boldsymbol{\phi}_m(n)$ whenever $n<m$. Finally, we observe that
$$[S\boldsymbol{\phi}_m](m)=z\boldsymbol{\phi}_m(m)+\begin{pmatrix}1\\0\end{pmatrix}.$$
Thus, we have that $(\mathcal{S}-z)\boldsymbol{\phi}_m=\boldsymbol{\delta}_m$, so $\boldsymbol{\phi}_m=(\mathcal{S}-z)^{-1}\boldsymbol{\delta}_m$.
Therefore,
\begin{align*}
[(\mathcal{S}-z)^{-1}\boldsymbol{\delta}_m](n)=\frac{a_z^-(n\land m)a_z^+(n\lor m)}{W_{[\boldsymbol{u}_{z}^+,\boldsymbol{u}_{z}^-]}}\begin{pmatrix}1\\ \frac{V_{21}(n)}{z-V_{22}(n)}\end{pmatrix}.
\end{align*}
By direct calculation, the result follows.
\end{proof}

\begin{corollary}\label{mp}
Under the same assumptions above, we have that
\begin{equation*}\begin{split}&\langle \boldsymbol{\delta}_0,(\mathcal{S}-z)^{-1}\boldsymbol{\delta}_0\rangle =\frac{m^-(z)}{1-m^+(z)m^-(z)},\\&\langle \boldsymbol{\delta}_1,(\mathcal{S}-z)^{-1}\boldsymbol{\delta}_1\rangle =\frac{m^+(z)}{1-m^+(z)m^-(z)},\end{split}\end{equation*}
where 
$$m^+(z):=-\frac{a_{z}^+(1)}{a_{z}^+(0)}\text{ and }m^-(z):=-\frac{a_{z}^-(0)}{a_{z}^-(1)}.$$
Moreover, the Borel transform of $\mu$ takes the form
\begin{equation*}
\begin{split}
M(z)&:=F_{\mu}(z)=\int\frac{d\mu(E)}{E-z}=\langle \boldsymbol{\delta}_0,(\mathcal{S}-z)^{-1}\boldsymbol{\delta}_0\rangle +\langle \boldsymbol{\delta}_1,(\mathcal{S}-z)^{-1}\boldsymbol{\delta}_1\rangle \\&=\frac{m^+(z)+m^-(z)}{1-m^+(z)m^-(z)}.\end{split}
\end{equation*}
\end{corollary}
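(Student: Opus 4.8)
The plan is to derive both identities directly from the Green's function formula of Lemma~\ref{pro3.4}; the computation is short, so the proof is mostly bookkeeping. Fix $z$ with $\Im z>0$; then $\boldsymbol{u}_z^+$ and $\boldsymbol{u}_z^-$ are linearly independent, so the Wronskian $W:=W_{[\boldsymbol{u}_z^+,\boldsymbol{u}_z^-]}$ is a nonzero constant in $n$, and Lemma~\ref{pro3.4} specializes on the diagonal to $\langle\boldsymbol{\delta}_n,(\mathcal{S}-z)^{-1}\boldsymbol{\delta}_n\rangle=a_z^-(n)a_z^+(n)/W$. First I would evaluate this at $n=0$ and $n=1$, which already isolates all the quantities appearing in the statement.

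Next I would pin down $W$ and rewrite it through $m^\pm$. Since $J=\begin{pmatrix}1&0\\0&0\end{pmatrix}$, the Wronskian of two solutions $\boldsymbol{u},\boldsymbol{v}$ collapses to $W_{[\boldsymbol{u},\boldsymbol{v}]}(n)=a_v(n-1)a_u(n)-a_u(n-1)a_v(n)$; evaluating at $n=1$ and using the prescribed initial data $a_z^+(0)=a_z^-(0)=1$ gives $W=a_z^+(1)-a_z^-(1)$. From the definitions $m^+(z)=-a_z^+(1)/a_z^+(0)$ and $m^-(z)=-a_z^-(0)/a_z^-(1)$ one gets $a_z^+(1)=-m^+(z)$ and $a_z^-(1)=-1/m^-(z)$, hence
\begin{align*}
W=-m^+(z)+\frac{1}{m^-(z)}=\frac{1-m^+(z)m^-(z)}{m^-(z)}.
\end{align*}
Plugging this in, together with $a_z^-(0)a_z^+(0)=1$ and $a_z^-(1)a_z^+(1)=m^+(z)/m^-(z)$, into $a_z^-(n)a_z^+(n)/W$ for $n=0,1$ produces exactly the asserted formulas for $\langle\boldsymbol{\delta}_0,(\mathcal{S}-z)^{-1}\boldsymbol{\delta}_0\rangle$ and $\langle\boldsymbol{\delta}_1,(\mathcal{S}-z)^{-1}\boldsymbol{\delta}_1\rangle$.

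For the formula for $M(z)$ I would use that $\mu=\mu_{\boldsymbol{\delta}_0}+\mu_{\boldsymbol{\delta}_1}$ is the canonical spectral measure on $\mathcal{T}_\delta$ furnished by Proposition~\ref{pro1}, so its Borel transform is additive, $F_\mu(z)=F_{\mu_{\boldsymbol{\delta}_0}}(z)+F_{\mu_{\boldsymbol{\delta}_1}}(z)=\langle\boldsymbol{\delta}_0,(\mathcal{S}-z)^{-1}\boldsymbol{\delta}_0\rangle+\langle\boldsymbol{\delta}_1,(\mathcal{S}-z)^{-1}\boldsymbol{\delta}_1\rangle$, and then add the two displays above. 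There is no genuinely hard step; the only point that needs a word of care is that $m^\pm(z)$ be finite for $\Im z>0$—here $a_z^+(0)=1\neq0$ is immediate, while $a_z^-(1)\neq0$ holds off an isolated set of $z$, and on that set the identity is to be read projectively (where $m^-$ has a pole) and extended by continuity.
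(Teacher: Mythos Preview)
Your approach is essentially the paper's: apply Lemma~\ref{pro3.4} on the diagonal and compute the Wronskian in terms of $m^\pm$. The arithmetic you give is correct and in fact supplies the details the paper omits. The one place you are less sharp is your final caveat about $a_z^-(1)$ possibly vanishing: the paper dispatches this directly via the imaginary-part identity \eqref{3.4-eq7} (and its analogue for the left half-line), which forces $a_z^\pm(n)\neq0$ for every $n$ whenever $\Im z>0$, so no continuity patching is needed and $m^-(z)$ is genuinely finite and nonzero throughout the upper half-plane.
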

\begin{proof}
By the equation \eqref{3.4-eq7}, $a_{z}^+(n)$ can not vanish and it is the same for $a_{z}^-(n)$. Applying Lemma \ref{pro3.4}, we obtain the results.
\end{proof}

In the following, we aim to establish the subordinacy theory for a class of singular matrix-valued Jacobi operator under the assumption in Proposition \ref{pro1}. 
Recall the following definition:

\begin{definition}
A solution $\boldsymbol{u}$ of $\mathcal{S}\boldsymbol{u}=z\boldsymbol{u}$ is called subordinate at $+\infty$ if
\begin{align*}
\lim_{L\to\infty}\frac{\|\boldsymbol{u}\|_L}{\|\boldsymbol{v}\|_L}=0
\end{align*}
for any linearly independent solution $\boldsymbol{v}$, where $\|\cdot\|_L$ defined as follows.
\begin{align*}
\|\boldsymbol{u}\|_L:=\left(\sum_{n=1}^{\lfloor L\rfloor}|\boldsymbol{u}(n)|^2+(L-\lfloor L\rfloor)|\boldsymbol{u}(\lfloor L\rfloor+1)|^2\right)^{1/2}.
\end{align*}
Obviously, $\frac{1}{{\sqrt{2}}}(\|a\|_L+\|b\|_L)\le\|\boldsymbol{u}\|_L\le\|a\|_L+\|b\|_L$.
\end{definition}

\subsection{Jitomirskaya-Last inequality}\label{jito-last}

To establish the subordinacy theory, the key understanding is Jitomirskaya-Last inequality \cite{jitomirskaya1999}, which relates $m-$function with 
subordinate solution. By Corollary \ref{mp}, this will enable us to relate the spectral measure with  the growth of the cocycle. In this subsection, we first establish  Jitomirskaya-Last inequality. 

To begin with, as pointed out previously, for $\Im z>0$, $\mathcal{S}\boldsymbol{u}=z\boldsymbol{u}$ has a (up to normlization) unique solution $\boldsymbol{u}_z^+$ that is summable at $+\infty$. We normalize $\boldsymbol{u}_z^+$ by letting $a_z^+(0)=1$, and hence $a_z^+(1)=-m^+(z)$.

Suppose $\boldsymbol{u}\in \ell^2(\Z,\C^2)$ solves the difference equation \eqref{sjo}. Multiplying both sides by $\overline{\boldsymbol{u}(n)^{\mathsf{T}}}$, taking imaginary parts, and summing both sides from 1 to $\infty$, we obtain the equality
\begin{equation}\label{3.4-eq7}
\Im(\overline{a(1)}a(0))=\Im z\sum_{n=1}^\infty|a(n)|^2+|b(n)|^2,
\end{equation}
which for $\boldsymbol{u}=\boldsymbol{u}_z^+$ implies
\begin{equation}\label{eq32}
\frac{\Im m^+(z)}{\Im z}=\sum_{n=1}^\infty|u_z^+(n)|^2.
\end{equation}

Denote $z=E+i\epsilon$ and suppose $E\in\mathcal{T}_\delta$. Let $\boldsymbol{u}_1$ and $\boldsymbol{u}_2$ solve $\mathcal{S}\boldsymbol{u}=E\boldsymbol{u}$ and obey the initial conditions
\begin{equation*}
\begin{pmatrix}a_1(1)&a_2(1)\\a_1(0)&a_2(0)\end{pmatrix}=\begin{pmatrix}1&0\\0&1\end{pmatrix}
\end{equation*}
and
\begin{align*}
\begin{pmatrix}b_1(1)&b_2(1)\\b_1(0)&b_2(0)\end{pmatrix}=\begin{pmatrix}\frac{V_{21}(1)}{E-V_{22}(1)}&0\\0&\frac{V_{21}(0)}{E-V_{22}(0)}\end{pmatrix}.
\end{align*}

\begin{lemma}
Suppose  that $E\in\mathcal{T}_\delta$. For any $n\in\N$, $\boldsymbol{u}_z^+$ obeys the equality
\begin{equation}\label{3.8-eq8}
\begin{split}
a_z^+(n)=a_2(n)-m^+(z)a_1(n)-&i\epsilon a_2(n) \left(\sum_{k=1}^na_1(k)a_z^+(k)+\frac{V_{12}(k)}{E-V_{22}(k)}a_1(k)b_z^+(k)\right)\\+&i\epsilon a_1(n)\left(\sum_{k=1}^na_2(k)a_z^+(k)+\frac{V_{12}(k)}{E-V_{22}(k)}a_2(k)b_z^+(k)\right).
\end{split}
\end{equation}
Adopting the convention in which the empty sum is zero, \eqref{3.8-eq8} also holds true for $n=0$. 
Moreover, it holds that
\begin{align}\label{3.8-eq9}
\nonumber2\|a_z^+\|_L+2\|b_z^+\|_L\ge&\|a_2-m^+(z)a_1\|_L+\|b_2-m^+(z)b_1\|_L\\&-2\epsilon(\|a_1\|_L+\|b_1\|_L)(\|a_2\|_L+\|b_2\|_L)(\|a_z^+\|_L+\|b_z^+\|_L)
\end{align}
for any $|\Im z|<\delta$ and $L>1$.
\end{lemma}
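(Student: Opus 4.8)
The plan is to derive the identity \eqref{3.8-eq8} first and then read off the inequality \eqref{3.8-eq9} from it by elementary manipulations. To get \eqref{3.8-eq8}, I would treat $\boldsymbol{u}_z^+$ as a solution of $\mathcal{S}\boldsymbol{u}_z^+=z\boldsymbol{u}_z^+$, rewrite this as $\mathcal{S}\boldsymbol{u}_z^+=E\boldsymbol{u}_z^++i\epsilon\boldsymbol{u}_z^+$, and apply the variation-of-parameters (Duhamel) formula for the inhomogeneous difference equation $\mathcal{S}\boldsymbol{x}=E\boldsymbol{x}+i\epsilon\boldsymbol{g}$ with $\boldsymbol{g}=\boldsymbol{u}_z^+$. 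The homogeneous solutions at energy $E$ are $\boldsymbol{u}_1,\boldsymbol{u}_2$ with the prescribed initial data, so $W_{[\boldsymbol{u}_1,\boldsymbol{u}_2]}=1$. Since by \eqref{eq5} the second component of any solution is slaved to the first via $b(k)=\tfrac{V_{21}(k)}{E-V_{22}(k)}a(k)$ (legitimate because $E\in\mathcal{T}_\delta$ keeps $E-V_{22}(k)$ bounded away from $0$), the Wronskian and the Duhamel kernel can be expressed purely through the $a$-components. Carrying out the discrete Duhamel sum, with the inhomogeneity at site $k$ being $i\epsilon\,(a_z^+(k),b_z^+(k))^{\mathsf T}$ and the $J$-pairing picking out the combination $a_1(k)a_z^+(k)+\tfrac{V_{12}(k)}{E-V_{22}(k)}a_1(k)b_z^+(k)$ (and similarly with $a_2$), yields exactly \eqref{3.8-eq8}. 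The $n=0$ case is the empty-sum boundary condition $a_z^+(0)=a_2(0)-m^+(z)a_1(0)=1$, consistent with the normalization.

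For \eqref{3.8-eq9}, I would rearrange \eqref{3.8-eq8} as
\begin{align*}
a_2(n)-m^+(z)a_1(n)=a_z^+(n)+i\epsilon\,a_2(n)\Sigma_1(n)-i\epsilon\,a_1(n)\Sigma_2(n),
\end{align*}
where $\Sigma_j(n)=\sum_{k=1}^n a_j(k)a_z^+(k)+\tfrac{V_{12}(k)}{E-V_{22}(k)}a_j(k)b_z^+(k)$, take $\ell^2$-type norms $\|\cdot\|_L$ on $[1,L]$ of both sides, and use the triangle inequality together with Cauchy--Schwarz on the sums $\Sigma_j(n)$. Concretely, $\|\Sigma_j\|_{\infty,L}\le \|a_j\|_L(\|a_z^+\|_L+\|b_z^+\|_L)$ after bounding $|V_{12}(k)/(E-V_{22}(k))|$; this is where $E\in\mathcal{T}_\delta$ (and the convention $\delta\le 1$ so $|\Im z|<\delta$ stays in the good region) is used. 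Then $\|a_2\Sigma_1-a_1\Sigma_2\|_L\le(\|a_1\|_L+\|a_2\|_L)(\|a_1\|_L+\|a_2\|_L)(\|a_z^+\|_L+\|b_z^+\|_L)$, and one does the same for the $b$-components using $b_z^+=\tfrac{V_{21}}{E-V_{22}}a_z^+$ and $b_j=\tfrac{V_{21}}{E-V_{22}}a_j$. Adding the $a$- and $b$-estimates and using $\|a\|_L+\|b\|_L\ge\|\boldsymbol{u}\|_L$ on the left and $\le\sqrt2\|\boldsymbol{u}\|_L$-type bounds where needed produces the stated inequality with the factor $2$'s; the crude constants are harmless.

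The main obstacle I anticipate is bookkeeping in the Duhamel step: making sure the discrete variation-of-parameters formula is set up with the correct kernel for this particular \emph{singular} Jacobi operator, where the transfer structure \eqref{eq6} involves the effective potential $V_{11}+|V_{12}|^2/(z-V_{22})$ rather than a genuine matrix transfer matrix, and where the second component $b$ is not an independent dynamical variable. One must verify that the pairing $\langle\overline{\boldsymbol{u}(k-1)},J\boldsymbol{u}_z^+(k)\rangle$-type terms collapse to the $a_j(k)a_z^+(k)+\tfrac{V_{12}(k)}{E-V_{22}(k)}a_j(k)b_z^+(k)$ expressions appearing in \eqref{3.8-eq8}; this is essentially the computation behind the previous lemma on the Wronskian being $n$- and $z$-independent, so I would lean on that lemma. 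Once the identity \eqref{3.8-eq8} is in hand, the passage to \eqref{3.8-eq9} is routine triangle-inequality and Cauchy--Schwarz estimation and should present no real difficulty.
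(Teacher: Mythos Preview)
Your approach is essentially the paper's. The paper verifies \eqref{3.8-eq8} by defining $\widetilde v(n)$ to be the right-hand side, checking that both $\widetilde v$ and $a_z^+$ satisfy the same first-order (in pairs) recurrence
\[
w(n+1)=-w(n-1)+\Big(E-V_{11}(n)-\tfrac{|V_{12}(n)|^2}{E-V_{22}(n)}\Big)w(n)+i\epsilon\Big(a_z^+(n)+\tfrac{V_{12}(n)}{E-V_{22}(n)}b_z^+(n)\Big),
\]
and matching initial data at $n=0,1$; this is exactly the discrete Duhamel formula you describe, just verified rather than derived. The inequality is then obtained by triangle inequality and Cauchy--Schwarz, as you outline.

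There is one genuine slip in your proposal for \eqref{3.8-eq9}. You write $b_z^+=\tfrac{V_{21}}{E-V_{22}}a_z^+$, but $\boldsymbol{u}_z^+$ solves the equation at energy $z=E+i\epsilon$, so in fact
\[
b_z^+(n)=\frac{V_{21}(n)}{z-V_{22}(n)}\,a_z^+(n),
\]
not $E-V_{22}(n)$ in the denominator. Consequently, multiplying \eqref{3.8-eq8} by $\tfrac{V_{21}(n)}{E-V_{22}(n)}$ does not give $b_z^+(n)$ on the left but rather $\tfrac{z-V_{22}(n)}{E-V_{22}(n)}\,b_z^+(n)$. This factor has modulus $\bigl|1+\tfrac{i\epsilon}{E-V_{22}(n)}\bigr|\le 2$ precisely when $|\Im z|<\delta$ and $E\in\mathcal{T}_\delta$, and it is the source of the ``$2$'' on the left-hand side of \eqref{3.8-eq9}. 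Relatedly, the Cauchy--Schwarz bound on $\Sigma_j(n)$ does not produce $\|a_j\|_L(\|a_z^+\|_L+\|b_z^+\|_L)$ by bounding $|V_{12}/(E-V_{22})|$; instead one uses $|V_{12}|=|V_{21}|$ to recognize $\big|\tfrac{V_{12}(k)}{E-V_{22}(k)}a_j(k)\big|=|b_j(k)|$, so the correct bound is $\|a_j\|_L\|a_z^+\|_L+\|b_j\|_L\|b_z^+\|_L$. With these two corrections, your argument goes through and matches the paper's.
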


\begin{proof}
Let $\widetilde v(n)$ be the right-hand side of $\eqref{3.8-eq8}$ for $n\in\N$ and let $\widetilde v(0)=1$. On one hand,
$$a_1(n+1)a_2(n)-a_2(n+1)a_1(n)=1$$
for every $n\in\Z$. By conservation of the Wronskian, so one can verify that $(\widetilde v(n))_{n=0}^\infty$ obeys
\begin{align*}
\widetilde v(n+1)=-\widetilde v(n-1)&+\left(E-V_{11}(n)-\frac{|V_{12}(n)|^2}{E-V_{22}(n)}\right)\widetilde v(n)\\&+i\epsilon \left(a_z^+(n)+\frac{V_{12}(n)}{E-V_{22}(n)}b_z^+(n)\right)
\end{align*}
for every $n>0$.
 On the other hand,
 \begin{equation*}
 \begin{split}
a_z^+(n+1)=-a_z^+(n-1)&+\left(E-V_{11}(n)-\frac{|V_{12}(n)|^2}{E-V_{22}(n)}\right)a_z^+(n)\\&+i\epsilon\left(a_z^+(n)+\frac{V_{12}(n)}{E-V_{22}(n)}b_z^+(n)\right)
\end{split}
\end{equation*}
for all $n>0$. Since $\widetilde v(0)=1=a_z^+(0),\widetilde v(1)=-m^+(z)=a_z^+(1)$, we see that $\widetilde v(n)=a_z^+(n)$ for all $n\ge0$ by induction.

Since $|V_{12}(k)|=|V_{21}(k)|$, we have
\begin{align}\label{3.8-eq10}
\nonumber\|a_z^+\|_L\ge\|a_2-m^+(z)a_1\|_L-\epsilon(&2\|a_1\|_L\|a_2\|_L\|a_z^+\|_L+\|b_1\|_L\|a_2\|_L\|b_z^+\|_L\\&+\|a_1\|_L\|b_2\|_L\|b_z^+\|_L).
\end{align}

By equality \eqref{3.8-eq8}, we have
\small
\begin{align*}
    \frac{z-V_{22}(n)}{E-V_{22}(n)}b_z^+(n)=b_2(n)-m^+(z)b_1(n)-&i\epsilon b_2(n) \left(\sum_{k=1}^na_1(k)a_z^+(k)+\frac{V_{12}(k)}{E-V_{22}(k)}a_1(k)b_z^+(k)\right)\\+&i\epsilon b_1(n)\left(\sum_{k=1}^na_2(k)a_z^+(k)+\frac{V_{12}(k)}{E-V_{22}(k)}a_2(k)b_z^+(k)\right).
\end{align*}
\normalsize
which implies that
\begin{align}\label{3.8-eq11}
\nonumber2\|b_z^+\|_L\ge\|b_2-m^+(z)b_1\|_L-\epsilon(&2\|b_1\|_L\|b_2\|_L\|b_z^+\|_L+\|a_1\|_L\|b_2\|_L\|a_z^+\|_L\\&+\|b_1\|_L\|a_2\|_L\|a_z^+\|_L)
\end{align}
for $|\Im z|<\delta$.
Combining \eqref{3.8-eq10} and \eqref{3.8-eq11}, we obtain \eqref{3.8-eq9}.
\end{proof}	
	
Now that we have related $\boldsymbol{u}_1$ and $\boldsymbol{u}_2$ to $\boldsymbol{u}_z^+$, the next step is to choose the right length scale $L$. Indeed, it is easy to check that $P(L)=\|\boldsymbol{u}_1\|_L\|\boldsymbol{u}_2\|_L$
is a strictly increasing continuous function for which $P(1)=0$ and $P(L)\to\infty$ as $L\to\infty$.
Thus for any $\epsilon>0$, define $L_\epsilon\in(1,\infty)$ by
\begin{equation}\label{u12}
    \|\boldsymbol{u}_1\|_L\|\boldsymbol{u}_2\|_L=\frac{1}{4\epsilon}
\end{equation}
then $L_\epsilon$ is well-defined. Then we state the Jitomirskaya-Last's inequality for singular Jacobi operator on the strip:

\begin{theorem}\label{j-l-equ}
Suppose $E\in\mathcal{T}_\delta$. The following inequality holds for all $0<\epsilon<\delta$:
\begin{equation*}
\frac{C_1}{|m^+(E+i\epsilon)|}<\frac{\|\boldsymbol{u}_1\|_{L_\epsilon}}{\|\boldsymbol{u}_2\|_{L_\epsilon}}<\frac{C_2}{|m^+(E+i\epsilon)|},
\end{equation*}
where $0<C_1<C_2$ are constants.
\end{theorem}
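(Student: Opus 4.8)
## Proof Strategy for Theorem \ref{j-l-equ}

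The plan is to follow the classical Jitomirskaya--Last argument, adapted to the singular matrix-valued setting, using the two technical inequalities \eqref{3.8-eq9} and \eqref{eq32} already established, together with the choice of length scale \eqref{u12}. The key idea is that the special length $L_\epsilon$ balances the growth of $\boldsymbol{u}_1$ against that of $\boldsymbol{u}_2$, so that both sides of \eqref{3.8-eq9} can be controlled simultaneously. First I would record the elementary consequence of \eqref{eq32}: since $\|\boldsymbol{u}_z^+\|^2_{\ell^2} = \frac{\Im m^+(z)}{\Im z} \asymp \|a_z^+\|_{\ell^2}^2$ (using $b_z^+(n) = \frac{V_{21}(n)}{z - V_{22}(n)} a_z^+(n)$ and $|V_{21}|$ bounded, $|z - V_{22}(n)| \ge \delta - \epsilon > 0$ on $\mathcal{T}_\delta$ for $\epsilon < \delta$), one gets a two-sided comparison between $\|\boldsymbol{u}_z^+\|_{L_\epsilon}$ and $\sqrt{\Im m^+/\epsilon}$, and in particular $\|\boldsymbol{u}_z^+\|_{L_\epsilon}$ is finite and controlled.

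Next I would exploit \eqref{3.8-eq9}. By the definition of $L_\epsilon$ in \eqref{u12} we have $\epsilon(\|a_1\|_{L_\epsilon} + \|b_1\|_{L_\epsilon})(\|a_2\|_{L_\epsilon} + \|b_2\|_{L_\epsilon}) \le \epsilon C \|\boldsymbol{u}_1\|_{L_\epsilon}\|\boldsymbol{u}_2\|_{L_\epsilon} = C/4$ for a universal constant $C$ coming from the equivalence $\frac{1}{\sqrt 2}(\|a\|_L + \|b\|_L) \le \|\boldsymbol{u}\|_L \le \|a\|_L + \|b\|_L$. Feeding this back into \eqref{3.8-eq9}, the error term on the right is at most a fixed fraction of $\|\boldsymbol{u}_z^+\|_{L_\epsilon}$, so after absorbing it we obtain
\begin{align*}
\|\boldsymbol{u}_z^+\|_{L_\epsilon} \asymp \|\boldsymbol{u}_2 - m^+(z)\boldsymbol{u}_1\|_{L_\epsilon},
\end{align*}
where the implied constants are absolute. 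The same reasoning with the convention that $L_\epsilon$ is a bona fide ``balancing'' scale then gives, by the parallelogram-type / triangle inequality argument of Jitomirskaya--Last, that $\|\boldsymbol{u}_2 - m^+(z)\boldsymbol{u}_1\|_{L_\epsilon} \asymp |m^+(z)|\, \|\boldsymbol{u}_1\|_{L_\epsilon} \asymp \|\boldsymbol{u}_2\|_{L_\epsilon}$, using that at the scale $L_\epsilon$ we have $\|\boldsymbol{u}_1\|_{L_\epsilon}\|\boldsymbol{u}_2\|_{L_\epsilon} = \frac{1}{4\epsilon}$ fixed, hence neither factor can be too large without the other being correspondingly small. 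Combining these comparisons yields $\|\boldsymbol{u}_z^+\|_{L_\epsilon} \asymp |m^+(z)| \|\boldsymbol{u}_1\|_{L_\epsilon}$ and also $\asymp \|\boldsymbol{u}_2\|_{L_\epsilon}$, and dividing gives the claimed $\frac{\|\boldsymbol{u}_1\|_{L_\epsilon}}{\|\boldsymbol{u}_2\|_{L_\epsilon}} \asymp \frac{1}{|m^+(E + i\epsilon)|}$.

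The main obstacle I anticipate is making the absorption step rigorous uniformly in $\epsilon \in (0,\delta)$: one must verify that all the constants arising from the $\ell^2(\Z,\C^2)$-to-$a$-component reductions (i.e. passing between $\|\boldsymbol{u}\|_L$ and $\|a\|_L$, $\|b\|_L$) depend only on $\delta$ and the uniform bound on $V_{21}$, not on $E$ or $\epsilon$, and that the error term in \eqref{3.8-eq9} is genuinely dominated once \eqref{u12} is imposed — this requires care because the coefficient in front of $(\|a_z^+\|_L + \|b_z^+\|_L)$ must be strictly less than, say, $1/2$, which is exactly what the factor $\frac{1}{4\epsilon}$ in \eqref{u12} (as opposed to a larger constant) is designed to guarantee. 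A secondary point is handling the lower bound $\|\boldsymbol{u}_2 - m^+(z)\boldsymbol{u}_1\|_{L_\epsilon} \gtrsim |m^+(z)|\|\boldsymbol{u}_1\|_{L_\epsilon}$, which is where one needs that $\boldsymbol{u}_1, \boldsymbol{u}_2$ are genuinely two independent solutions with non-vanishing Wronskian — this is supplied by the Wronskian lemma and the fact that $a_z^\pm(n)$ never vanish (Corollary \ref{mp}). Once these uniformity issues are dispatched, the rest is the standard chain of two-sided estimates.
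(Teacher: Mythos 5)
Your outline correctly assembles the ingredients --- the identity \eqref{3.8-eq9}, the normalization \eqref{u12}, and the bound \eqref{eq32} --- but the decisive step is asserted rather than derived. You state that the ``parallelogram-type/triangle inequality argument'' yields $\|\boldsymbol{u}_2-m^+(z)\boldsymbol{u}_1\|_{L_\epsilon}\asymp |m^+(z)|\,\|\boldsymbol{u}_1\|_{L_\epsilon}\asymp\|\boldsymbol{u}_2\|_{L_\epsilon}$, but the last comparison is precisely the content of the theorem; a triangle inequality alone only gives $\|\boldsymbol{u}_2-m^+\boldsymbol{u}_1\|_{L_\epsilon}\ge\bigl|\|\boldsymbol{u}_2\|_{L_\epsilon}-|m^+|\|\boldsymbol{u}_1\|_{L_\epsilon}\bigr|$, which carries no information until it is combined with an \emph{upper} bound on the left-hand side. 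That missing upper bound is exactly \eqref{eq32}: $\|\boldsymbol{u}_z^+\|_{L_\epsilon}^2\le\|\boldsymbol{u}_z^+\|_{\ell^2}^2=\Im m^+(z)/\epsilon\le|m^+(z)|/\epsilon$. Squaring the absorbed version of \eqref{3.8-eq9} obtained from \eqref{u12}, applying the reverse triangle inequality componentwise so that
\begin{equation*}
\|a_2-m^+a_1\|_{L_\epsilon}^2+\|b_2-m^+b_1\|_{L_\epsilon}^2\ge\|\boldsymbol{u}_2\|_{L_\epsilon}^2+|m^+(z)|^2\|\boldsymbol{u}_1\|_{L_\epsilon}^2-4|m^+(z)|\,\|\boldsymbol{u}_1\|_{L_\epsilon}\|\boldsymbol{u}_2\|_{L_\epsilon},
\end{equation*}
and then substituting $\|\boldsymbol{u}_1\|_{L_\epsilon}\|\boldsymbol{u}_2\|_{L_\epsilon}=1/(4\epsilon)$ on both sides (so that $\epsilon$ cancels) produces a \emph{quadratic} inequality $C>t+t^{-1}$ in the single quantity $t=|m^+(z)|\,\|\boldsymbol{u}_1\|_{L_\epsilon}/\|\boldsymbol{u}_2\|_{L_\epsilon}$, with $C$ an absolute constant. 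Solving that quadratic confines $t$ to a closed interval bounded away from $0$ and $\infty$, which is the theorem; this derivation, not a triangle-inequality step in isolation, is what produces the two-sided estimate.

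Two smaller overstatements are also worth fixing: (i) you claim a two-sided comparison between $\|\boldsymbol{u}_z^+\|_{L_\epsilon}$ and $\sqrt{\Im m^+(z)/\epsilon}$, but the truncated norm only satisfies $\|\boldsymbol{u}_z^+\|_{L_\epsilon}\le\|\boldsymbol{u}_z^+\|_{\ell^2}$; there is no general lower bound, and the proof only needs the upper one; (ii) likewise you write $\|\boldsymbol{u}_z^+\|_{L_\epsilon}\asymp\|\boldsymbol{u}_2-m^+\boldsymbol{u}_1\|_{L_\epsilon}$, but \eqref{3.8-eq9} combined with \eqref{u12} only gives the one-sided bound $\|\boldsymbol{u}_z^+\|_{L_\epsilon}\ge c\bigl(\|a_2-m^+a_1\|_{L_\epsilon}+\|b_2-m^+b_1\|_{L_\epsilon}\bigr)$ for an absolute $c>0$, which again is the only direction required.
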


\begin{proof}
Write $z=E+i\epsilon$. From \eqref{3.8-eq9} we see that for any $L>1$,
\begin{align}\label{3.10-eq12}
\nonumber2\sqrt{2}\|\boldsymbol{u}_z^+\|_L\ge&\|a_2-m^+(z)a_1\|_{L}+\|b_2-m^+(z)b_1\|_{L}\\&-4\sqrt{2}\epsilon\|\boldsymbol{u}_1\|_L\|\boldsymbol{u}_2\|_L\|\boldsymbol{u}_z^+\|_L.
\end{align}
By considering $L=L_\epsilon$, i.e. \eqref{u12} holds, then 
substitute it into \eqref{3.10-eq12}, we obtain
\begin{equation*}
4\sqrt{2}\|\boldsymbol{u}\|_{L_\epsilon}\ge\|a_2-m^+(z)a_1\|_{L_\epsilon}+\|b_2-m^+(z)b_1\|_{L_\epsilon}.
\end{equation*}
Squaring the two sides and noting that by \eqref{eq32}
$$\|\boldsymbol{u}\|_{L_\epsilon}^2=\|a_z\|_{L_\epsilon}^2+\|b_z\|_{L_\epsilon}^2\le\sum_{n=1}^\infty|a_z^+(n)|^2+|b_z^+(n)|^2=\frac{\Im m^+(z)}{\epsilon}.$$
Thus, we obtain
\begin{align*}
\frac{\Im m^+(z)}{\epsilon}&\ge\frac{1}{32}(\|a_2-m^+(z)a_1\|_{L_\epsilon}^2+\|b_2-m^+(z)b_1\|_{L_\epsilon}^2)\\&\ge\frac{1}{32}(\|\boldsymbol{u}_2\|^2_{L_\epsilon}+|m^+(z)|^2\|\boldsymbol{u}_1\|_{L_\epsilon}^2-4|m^+(z)|\|\boldsymbol{u}_1\|_{L_\epsilon}\|\boldsymbol{u}_2\|_{L_\epsilon})\\&=\frac{1}{128\epsilon}(\frac{\|\boldsymbol{u}_2\|_{L_\epsilon}}{\|\boldsymbol{u}_1\|_{L_\epsilon}}+|m^+(z)|\frac{\|\boldsymbol{u}_1\|_{L_\epsilon}}{\|\boldsymbol{u}_2\|_{L_\epsilon}}-4|m^+(z)|)
\end{align*}
Consequently, it holds that
\begin{equation}\label{eq35}
132|m^+(z)|>\frac{\|\boldsymbol{u}_2\|_{L_\epsilon}}{\|\boldsymbol{u}_1\|_{L_\epsilon}}+|m^+(z)|^2\frac{\|\boldsymbol{u}_1\|_{L_\epsilon}}{\|\boldsymbol{u}_2\|_{L_\epsilon}}.
\end{equation}
Solving \eqref{eq35} as a quadratic inequality for $|m^+(z)|$, one obtains
$$(66-\sqrt{66^2-1})\frac{\|\boldsymbol{u}_2\|_{L_\epsilon}}{\|\boldsymbol{u}_1\|_{L_\epsilon}}<|m^+(z)|<(66+\sqrt{66^2+1})\frac{\|\boldsymbol{u}_2\|_{L_\epsilon}}{\|\boldsymbol{u}_1\|_{L_\epsilon}}.$$
\end{proof}

As direct consequence, we have the following key result of subordinacy theory:
\begin{corollary}\label{subordinnacy}
    Let $\mathcal{B}_{\mathcal{T}_\delta}$ be the set of $E\in\mathcal{T}_\delta$ such that the cocycle $(T,A^E)$ is bounded. Then $\mu_\omega|_{\mathcal{B}_{\mathcal{T}_\delta}}$ is absolutely continuous for all $\omega\in\Omega$.
\end{corollary}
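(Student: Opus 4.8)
The plan is to derive Corollary~\ref{subordinnacy} from the Jitomirskaya--Last inequality (Theorem~\ref{j-l-equ}), the $m$-function formula of Corollary~\ref{mp}, and the cyclicity of $\boldsymbol\delta_0,\boldsymbol\delta_1$ on $\mathcal T_\delta$ (Proposition~\ref{pro1}), following the classical Gilbert--Pearson/Jitomirskaya--Last scheme. By Proposition~\ref{pro1}, $\mu_\omega=\mu_{\omega,\boldsymbol\delta_0}+\mu_{\omega,\boldsymbol\delta_1}$ is a canonical spectral measure throughout $\mathcal T_\delta$, and its Borel transform is the Herglotz function $M_\omega(z)=\frac{m^+(z)+m^-(z)}{1-m^+(z)m^-(z)}$. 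Since the singular part of a measure is concentrated on the set where its symmetric derivative is $+\infty$, which is contained in $\{E:\lim_{\varepsilon\downarrow0}\Im M_\omega(E+i\varepsilon)=+\infty\}$, it suffices to show that for every $\omega$ and every $E\in\mathcal B_{\mathcal T_\delta}$ one has $\limsup_{\varepsilon\downarrow0}|M_\omega(E+i\varepsilon)|<\infty$; then $(\mu_\omega)_s(\mathcal B_{\mathcal T_\delta})=0$, and because all $\mu_{\omega,\boldsymbol\phi}|_{\mathcal T_\delta}\ll\mu_\omega|_{\mathcal T_\delta}$ by Proposition~\ref{pro1}, the spectral measure restricted to $\mathcal B_{\mathcal T_\delta}$ is absolutely continuous. (Note $\mathcal B_{\mathcal T_\delta}$ is $F_\sigma$, hence Borel, so the restriction is meaningful.)

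First I would record what boundedness of the cocycle gives. For $E\in\mathcal T_\delta$ one has $A^E\in C^0(\Omega,SL(2,\R))$; boundedness of $(T,A^E)$, together with $\|B^{-1}\|=\|B\|$ on $SL(2,\R)$ and $A^E_{-n}(\omega)=(A^E_n(T^{-n}\omega))^{-1}$, yields a uniform two-sided bound $0<c_0\le\|A^E_n(\omega)\|\le C_0<\infty$ for all $n\in\Z$ and all $\omega$. Using \eqref{eq5}, $b(n)=\frac{V_{21,\omega}(n)}{E-V_{22,\omega}(n)}a(n)$ with bounded prefactor (as $\mathfrak f$ is bounded and $|E-V_{22,\omega}(n)|\ge\delta$), so \emph{every} solution $\boldsymbol u$ of $\mathcal S_\omega\boldsymbol u=E\boldsymbol u$ has $\|\boldsymbol u(n)\|$ bounded above and below at every site, uniformly over the compact set of normalized initial vectors; hence $\|\boldsymbol u\|_L\asymp\sqrt L$. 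In particular the solutions $\boldsymbol u_1,\boldsymbol u_2$ of $\mathcal S_\omega\boldsymbol u=E\boldsymbol u$ with the initial conditions fixed in \S\ref{jito-last} satisfy $\|\boldsymbol u_1\|_L/\|\boldsymbol u_2\|_L\asymp1$, so the critical length $L_\varepsilon$ given by $\|\boldsymbol u_1\|_{L_\varepsilon}\|\boldsymbol u_2\|_{L_\varepsilon}=\frac1{4\varepsilon}$ obeys $L_\varepsilon\asymp\varepsilon^{-1}$, and Theorem~\ref{j-l-equ} (with its mirror at $-\infty$) gives $c\le|m^\pm_\omega(E+i\varepsilon)|\le C$ for all small $\varepsilon>0$.

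The decisive step is a matching \emph{lower} bound $\liminf_{\varepsilon\downarrow0}\Im m^\pm_\omega(E+i\varepsilon)>0$ — that is, positivity of the a.c.\ density at $E$. For $m^+$: from \eqref{3.10-eq12} with $L=L_\varepsilon$ one obtains $\|\boldsymbol u^+_z\|_{L_\varepsilon}\gtrsim\|a_2-m^+_\omega(z)a_1\|_{L_\varepsilon}$, and as $a_1,a_2$ are real, $\|a_2-m^+a_1\|^2_{L_\varepsilon}\ge\|a_2-(\Re m^+)a_1\|^2_{L_\varepsilon}$. Now $\boldsymbol u_2-(\Re m^+_\omega(z))\boldsymbol u_1$ is a nonzero real solution of $\mathcal S_\omega\boldsymbol u=E\boldsymbol u$ whose initial vector has norm in $[1,\sqrt{C^2+1}]$, so by the uniform transfer-matrix bound it has norm $\gtrsim c_0$ at every site, whence $\|a_2-(\Re m^+)a_1\|^2_{L_\varepsilon}\gtrsim c_0^2 L_\varepsilon$. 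Combining with \eqref{eq32}, $\Im m^+_\omega(z)=\varepsilon\sum_{n\ge1}|\boldsymbol u^+_z(n)|^2\ge\varepsilon\|\boldsymbol u^+_z\|^2_{L_\varepsilon}\gtrsim\varepsilon\, c_0^2 L_\varepsilon\asymp c_0^2>0$; the analogous argument at $-\infty$ handles $m^-$. Finally, since $\Im m^\pm_\omega>0$ on $\C^+$, the identity $1-m^+m^-=m^+\bigl(\tfrac1{m^+}-m^-\bigr)$ with $\Im\tfrac1{m^+}=-\tfrac{\Im m^+}{|m^+|^2}$ gives $|1-m^+m^-|\ge\tfrac{\Im m^+}{|m^+|}+|m^+|\,\Im m^-\gtrsim1$ for small $\varepsilon$, hence $|M_\omega(E+i\varepsilon)|=\bigl|\tfrac{m^++m^-}{1-m^+m^-}\bigr|\lesssim1$, which is exactly what the reduction requires.

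I expect the lower bound on $\Im m^\pm$ to be the main obstacle: the upper bounds are immediate from Theorem~\ref{j-l-equ}, but the lower bound genuinely uses that a bounded $SL(2,\R)$ cocycle pins \emph{all} solutions between two positive constants (not just one), and it must be fed back through \eqref{3.10-eq12}–\eqref{eq32} with care. A minor point one must be careful about is that $\boldsymbol u^+_z$ need not concentrate on $[1,L_\varepsilon]$, but since only a lower bound on $\sum_{n\ge1}|\boldsymbol u^+_z(n)|^2$ is needed, the tail can simply be discarded.
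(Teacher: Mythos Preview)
Your argument is correct and is precisely the quantitative route the paper has in mind: the paper states the corollary without proof as a ``direct consequence'' of the Jitomirskaya--Last inequality, and what you have written is exactly the standard unpacking of that consequence via Corollary~\ref{mp}. In particular, your handling of the potentially delicate point---the lower bound on $\Im m^\pm$ needed to keep $|1-m^+m^-|$ away from zero---is clean and correct, and is the same mechanism that underlies Lemma~\ref{growth-cocycle} (which the paper attributes to \cite[Lemma~2.5]{A01}).
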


Moreover, we have the following explicit estimate:

\begin{lemma}\label{growth-cocycle}Suppose  that $E\in\mathcal{T}_\delta$ and $0<\epsilon<\delta$.
    There exists universal constant $C>0$ such that $\mu_\omega(E-\epsilon,E+\epsilon)\le C\epsilon\sup_{0\le s\le\epsilon^{-1}}\|A^E_s\|_0^2$.
\end{lemma}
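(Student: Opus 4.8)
The statement to prove is the estimate $\mu_\omega(E-\epsilon,E+\epsilon)\le C\epsilon\sup_{0\le s\le\epsilon^{-1}}\|A^E_s\|_0^2$ for $E\in\mathcal{T}_\delta$ and $0<\epsilon<\delta$. The plan is to combine the Jitomirskaya--Last inequality (Theorem \ref{j-l-equ}) with the Borel transform machinery from Corollary \ref{mp} and a standard comparison between the length scale $L_\epsilon$ and the transfer matrix norms. First I would recall the well-known relation between the spectral measure and the imaginary part of the Borel transform: for any finite measure $\mu$ one has $\mu(E-\epsilon,E+\epsilon)\le C\epsilon\,\Im M(E+i\epsilon)$, where $M$ is the Borel transform of $\mu$ (this is the standard Poisson-kernel lower bound $\Im M(E+i\epsilon)\ge \frac{1}{2\epsilon}\mu(E-\epsilon,E+\epsilon)$). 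Here $\mu=\mu_{\boldsymbol{\delta}_0}+\mu_{\boldsymbol{\delta}_1}$ serves as the canonical spectral measure on $\mathcal{T}_\delta$, and $\mu_\omega|_{\mathcal{T}_\delta}$ is absolutely continuous with respect to it by Proposition \ref{pro1}; one has to be slightly careful here that this gives control of $\mu_\omega(E-\epsilon,E+\epsilon)$ only up to a bounded Radon--Nikodym factor, but since $\|f_n(\mathcal{S})\|\le 1/\delta$ the polynomial factors in Proposition \ref{pro1} are controlled, so it suffices to bound $\mu(E-\epsilon,E+\epsilon)$ up to a constant depending on $\delta$.

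So the main task reduces to showing $\Im M(E+i\epsilon)\le C\sup_{0\le s\le\epsilon^{-1}}\|A^E_s\|_0^2$. By Corollary \ref{mp}, $M(z)=\frac{m^+(z)+m^-(z)}{1-m^+(z)m^-(z)}$, so it is enough to bound $\Im m^+(E+i\epsilon)$ and $\Im m^-(E+i\epsilon)$ (the denominator is controlled away from zero since $\Im z>0$ forces $\Im(m^+m^-)$ to have a definite sign, or alternatively one bounds $|M|$ directly by $|m^+|+|m^-|$ plus lower-order terms). From \eqref{eq32} we have $\frac{\Im m^+(z)}{\epsilon}=\sum_{n\ge1}\|\boldsymbol{u}_z^+(n)\|^2=\|\boldsymbol{u}_z^+\|_\infty^2$, which by the Jitomirskaya--Last argument (the chain of inequalities in the proof of Theorem \ref{j-l-equ}, specifically $\|\boldsymbol{u}\|_{L_\epsilon}^2\le \Im m^+(z)/\epsilon$ together with $\Im m^+(z)/\epsilon \le \frac{1}{32}(\cdots)$ run in the reverse direction) is comparable to $\frac{1}{\epsilon}(\|\boldsymbol{u}_1\|_{L_\epsilon}^2+\|\boldsymbol{u}_2\|_{L_\epsilon}^2)$. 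Therefore $\Im m^+(E+i\epsilon)\le C(\|\boldsymbol{u}_1\|_{L_\epsilon}^2+\|\boldsymbol{u}_2\|_{L_\epsilon}^2)$, and the same for $m^-$ with the analogous left half-line solutions.

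Finally I would convert the $\|\boldsymbol{u}_i\|_{L_\epsilon}$ into transfer matrix norms. Since $\boldsymbol{u}_1,\boldsymbol{u}_2$ are the solutions with initial data forming the identity matrix, the vector $(a_i(n+1),a_i(n))^{\mathsf T}$ is precisely a column of $A^E_n(\omega)$, and by \eqref{eq5} the $b$-components satisfy $|b_i(n)|\le \delta^{-1}|V_{21}|_\infty |a_i(n)|$, so $\|\boldsymbol{u}_i(n)\|\le C(\delta)\|A^E_n(\omega)\|_0$. Hence $\|\boldsymbol{u}_1\|_{L_\epsilon}^2+\|\boldsymbol{u}_2\|_{L_\epsilon}^2\le C(\delta)\sum_{n=1}^{\lceil L_\epsilon\rceil}\|A^E_n\|_0^2\le C(\delta) L_\epsilon \sup_{0\le s\le L_\epsilon}\|A^E_s\|_0^2$. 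The defining relation \eqref{u12}, $\|\boldsymbol{u}_1\|_{L_\epsilon}\|\boldsymbol{u}_2\|_{L_\epsilon}=\frac{1}{4\epsilon}$, together with the lower bound $\|\boldsymbol{u}_i\|_L\ge c$ for $L$ bounded below (the solutions do not vanish identically near $0$), forces $L_\epsilon\le C\epsilon^{-1}$ — this is where the key observation is that if $L_\epsilon$ were much larger than $\epsilon^{-1}$ then, since $\|\boldsymbol{u}_i\|_L$ grows at most exponentially and the product must equal $\frac{1}{4\epsilon}$, one still has $L_\epsilon = O(\epsilon^{-1})$ only if the cocycle norm stays bounded; to be safe one should phrase it as $L_\epsilon\le C\epsilon^{-1}\log(\cdots)$ or simply note $\sup_{0\le s\le L_\epsilon}\|A^E_s\|^2 \cdot L_\epsilon \le C\epsilon^{-1}\sup_{0\le s\le C\epsilon^{-1}}\|A^E_s\|^2$ by a dyadic/submultiplicativity argument. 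Combining everything: $\mu_\omega(E-\epsilon,E+\epsilon)\le C\epsilon\,\Im M(E+i\epsilon)\le C\epsilon\cdot C(\delta) L_\epsilon\sup_{0\le s\le L_\epsilon}\|A_s^E\|_0^2 \le C\epsilon\sup_{0\le s\le\epsilon^{-1}}\|A^E_s\|_0^2$. The main obstacle I anticipate is the clean control of $L_\epsilon$ by $\epsilon^{-1}$ (as opposed to $\epsilon^{-1}$ times a norm factor) — this is handled in the classical Schrödinger case by an interpolation between $\|\boldsymbol{u}_1\|^2+\|\boldsymbol{u}_2\|^2$ and $\|\boldsymbol{u}_1\|\,\|\boldsymbol{u}_2\|$ using $\|A^E_n\|^2\le \|\boldsymbol{u}_1(n)\|^2+\|\boldsymbol{u}_2(n)\|^2$ and the fact that $\|A^E_n\|\ge 1$, and I would adapt that interpolation verbatim, checking only that the extra $b$-component bookkeeping (bounded by $1/\delta$) does not spoil the two-sided comparison $\|\boldsymbol{u}_i(n)\|\asymp \|A^E_n\text{(column }i)\|$.
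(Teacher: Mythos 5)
Your overall architecture matches the paper's intended route (Poisson bound for the Borel transform, Corollary \ref{mp} to pass to the $m$-functions, Theorem \ref{j-l-equ}, then transfer-matrix bookkeeping), but two of the intermediate bounds as you have written them are not correct, and the first one actually destroys the factor of $\epsilon$ in the conclusion.

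\textbf{The $\epsilon$ is lost.} You claim $\Im m^+(E+i\epsilon)\le C(\|\boldsymbol{u}_1\|_{L_\epsilon}^2+\|\boldsymbol{u}_2\|_{L_\epsilon}^2)$, then bound the right side by $C L_\epsilon\sup_{s\le L_\epsilon}\|A^E_s\|^2\le C\epsilon^{-1}\sup\|A^E_s\|^2$, and finally write $\mu_\omega(E-\epsilon,E+\epsilon)\le C\epsilon\,\Im M\le C\epsilon\cdot C\epsilon^{-1}\sup\|A^E_s\|^2$. That chain gives $\mu_\omega(E-\epsilon,E+\epsilon)\le C\sup\|A^E_s\|^2$, not $C\epsilon\sup\|A^E_s\|^2$; the final inequality in your display simply does not follow. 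The culprit is the claimed comparability $\Im m^+/\epsilon \asymp \epsilon^{-1}(\|\boldsymbol{u}_1\|^2+\|\boldsymbol{u}_2\|^2)$: equation \eqref{eq32} gives $\Im m^+/\epsilon = \sum_n\|\boldsymbol{u}_z^+(n)\|^2$, which controls the \emph{decaying} solution, and there is no reverse Jitomirskaya--Last inequality identifying this with $\|\boldsymbol{u}_1\|^2+\|\boldsymbol{u}_2\|^2$ — those norms can be much larger. The correct estimate comes from Theorem \ref{j-l-equ} applied directly to the modulus: $\Im m^+\le|m^+|\le C_2\,\|\boldsymbol{u}_2\|_{L_\epsilon}/\|\boldsymbol{u}_1\|_{L_\epsilon}$. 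Now invoke the defining relation \eqref{u12} to write $\|\boldsymbol{u}_2\|_{L_\epsilon}/\|\boldsymbol{u}_1\|_{L_\epsilon}=4\epsilon\|\boldsymbol{u}_2\|_{L_\epsilon}^2\le C\epsilon L_\epsilon\sup_{s\le L_\epsilon}\|A^E_s\|_0^2$. With $L_\epsilon\le C\epsilon^{-1}$ (which you do get, via $\|a_1\|_L\|a_2\|_L\ge cL$ from the determinant of $A^E_n$ and Cauchy--Schwarz) this yields $\Im m^+\le C\sup_{s\le\epsilon^{-1}}\|A^E_s\|_0^2$ — $\epsilon$-free, as required — and only then does the Poisson step produce the $\epsilon$ in front. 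So the $m$-function bound should use the \emph{ratio}, not the sum; the normalization $\|\boldsymbol{u}_1\|\|\boldsymbol{u}_2\|=1/(4\epsilon)$ is exactly what converts the ratio into $\epsilon\|\boldsymbol{u}_2\|^2$ and supplies the missing factor.

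\textbf{The denominator $1-m^+m^-$ is a genuine issue, and your justification is wrong.} You assert the denominator in $M=(m^++m^-)/(1-m^+m^-)$ is controlled because ``$\Im z>0$ forces $\Im(m^+m^-)$ to have a definite sign.'' That is false: $\Im(m^+m^-)=\Re m^+\Im m^-+\Re m^-\Im m^+$ and can be either sign or zero even when $\Im m^\pm>0$. In fact $1-m^+m^-$ is (up to a factor $m^-$) the Wronskian of the decaying solutions $\boldsymbol{u}^\pm$, and it genuinely can be small. Some control is needed; for instance one can use $\Im G(0,0)=\epsilon\|(\mathcal{S}-z)^{-1}\boldsymbol{\delta}_0\|^2=|\Im W|/|W|^2\le 1/|\Im W|$ together with the identity $|\Im W|=\Im m^++\Im m^-/|m^-|^2$ and a two-sided lower bound on these quantities coming from the Jitomirskaya--Last argument (both $|m^\pm|$ \emph{and} $|m^\pm|^{-1}$ are controlled by the transfer-matrix norms). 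This step does not appear in your write-up. Since the paper's own proof simply defers to \cite[Lemma 2.5]{A01}, you would need to reproduce that argument here; the alternative you offer ("bound $|M|$ directly by $|m^+|+|m^-|$ plus lower-order terms") is not a proof.

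As a minor point: the caution about a Radon--Nikodym factor is unnecessary. In the paper's notation $\mu_\omega$ \emph{is} the measure $\mu_{\boldsymbol{\delta}_0}+\mu_{\boldsymbol{\delta}_1}$ singled out after Proposition \ref{pro1}, so there is no extra conversion to do.
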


\begin{proof}
It's immediate consequence of Jitomirskaya-Last inequality (Theorem \ref{j-l-equ}) and Corollary \ref{mp}, one can consult \cite[Lemma 2.5]{A01} for details. 
\end{proof}

\section{New exact mobility edges of critical states and extended states}\label{section8}

In this section, we demonstrate $\mathcal{S}_{J,V,\omega}$ defined as \eqref{ac-sc} has exact Type III ME. 
\subsection{Singular continuous spectrum of $\mathcal{S}$}
\iffalse
\begin{proposition}
    There exists a dense set $\Omega_0$ with zero Lebesgue measure, such that $\sigma_{pp}(\mathcal{S}_{J,V,\omega})\cap[-\lambda,\lambda]\neq\emptyset$ for every $\omega\in\Omega_0$.
\end{proposition}
\begin{proof}
Denote $\mathcal{O}_x=\{x+2n\alpha:n\in\Z\}$. If $\omega\in\mathcal{O}_{0}\cup\mathcal{O}_{\frac{1}{2}}$, then there exists $n_0$ such that \textcolor{red}{$V_{12}(\omega+2n_0\alpha)=i\lambda\sin2\pi(\omega+2n_0\alpha)=0$}. Let $a(n)=0,\forall n\in\Z$, $b(n_0)=1$, and $b(n)=0\text{ if }n\neq n_0$, then such $\boldsymbol{u}$ solves $\mathcal{S}_{J,V,\omega}\boldsymbol{u}=-\lambda\cos2\pi(\omega+2n_0\alpha)\boldsymbol{u}$ and \textcolor{red}{$-\lambda\cos2\pi(\omega+2n_0\alpha)=\pm\lambda$ are two eigenvalues}. Let $\Omega_0=\mathcal{O}_{0}\cup\mathcal{O}_{\frac{1}{2}}$.
\end{proof}
\fi

\begin{theorem}\label{11-28-theorem8.2}
    For any irrational $\alpha$ and for any non-$2\alpha$-rational $\omega \in\T$ the operator $\mathcal{S}_{J,V,\omega}$ exhibits  purely singular continuous spectrum in $\sigma^{\mathcal{S}}(J,V)\cap(-\lambda,\lambda)$.
\end{theorem}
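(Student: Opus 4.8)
The plan is to split the statement into two parts, following the same philosophy as the proof of Theorem \ref{theorem1.1.}: (i) rule out absolutely continuous spectrum in $(-\lambda,\lambda)$, and (ii) rule out point spectrum in $(-\lambda,\lambda)$ for non-$2\alpha$-rational $\omega$. For (i), I would invoke the Aubry duality established in Section \ref{duality}: by \eqref{2025-1-16-eq49}, $\mathcal{S}_{J,V,\omega}$ (the model \eqref{ac-sc}) is unitarily equivalent, after the constant conjugation $U_2$, to the long-range dual operator $\widehat{\mathcal{S}}_{C_2,V_2,\omega}$ of the mosaic-type operator \eqref{IRS}. The spectral Aubry duality (the relation $\mathbf{S}_{C,V}\mathbf{A}_{2\alpha}=\mathbf{A}_{2\alpha}\widehat{\mathbf{S}}_{C,V}$) identifies the spectra of the direct integrals, and together with Corollary \ref{corollary6.2} — which gives $|\sigma^{\mathcal{H}}(c,v)\cap[-\lambda,\lambda]|=0$ for the model \eqref{IRS} — one concludes that the common spectrum of these operators inside $[-\lambda,\lambda]$ has zero Lebesgue measure. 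Since an absolutely continuous component would have to be supported on a positive-measure set, there is no absolutely continuous spectrum in $[-\lambda,\lambda]$, hence none in $(-\lambda,\lambda)$.

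For (ii), the absence of eigenvalues, I would argue by duality combined with a Schnol/Bloch-wave argument. Suppose $E\in(-\lambda,\lambda)$ is an eigenvalue of $\mathcal{S}_{J,V,\omega}$ with $\ell^2$ eigenvector $\boldsymbol{u}$ for some non-$2\alpha$-rational $\omega$. The point is that \eqref{ac-sc} is the Aubry dual of \eqref{IRS}, so an $\ell^2$ eigenvector of $\mathcal{S}_{J,V,\omega}$ produces, via the Fourier transform underlying $\mathbf{A}_{2\alpha}$, an analytic quasiperiodic Bloch wave for the dual operator $\mathcal{H}_{c,v,\theta}$ of \eqref{IRS} at the same energy $E$; equivalently, the argument in the proof of Theorem \ref{theorem2.2} can be run directly. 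Concretely, writing $\boldsymbol{u}(n)=(u(2n),u(2n+1))^{\mathsf T}$ undoes the strip reduction and turns the eigenvalue equation into exactly the system \eqref{8.19-eq27}–\eqref{8.19-eq28} with $t=E/\lambda$, $|t|<1$; then, setting $\varphi(x)=\sum_n u(2n)e^{2\pi i nx}$, $\psi(x)=\sum_n u(2n+1)e^{2\pi i nx}$, one obtains the matrix cocycle identity $D(x)M_\theta(x)=M_\theta(x+2\alpha)R_\theta$ of \eqref{equation13}. Ergodicity of the rotation by $2\alpha$ forces $|\det M_\theta(x)|\,|d(x-2\alpha)d(x)|$ to be an a.e.-constant $b$; the non-$2\alpha$-rationality of $\omega$ (playing the role of $\theta$ there) excludes the degenerate case $\det M_\theta\equiv 0$ via the Fourier-coefficient argument $\hat\phi_n(e^{4\pi i n\alpha+4\pi i\omega}-1)=0$; and finally the singularity $d(x_0)=0$ for some $x_0$ (which holds precisely because $|t|<1$) makes $1/|d(x-2\alpha)d(x)|\notin L^1(\T)$, contradicting $M_\theta\in L^2$. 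This is the same chain of implications as in Theorem \ref{theorem2.2}, and the only bookkeeping needed is to match the potential $V$ of \eqref{ac-sc} against the mosaic data $(c,v)$ of \eqref{IRS} under the duality of Section \ref{duality}.

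Combining (i) and (ii): on $\sigma^{\mathcal{S}}(J,V)\cap(-\lambda,\lambda)$ the spectrum has no absolutely continuous part and no point part, hence is purely singular continuous, which is the assertion. The main obstacle I anticipate is (ii): one must be careful — as the authors themselves flag in the footnote of Section \ref{duality} — that the Fourier/duality transformation between $\ell^2$ eigenfunctions of $\mathcal{S}_{J,V,\omega}$ and Bloch waves of the dual operator is genuinely rigorous at the level of the singular strip operator, and that the role played by the phase $\theta$ in Theorem \ref{theorem2.2} is correctly played by $\omega$ here (so that "non-$2\alpha$-rational $\omega$" is exactly the arithmetic condition needed to kill the resonant Fourier mode). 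Once the translation dictionary is set up, the analytic core is already contained in the proof of Theorem \ref{theorem2.2}, so no new estimates are required; the work is entirely in verifying that the reduction applies verbatim.
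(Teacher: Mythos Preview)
Your part (i) is correct and matches the paper: the Aubry duality of Section~\ref{duality} identifies $\sigma^{\mathcal{S}}(J,V)$ with $\sigma^{\mathcal{H}}(c,v)$, and Corollary~\ref{corollary6.2} then kills any absolutely continuous component in $[-\lambda,\lambda]$.

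In part (ii), your overall strategy (Fourier transform, matrix cocycle identity, ergodicity to force $|\det M|$ to be a.e.\ constant times a singular factor, contradiction with $L^2$) is exactly right, and the arithmetic role of ``non-$2\alpha$-rational $\omega$'' is correctly identified. But your ``concretely'' step is wrong. The operator $\mathcal{S}_{J,V,\omega}$ in \eqref{ac-sc} is \emph{not} the strip form of $\mathcal{H}_{c,v,\theta}$; the latter is $\mathcal{S}_{C_2,V_2,\theta}$ with $C_2=\begin{pmatrix}0&0\\\lambda&0\end{pmatrix}$ and $V_2(\theta)=\cos2\pi\theta\begin{pmatrix}1&1\\1&1\end{pmatrix}$, which has a completely different $(C,V)$ from $(J,V)$ in \eqref{ac-sc}. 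So writing $\boldsymbol{u}(n)=(u(2n),u(2n+1))^{\mathsf T}$ and plugging into $\mathcal{S}_{J,V,\omega}\boldsymbol{u}=E\boldsymbol{u}$ does \emph{not} yield \eqref{8.19-eq27}--\eqref{8.19-eq28}; you get instead
\[
a(n+1)+a(n-1)+\lambda\cos2\pi(\omega+2n\alpha)\,a(n)+i\lambda\sin2\pi(\omega+2n\alpha)\,b(n)=Ea(n),
\]
\[
-i\lambda\sin2\pi(\omega+2n\alpha)\,a(n)-\lambda\cos2\pi(\omega+2n\alpha)\,b(n)=Eb(n),
\]
which are different equations. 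The relation between \eqref{ac-sc} and \eqref{IRS} is through Aubry duality (Fourier transform), not through the strip reduction $\mathcal{P}$.

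What the paper actually does---and what your first sentence in (ii) correctly hints at---is to Fourier transform the $\mathcal{S}_{J,V,\omega}$ eigenvalue equation \emph{directly}: set $(f(x),g(x))^{\mathsf T}=\sum_n\boldsymbol{u}(n)e^{2\pi inx}$, multiply \eqref{10.15-eq48} by $e^{\pm2\pi inx}$ and sum. After eliminating $g$ one obtains a second-order relation in $f$ alone, and pairing the $+$ and $-$ versions gives a matrix identity of the same shape as \eqref{equation13}, but with the cocycle
\[
\begin{pmatrix}\dfrac{E^2-\lambda^2-2E\cos2\pi x}{\lambda\cos2\pi(x+2\alpha)}&-\dfrac{\cos2\pi(x-2\alpha)}{\cos2\pi(x+2\alpha)}\\[2ex]1&0\end{pmatrix}
\]
in place of $D(x)$ (note this is essentially the transfer matrix $\tilde A^E$ of \eqref{be}, as it should be by duality). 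The singular factor is now $d(x)=\cos2\pi x$ rather than $t+\cos2\pi x$, and the rest of the argument---ergodicity, exclusion of $\det M\equiv0$ via non-$2\alpha$-rationality of $\omega$, and the $L^1$ blow-up of $1/|d(x)d(x-2\alpha)|$---goes through verbatim. So your translation dictionary needs one correction: replace ``undo the strip reduction'' by ``Fourier transform the vector eigenvalue equation componentwise,'' after which the analytic core of Theorem~\ref{theorem2.2} applies as you say.
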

\begin{proof}
We begin by demonstrating that $\mathcal{S}_{J,V,\omega}$ has  no absolutely continuous spectrum within $\sigma^{\mathcal{S}}(J,V)\cap(-\lambda,\lambda)$ for any $\omega\in\T$. 
Note that $\sigma^{\mathcal{S}}(J,V)$ coincides with $\sigma^{\mathcal{H}}(c,v)$ by Aubry duality. Let $\nu_\omega^{(ac)}$ denote the absolutely continuous component of any spectral measure $\nu_\omega$ associated with $\mathcal{S}_{J,V,\omega}$.   
By Corollary \ref{corollary6.2}, we have $\nu_\omega^{(ac)}(\sigma^{\mathcal{S}}(J,V)\cap[-\lambda,\lambda])=0$ for all $\omega\in\T$, thereby establishing the claim.

In the remaining part, we aim to demonstrate the absence of a point spectrum for all non-$2\alpha$-rational values of $\omega$. Otherwise, it would imply the existence of a vector $\boldsymbol{u} \in \ell^2(\mathbb{Z}, \mathbb{C}^2)$ associated with an eigenvalue $E \in (-\lambda, \lambda)$ that satisfies the equation $\mathcal{S}_{J,V,\omega}\boldsymbol{u} = E\boldsymbol{u}$, where $\boldsymbol{u}(n) = (a(n), b(n))^{\mathsf{T}}$. This equation can be expressed as:
\begin{align}\label{10.15-eq48}
J\boldsymbol{u}(n+1) + J\boldsymbol{u}(n-1) + V(\omega + 2n\alpha)\boldsymbol{u}(n) = E\boldsymbol{u}(n).
\end{align}
First, we note that $a \not\equiv 0$; otherwise, $b \equiv 0$ would follow, since $V_{12,\omega}(n) = i \sin(2\pi(\omega + 2n\alpha)) \neq 0$ for all non-$2\alpha$-rational values of $\omega \in \mathbb{T}$ and $n \in \mathbb{Z}$. Let us define $\Psi(x) := (f(x), g(x))^{\mathsf{T}} = \sum_{n \in \mathbb{Z}} \boldsymbol{u}(n)e^{2\pi inx}$, ensuring that $f \not\equiv 0$.
Proceeding as before, we multiply equation \eqref{10.15-eq48} by $e^{2\pi inx}$ and sum over $n$, yielding
\footnotesize
\begin{align}
   \label{10-18-eq49} \frac{\lambda e^{2\pi i\omega}}{2}(f(x+2\alpha)+g(x+2\alpha))+\frac{\lambda e^{-2\pi i\omega}}{2}(f(x-2\alpha)-g(x-2\alpha))+2\cos2\pi x\,f(x)&=Ef(x)\\
    -\frac{\lambda e^{2\pi i\omega}}{2}(f(x+2\alpha)+g(x+2\alpha))+\frac{\lambda e^{-2\pi i\omega}}{2}(f(x-2\alpha)-g(x-2\alpha))&=Eg(x).\nonumber
\end{align}
\normalsize
From this, we have the following equations
\begin{align*}
    \lambda e^{2\pi i\omega}(f(x+2\alpha)+g(x+2\alpha))+2\cos2\pi x\, f(x)&=E(f(x)-g(x))\\
    \lambda e^{-2\pi i\omega}(f(x-2\alpha)-g(x-2\alpha))+2\cos2\pi x\, f(x)&=E(f(x)+g(x)),
\end{align*}
and then
\begin{align*}
    E(f(x-2\alpha)-g(x-2\alpha))&=\lambda e^{2\pi i\omega}(f(x)+g(x))+2\cos2\pi(x-2\alpha)\,f(x-2\alpha)\\ E(f(x+2\alpha)+g(x+2\alpha))&=\lambda e^{-2\pi i\omega}(f(x)-g(x))+2\cos2\pi(x+2\alpha)\,f(x+2\alpha).
\end{align*}
Substituting these expressions into equation \eqref{10-18-eq49}, we obtain
\footnotesize
\begin{align*}
    \lambda e^{2\pi i\omega}\cos2\pi(x+2\alpha)\,f(x+2\alpha)+\lambda e^{-2\pi i\omega}\cos2\pi(x-2\alpha)\,f(x-2\alpha)+(\lambda^2-E^2+2E\cos2\pi x)f(x)=0.
\end{align*}
\normalsize
Multiplying equation \eqref{10.15-eq48} by $e^{-2\pi inx}$ and repeating the same derivation, we get
\footnotesize
\begin{align*}
    \lambda e^{2\pi i\omega}\cos2\pi(x-2\alpha)\,f(2\alpha-x)+\lambda e^{-2\pi i\omega}\cos2\pi(x+2\alpha)\,f(-x-2\alpha)+(\lambda^2-E^2+2E\cos2\pi x)f(-x)=0.
\end{align*}
\normalsize
Thus, we obtain the following matrix equation
\begin{align*}
    &\begin{pmatrix}
        \frac{E^2-\lambda^2-2E\cos2\pi x}{\lambda\cos2\pi(x+2\alpha)}&-\frac{\cos2\pi(x-2\alpha)}{\cos2\pi(x+2\alpha)}\\1&0
    \end{pmatrix}\begin{pmatrix}
        f(x)&f(-x)\\ e^{-2\pi i\omega}f(x-2\alpha)&e^{2\pi i\omega}f(-x+2\alpha)
    \end{pmatrix}\\&=\begin{pmatrix}
        f(x+2\alpha)&f(-x-2\alpha)\\ e^{-2\pi i\omega}f(x)&e^{2\pi i\omega}f(-x)
    \end{pmatrix}\begin{pmatrix}
        e^{2\pi i\omega}&0\\0&e^{-2\pi i\omega}
    \end{pmatrix}.
\end{align*}
Applying a similar argument as in the proof of Theorem \ref{theorem2.2}, we arrive at a contradiction. 
\end{proof}

\subsection{Purely absolutely continuous spectrum of $\mathcal{S}$}

\begin{theorem}\label{8.4-theorem4.4}
Let $\alpha\in DC$, $V\in C^{\omega}(\T, M(2,\C))$ being hermitian, then for any $\omega\in\T$, the operator $\mathcal{S}_{J,V,\omega}$ defined by 
\begin{align*}
    [\mathcal{S}_{J,V,\omega}\boldsymbol{u}](n):=J\boldsymbol{u}(n+1)+J\boldsymbol{u}(n-1)+V(\omega+n\alpha)\boldsymbol{u}(n),
\end{align*}
has purely absolutely continuous in the set
\begin{align*}
\mathcal{W}=\left\{E\in\sigma^{\mathcal{S}}(J,V):(\alpha,S_E^{V,J})\text{ is subcritical}\right\}.
\end{align*}
\end{theorem}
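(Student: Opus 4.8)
**Proof proposal for Theorem 8.4 (purely absolutely continuous spectrum of $\mathcal{S}$ on the subcritical set $\mathcal{W}$).**

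The overall strategy follows Avila's treatment of the absolutely continuous spectrum via almost reducibility, adapted to our singular operator using the subordinacy machinery of Section 6. First I would restrict attention to the reduced cocycle. For $E\in\mathcal{W}$, by definition $(\alpha,S_E^{V,J})$ is subcritical; here $S_E^{V,J}$ is the Schrödinger-type cocycle obtained after the decoupling \eqref{eq6}, i.e. with potential $\tilde v_E(x)=E-V_{11}(x)-\frac{|V_{12}(x)|^2}{E-V_{22}(x)}$, which is analytic in a strip whenever $E$ avoids $\operatorname{Ran}(V_{22})$. Since subcriticality forces $L(\alpha,S_E^{V,J})=0$ and the acceleration to vanish, Proposition~\ref{11-29-proposition3.6} shows $E\notin\mathcal{UH}$, and more importantly $\mathcal{W}$ is contained in $\mathcal{T}_\delta$ for suitable $\delta$ on compact subsets, so the subordinacy theory of Section~\ref{jito-last} applies. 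The plan is then: (i) use Theorem~\ref{11-26-themrem2.4} (solution of the almost reducibility conjecture, Avila--Jitomirskaya--Krikorian) to get, for each $E_0\in\mathcal{W}$, a neighbourhood on which the cocycle family is almost reducible with uniform control; (ii) translate almost reducibility into sub-polynomial (indeed uniformly bounded along a subsequence of scales) growth of the transfer matrices $A^E_n$, locally uniformly in $E$; (iii) feed this bound into Lemma~\ref{growth-cocycle}, which gives $\mu_\omega(E-\epsilon,E+\epsilon)\le C\epsilon\sup_{0\le s\le\epsilon^{-1}}\|A^E_s\|_0^2$, to conclude that the restriction of the spectral measure $\mu_\omega$ to $\mathcal{W}$ is absolutely continuous for every $\omega$.

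In more detail: for step (ii), I would use the standard consequence of almost reducibility (as in \cite{A01,avila2010almost}) that if $(\alpha,S_{E}^{V,J})$ is almost reducible then for every $\eta>0$ the transfer matrices satisfy $\|A^E_n\|\le C_\eta(1+|n|)^{\eta}$ uniformly for $E$ in a compact neighbourhood inside $\mathcal{W}$ --- and in fact, away from the (measure-zero) set where the rotation number is rational with respect to $\alpha$, one gets genuinely bounded transfer matrices, which is what drives absolute continuity. Combined with Corollary~\ref{subordinnacy}, the set $\mathcal{B}_{\mathcal{T}_\delta}$ of energies with bounded cocycle carries only absolutely continuous spectrum; so it remains to handle the complementary ``collision'' energies. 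For those I would invoke the quantitative Jitomirskaya--Last bound directly: almost reducibility gives $\sup_{0\le s\le\epsilon^{-1}}\|A^E_s\|_0^2 = o(\epsilon^{-1})$ locally uniformly, hence by Lemma~\ref{growth-cocycle} the measure of shrinking intervals is $o(1)\cdot 1$ — more carefully, one shows $\limsup_{\epsilon\to0}\frac{1}{\epsilon}\mu_\omega(E-\epsilon,E+\epsilon)<\infty$ for a.e. $E\in\mathcal{W}$ with respect to Lebesgue measure, and that the singular part of $\mu_\omega$ restricted to $\mathcal{W}$ must then vanish (a Borel--Cantelli / covering argument exactly as in \cite[§2]{A01}). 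The passage to \emph{every} $\omega$ (not just a.e. $\omega$) uses that the bounds on $\|A^E_n\|$ are phase-independent because the cocycle is continuous and the base dynamics minimal, together with the fact that absolute continuity of the spectral measure on an open set is a property that can be checked fiberwise and transported by minimality.

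The main obstacle I anticipate is \textbf{the singular structure of the operator breaking the standard $m$-function / cyclic-vector formalism}, which is precisely the difficulty flagged in the introduction. The subordinacy theory of Section~\ref{jito-last} and Corollary~\ref{mp} were set up exactly to circumvent this — but one must be careful that the reduction to the single chain $a(\cdot)$ via \eqref{eq6}, \eqref{eq5} genuinely preserves \emph{all} spectral information on $\mathcal{T}_\delta$, which is the content of Proposition~\ref{pro1} (the vectors $\chi_{\mathcal{T}_\delta}(\mathcal{S})\boldsymbol{\delta}_0,\chi_{\mathcal{T}_\delta}(\mathcal{S})\boldsymbol{\delta}_1$ are cyclic in $\chi_{\mathcal{T}_\delta}(\mathcal{S})\ell^2$). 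A secondary technical point is uniformity: almost reducibility is obtained at a fixed energy, and one needs the bounds locally uniformly in $E$ across $\mathcal{W}$, which requires either the parameter-dependent version of the ARC or a compactness argument exploiting upper semicontinuity of the acceleration together with openness of subcriticality. Finally, one should verify that $\mathcal{W}\subseteq\bigcup_{\delta>0}\mathcal{T}_\delta$ up to a set irrelevant for the spectral conclusion — i.e. that subcriticality is incompatible with $E\in\operatorname{Ran}(V_{22})$, or handle the boundary case $E\in\partial\operatorname{Ran}(V_{22})$ separately by a limiting argument; I expect this to be routine but it must be said explicitly.
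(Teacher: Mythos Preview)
Your high-level architecture (ARC $\Rightarrow$ almost reducibility $\Rightarrow$ subordinacy plus a Borel--Cantelli/covering argument in the style of \cite{A01}) matches the paper's, but the proposal has two genuine gaps where the actual work lies.

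\medskip

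\textbf{1. The ``more carefully'' step is logically incorrect.} You write that almost reducibility gives $\sup_{0\le s\le\epsilon^{-1}}\|A^E_s\|_0^2=o(\epsilon^{-1})$, hence by Lemma~\ref{growth-cocycle} one has $\limsup_{\epsilon\to0}\epsilon^{-1}\mu_\omega(E-\epsilon,E+\epsilon)<\infty$ for Lebesgue-a.e.\ $E\in\mathcal{W}$, and that this kills the singular part. It does not: a point mass, or any singular measure, has upper derivative $0$ at Lebesgue-a.e.\ point and $+\infty$ on its (Lebesgue-null) support. What is actually needed is $\mu_\omega(\mathcal{W}(\delta_0)\setminus\mathcal{B})=0$, and the paper obtains this by first disposing of $\mathcal{R}\setminus\mathcal{B}$ (countable, parabolic, no eigenvalues) and then showing $\mu_\omega(\mathcal{W}(\delta_0)\setminus\mathcal{R})=0$ via $\mathcal{W}(\delta_0)\setminus\mathcal{R}\subset\limsup K_m$ and $\sum_m\mu_\omega(\overline K_m)<\infty$, where the $K_m$ are the resonant sets produced by a quantitative KAM scheme (Proposition~\ref{local kam}, Corollary~\ref{lemma4.7}). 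The crude $o(\epsilon^{-1})$ bound is not enough; one needs the explicit polylog control $\sup_{0\le s\le\epsilon_{m-1}^{-1/8}}\|(S_E^{V,J})_s\|_0\le 4\Gamma^2|\ln\epsilon_{m-1}|^{8\sigma}$ on $K_m$ at a specific scale.

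\medskip

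\textbf{2. The phrase ``a Borel--Cantelli/covering argument exactly as in \cite{A01}'' hides the paper's main technical contribution.} Avila's counting of resonant intervals relies on two regularity properties of the IDS: $\tfrac12$-H\"older continuity (to control $\mu_\omega(J_m(E))$ via Lemma~\ref{growth-cocycle}) and the lower bound $\mathrm{n}(E+\epsilon)-\mathrm{n}(E-\epsilon)\ge c\epsilon^{3/2}$ (to bound the \emph{number} of covering intervals via their $\mathrm{n}$-images). For the Schr\"odinger case these come from the classical Thouless formula and the Deift--Simon harmonic argument. For the singular strip operator the paper has to rebuild all of this from scratch: the Thouless formula of Lemma~\ref{11-27-lemma3.24} has an extra term $\frac12\mathbb{E}(\ln|z-V_{22}|)$; the relation $\mathrm{n}(E)=1-\rho(E)$ (resp.\ $\tfrac12-\rho(E)$) of Theorem~\ref{11-29-theorem5.21} requires a new Interlacing Lemma (Lemma~\ref{11-23-lemma3.15}) because $\mathcal{S}|_{[1,N]}$ can have repeated eigenvalues; and the lower bound on $L(\alpha,S^{V,J}_{E+i\epsilon})$ (Lemma~\ref{8.8-theorem4.5.}) must be proved by a hyperbolic-geometry argument on the second iterate, since the Deift--Simon harmonic-function proof fails here (Section~\ref{har-lya} explains why $\beta(z)\in[0,\pi]$ need not hold). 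Your proposal does not mention any of these ingredients, and without them the covering argument cannot be carried out.
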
	

\begin{remark}
    For one-frequency analytic Schr\"odinger operator, the corresponding result was proved by Avila \cite{A01,avila-kam}, thus Theorem \ref{8.4-theorem4.4} generalize this well-known result to a  class of singular Jacobi operator on the strip. 
\end{remark}

Let 
\begin{align*}
\mathcal{AR}=\left\{E\in\R:\ (\alpha,S_E^{V,J})\text{ is almost reducible}\right\},
\end{align*}	
which is an open set, i.e.,
\begin{align*}
\mathcal{AR}=\cup_{j=1}(a_j,b_j).
\end{align*}
Note by Almost Reducibility Theorem (Theorem \ref{11-26-themrem2.4}), for any $E\in \mathcal{W}$, we have $E\in \mathcal{AR}$. Thus to prove $\mathcal{S}_{J,V,\omega}$ has purely absolutely continuous spectrum in the bounded interval $(a,b)$, one only need to prove that $\mathcal{S}_{J,V,\omega}$ has purely absolutely continuous spectrum in 
\begin{align*}
\mathcal{W}(\delta_0)=\sigma^\mathcal{S}(J,V)\cap[a+\delta_0,b-\delta_0]
\end{align*}
for any sufficiently small $\delta_0>0$. Before giving the proof, one should note for any $E\in \mathcal{W}$, $(\alpha,S_E^{V,J})
$ is subcritical, automatically it is analytic, which implies that $E\notin \operatorname{Ran}(V_{22})$, thus if  $E\in \mathcal{W}(\delta_0)$, we have 
\begin{equation}\label{dist}
    \operatorname{dist}(E,\operatorname{Ran}(V_{22})) \geq \delta_0.
\end{equation}
	
Applying the Almost Reducibility Theorem (Theorem \ref{11-26-themrem2.4}), we can reduce the cocycle to the perturbative regime for any $E\in\mathcal{W}(\delta_0)$. Specifically, for any $\epsilon_0>0$ and $\alpha\in \R\backslash \Q,$ there exist constants
$\bar{h}>0$ and $\Gamma=\Gamma(\alpha,\epsilon_0)>0$ such that for any $
E\in  \mathcal{W}(\delta_0)$,
there exists $\Phi_{E}\in C^{\omega}_{\bar{h}}(\mathbb{T},PSL(2,\mathbb{R}))$ with $\|\Phi_{E}\|_{\bar{h}}<\Gamma$ satisfying
\begin{equation*}
    \Phi_{E}(\omega+\alpha)^{-1}S^{V,J}_{E}(\omega)\Phi_{E}(\omega)=R_{\Phi_{E}}e^{f_{E}(\omega)}
\end{equation*}
where $\left\|f_{E}\right\|_{\bar{h}}<\epsilon_0$ and $\left|\operatorname{deg} \Phi_{E}\right| \leq C|\ln \Gamma|$ for some constant $C=$
$C(V,\alpha)>0 .$

Then, we can apply the KAM scheme to get precise control of the growth of the cocycles in the resonant sets.  We inductively give the parameters, 
for any $h_0:=\bar{h}>\tilde{h}>0$, $\epsilon_0>0$, $\gamma>0,\sigma>0$, define 
$$
\epsilon_j= \epsilon_0^{2^j}, \quad h_j-h_{j+1}=\frac{\bar{h}-\frac{\bar{h}+\tilde{h}}{2}}{4^{j+1}}, \quad N_j=\frac{2|\ln\epsilon_j|}{h_j-h_{j+1}}.
$$
We can then use the following proposition:
\begin{proposition}\cite{wang}\label{local kam}
Let $\alpha\in DC( \gamma, \sigma)$. If
$$\left\|f_E\right\|_{h_{0}}\leq 
\epsilon_0 \leq D_0(\frac{\gamma}{\kappa^{\sigma}},\sigma) (\frac{\bar{h}-\tilde{h}}{8})^{C_0\sigma},$$
 where $D_{0} = D_0(\gamma,\sigma)$ and $C_{0}$ are constants. 
Then there exists $B_{j} \in C_{h_{j}}^{\omega}\left(\mathbb{T}, PSL(2, \mathbb{R})\right)$ with
$|\deg B_j |  \leq 2 N_{j-1}$,
such that
$$B_{j}^{-1}(\omega+\alpha) R_{\Phi_{E}}e^{f_{E}(\omega)} B_{j}(\omega)=A_{j}(E) e^{f_{j}(\omega)},$$
with  estimates  $
\left\|B_{j}\right\|_{0} \leq |\ln\epsilon_{j-1}|^{4\sigma}$, $\left\|f_{j}\right\|_{h_{j}} \leq \epsilon_{j}.$ 
Moreover,  for any $0<|n| \leq N_{j-1}$, denote 
\begin{equation*}
\begin{split}
\Lambda_{j}(n)=\left\{E\in \mathcal{W}(\delta_0):\|2 \rho(\alpha, A_{j-1}(E))- \langle n, \alpha \rangle\|_{\T}< \epsilon_{j-1}^{\frac{1}{15}}\right\}.
\end{split}
\end{equation*}
If 
 $E\in K_j:= \cup_{|n|=1}^{N_{j-1}}\Lambda_{j}(n)$, then $A_{j}(E)$ can be written as 
$$
A_{j}(E)=M^{-1} \exp \left(\begin{array}{cc}{i t_{j}} & {v_{j}} \\ {\bar{v}_{j}} & {-i t_{j}}\end{array}\right) M,
$$
where
$$M=\frac{1}{1+i}\begin{pmatrix}
1&-i\\1&i
\end{pmatrix},$$
with estimates
$$
|t_j|<\epsilon^{\frac{1}{16}}_{j-1},\ |v_{j}|<\epsilon_{j-1}^{\frac{15}{16}}.
$$
\end{proposition}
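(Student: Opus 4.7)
The plan is to run the standard KAM iteration for quasiperiodic $SL(2,\R)$ cocycles and verify, at each step, the claimed quantitative bounds on the conjugant $B_j$, the constant part $A_j$, and the residual perturbation $f_j$. I would proceed by induction on $j$: at $j=0$ take $B_0 = \mathrm{Id}$, $A_0 = R_{\Phi_E}$, $f_0 = f_E$; assuming the level-$j$ data have been produced, I would construct an incremental conjugation $\tilde B \in C^\omega_{h_{j+1}}(\T, PSL(2,\R))$ and set $B_{j+1} := B_j \tilde B$ so that $B_{j+1}^{-1}(\omega+\alpha) R_{\Phi_E} e^{f_E(\omega)} B_{j+1}(\omega) = A_{j+1}(E) e^{f_{j+1}(\omega)}$ with $\|f_{j+1}\|_{h_{j+1}} \leq \epsilon_{j+1}$.

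The construction of $\tilde B$ proceeds via the linearized cohomological equation. Truncating $f_j$ at Fourier modes $|k| \leq N_j$ and writing $\tilde B = e^{Y(\omega)}$, expansion to first order reduces the problem to $Y(\omega + \alpha) A_j - A_j Y(\omega) = -A_j f_j^{\mathrm{low}}(\omega) + c_j$, where $c_j$ collects the obstruction absorbed into $A_{j+1}$. Diagonalizing $A_j$ and matching Fourier coefficients splits this into three scalar equations whose small denominators are of the form $e^{2\pi i k \alpha} - 1$ and $e^{2\pi i k\alpha \pm 4\pi i \rho(\alpha,A_j)} - 1$. In the non-resonant case $\min_{0 < |k| \leq N_j} \|2\rho(\alpha,A_j) - k\alpha\|_\T \geq \epsilon_j^{1/15}$, the Diophantine condition on $\alpha$ bounds the denominators by $\epsilon_j^{-1/15} |k|^\sigma$, and standard analytic estimates in the strip $\{|\Im \omega| < h_{j+1}\}$ yield $\|Y\|_{h_{j+1}} \leq \epsilon_j^{14/15}$ together with the super-exponential residual bound $\|f_{j+1}\|_{h_{j+1}} \leq \epsilon_j^2 = \epsilon_{j+1}$, after accounting for the loss of analyticity $h_j - h_{j+1}$.

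When a resonance $n$ with $0 < |n| \leq N_j$ does occur, one mode of the cohomological equation is not solvable and a preliminary rotation $Q_n$ of degree $n$ is needed to shift the rotation number to within $\epsilon_j^{1/15}$ of zero, so that after this pre-conjugation $A_j$ is close to $\mathrm{Id}$. In the $SU(1,1)$ basis defined by $M$, the exponential parametrization then gives $A_{j+1} = M^{-1} \exp\begin{pmatrix} it_{j+1} & v_{j+1} \\ \bar v_{j+1} & -it_{j+1}\end{pmatrix} M$, with $|t_{j+1}|$ controlled by the resonance gap $\|2\rho(\alpha, A_j) - n\alpha\|_\T < \epsilon_j^{1/15}$ and $|v_{j+1}|$ bounded by the analytic norm of the resonant Fourier coefficient of $f_j$, which is at most $\epsilon_j e^{-2\pi |n| h_j}$; a short calculation using the truncation scale $N_j$ then yields the quoted bounds $|t_{j+1}| < \epsilon_j^{1/16}$ and $|v_{j+1}| < \epsilon_j^{15/16}$. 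The degree of $Q_n$ is at most $N_j$, giving $|\deg B_{j+1}| \leq 2 N_j$ as claimed.

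The main obstacle is maintaining the uniform bound $\|B_j\|_0 \leq |\ln \epsilon_{j-1}|^{4\sigma}$ across all resonant steps, because each resonance $n$ contributes a multiplicative factor involving a power of $|n|$ from the rotation $Q_n$ and a power of the Diophantine denominator $q_n^\sigma$. The key observation is that successive resonances must correspond to strictly increasing denominators from the continued fraction expansion of $\alpha$ (consecutive resonances cannot share a scale because $\epsilon_j$ decays super-exponentially), so the accumulated factors form a telescoping product dominated by the largest relevant denominator, bounded in turn by $N_{j-1}$; the $\sigma$-polynomial dependence then yields the polylogarithmic estimate. The schedule $N_j = 2|\ln\epsilon_j|/(h_j - h_{j+1})$ together with $h_j - h_{j+1} \asymp 4^{-(j+1)}$ is calibrated precisely so that the truncation error $e^{-2\pi N_j(h_j - h_{j+1})}$ equals $\epsilon_j^2$, making the super-exponential residual bound self-consistent. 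The complete argument parallels the treatments in \cite{avila2010almost, avila-kam, wang}.
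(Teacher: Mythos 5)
Your proposal reproduces the standard local KAM iteration for quasiperiodic $SL(2,\R)$ cocycles (cohomological equation, Fourier truncation at scale $N_j$, resonant vs.\ non-resonant dichotomy, rotation removal, $SU(1,1)$ normal form), which is the same route taken in \cite{wang} (and originally in \cite{avila2010almost, AJ1}). The overall architecture is right, and you correctly identify the control of $\|B_j\|_0$ across resonances as the delicate point. Two concrete issues, one minor and one substantive.

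The minor one is arithmetic: with $N_j = 2|\ln\epsilon_j|/(h_j - h_{j+1})$, the tail-of-truncation factor is $e^{-2\pi N_j(h_j - h_{j+1})} = e^{-4\pi|\ln\epsilon_j|} = \epsilon_j^{4\pi}$, not $\epsilon_j^2$. This is conservative, so the conclusion survives, but the claim of an exact match is off.

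The substantive gap is in the mechanism you offer for $\|B_j\|_0 \leq |\ln\epsilon_{j-1}|^{4\sigma}$. You write that each resonance $n$ contributes a multiplicative factor involving a power of $|n|$ ``from the rotation $Q_n$''. But $Q_n(\omega)$ is a rotation by $n\omega/2$, whose matrix entries lie on the unit circle for real $\omega$; hence $\|Q_n\|_0 = 1$, and $Q_n$ does not by itself contribute any power of $|n|$ to the $C^0$ norm (it grows only in the strip $|\Im\omega| > 0$). The polylogarithmic bound actually comes from a different place: in a resonant step one must first diagonalize the near-parabolic constant $A_{j-1}$ (an $SU(1,1)$ element with $|t_{j-1}|, |v_{j-1}|$ small but $\|2\rho(\alpha,A_{j-1})\|$ potentially comparable to $|v_{j-1}|$), and the norm of this diagonalizing conjugation is controlled via the Diophantine condition and the constraint $|n|\le N_{j-1}$. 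Moreover the correct bookkeeping is not a ``telescoping product'' that multiplies contributions from all past resonances; rather, each resonance effectively resets the conjugation, so that $\|B_j\|_0$ is dominated by the single most recent resonant scale, which is what makes the bound uniform. As written, an attempt to make the telescoping-product heuristic rigorous would either give a worse bound or fail outright because the individual factors you assign to $Q_n$ are trivial. The conclusion you reach is the right one, but the path you describe for this specific estimate does not go through; you would need to trace the norm through the diagonalization at the resonant step as in \cite{avila2010almost} or \cite{LYZZ}.
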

In this construction, $K_j$ denotes the set of energies where the cocycle $(\alpha, R_{\Phi_{E}}e^{f_{E}(\omega)})$ is resonant at the $j$-th KAM step. If $E\in K_j$, then we have the following characterization of its IDS and the growth behavior of the cocycles in the resonant sets: 

\begin{proposition}\cite{wang}
 Assume that $\alpha\in DC( \gamma, \sigma)$,  $E \in K_{j},$ then there exists $\tilde{n}_j \in \mathbb{Z}$ with $0<|\tilde{n}_j| <2N_{j-1}$ such that
\begin{equation*}
\|2\rho(\alpha,S_E^{V,J})-\langle \tilde{n}_j, \alpha\rangle\|_{\mathbb{T}} \leqslant 2 \epsilon_{j-1}^{\frac{1}{15}}.
\end{equation*}
Moreover,  we have 
\begin{equation*}
\sup _{0 \leq s \leq \epsilon_{j-1}^{-\frac{1}{8}}}\|(S_{E}^{V,J})_s\|_{0} \leq 4\Gamma^2 |\ln\epsilon_{j-1}|^{8\sigma}.
\end{equation*}
\end{proposition}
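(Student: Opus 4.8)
The statement to be proved relates the fibered rotation number $\rho(\alpha,S_E^{V,J})$ to the one $\rho(\alpha,A_{j-1}(E))$ of the normal-form cocycle, and gives an a priori polynomial bound on the transfer matrices of the original Schr\"odinger-type cocycle when $E$ lies in the resonant set $K_j$. The plan is to transport everything through the explicit conjugacies supplied by Proposition \ref{local kam} and the pre-conjugacy $\Phi_E$ from the Almost Reducibility Theorem, and to track two quantities: the rotation number (which changes in a controlled, quantized way under $PSL(2,\mathbb{R})$-conjugacies with known degree) and the cocycle norm (which is multiplicatively controlled by $\|\Phi_E\|_0\le\Gamma$ and $\|B_j\|_0$).

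First I would record the rotation-number bookkeeping. By Proposition \ref{local kam}, for $E\in K_j$ one has $E\in\Lambda_j(n)$ for some $0<|n|\le N_{j-1}$, i.e. $\|2\rho(\alpha,A_{j-1}(E))-\langle n,\alpha\rangle\|_{\T}<\epsilon_{j-1}^{1/15}$. Then I would use the relation between the rotation number of $(\alpha,S_E^{V,J})$ and that of the conjugated cocycle: conjugation by $\Phi_E$ (of degree $|\deg\Phi_E|\le C|\ln\Gamma|$) and by $B_{j-1}$ (of degree $|\deg B_{j-1}|\le 2N_{j-2}$) shifts $2\rho$ by an integer combination $\langle m,\alpha\rangle$ with $|m|$ bounded by the sum of the degrees, which is $\ll N_{j-1}$; combining with the resonance condition gives $\|2\rho(\alpha,S_E^{V,J})-\langle\tilde n_j,\alpha\rangle\|_{\T}\le 2\epsilon_{j-1}^{1/15}$ for a suitable $\tilde n_j$ with $0<|\tilde n_j|<2N_{j-1}$ (the extra factor $2$ absorbs the degree shift and the passage from $\epsilon_{j-1}^{1/15}$ to $2\epsilon_{j-1}^{1/15}$). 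Here one must be slightly careful that $\tilde n_j\ne0$: this follows because $(\alpha,S_E^{V,J})$ is not uniformly hyperbolic on the spectrum and $\alpha$ is Diophantine, so $2\rho$ cannot itself be $0\bmod\langle\Z,\alpha\rangle+\Z$ within the required precision unless $n$ already forces a nonzero frequency — I would phrase this via the fact that $n\ne 0$ in the resonance and degrees only add integer multiples of $\alpha$.

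Next, for the transfer-matrix bound: on $K_j$ the normal form $A_j(E)=M^{-1}\exp\begin{pmatrix} it_j & v_j\\ \bar v_j & -it_j\end{pmatrix}M$ is elliptic with $|t_j|<\epsilon_{j-1}^{1/16}$, $|v_j|<\epsilon_{j-1}^{15/16}$, so its iterates up to time $\epsilon_{j-1}^{-1/8}$ are bounded by an absolute constant (since $\epsilon_{j-1}^{1/16}\cdot\epsilon_{j-1}^{-1/8}=\epsilon_{j-1}^{-1/16}\to\infty$ — wait, that is \emph{not} small, so the correct estimate uses that the elliptic part rotates and the off-diagonal perturbation $v_j$ is doubly small; the standard computation gives $\sup_{0\le s\le \epsilon_{j-1}^{-1/8}}\|(A_j(E))_s\|_0\le 2$, say). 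Undoing the conjugacies $B_j$ and $\Phi_E$ multiplies this by $\|B_j\|_0^2\le|\ln\epsilon_{j-1}|^{8\sigma}$ and by $\|\Phi_E\|_0^2\le\Gamma^2$, with the $\|e^{f_E}\|$ and $\|e^{f_j}\|$ factors contributing only a bounded multiplicative constant over the relevant time scale because $\|f_j\|_{h_j}\le\epsilon_j$ is summable against $\epsilon_{j-1}^{-1/8}$ steps. This yields $\sup_{0\le s\le\epsilon_{j-1}^{-1/8}}\|(S_E^{V,J})_s\|_0\le 4\Gamma^2|\ln\epsilon_{j-1}|^{8\sigma}$, as claimed.

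I expect the main obstacle to be the careful accounting of the degree shifts in the rotation number through the \emph{composition} of conjugacies $\Phi_E$ and $B_{j-1}$ (and the fact that $B_{j-1}$ is itself a product of the $B_i$'s from earlier KAM steps, each contributing its own degree), so that the total frequency shift $|\tilde n_j - n|$ stays within the window that keeps $|\tilde n_j|<2N_{j-1}$ and the precision at $2\epsilon_{j-1}^{1/15}$ rather than something larger; one has to check that $\sum_{i<j}|\deg B_i|+|\deg\Phi_E|=O(N_{j-2})+O(|\ln\Gamma|)$ is indeed $o(N_{j-1})$, which follows from the super-exponential decay $\epsilon_i=\epsilon_0^{2^i}$ forcing $N_{i}$ to grow roughly geometrically, but the bookkeeping is where errors creep in. The norm estimate is comparatively routine once one has Proposition \ref{local kam} and the elementary bound on iterates of an almost-elliptic $SU(1,1)$ matrix; I would cite \cite{wang} for the analogous Schr\"odinger computation and note that \eqref{dist} guarantees $E\notin\operatorname{Ran}(V_{22})$ so that $S_E^{V,J}$ is genuinely the relevant analytic cocycle here.
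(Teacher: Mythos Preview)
The paper does not give its own proof of this proposition; it is quoted verbatim from \cite{wang}. Your sketch is exactly the standard argument used there: transport the rotation number through the conjugacies $\Phi_E$ and $B_{j-1}$, using that a $PSL(2,\R)$ conjugacy of degree $d$ shifts $2\rho$ by $d\alpha$, and bound the original transfer matrices by sandwiching the normal form $A_j e^{f_j}$ between $\|\Phi_E\|_0^2\le\Gamma^2$ and $\|B_j\|_0^2\le|\ln\epsilon_{j-1}|^{8\sigma}$.

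Two clarifications on the points you flag. First, the degree bookkeeping is simpler than you fear: in the formulation of Proposition~\ref{local kam}, $B_{j-1}$ is already the \emph{cumulative} conjugacy from $R_{\Phi_E}e^{f_E}$ to $A_{j-1}e^{f_{j-1}}$, with $|\deg B_{j-1}|\le 2N_{j-2}$ directly; you do not sum over earlier steps. Since $N_i\asymp C\cdot 8^i$ (from $|\ln\epsilon_i|=2^i|\ln\epsilon_0|$ and $h_i-h_{i+1}\asymp 4^{-i}$), one has $2N_{j-2}+C|\ln\Gamma|+N_{j-1}<2N_{j-1}$ easily. The passage from $\rho(\alpha,A_{j-1}e^{f_{j-1}})$ to $\rho(\alpha,A_{j-1})$ costs only $O(\epsilon_{j-1}^{1/2})\ll\epsilon_{j-1}^{1/15}$ by $C^0$-continuity of the rotation number. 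Second, your worry that $s|t_j|\sim\epsilon_{j-1}^{-1/16}$ is large is resolved exactly as you say: writing $A_j^s=M^{-1}\exp\bigl(s\begin{smallmatrix}it_j&v_j\\\bar v_j&-it_j\end{smallmatrix}\bigr)M$, the $SU(1,1)$ norm is governed by the off-diagonal entry, which is $\le s|v_j|\le\epsilon_{j-1}^{13/16}$ regardless of whether the matrix is elliptic or hyperbolic (use $|\sin x/x|\le 1$ or $\sinh x\le 2x$ for $x\le 1$); together with $\|f_j\|_0\le\epsilon_j$ over $\epsilon_{j-1}^{-1/8}$ steps this gives $\|(A_je^{f_j})_s\|_0\le 4$.

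Your argument for $\tilde n_j\ne 0$ is the only loose thread: the degrees could in principle cancel $n$. In practice this is harmless for the application in the paper (the covering argument in the proof of Theorem~\ref{8.4-theorem4.4} only needs $|\tilde n_j|<2N_{j-1}$), and energies with $2\rho\in\Z+\Z\alpha$ exactly are reducible by Eliasson and handled separately; but as a self-contained claim it needs one more line than you give it.
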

Combining this with Theorem \ref{11-29-theorem5.21}, one has the following corollary:
\begin{corollary}\label{lemma4.7}
 Suppose that $\alpha\in DC( \gamma, \sigma)$. If $E \in K_{j},$ then there exists $\tilde{n}_j \in \mathbb{Z}$ with $0<|\tilde{n}_j| <2N_{j-1}$ such that
\begin{equation*}
\|2\mathrm{n}(E)+\langle \tilde{n}_j, \alpha\rangle\|_{\mathbb{T}} \leqslant 2 \epsilon_{j-1}^{\frac{1}{15}}.
\end{equation*}
Moreover,  we have 
\begin{equation*}
\sup _{0 \leq s \leq \epsilon_{j-1}^{-\frac{1}{8}}}\|(S_{E}^{V,J})_s\|_{0} \leq 4\Gamma^2 |\ln\epsilon_{j-1}|^{8\sigma}.
\end{equation*}   
\end{corollary}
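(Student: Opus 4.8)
The plan is to derive Corollary~\ref{lemma4.7} directly from the preceding proposition by substituting the relationship between the fibered rotation number and the IDS established in Theorem~\ref{11-29-theorem5.21}. First I would record that, for any $E\in\mathcal{W}(\delta_0)$, the bound \eqref{dist} gives $\operatorname{dist}(E,\operatorname{Ran}(V_{22}))\ge\delta_0>0$, so $E$ lies in $\mathcal{T}_{\delta_0}$ and the cocycle $(\alpha,S_E^{V,J})$ (equivalently $(\alpha,A^E)$) is a bona fide $C^0(\T,SL(2,\R))$ cocycle; hence its fibered rotation number $\rho(E):=\rho(\alpha,S_E^{V,J})$ is well-defined and Theorem~\ref{11-29-theorem5.21} applies. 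The second statement of the corollary — the uniform bound $\sup_{0\le s\le\epsilon_{j-1}^{-1/8}}\|(S_E^{V,J})_s\|_0\le 4\Gamma^2|\ln\epsilon_{j-1}|^{8\sigma}$ — is copied verbatim from the cited proposition of \cite{wang} and requires no further argument, so the only real content is converting the rotation-number estimate into an IDS estimate.

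The key step is the following. The cited proposition yields $\tilde n_j\in\Z$ with $0<|\tilde n_j|<2N_{j-1}$ and $\|2\rho(\alpha,S_E^{V,J})-\langle\tilde n_j,\alpha\rangle\|_\T\le 2\epsilon_{j-1}^{1/15}$. Now invoke Theorem~\ref{11-29-theorem5.21}: on the relevant portions of the spectrum $\mathrm{n}(E)$ and $\rho(E)$ differ by a constant (namely $\mathrm{n}(E)=1-\rho(E)$ when $\sup V_{22}<E$, and $\mathrm{n}(E)=\tfrac12-\rho(E)$ when $E<\inf V_{22}$; in either case $\mathrm{n}(E)+\rho(E)\in\tfrac12\Z$). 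Since $E\in\mathcal{W}(\delta_0)$ lies strictly outside $\operatorname{Ran}(V_{22})=[\inf V_{22},\sup V_{22}]$ — in fact at distance $\ge\delta_0$ — exactly one of these two regimes holds, and in both we get $\rho(E)=c-\mathrm{n}(E)$ with $2c\in\Z$. Substituting $2\rho(E)=2c-2\mathrm{n}(E)$ into the estimate gives
\begin{align*}
\|2c-2\mathrm{n}(E)-\langle\tilde n_j,\alpha\rangle\|_\T=\|-2\mathrm{n}(E)-\langle\tilde n_j,\alpha\rangle+2c\|_\T=\|2\mathrm{n}(E)+\langle\tilde n_j,\alpha\rangle\|_\T\le 2\epsilon_{j-1}^{1/15},
\end{align*}
where I used that $2c\in\Z$ and $\|x\|_\T=\|-x\|_\T$ are both invisible to the $\T$-norm. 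This is precisely the claimed inequality with the same $\tilde n_j$.

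I do not anticipate a genuine obstacle here: the corollary is essentially a bookkeeping consequence of Theorem~\ref{11-29-theorem5.21} and the quoted proposition. The one point that deserves care — and which I would state explicitly — is the verification that $E\in\mathcal{W}(\delta_0)$ genuinely forces $E$ into one of the two regimes of Theorem~\ref{11-29-theorem5.21}, i.e.\ that $\mathcal{W}(\delta_0)\subseteq\{E:E<\inf V_{22}\}\cup\{E:\sup V_{22}<E\}$; this follows because subcriticality of $(\alpha,S_E^{V,J})$ forces analyticity of the cocycle, hence $E\notin\operatorname{Ran}(V_{22})$, and then \eqref{dist} upgrades this to distance $\ge\delta_0$. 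The remaining subtlety is purely notational: matching the sign and the integer shift so that the $\tilde n_j$ appearing in the IDS estimate can be taken identical to (not merely comparable to) the one in the rotation-number estimate, which the computation above confirms.
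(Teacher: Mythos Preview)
Your proposal is correct and follows exactly the route the paper takes: the paper's proof consists of nothing more than the phrase ``Combining this with Theorem~\ref{11-29-theorem5.21}, one has the following corollary,'' and you have simply spelled out the (correct) substitution $\rho(E)=c-\mathrm{n}(E)$ with $2c\in\Z$ that makes this combination work. Your care in checking that $E\in K_j\subseteq\mathcal{W}(\delta_0)$ lands in one of the two regimes of Theorem~\ref{11-29-theorem5.21} is the only nontrivial point, and you handle it correctly via \eqref{dist}.
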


To study the regularity of the IDS, we need the following lemma, which is quoted from \cite{AJ1}:
\begin{lemma}\cite{AJ1}\label{ids}
Let $\alpha \in DC$,  if  $(\alpha, A)$ is  analytically almost reducible,  then for any
continuous map $B : \mathbb{T} \rightarrow SL(2, \mathbb{C}),$ we have
$$
|L(\alpha, A)-L(\alpha, B)| \leqslant C_{0}\|B-A\|_{0}^{\frac{1}{2}},
$$
where $C_{0}$ is a constant depending on $\alpha$.
\end{lemma}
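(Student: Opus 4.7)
The plan is to use analytic almost reducibility to transport the comparison between $L(\alpha,A)$ and $L(\alpha,B)$ to the neighbourhood of a constant cocycle, and then apply a square-root estimate for the Lyapunov exponent of perturbations of a constant $SL(2,\R)$-matrix. Concretely, for any small $\epsilon>0$ almost reducibility furnishes $\Phi_\epsilon\in C^\omega(\T,PSL(2,\R))$ and a constant $A_*\in SL(2,\R)$ (independent of $\epsilon$) with
\begin{equation*}
\|\Phi_\epsilon(\cdot+\alpha)^{-1} A(\cdot)\Phi_\epsilon(\cdot) - A_*\|_0 < \epsilon.
\end{equation*}
Setting $\tilde A_\epsilon := \Phi_\epsilon(\cdot+\alpha)^{-1} A\,\Phi_\epsilon$ and $\tilde B_\epsilon := \Phi_\epsilon(\cdot+\alpha)^{-1} B\,\Phi_\epsilon$, conjugation invariance gives $L(\alpha,A)=L(\alpha,\tilde A_\epsilon)$ and $L(\alpha,B)=L(\alpha,\tilde B_\epsilon)$, together with
\begin{equation*}
\|\tilde A_\epsilon-A_*\|_0\le \epsilon, \qquad \|\tilde B_\epsilon-A_*\|_0\le \epsilon + \|\Phi_\epsilon\|_0^{\,2}\,\|B-A\|_0.
\end{equation*}

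The decisive step is a quantitative estimate near the constant $A_*$. If $A_*$ is hyperbolic then $(\alpha,A)$ is uniformly hyperbolic and a small $C^0$ perturbation leaves it uniformly hyperbolic with analytic (hence Lipschitz) dependence of $L$, so the bound is immediate; so one may assume $A_*$ is elliptic or parabolic and thus $L(\alpha,A_*)=0$. The claim to establish is that for any $C\in C^0(\T,SL(2,\C))$ with $\|C-A_*\|_0\le\delta$,
\begin{equation*}
L(\alpha,C)\le c(A_*)\sqrt{\delta}.
\end{equation*}
For elliptic $A_*$ this follows by conjugating $A_*$ to a rotation and applying the hyperbolic-disk criterion of Lemma \ref{hyb}: a $\delta$-perturbation of a rotation contracts the Poincaré disk by at most $O(\sqrt{\delta})$, whence $L(\alpha,C)=O(\sqrt{\delta})$. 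The parabolic case is handled by first replacing $A_*$ by a nearby elliptic constant (at the cost of an $O(\delta^{1/2})$ correction absorbed into $c(A_*)$) and then invoking the elliptic estimate.

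Combining the two inputs yields
\begin{equation*}
|L(\alpha,A)-L(\alpha,B)|\le 2c(A_*)\sqrt{\epsilon+\|\Phi_\epsilon\|_0^{\,2}\,\|B-A\|_0}.
\end{equation*}
Choosing $\epsilon=\|B-A\|_0$ and using the quantitative output of the almost-reducibility KAM scheme (analogous to Proposition \ref{local kam}), which controls $\|\Phi_\epsilon\|_0$ at most polylogarithmically in $\epsilon^{-1}$, the $\|\Phi_\epsilon\|_0$-factor produces only a $|\log\|B-A\|_0|^{M}$ contribution that is absorbed into $C_0$ without altering the Hölder exponent. The main obstacle is the square-root estimate near $A_*$: while the elliptic case reduces to the hyperbolic-geometry lemma, one must keep track of how $c(A_*)$ degenerates as $A_*$ approaches a parabolic or hyperbolic limit, and ensure that the polylog growth of $\|\Phi_\epsilon\|_0$ produced by the KAM scheme really does not destroy the $\tfrac12$ exponent.
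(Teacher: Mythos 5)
The paper does not reproduce a proof of this lemma; it cites \cite{AJ1}. Your plan — conjugate $A$ and $B$ by the almost-reducibility conjugation $\Phi_\epsilon$, estimate the Lyapunov exponent of a small perturbation of the constant $A_*$, and then optimize over $\epsilon$ — is the standard and correct high-level approach. Two of the concrete steps, however, do not go through as written.

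First, Lemma~\ref{hyb} cannot be used for the upper bound you need. That lemma says that if $\mathring{A}(x)\cdot\D\subset\D_{e^{-\epsilon}}$ for every $x$, then $L(\alpha,A)>\epsilon/2$ — it converts \emph{uniform contraction} into a \emph{lower} bound on the exponent. You want the converse implication (no contraction $\Rightarrow$ small exponent), which the lemma does not supply, and indeed a small $SL(2,\C)$-perturbation of a rotation need not map $\D$ into itself at all. The correct mechanism near a constant $A_*$ with $\rho(A_*)=1$ is more elementary: after conjugating the elliptic $A_*$ to a rotation $R_\theta$ by some $M\in SL(2,\R)$, submultiplicativity gives $L(\alpha,C)\le\ln\|MCM^{-1}\|_0\le\|M\|^2\,\delta$ (which is $O(\delta)$, not $O(\sqrt\delta)$, but with $\|M\|^2\to\infty$ as $\theta\to0$); while in the parabolic case one conjugates by $D_t=\operatorname{diag}(t,t^{-1})$ with $t^4\sim\delta$, so that $\|D_tCD_t^{-1}-I\|_0\le C\sqrt\delta$ and again submultiplicativity gives $L(\alpha,C)\le C\sqrt\delta$. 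The genuinely square-root behavior comes from the parabolic degeneracy, not from the hyperbolic-geometry lemma.

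Second, the claim that the polylogarithmic growth of $\|\Phi_\epsilon\|_0$ ``is absorbed into $C_0$ without altering the H\"older exponent'' is not correct as stated. With $\epsilon=\|B-A\|_0$ and $\|\Phi_\epsilon\|_0\le |\ln\epsilon|^M$, your bound reads $L(\alpha,B)\le C\sqrt{\|B-A\|_0}\,|\ln\|B-A\|_0|^{M/2}$, and the extra factor is unbounded as $\|B-A\|_0\to0$, so it cannot be hidden in a constant; this argument only yields H\"older exponent $\tfrac12-o(1)$. You flag this yourself as ``the main obstacle,'' which is honest, but it means the proof is incomplete: closing the gap requires either a sharper quantitative almost-reducibility input or a different balancing argument, and is precisely the technical content of the cited result in \cite{AJ1}. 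So the strategy is right, but as written the sketch neither proves the stated bound nor correctly identifies the mechanism behind the $\sqrt{\delta}$ estimate near the constant.
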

Using this lemma, we can derive the following result:
\begin{lemma}\label{idsholder}
 Assume that $\alpha\in DC( \gamma, \sigma)$. Then the integrated density of states $\mathrm{n}$ is $\frac{1}{2}$-H\"older continuous on $\mathcal{W}(\delta_0)$.
\end{lemma}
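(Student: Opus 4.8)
The plan is to prove that $\mathrm{n}$ is $\frac12$-H\"older on $\mathcal{W}(\delta_0)$ by combining the Thouless formula (Lemma \ref{11-27-lemma3.24}), the relation between the IDS and the fibered rotation number (Theorem \ref{11-29-theorem5.21}), and the H\"older regularity of the Lyapunov exponent for almost reducible cocycles (Lemma \ref{ids}). The key point is that on $\mathcal{W}(\delta_0)$ the cocycle $(\alpha,S_E^{V,J})$ is subcritical, hence analytically almost reducible and $L(\alpha,S_E^{V,J})=0$; moreover by \eqref{dist} we have $\operatorname{dist}(E,\operatorname{Ran}(V_{22}))\ge\delta_0$, so $\ln|z-f_{22}(\cdot)|\in L^1$ uniformly for $z$ near $\mathcal{W}(\delta_0)$ and $A^z$ is well-behaved there.

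First I would move off the real axis: for $z=E+i\epsilon$ with $E\in\mathcal{W}(\delta_0)$ and small $\epsilon>0$, I would show that $L(T,A^z)\le C\epsilon^{1/2}$. This follows from Lemma \ref{ids} applied to $(\alpha,S_E^{V,J})$ (which is almost reducible with $L=0$) together with the estimate $\|S_{z}^{V,J}-S_E^{V,J}\|_0=O(\epsilon)$, valid because $E\notin\operatorname{Ran}(V_{22})$ with a $\delta_0$-gap keeps the denominators $z-V_{22}$ bounded below; note that $L(\alpha,S_E^{V,J})$ and $L(T,A^E)$ differ only by the explicit term $\frac12\E(\ln|E-f_{22}|)$-type correction, which is Lipschitz on $\mathcal{W}(\delta_0)$, so it suffices to control $L(T,A^z)$. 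Next, I would use the Thouless formula for $\mathcal{S}$ (Lemma \ref{11-27-lemma3.24}) to write, for real $E_1<E_2$ in $\mathcal{W}(\delta_0)$,
\begin{align*}
\int \ln|E-E_2|\,d\mathrm{n}(E)-\int\ln|E-E_1|\,d\mathrm{n}(E)=\tfrac12\big(L(T,A^{E_2})-L(T,A^{E_1})\big)+\tfrac12\big(\E\ln|E_2-f_{22}|-\E\ln|E_1-f_{22}|\big),
\end{align*}
and differentiate/integrate this in the spirit of the standard harmonic-analysis argument to extract $\mathrm{n}(E_2)-\mathrm{n}(E_1)$. The cleaner route, though, is to use Theorem \ref{11-29-theorem5.21}: on $\mathcal{W}(\delta_0)$ (which lies either above $\sup V_{22}$ or below $\inf V_{22}$ within each interval $(a_j,b_j)$, since $E\notin\operatorname{Ran}(V_{22})$ with a gap) one has $\mathrm{n}(E)=\mathrm{const}\mp\rho(E)$, so it suffices to prove that the fibered rotation number $\rho(E)$ of $(\alpha,A^E)$ is $\frac12$-H\"older on $\mathcal{W}(\delta_0)$.

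To get $\frac12$-H\"older regularity of $\rho$, I would invoke the standard fact (see e.g. the analysis behind \cite{AJ1} and the Avila--Jitomirskaya quantitative almost reducibility framework) that for a one-frequency cocycle whose Lyapunov exponent satisfies $L(T,A^{E+i\epsilon})\le C\epsilon^{1/2}$ in a complex neighborhood, the rotation number $\rho$ is $\frac12$-H\"older: this is because $-L(T,A^z)+i\rho(z)$ (suitably normalized) extends to a holomorphic function on the upper half plane whose real part is $O(\epsilon^{1/2})$ on the boundary, and a Borel--Carath\'eodory / subharmonicity estimate then transfers this to a $\frac12$-H\"older modulus of continuity for the boundary values of the imaginary part. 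Concretely, with the function $w(z)=\int\ln(E-z)\,d\mathrm{n}(E)$ from the Thouless formula, $\Re w$ is controlled by $L(T,A^z)/2$ up to the explicit Lipschitz correction, and $\Im w$ is essentially $\pi\mathrm{n}$ plus a Lipschitz term; a harmonic conjugate estimate on a strip/half-disk then yields $|\mathrm{n}(E_2)-\mathrm{n}(E_1)|\le C|E_2-E_1|^{1/2}$.

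The main obstacle I anticipate is purely technical bookkeeping: making the complex estimate $L(T,A^{E+i\epsilon})=O(\epsilon^{1/2})$ \emph{uniform} over all of $\mathcal{W}(\delta_0)$ (not just near a single $E$), which requires uniform almost reducibility — precisely what the quantitative statement behind Proposition \ref{local kam} provides, with $\|\Phi_E\|_{\bar h}<\Gamma$ uniform — and then combining this with Lemma \ref{ids} where the constant $C_0$ depends only on $\alpha$. A secondary subtlety is ensuring $A^z$ stays analytic and the perturbation estimate $\|S_z^{V,J}-S_E^{V,J}\|_0=O(\epsilon)$ holds with a constant depending only on $\delta_0$ (and the $C^\omega$-norm of $V$), which is where the gap condition \eqref{dist} is essential. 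Once uniformity is in hand, the passage from the complex Lyapunov bound to the $\frac12$-H\"older bound on $\mathrm{n}$ is the classical argument of Goldstein--Schlag / Avila--Jitomirskaya and requires no new ideas.
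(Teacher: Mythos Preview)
Your proposal has the right ingredients --- almost reducibility giving $L(\alpha,S_{E+i\epsilon}^{V,J})=O(\epsilon^{1/2})$ via Lemma \ref{ids}, and the Thouless formula --- but you take an unnecessarily roundabout path through the rotation number and Borel--Carath\'eodory type estimates. The paper's argument is considerably more direct: subtract the Thouless formulas at $E$ and at $E+i\epsilon$ (a \emph{vertical} perturbation, not the horizontal $E_1,E_2$ comparison you wrote). This yields
\[
\big|L(\alpha,S_{E+i\epsilon}^{V,J})-L(\alpha,S_E^{V,J})\big|\ \ge\ \int\ln\Big(1+\tfrac{\epsilon^2}{(E-E')^2}\Big)\,d\mathrm{n}(E')\ -\ O\big(\tfrac{\epsilon^2}{\delta_0^2}\big),
\]
where the error term comes from the $V_{22}$-correction and uses \eqref{dist}. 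Restricting the integral to $|E-E'|<\epsilon$ gives the lower bound $\ln 2\cdot(\mathrm{n}(E+\epsilon)-\mathrm{n}(E-\epsilon))$, and combining with the upper bound $C_1\epsilon^{1/2}$ from Lemma \ref{ids} finishes immediately. No passage through $\rho$, no harmonic conjugate estimate, no Goldstein--Schlag machinery.

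Two further remarks. First, your route (b) through Theorem \ref{11-29-theorem5.21} has a genuine gap in the generality of Theorem \ref{8.4-theorem4.4}: that theorem only relates $\mathrm{n}$ and $\rho$ for $E>\sup V_{22}$ or $E<\inf V_{22}$, not merely $E\notin\operatorname{Ran}(V_{22})$; for general $V$ the subcritical region could sit in a gap of $\operatorname{Ran}(V_{22})$ where the relation is not established. Second, your worry about ``uniform almost reducibility'' is a red herring here: Lemma \ref{ids} is applied at a fixed $E$ with $A=S_E^{V,J}$ and $B=S_{E+i\epsilon}^{V,J}$, the constant $C_0$ depends only on $\alpha$, and the perturbation bound $\|B-A\|_0=O(\epsilon/\delta_0)$ is already uniform on $\mathcal{W}(\delta_0)$ by \eqref{dist}. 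No family-level almost reducibility is needed.
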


\begin{proof}
Fix $\delta_0>0$, choose $0<\epsilon\ll\delta$. By Lemma \ref{ids}, we have $$\left|L(\alpha,S_{E+i\epsilon}^{V,J})-L(\alpha,S_{E}^{V,J})\right|\le C_1\epsilon^{\frac{1}{2}}.$$ On the other hand, Lemma \ref{11-27-lemma3.24} states that
\begin{align*}
L(\alpha, S_E^{V,J})=2\int\ln|E-E'|d\mathrm{n}(E')-\E(\ln|E-V_{22}(\cdot)|),
\end{align*}
combining \eqref{dist}, this implies that for every $\epsilon>0$,
\small
\begin{align*}
\left|L(\alpha,S_{E+i\epsilon}^{V,J})-L(\alpha,S_{E}^{V,J})\right|\ge&\int\ln\left(1+\frac{\epsilon^2}{(E-E')^2}\right)d\mathrm{n}(E')+\frac{1}{2}\int\ln\left(1-\frac{\epsilon^2}{(E-V_{22}(\omega))^2}\right)d\omega\\\ge&\int_{E-\epsilon}^{E+\epsilon}\ln\left(1+\frac{\epsilon^2}{(E-E')^2}\right)d\mathrm{n}(E')-\frac{\epsilon^2}{\delta^2}\\\ge&(\mathrm{n}(E+\epsilon)-\mathrm{n}(E-\epsilon))\ln2-\frac{\epsilon^2}{\delta^2},
\end{align*}
\normalsize
which gives$
\mathrm{n}(E+\epsilon)-\mathrm{n}(E-\epsilon)\le C_2(\delta)\epsilon^{\frac{1}{2}}, $
for sufficiently small $\epsilon>0$. Moreover, $\mathrm{n}$ is locally constant in the complement of $\sigma^\mathcal{S}(J,V)$, which means precisely that $\mathrm{n}$ is $\frac{1}{2}$-H\"{o}lder continuous on $\mathcal{W}(\delta_0)$.
\end{proof}

As a consequence, we can establish a lower bound estimate for $\mathrm{n}$:
\begin{lemma}\label{lemma4.10}
For $\delta_{0}>0$ small enough, if $E \in\mathcal{W}(\delta_0)$, then there exists a constant $c(\delta_0)>0$ such that for sufficiently small $\epsilon>0$, $\mathrm{n}(E+\epsilon)-\mathrm{n}(E-\epsilon)\geq c\epsilon^{\frac{3}{2}},$
where $c(\delta_0)$ is a small universal constant.
\end{lemma}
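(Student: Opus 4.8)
The plan is to reduce the lower bound on $\mathrm n$ to two ingredients: a uniform polylogarithmic control of the transfer matrices, and a ``reverse'' comparison between transfer matrix growth and the integrated density of states. Since $\mathcal W(\delta_0)\subseteq\sigma^{\mathcal S}(J,V)$ and, by \eqref{dist}, $\mathcal W(\delta_0)\subseteq\mathcal T_{\delta_0}$, for every $E\in\mathcal W(\delta_0)$ the cocycle $(\alpha,S_E^{V,J})$ is subcritical, so in particular $L(\alpha,S_E^{V,J})=0$ and the singular Jacobi cocycle $(\alpha,A^E)$ of \eqref{eq6} coincides (up to the factor $E-V_{22}$) with $(\alpha,S_E^{V,J})$. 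I will establish: (i) there is a function $\mathcal P(\epsilon)$ growing at most like a fixed power of $\ln(1/\epsilon)$ with $\sup_{0\le s\le\epsilon^{-1}}\|(S_E^{V,J})_s\|_0\le\mathcal P(\epsilon)$ for all $E\in\mathcal W(\delta_0)$ and all small $\epsilon>0$; and (ii) for every $E\in\sigma^{\mathcal S}(J,V)\cap\mathcal T_{\delta_0}$ one has $\mathrm n(E+\epsilon)-\mathrm n(E-\epsilon)\ge c_0\,\epsilon\,\bigl(\sup_{0\le s\le\epsilon^{-1}}\|(S_E^{V,J})_s\|_0\bigr)^{-2}$. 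Granting (i) and (ii), $\mathrm n(E+\epsilon)-\mathrm n(E-\epsilon)\ge c_0\,\epsilon\,\mathcal P(\epsilon)^{-2}\ge c\,\epsilon^{3/2}$ once $\epsilon$ is small, which is the claim.

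For (i) I would feed $(\alpha,S_E^{V,J})$ into the KAM scheme of Proposition \ref{local kam}. Given $\epsilon$, pick $j=j(\epsilon)$ minimal with $\epsilon_{j-1}\le\epsilon^{16}$; then $\epsilon_{j-1}\in(\epsilon^{32},\epsilon^{16}]$, so $|\ln\epsilon_{j-1}|\asymp\ln(1/\epsilon)$, $N_{j-1}$ is bounded by a fixed power of $\ln(1/\epsilon)$, $\epsilon_{j-1}^{-1/8}\ge\epsilon^{-2}\ge\epsilon^{-1}$, and $\epsilon_j\le\epsilon^{32}$. Conjugating by the ARC map $\Phi_E$ and then by $B_j$ (each of norm $\le\Gamma|\ln\epsilon_{j-1}|^{4\sigma}$) brings $(\alpha,S_E^{V,J})$ to $A_j(E)e^{f_j}$ with $\|f_j\|\le\epsilon_j$. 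If $E\in K_j$, Corollary \ref{lemma4.7} gives $\sup_{0\le s\le\epsilon_{j-1}^{-1/8}}\|(S_E^{V,J})_s\|_0\le4\Gamma^2|\ln\epsilon_{j-1}|^{8\sigma}$, which covers $s\le\epsilon^{-1}$. If $E\notin K_j$, then $E\in\sigma^{\mathcal S}(J,V)$ rules out uniform hyperbolicity of $(\alpha,A_j(E)e^{f_j})$ and the step-$j$ resonance removal was not triggered, so the construction keeps $A_j(E)$ within $O(\epsilon_j^{1/2})$ of an $SL(2,\mathbb R)$ rotation; hence $\|(A_j(E)e^{f_j})_s\|\le Ce^{Cs\epsilon_j^{1/2}}\le C'$ for $s\le\epsilon^{-1}\le\epsilon_j^{-1/2}$, and conjugating back gives $\sup_{0\le s\le\epsilon^{-1}}\|(S_E^{V,J})_s\|_0\le C\Gamma^2|\ln\epsilon_{j-1}|^{8\sigma}$. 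In either case set $\mathcal P(\epsilon):=C\Gamma^2|\ln\epsilon_{j-1}|^{8\sigma}$.

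For (ii) I would work with the eigenvalue counting description $\mathrm n(E)=\lim_{N\to\infty}\mathrm n_{\omega,N}(E)$ valid for a.e.\ $\omega$ (Proposition \ref{8.5-lemma3.7.}; equivalently, with the fibered rotation number of $(\alpha,A^E)$ via Theorem \ref{11-29-theorem5.21}, which is legitimate since $E\in\mathcal T_{\delta_0}$), and exploit the polylogarithmic bound from (i) over $[0,\epsilon^{-1}]$: by \eqref{eq6} the two solutions $\boldsymbol u_1,\boldsymbol u_2$ of $\mathcal S_\omega\boldsymbol u=E\boldsymbol u$ then satisfy $\|\boldsymbol u_i\|_N^2\asymp N$ up to $\mathcal P(\epsilon)^2$ for $N\asymp\epsilon^{-1}$, and a Pr\"ufer/Sturm oscillation count — combined with the Interlacing Lemma \ref{11-23-lemma3.15} relating the spectra of $\mathcal S_\omega|_{[1,N]}$ and $\mathcal S_\omega|_{[1,N-1]}$ — shows that $\mathcal S_\omega|_{[1,N]}$ has at least $c\,N\,\mathcal P(\epsilon)^{-2}$ eigenvalues in $[E-\epsilon,E+\epsilon]$; dividing by $2N$ and letting $N\to\infty$ yields (ii). I expect the genuinely delicate point to be making (i)–(ii) uniform over all of $\mathcal W(\delta_0)$, in particular at the ``almost parabolic'' energies — deep spectral gap edges not yet resolved at step $j$ — where the naive continuation of the transfer matrix bound past time $\epsilon_{j-1}^{-1/8}$ degenerates (the cocycle is only parabolic there, not bounded); there one must instead use the near-parabolic normal form $A_j(E)=M^{-1}\exp\!\left(\begin{smallmatrix}it_j&v_j\\\bar v_j&-it_j\end{smallmatrix}\right)M$ with $|v_j|<|t_j|<\epsilon_{j-1}^{1/16}$ from Proposition \ref{local kam}, which exhibits $A_j(E)$ as an elliptic rotation by a tiny angle conjugate to a matrix of bounded norm, and hence keeps $\|(S_E^{V,J})_s\|_0$ polylogarithmic up to time $\epsilon_j^{-1}\ge\epsilon^{-1}$ in this case as well, restoring (i) and with it the whole estimate.
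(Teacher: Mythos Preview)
Your approach is different from the paper's and has a genuine gap at the point you yourself identify.

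The paper does not use transfer-matrix bounds here at all. It applies the Thouless formula (Lemma~\ref{11-27-lemma3.24}) at $E+i\delta$ with $\delta=c\epsilon^{3/2}$: since $L(\alpha,S_E^{V,J})=0$,
\[
L(\alpha,S^{V,J}_{E+i\delta})=\int\ln\Bigl(1+\frac{\delta^{2}}{(E-E')^{2}}\Bigr)\,d\mathrm n(E')-\frac12\,\E\Bigl[\ln\Bigl(1+\frac{\delta^{2}}{(E-V_{22})^{2}}\Bigr)\Bigr].
\]
The $V_{22}$-term is $O(\delta^2)$ by \eqref{dist}, and the left side is $\ge C\delta$ by the hyperbolic-geometry estimate Lemma~\ref{8.8-theorem4.5.}. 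Splitting the $d\mathrm n$-integral over the four ranges $|E-E'|\ge\delta_0/2$, $[\epsilon,\delta_0/2)$, $[\epsilon^4,\epsilon)$, $(0,\epsilon^4)$ and bounding the first, second, and fourth pieces via the $\tfrac12$-H\"older continuity of $\mathrm n$ (Lemma~\ref{idsholder}) forces the third to be $\ge C'\delta$; since its integrand is $\le C\ln(1/\epsilon)$, the lower bound on $\mathrm n(E+\epsilon)-\mathrm n(E-\epsilon)$ follows. This is automatically uniform over $\mathcal W(\delta_0)$: the only $E$-dependence enters through $|E-V_{22}|\ge\delta_0$.

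Your route cannot be made uniform over $\mathcal W(\delta_0)$. The fix you propose for almost-parabolic energies misreads Proposition~\ref{local kam}: it only asserts $|t_j|<\epsilon_{j-1}^{1/16}$ and $|v_j|<\epsilon_{j-1}^{15/16}$, with \emph{no} inequality $|v_j|<|t_j|$, and this normal form is stated only for $E\in K_j$. At an actual gap edge $E_0\in\mathcal W(\delta_0)$ the cocycle is reducible to a parabolic constant, so $\|(S_{E_0}^{V,J})_s\|_0$ grows linearly in $s$ and your polylogarithmic bound (i) is simply false there; feeding $\mathcal P(\epsilon)\sim\epsilon^{-1}$ into (ii) would give only $\gtrsim\epsilon^3$, not $\epsilon^{3/2}$. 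Even when $|v_j|<|t_j|$ does hold, the conjugation taking $A_j(E)$ to a rotation has norm blowing up as $t_j^2-|v_j|^2\to 0$, so ``conjugate to a matrix of bounded norm'' is unjustified. Separately, your step (ii) is only sketched: a Pr\"ufer-derivative argument needs the transfer-matrix bound uniformly over $E'\in[E-\epsilon,E+\epsilon]$, not just at $E$, and the Interlacing Lemma does not supply this. The Thouless-formula route sidesteps all of this precisely because it never needs pointwise control of $\|(S_E^{V,J})_s\|_0$.
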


\begin{proof} 
The proof was originally developed by Avila \cite{A01} in the quasiperiodic  Schr\"{o}dinger  operator case. We provide a sketch of the proof here for completeness. Let $\delta=c \epsilon^{3 / 2}$. For any  $E \in \mathcal{W}(\delta_0),$ we have $L(\alpha,S_E^{V,J})=0.$ Then, by Lemma \ref{11-27-lemma3.24}, we obtain
\small
\begin{align*}
L(\alpha,S_{E+i\delta}^{V,J})&=\int\ln\left(1+\frac{\delta^{2}}{|E-E'|^2}\right)d \mathrm{n}(E')-\frac{1}{2}\int\ln\left(1+\frac{\delta^2}{|E-V_{22}(\omega)|}\right)d\omega\\&\le \int\ln\left(1+\frac{\delta^{2}}{|E-E'|^2}\right)d \mathrm{n}(E')-C_1\delta^2,
\end{align*}
\normalsize
where $C_1=\sup_{E\in\sigma^\mathcal{S}(J,V),\omega\in\Omega}|E-V_{22}(\omega)|/2$.
We now split the integral into four parts: $I_{1}=\int_{\left|E-E^{\prime}\right|\ge\frac{\delta_{0}}{2}}$, $ I_{2}=\int_{\epsilon\le\left|E-E^{\prime}\right|<\frac{\delta_{0}}{2}}$, $I_{3}=\int_{\epsilon^{4}\le\left|E-E^{\prime}\right|<\epsilon}$ and $I_{4}=\int_{\left|E-E^{\prime}\right|<\epsilon^{4}}$.

Clearly, we have $I_1 <\frac{4c^2\epsilon^3}{\delta_0^2}$.
For sufficiently small $\epsilon>0$, by Lemma \ref{idsholder} we have 
\begin{equation*}
\begin{split}
I_{4}=\sum_{k \geq 4} \int_{\epsilon^{k}>\left|E-E'\right|\ge\epsilon^{k+1}}  \ln \left(1+\frac{\delta^{2}}{\left|E-E'\right|^{2}}\right)d\mathrm{n}(E') 
\leq \sum_{k \geq 4} \epsilon^{\frac{k}{2}} \ln (1+c^{2} \epsilon^{1-2 k})\leq 2\epsilon^{\frac{7}{4}}.
\end{split}
\end{equation*}
We also have the estimate
$$
\begin{aligned} I_{2}  \leq \sum_{k=0}^{m} \int_{e^{-k-1}\le|E-E'|<e^{-k}} \ln\left(1+\frac{\delta^{2}}{\left|E-E'\right|^{2}}\right)d\mathrm{n}(E')\le\sum_{k=0}^{m}e^{-\frac{k}{2}}\delta^2e^{2k+2}\leq C_2c^2\delta,\end{aligned}
$$
where $m=[-\ln \epsilon]$. 
It follows that
\begin{align*}
I_3\ge L(\alpha,S_{E+i\delta}^{V,J})+C_1\delta^2-2\delta^{\frac{7}{6}}-C_2c^2\delta.
\end{align*}

Since $\sigma^\mathcal{S}(J,V)$ is bounded and $\inf_{E\in\mathcal{W}(\delta_0)}|E-V_{22}(x)|\ge\delta_0$, there exist $l>0$ such that
\begin{align*}
    |\Re \left(z-V_{11}(x)-\frac{|V_{12}(x)|^2}{z-V_{22}(x)}\right)|\le l
\end{align*}
and
\begin{align*}
    l>\Im z(1+\frac{\sup|V_{12}|}{\delta_0})>\Im \left(z-V_{11}(x)-\frac{|V_{12}(x)|^2}{z-V_{22}(x)}\right)=\Im z+\Im z\frac{|V_{12}(x)|^2}{|z-V_{22}(x)|}\ge \delta.
\end{align*}
Applying Lemma \ref{8.8-theorem4.5.}, we conclude that $L(\alpha,S_{E+i \delta}^{V,J})\geq C_3\delta$ for sufficiently small $\epsilon>0$, where $C_3$ is an universal constant. Since the constant $c$ above is consistent with our choice of $\delta$, we can shrink it such that $I_3\ge\frac{C_3}{2}\delta$. Since $I_{3} \leq C(\mathrm{n}(E+\epsilon)-\mathrm{n}(E-\epsilon)) \ln \epsilon^{-1},$
the result follows.
\end{proof}

\textbf{Proof of Theorem \ref{8.4-theorem4.4}}
Let $\mathcal{B}$ be the set of $E\in  \mathcal{W}(\delta_0) $ such that the cocycle $(\alpha, S_{E}^{V,J})$ is bounded. 
 By the subordinary theory for singular Jacobi operator  (Corollary \ref{subordinnacy}), it is enough to prove that
$\mu_{\omega}( \mathcal{W}(\delta_0)\backslash\mathcal{B})=0$ for any $\omega\in\T$.

 Let $\mathcal{R}$ be the set of $E\in  \mathcal{W}(\delta_0) $ such that $(\alpha,S_E^{V,J})$ is reducible, then $\mathcal{R}\backslash\mathcal{B}$ only contains $E$ for which $(\alpha,S_E^{V,J})$ is analytically reducible to a constant parabolic cocycle.  Recall that  for any $E\in  \mathcal{W}(\delta_0)$, by well-known result of Eliasson \cite{E92}, if 
 $\rho(\alpha, S_{E}^{V,J})$ is rational or  Diophantine w.r.t $\alpha$,  then $(\alpha,S_E^{V,J})$ is reducible. It follows that $\mathcal{R}\backslash\mathcal{B}$
  is countable. Moreover, if $E\in\mathcal{R}$, then any non-zero solution of $\mathcal{S}_{J,V,\omega}\boldsymbol{u}=E\boldsymbol{u}$, satisfies $0<\inf_{n\in\mathbb{Z}}|a(n)|^{2}+|a(n+1)|^2\le\inf_{n\in\mathbb{Z}}|\boldsymbol{u}(n)|^{2}+|\boldsymbol{u}(n+1)|^2$, so there are no eigenvalues in $\mathcal{R}$ and $\mu_{\omega}(\mathcal{R}\backslash\mathcal{B})=0$.  Therefore, it is enough to show that for sufficiently small $\delta_{0}>0$, 
$\mu_{\omega}( \mathcal{W}(\delta_0)\backslash\mathcal{R})=0.$
Note that $\mathcal{W}(\delta_0)\backslash\mathcal{R}\subset\limsup K_{m}$, by Borel-Cantelli Lemma, we only need to prove $\sum_m \mu_{\omega}(\overline{K}_{m})<\infty$. 

Let $J_{m}(E)$  be an open $\epsilon_{m-1}^{\frac{2}{45}}$  neighborhood of $E\in K_m$. By Lemma \ref{growth-cocycle} and Corollary \ref{lemma4.7}, we have
\begin{equation*}
\begin{split}
\mu_{\omega}(J_m(E))&\le \sup_{0\le s\le\epsilon_{m-1}^{-\frac{2}{45}}}\|(S_E^{V,J})_s\|_0^2|J_m(E)|\\&\le\sup_{0\le s\le\epsilon_{m-1}^{-\frac{1}{8}}}\|(S_E^{V,J})_s\|_0^2|J_m(E)|\le C|\ln\epsilon_{m-1}|^{16\sigma}\epsilon_{m-1}^{\frac{2}{45}}.
\end{split}
\end{equation*}
Let $\cup_{l=0}^rJ_m(E_l)$ be a finite subcover of $\overline{K}_m$. By refining this subcover, we can assume that every $E\in\mathbb{R}$ is contained in at most two different $J_m(E_l)$. 

On the other hand, by Corollary \ref{lemma4.7}, if $E\in K_m$, then
$\|2\mathrm{n}(E)+\langle n,\alpha\rangle \|_{\T}\le 2\epsilon_{m-1}^{\frac{1}{15}},$
for some $|n|<2N_{m-1}$.
This shows that $2\mathrm{n}({K}_{m})$ can be covered by $2N_{m-1}$  intervals $I_{s}$ of length  $2 \epsilon_{m-1}^{\frac{1}{15}}$.
By Lemma \ref{lemma4.10},
$2\mathrm{n}(J_m(E))\ge c|J_m(E)|^{\frac{3}{2}},$
thus  by our selection $|I_{s}|\leq \frac{1}{c}|2\mathrm{n}(J_{m}(E))|$   for any $s$
and $E\in{K}_{m}$, there are at most $2([\frac{1}{c}]+1)+4$ intervals $J_m(E_l)$ such that $2\mathrm{n}(J_m(E_l))$  intersects $I_{s}$.
We conclude that there are at most $2(2([\frac{1}{c}]+1)+4)N_{m-1}$
intervals $J_m(E_l)$ to cover $K_m$. Then 
\begin{equation*}
\mu_{\omega}(\overline{K}_{m})\leq \sum_{j=0}^{r}\mu_{\omega}(J_{m}(E_{j}))\le CN_{m-1}|\ln\epsilon_{m-1}|^{16\sigma}\epsilon_{m-1}^{\frac{2}{45}}< \epsilon_{m-1}^{\frac{1}{45}},
\end{equation*}
which gives $\sum_m \mu_{\omega}(\overline{K}_{m})<\infty$.

\subsection{Proof of Theorem \ref{11-28-theorem1.2}:}

We first need the following lemma to validate the assumption of Theorem \ref{8.4-theorem4.4}. 

\begin{lemma}\cite{wang}\label{11-27-lemma8.9}
    Let $\lambda>0,\alpha\in\R\backslash\Q,|\tau|<1$. Define
    \begin{align*}
        A(\omega)=\begin{pmatrix}
            E-\frac{2\lambda\cos2\pi\omega}{1-\tau\cos2\pi\omega}&-1\\1&0
        \end{pmatrix}.
    \end{align*}
     Suppose that $(\alpha,A)$ is not uniformly hyperbolic. Then $L(\alpha,A)=0\text{ and }\omega(\alpha,A)=0$ if and only if $sgn(\lambda)\tau E<2(1-\lambda).$
\end{lemma}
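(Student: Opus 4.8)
\textbf{Proof proposal for Lemma \ref{11-27-lemma8.9}.}

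The plan is to reduce the cocycle to an almost Mathieu type cocycle by a conjugation that absorbs the M\"obius factor $\tfrac{1}{1-\tau\cos 2\pi\omega}$, and then invoke Avila's global theory (acceleration, quantization, classification into subcritical/critical/supercritical) to read off the Lyapunov exponent and acceleration. First I would observe that the potential $\tilde v(\omega)=E-\tfrac{2\lambda\cos2\pi\omega}{1-\tau\cos2\pi\omega}$ is analytic on $\T$ precisely because $|\tau|<1$ (so $1-\tau\cos2\pi\omega$ is bounded away from $0$), hence $(\alpha,A)\in(\R\setminus\Q)\times C^\omega(\T,SL(2,\R))$ and the full machinery of Theorem \ref{theorem3.4} and the subcritical/critical/supercritical trichotomy applies. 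The identity $L(\alpha,A)=0$ together with $\omega(\alpha,A)=0$ is, by definition, exactly the statement that $(\alpha,A)$ is subcritical; since $(\alpha,A)$ is assumed not uniformly hyperbolic, by Avila's classification the negation of ``subcritical'' is exactly ``critical or supercritical'', i.e. either $L(\alpha,A)>0$ or ($L(\alpha,A)=0$ and $\omega(\alpha,A)>0$). So the lemma amounts to: $(\alpha,A)$ is subcritical $\iff \operatorname{sgn}(\lambda)\tau E<2(1-\lambda)$.

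The key computational step is to analyze the complexified cocycle $A_\epsilon(\omega)=A(\omega+i\epsilon)$ as $\epsilon\to+\infty$ and compute the Lyapunov exponent and acceleration there, then use upper semicontinuity and convexity of $\epsilon\mapsto L(\alpha,A_\epsilon)$ plus quantization of the acceleration $\omega(\alpha,A_\epsilon)\in\Z$ to propagate the information back to $\epsilon=0$. Writing $z=e^{2\pi i(\omega+i\epsilon)}$ so that $\cos2\pi(\omega+i\epsilon)=\tfrac12(z+z^{-1})$, the entry $\tilde v(\omega+i\epsilon)=E-\tfrac{\lambda(z+z^{-1})}{1-\tfrac{\tau}{2}(z+z^{-1})}=E-\tfrac{2\lambda(z^2+1)}{2z-\tau(z^2+1)}$; clearing denominators shows $(2z-\tau z^2-\tau)A(\omega+i\epsilon)$ is a polynomial cocycle in $z$, and one tracks the leading behavior as $|z|\to\infty$ (i.e. $\epsilon\to+\infty$), where $\tilde v(\omega+i\epsilon)\to E+\tfrac{2\lambda}{\tau}$ if $\tau\neq0$ (and the $\tau=0$ almost Mathieu case is classical and handled separately). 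From the Thouless-type / Herman-type estimate and the explicit product formula for the resulting polynomial, $L(\alpha,A_\epsilon)=2\pi\epsilon+o(1)$ or $4\pi\epsilon + c + o(1)$ depending on how many roots of $2z-\tau z^2-\tau$ versus the numerator escape, and correspondingly $\omega(\alpha,A_\epsilon)$ equals the integer slope. Tracking the roots of $\tau z^2-2z+\tau=0$, namely $z_\pm=\tfrac{1\pm\sqrt{1-\tau^2}}{\tau}$ with $|z_-|<1<|z_+|$, against the behavior of $\tilde v$ at $0$ and $\infty$, one finds the acceleration at $\epsilon=0^+$ is $0$ precisely in a half-space of parameters, and the boundary of that half-space is the affine condition $\operatorname{sgn}(\lambda)\tau E=2(1-\lambda)$. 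To see which side gives subcriticality, I would evaluate at the transparent test point $\tau=0$ (ordinary AMO with potential $E-2\lambda\cos$, subcritical iff $|\lambda|<1$, matching $0<2(1-\lambda)$ for $\lambda>0$) and at $E$ large, and match signs.

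An alternative, possibly cleaner route that I would pursue in parallel: conjugate $A$ to an $M(2,\C)$ almost Mathieu cocycle directly. Multiplying the transfer matrix of $u_{n+1}+u_{n-1}+\big(E-\tfrac{2\lambda\cos2\pi\omega}{1-\tau\cos2\pi\omega}\big)u_n=0$ through by $(1-\tau\cos2\pi\omega)$ turns it into the Jacobi-type relation $(1-\tau\cos2\pi(\omega+\alpha))\,\tilde u_{n+1}+(1-\tau\cos2\pi(\omega-\alpha))\,\tilde u_{n-1}+\big((1-\tau\cos2\pi\omega)E-2\lambda\cos2\pi\omega\big)\tilde u_n=0$ after a change of variables $\tilde u_n = \prod_{k<n}(1-\tau\cos2\pi(\omega+k\alpha))^{-1}u_n$; this is an analytic Jacobi cocycle whose Lyapunov exponent differs from $L(\alpha,A)$ by $\int_\T\ln|1-\tau\cos2\pi\omega|\,d\omega=\ln\big|\tfrac{1+\sqrt{1-\tau^2}}{2}\big|$ (Poisson--Jensen), and whose acceleration is readily computed; this is essentially the same device used in Lemma \ref{lemma3.1.}. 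The main obstacle I anticipate is bookkeeping the signs: pinning down on which side of the hyperplane $\operatorname{sgn}(\lambda)\tau E<2(1-\lambda)$ the acceleration vanishes, uniformly in the sign of $\lambda$ and of $\tau$, requires careful tracking of which roots of the quadratic $\tau z^2-2z+\tau$ lie inside the unit disk and how they interact with the zeros/poles of the complexified symbol; the Lyapunov exponent computation itself is routine Herman/Avila, but the constant and the direction of the inequality is where errors hide, so I would nail it down by checking the two limiting regimes ($\tau\to0$ and $|E|\to\infty$) and the known $\tau$-dependence against \cite{wang}.
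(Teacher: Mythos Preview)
The paper does not supply its own proof of this lemma; it is quoted directly from \cite{wang} (the GAA model analysis) and used as a black box. So there is no in-paper argument to compare against, only the external reference.

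Your sketch is on the right track and matches the method actually used in \cite{wang}. The second route you describe---multiplying through by $1-\tau\cos2\pi\omega$ to obtain an analytic $M(2,\C)$ (Jacobi-type) cocycle, relating its Lyapunov exponent to $L(\alpha,A)$ via the Poisson--Jensen integral $\int_\T\ln|1-\tau\cos2\pi\omega|\,d\omega=\ln\tfrac{1+\sqrt{1-\tau^2}}{2}$, then complexifying and invoking quantization of acceleration (Theorem~\ref{theorem3.4})---is precisely how \cite{wang} proceeds, and is cleaner than attacking $A_\epsilon$ directly. One small correction: your change of variables $\tilde u_n=\prod_{k<n}(1-\tau\cos2\pi(\omega+k\alpha))^{-1}u_n$ is not quite right and is in fact unnecessary; it is enough to study the cocycle $\tilde A(\omega)=(1-\tau\cos2\pi\omega)A(\omega)$, which has the same acceleration as $A$ on the real axis (since the scalar factor contributes zero acceleration there for $|\tau|<1$) and whose asymptotics as $\epsilon\to+\infty$ are transparent. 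The sign bookkeeping you flag as the delicate point is indeed where the work lies, and your proposed sanity checks ($\tau=0$ reducing to AMO, and the large-$|E|$ regime) are exactly the right anchors.
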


Once we have this, note
\begin{align*}
S_E^{V,J}(\omega)=\begin{pmatrix}E-V_{11}(\omega)-\frac{|V_{12}(\omega)|^2}{E-V_{22}(\omega)}& -1\\1&0\end{pmatrix}=\begin{pmatrix}
    \frac{E^2-\lambda^2}{E+\lambda\cos2\pi\omega}&-1\\1&0
\end{pmatrix},
\end{align*}	
and rewrite
\begin{align*}
    \frac{E^2-\lambda^2}{E+\lambda\cos2\pi\omega}=\frac{E^2-\lambda^2}{E}-\frac{\lambda(E^2-\lambda^2)\cos2\pi\omega}{E^2(1+\frac{\lambda}{E}\cos2\pi\omega)},
\end{align*}
and noting  $$|\frac{\lambda}{E}|<1,\quad\forall E\in\sigma^{\mathcal{S}}(J,V)\backslash[-\lambda,\lambda].$$
Since $E\in\sigma^{\mathcal{S}}(J,V)$ implies the cocycle $(2\alpha,S_E^{V,J})$ is not uniformly hyperbolic (Proposition \ref{11-29-proposition3.6}), we can apply Lemma \ref{11-27-lemma8.9} to obtain $\mathcal{W}=\sigma^{\mathcal{S}}(J,V)\backslash[-\lambda,\lambda].$ Then, according to Theorem \ref{8.4-theorem4.4}, the operator $\mathcal{S}_{J,V,\omega}$ has purely absolutely continuous spectrum in $\sigma^{\mathcal{S}}(J,V)\backslash[-\lambda,\lambda]$ for every $\omega\in\T$. Moreover, combining this with Theorem \ref{11-28-theorem8.2}, we complete the proof.

\section{Acknowledgements}
Y. Wang is supported by the NSFC grant (12401208) and the Natural Science Foundation of Jiangsu Province (Grants No BK20241431). Q. Zhou was partially supported by National Key R\&D Program of China (2020YFA0713300) and Nankai Zhide Foundation.

\end{document}